\documentclass{amsart}
\usepackage{amssymb,euscript,amsmath, mathrsfs}
\usepackage[dvips]{graphicx}
\usepackage[dvips]{color}

\newcounter{ENUM}

\input xy
\xyoption{all}
\CompileMatrices


\def\<{\langle}
\def\>{\rangle}
\def\0{{{\bf 0}}}

\def\OO{{\mathcal O}}

\def\CI{{\mathcal I}}
\def\CK{{\mathcal K}}
\def\CA{{\mathcal A}}
\def\CP{{\mathcal P}}

\def\tkappa{{\tilde \kappa}}
\def\tsigma{{\tilde \sigma}}

\def\CC{{\mathbb C}}

\def\FF{{\mathbb F}}
\def\GG{{\mathbb G}}

\def\QQ{{\mathbb Q}}

\def\tx{{\tilde x}}

\def\tU{{\tilde U}}

\def\tV{{\tilde V}}
\def\tM{{\tilde M}}
\def\tP{{\tilde P}}
\def\tpi{{\tilde \pi}}
\def\tPi{{\tilde \Pi}}
\def\tchi{{\tilde \chi}}

\def\tTheta{{\tilde \Theta}}

\def\unif{{\varpi}}
\def\abs{\mid~\;~\mid}

\newcommand{\Ind}{\operatorname{Ind}}
\newcommand{\cInd}{\operatorname{c-Ind}}

\newcommand{\St}{\operatorname{St}}

\newcommand{\Rep}{\operatorname{Rep}}

\newcommand{\reg}{\mbox{\rm \tiny reg}}

\newcommand{\cusp}{\mbox{\rm \tiny cusp}}

\newcommand{\red}{\mbox{\rm \tiny red}}

\def\ZZ{{\mathbb Z}}

\def\I{{\mathcal I}}
\def\K{{\mathcal K}}

\def\i{{\mathfrak i}}

\def\Hom{\operatorname{Hom}}

\def\End{\operatorname{End}}
\def\Aut{\operatorname{Aut}}

\def\Gal{\operatorname{Gal}}
\def\GL{\operatorname{GL}}

\def\Spec{\operatorname{Spec}}

\def\red{\operatorname{red}}

\newcommand{\Id}{\operatorname{Id}}

\newcommand{\margh}[1]{}

\newtheorem{thm}{Theorem}[section]
\newtheorem{theorem}[thm]{Theorem}
\newtheorem{prop}[thm]{Proposition}
\newtheorem{proposition}[thm]{Proposition}
\newtheorem{lemma}[thm]{Lemma}

\newtheorem{corollary}[thm]{Corollary}

\theoremstyle{definition}

\newtheorem{definition}[thm]{Definition}

\newtheorem{example}[thm]{Example}

\newtheorem{remark}[thm]{Remark}

\numberwithin{equation}{section}

\begin{document}
\title[The Bernstein center]{The Bernstein center of the category of smooth $W(k)[\GL_n(F)]$-modules}
\author{David Helm}
\date{1-9-2012}
\subjclass[2010]{11F33,11F70 (Primary), 22E50 (Secondary)}

\begin{abstract}
We consider the category of smooth $W(k)[\GL_n(F)]$-modules, where $F$ is a $p$-adic field
and $k$ is an algebraically closed field of characteristic $\ell$ different from $p$.  We describe
a factorization of this category into blocks, and show that the center of each such block is
a reduced, $\ell$-torsion free, finite type $W(k)$-algebra.  Moreover, the $k$-points of the center
of a such a block are in bijection with the possible ``supercuspidal supports'' of the smooth
$k[\GL_n(F)]$-modules that lie in the block.  Finally, we describe a large explicit subalgebra of the center
of each block and give a description of the action of this algebra on the simple objects of the block, in
terms of the description of the classical ``characteristic zero'' Bernstein center of~\cite{BD}.
\end{abstract}

\maketitle

\section{Introduction}

The {\em center} of an abelian category is $\CA$ the endomorphism ring of the identity functor
of that category.  It is a commutative ring that acts naturally on every object of $\CA$,
a fact which often allows one to approach questions about $\CA$ from a module-theoretic point
of view.

One spectacular success of this approach originates with Bernstein-Deligne~\cite{BD},
who computed the centers of categories of smooth complex representations of $p$-adic algebraic
groups.  The center of such a category is called the Bernstein center.  Bernstein and
Deligne give a factorization of this category into blocks, known as Bernstein components,
as well as a simple and explicit description of the center of each block, which is
a finite type $\CC$-algebra.  In particular they showed that the $\CC$-points of 
the Bernstein center were in bijection with the supercuspidal supports of irreducible smooth complex representations.

The results of~\cite{BD} made it possible to give purely algebraic proofs of theorems
about smooth representations that previously could only be proven via deep results from
Fourier theory; Bushnell-Henniart's results about Whittaker models in~\cite{BH-whittaker} are an
example of this approach.

In recent years there has been considerable interest in studying smooth representations
over fields other than the complex numbers, or even over more general rings.  To apply
similar techniques in such a setting one needs to understand the centers of categories
of smooth representations over $\FF_{\ell}$ or $\ZZ_{\ell}$ (or even over $\ZZ$).
Some progress along these lines was made by Dat~\cite{dat-integral}; in particular he was able to
give an explicit description of the center of the category of smooth
representations of a $p$-adic algebraic group $G$ over $\ZZ_{\ell}$, for $\ell$ a
{\em banal} prime; that is, for $\ell$ prime to the order of $G(\FF_p)$.

More recently Paskunas~\cite{paskunas} has studied the center of a category of
representations of $\GL_2(\QQ_p)$ over $\FF_p$; his results allow him to characterize the
image of the Colmez functor.

We fix our attention on the category $\Rep_{W(k)}(\GL_n(F))$
of smooth representations of $\GL_n(F)$, where $F$ is a $p$-adic field,
over a ring of Witt vectors $W(k)$, for $k$ an algebraically closed field of characteristic
$\ell$ different from $p$.  We obtain a factorization of this category into blocks that parallels
the Bernstein decomposition over $\CC$.  This description is closely related to the decomposition
due to Vigneras~\cite{vig98} of the category of smooth representations of $\GL_n(F)$ over $k$;
in both decompositions the blocks are parameterized by inertial equivalence classes of pairs
$(L,\pi)$, where $L$ is a Levi subgroup of $\GL_n(F)$ and $\pi$ is an irreducible supercuspidal
representation of $L$ over $k$.  Indeed, our approach owes a substantial amount to the ideas
of Vigneras, and in particular her use of $G$-covers in an integral setting.  In spite of this
we do not rely on Vingeras's Bernstein decomposition over $k$~\cite{vig98}, and indeed our 
description of the Bernstein
decomposition over $W(k)$ implies the result of Vigneras over $k$.

More precisely, we introduce (Definition~\ref{def:mod ell support}) a notion
of {\em mod $\ell$ inertial supercuspidal support} for simple objects $\Pi$
in $\Rep_{W(k)}(\GL_n(F))$.  When $\Pi$ is killed by $\ell$ this is just the usual notion
of supercuspidal support (taken up to inertial equivalence), but this notion also makes
sense for those $\Pi$ on which multiplication by $\ell$ is an isomorphism.  For an inertial
equivalence class $(L,\pi)$ as above, we can then consider the full subcategory
$\Rep_{W(k)}(\GL_n(F))_{[L,\pi]}$ whose objects are those $\Pi$ such that every simple subquotient
of $\Pi$ has mod $\ell$ inertial supercuspidal support given by $(L,\pi)$.  Our first main
result, Theorem~\ref{thm:decomposition}, then
states that the blocks of $\Rep_{W(k)}(\GL_n(F))$ are precisely the subcategories
$\Rep_{W(k)}(\GL_n(F))_{[L,\pi]}$.

For each such $(L,\pi)$, let $A_{[L,\pi]}$ be the center of 
$\Rep_{W(k)}(\GL_n(F))_{[L,\pi]}$.  The main objective of this paper is to
understand the structure of $A_{[L,\pi]}$ and its action on the objects
of $\Rep_{W(k)}(\GL_n(F))_{[L,\pi]}$.  Our starting point is the Bernstein-Deligne
description of the analogous question over algebraically closed fields.  Indeed,
if $\overline{\CK}$ is the algebraic closure of the field of fractions of $W(k)$,
then tensoring with $\overline{\CK}$ defines a functor from $\Rep_{W(k)}(\GL_n(F))$
to $\Rep_{\overline{\CK}}(\GL_n(F))$; the image of this functor is a product of
blocks of $\Rep_{\overline{\CK}}(\GL_n(F))$.

More precisely, Bernstein and Deligne show that the blocks of $\Rep_{\overline{\CK}}(\GL_n(F))$
are indexed by inertial equivalence classes of pairs $(M,\tpi)$, where $\tpi$ is an
irreducible supercuspidal representation of $\GL_n(F)$ over $\overline{\CK}$.  If we
denote the corresponding block by $\Rep_{\overline{\CK}}(\GL_n(F))_{M,\tpi}$, then the
essential image of $\Rep_{\overline{\CK}}(\GL_n(F))_{[L,\pi]}$ under the functor that tensors
with $\overline{\CK}$ is the product of those $\Rep_{\overline{\CK}}(\GL_n(F))_{M,\tpi}$ 
whose objects, when considered
as $W(k)[\GL_n(F)]$-modules, lie in $\Rep_{W(k)}(\GL_n(F))_{[L,\pi]}$.  It is then
not hard to show (Proposition~\ref{prop:invert}) that in this case
one has an isomorphism of $\overline{\CK}$-algebras:
$$A_{[L,\pi]} \otimes \overline{\CK} \cong \prod_{M,\tpi} A_{M,\tpi},$$
where $A_{M,\tpi}$ is the center of $\Rep_{\overline{\CK}}(\GL_n(F))_{M,\tpi}$.
Since (for formal reasons) $A_{[L,\pi]}$ is $\ell$-torsion free, this shows that
$A_{[L,\pi]}$ embeds naturally as a $W(k)$-subalgebra of the product of the
$A_{M,\tpi}$.  The Bernstein-Deligne theory (which we summarize in section~\ref{sec:BD})
gives a completely explicit description of the latter.

Understanding the integral structure of $A_{[L,\pi]}$ is considerably more difficult;
our main tool for doing so is the construction of explicit projective objects
$\CP_{K,\tau}$ of $\Rep_{W(k)}(\GL_n(F))$, from maximal distinguished
cuspidal $k$-types $(K,\tau)$.  These projectives have a number of useful properties;
in particular their endomorphism rings are commutative, and can be explicitly computed
after tensoring with $\overline{\CK}$.  We carry out this computation in sections~\ref{sec:zero generic}
(in the depth zero case) and \ref{sec:generic} (for the general case).
Although the integral structure of the endomorphism ring is harder to describe,
we are able to construct an explicit subalgebra $C_{K,\tau}$ of the endomorphism ring
of $\CP_{K,\tau}$ in sections~\ref{sec:zero endomorphisms} and~\ref{sec:endomorphisms}, 
and describe the action of this subalgebra
explicitly.  We also show, in section~\ref{sec:fg}, that the action of $C_{K,\tau}$
makes $\CP_{K,\tau}$ into an {\em admissible} $C_{K,\tau}[\GL_n(F)]$-module.  

As a result of this admissibility, we are able to deduce Bernstein's second adjointness
for the group $\GL_n(F)$, a fact which allows us to build further projective modules out of
the $\CP_{K,\tau}$ via parabolic induction.  In this way, we construct, for each
block $\Rep_{W(k)}(\GL_n(F))_{[L,\pi]}$, a faithfully projective object $\CP_{[L,\pi]}$.

From the fact that $\CP_{[L,\pi]}$ is faithfully projective, we deduce
(see section~\ref{sec:hecke}) that an element of $A_{[L,\pi]} \otimes_{W(k)} \overline{\CK}$ lies in
$A_{[L,\pi]}$ if, and only if, it preserves $\CP_{[L,\pi]}$ in $\CP_{[L,\pi]} \otimes_{W(k)} \overline{\CK}$.
The module $\CP_{[L,\pi]}$ is a direct sum of modules built by parabolic induction from tensor products
of $\CP_{K,\tau}$.  We therefore have a wide array of endomorphisms of $\CP_{[L,\pi]}$ at our disposal, because
of our explicit understanding of the endomorphism algebras $C_{K,\tau}$.  In Section~\ref{sec:hecke}
we exploit this to construct a subalgebra $C_{[L,\pi]}$ of the endomorphism ring of $\CP_{[L,\pi]}$,
and we show that this subalgebra actually lies in $A_{[L,\pi]}$.  Moreover, we can show that
$\CP_{[L,\pi]}$ is admissible over $C_{[L,\pi]}$, and thus that $A_{[L,\pi]}$ is a finitely
generated $C_{[L,\pi]}$-module. 

A consequence of this finiteness is that an object $\Pi$ of
$\Rep_{W(k)}(\GL_n(F))_{[L,\pi]}$ is admissible as an $A_{[L,\pi]}(\GL_n(F))$-module if, and
only if, it is finitely generated as a $W(k)[\GL_n(F)]$-module.  This is an integral analogue
of a result of Bernstein over fields of characteristic zero.

Our construction of $\CP_{[L,\pi]}$ also allows us to prove results describing the interaction
between the ring $A_{[L,\pi]}$ and parabolic induction (we refer the reader to 
Theorem~\ref{thm:integral Bernstein parabolic induction} for a precise statement).  This also
allows us to show that $A_{[L,\pi]}$ admits a tensor factorization as a tensor product
of rings $A_{[L_i,\pi_i]}$, where each $(L_i,\pi_i)$ is a so-called {\em simple pair.} 
(Theorem~\ref{thm:tensor factorization}.)

The action of $C_{[L,\pi]}$ on irreducible representations of $\GL_n(F)$, both in characteristic
zero and in characteristic $\ell$, can be made completely explicit in terms of a choice
of certain ``compatible systems of cuspidals''.  (We refer the reader to Theorem~\ref{thm:compatibility}
and the discussion preceding it for a description of these systems.)
From this one can show that the action of $C_{[L,\pi]}$ on irreducible smooth $k$-representations
of $\GL_n(F)$ in $\Rep_{W(k)}(\GL_n(F))_{[L,\pi]}$ is sufficient to distinguish representations
with different supercuspidal supports (Proposition~\ref{prop:c action}).  On the other hand,
it is not hard to show that if two irreducible $k$-representations of $\GL_n(F)$ have the same
supercuspidal support, then $A_{[L,\pi]}$ acts on both of them via the same map $A_{[L,\pi]} \rightarrow k$.
In particular, the $k$-points of {\em both} $\Spec C_{[L,\pi]}$ and $\Spec A_{[L,\pi]}$ are in natural
bijection with the set of possible supercuspidal supports of irreducible $k$-representations of $\GL_n(F)$
in the block corresponding to $(L,\pi)$.  (Corollary~\ref{cor:points}.)  This gives a ``mod $\ell$'' analogue
of the corresponding result of Bernstein-Deligne for complex points of the classical
Bernstein center.  An immediate corollary is that the map $\Spec A_{[L,\pi]}/\ell \rightarrow \Spec C_{[L,\pi]}/\ell$
is a bijection on $k$-points; from this we can conclude that the inclusion of $C_{[L,\pi]}/\ell$ in
$A_{[L,\pi]}/\ell$ identifies $C_{[L,\pi]}/\ell$ with the reduced quotient $(A_{[L,\pi]}/\ell)^{\red}$.

In the final section, we prove some additional results about the structure of the
endomorphism rings of the projectives $\CP_{K,\tau}$, assuming a result (Theorem~\ref{thm:generic1})
whose proof we postpone to the sequel paper~\cite{bernstein3}.  Conditionally on this result,
we are able to realize $A_{[L,\pi]}$ as a subalgebra of a much simpler ring than the
endomorphism ring of $\CP_{[L,\pi]}$; this simpler ring can, in small cases, be given a completely
explicit description.  As a result we are able to give completely explicit descriptions
of $A_{[L,\pi]}$ in numerous situations where $n$ is small relative to $\ell$ and the order of $q$
modulo $\ell$.  (See the examples at the end of section~\ref{sec:explicit} for precise statements.)

This is the first paper in a four-part series.  The second paper, \cite{bernstein3},
applies the structure theory of $A_{[L,\pi]}$ developed here
to questions that arise from the
theory of Whittaker models, that were first studied over the complex numbers
in~\cite{BH-whittaker}.  We establish versions of several of these results that hold for smooth
representations of $\GL_n(F)$ over $W(k)$.  These results have implications for
the structure theory of certain representations associated to the ``local Langlands
correspondence in families'' of~\cite{emerton-helm}.  In particular,~\cite{emerton-helm}
conjectures the existence of certain algebraic families of admissible representations of $\GL_n(F)$
attached to Galois representations.  These families are characterized by certain properties
that can be understood in terms of the spaces of Whittaker functions we consider.  We make
use of this in~\cite{bernstein3} to reduce the question of the existence of such
families to a natural conjecture that relates $A_{[L,\pi]}$ to the deformation theory
of Galois representations, via the local Langlands correspondence.

The third and fourth papers in the series,~\cite{curtis} and~\cite{converse}, will be devoted
to exploring this relationship between $A_{[L,\pi]}$ and Galois theory.
In particular we construct a natural isomorphism between 
the completion of $A_{[L,\pi]}$ at a point $x$ as a
subalgebra of the universal framed deformation ring of the semisimple representation of
$G_F$ attached to $x$ via local Langlands.  
In particular, this will yield a proof of the existence
of the families conjectured in~\cite{emerton-helm}.

The author is grateful to Matthew Emerton, Richard Taylor, Sug-Woo Shin, Florian Herzig,
David Paige and J.-F. Dat for helpful conversations and encouragement on the subject of
this note, and also to an anonymous referee who suggested many improvements to the exposition,
and a strengthening of Theorem~\ref{thm:tensor factorization}.  
This paper was partially supported by NSF grant DMS-1161582, and partially by EPSRC grant EP/M029719/1.

Throughout this paper we work with the base ring $W(k)$.  We thus
adopt the convention that all tensor products are taken over
$W(k)$ unless otherwise specified.

\section{Faithfully projective modules and the Bernstein center} \label{sec:faithful}

\begin{definition} Let $\CA$ be an abelian category.  The {\em center} of
$\CA$ is the ring of endomorphisms of the identity functor $\Id: \CA \rightarrow \CA$.
More prosaically, an element of $\CA$ is a choice of element
$f_{M} \in \End(M)$ for every object $M$ in $\CA$, satisfying the condition
$f_M \circ \phi = \phi \circ f_N$ for every morphism $\phi: N \rightarrow M$ in $\CA$.
\end{definition}

If $\CA$ is the category of right $R$-modules for some (not necessarily commutative)
ring $R$, then it is easy to see that the center of $\CA$ is the center
$Z(R)$ of the ring $R$.  Indeed, $Z(R)$ acts on every object of $\CA$; this defines
a map of $Z(R)$ into the center.  Its inverse is constructed by considering
the action of $Z(R)$ on $R$, considered as a right $R$-module.

When $\CA$ is a more general abelian category, we can often describe its 
center by reducing to the case of a module category.  We more or less follow the
ideas of~\cite{roche}, section 1.1.  The key is to find an object in
$A$ that is faithfully projective, in the following sense:

\begin{definition}
Let $\CA$ be an abelian category with direct sums.  An object $P$ in $\CA$ is
{\em faithfully projective} if:
\begin{enumerate}
\item $P$ is a projective object of $\CA$.
\item The functor $M \mapsto \Hom(P,M)$ is faithful.
\item $P$ is {\em small}; that is, one has an isomorphism:
$$\oplus_{i \in I} \Hom(P, M_i) \cong \Hom(P, \oplus_{i \in I} M_i)$$
for any family $M_i$ of objects of $\CA$ indexed by a set $I$.
\end{enumerate}
\end{definition}

One checks easily that the condition that $M \mapsto \Hom(P,M)$ is faithful is
equivalent to the condition that $\Hom(P,M)$ is nonzero for every object $M$ of $\CA$.
If $\CA$ has the property that every object of $\CA$ has a simple subquotient,
then it suffices to check that $\Hom(P,M)$ is nonzero for every simple $M$.

If $P$ is a faithfully projective object of $\CA$, one has:
\begin{proposition}[\cite{roche}, Theorem 1.1]
Let $P$ be faithfully projective.  The functor $M \mapsto \Hom(P,M)$ is
an equivalence of categories from $\CA$ to the category of right $\End(P)$-modules.
In particular, the center of $\CA$ is isomorphic to the center of $\End(P)$.
\end{proposition}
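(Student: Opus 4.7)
The plan is to carry out a standard Morita-theoretic argument: construct a quasi-inverse to $F := \Hom(P, -)$ to produce an equivalence of abelian categories, and then deduce the statement about centers from the general fact that the center of an abelian category is preserved under equivalence. For the quasi-inverse $G$, I would first declare $G(\End(P)^{(I)}) := P^{(I)}$ on free right $\End(P)$-modules; the smallness hypothesis (iii) gives a natural isomorphism $F(P^{(I)}) \cong \End(P)^{(I)}$, so $FG$ is already the identity on this subcategory. For a general right $\End(P)$-module $N$, I would pick a presentation $\End(P)^{(J)} \to \End(P)^{(I)} \to N \to 0$ and define $G(N)$ to be the cokernel of the corresponding map $P^{(J)} \to P^{(I)}$ obtained under smallness. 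Projectivity of $P$ (hypothesis (i)) makes $F$ exact and also lets one lift a morphism of right $\End(P)$-modules to a compatible map between chosen presentations, so $G$ extends to a well-defined functor and $FG(N) \cong N$ canonically.

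The substantive step is the other direction $GF \cong \Id_\CA$, for which I would show that every $M \in \CA$ admits a two-term presentation by copies of $P$. Take $I := \Hom(P, M)$ and consider the canonical evaluation map $\alpha \colon P^{(I)} \to M$; its image $M' \subseteq M$ has the property that $F(M') \to F(M)$ is surjective by construction. Applying the left-exact functor $F$ to $0 \to M' \to M \to M/M' \to 0$ then forces $F(M/M') = 0$, and the faithfulness hypothesis (ii) gives $M/M' = 0$, so $\alpha$ is surjective. Iterating on $\ker(\alpha)$ produces a presentation $P^{(J)} \to P^{(I)} \to M \to 0$; applying $F$ to it and comparing with the defining presentation of $GF(M)$ yields the natural isomorphism $GF(M) \cong M$. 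The main obstacle is this presentation step, which is precisely where all three conditions of faithful projectivity interact.

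For the final assertion on centers, any equivalence of abelian categories $F \colon \CA \simeq \CB$ transports an endomorphism $\eta$ of $\Id_\CA$ to the endomorphism of $\Id_\CB$ defined by $\eta'_N := F(\eta_{G(N)})$, yielding a ring isomorphism on centers. Combined with the observation recalled in the excerpt that the center of the category of right $R$-modules is $Z(R)$, this identifies the center of $\CA$ with $Z(\End(P))$.
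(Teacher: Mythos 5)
The paper does not prove this proposition; it is quoted from \cite{roche}, Theorem~1.1, as a black box, so there is no in-text argument to compare against. Your proposal reconstructs the standard Morita-theoretic proof (which is indeed the route Roche takes), and it is essentially correct. Two small points are worth tightening. First, in the argument that the evaluation map $\alpha\colon P^{(I)}\to M$ is surjective, you invoke ``the left-exact functor $F$'' and conclude $F(M/M')=0$ from the surjectivity of $F(M')\to F(M)$; left exactness alone only shows that $F(M)\to F(M/M')$ is the zero map, not that its target vanishes. You need the full exactness of $F$ coming from projectivity of $P$ (which you have already noted a few lines earlier), or more directly: a nonzero $g\colon P\to M/M'$ would lift along $M\to M/M'$ to some $\tilde g\in I$, and $\tilde g$ factors through $M'=\operatorname{im}\alpha$ by construction, forcing $g=0$, a contradiction. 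Second, in transporting a central element $\eta$ of $\mathcal{A}$ to one of the module category, the assignment $\eta'_N:=F(\eta_{G(N)})$ lands in $\operatorname{End}(FG(N))$ rather than $\operatorname{End}(N)$; one should conjugate by the unit/counit isomorphism $FG\cong\operatorname{Id}$ to get an honest endomorphism of $\operatorname{Id}$. Both are cosmetic and do not affect the soundness of the argument; the core steps --- exactness of $\operatorname{Hom}(P,-)$ from projectivity, compatibility with coproducts from smallness, surjectivity of the canonical evaluation map from faithfulness, and the resulting two-term $P$-presentation of an arbitrary object --- are all in place and used correctly.
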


Idempotents of the center correspond to factorizations of $\CA$ as
a product of categories.  In practice we can obtain these factorizations by
constructing suitable injective objects of $\CA$.

\begin{proposition} \label{prop:decompose}
Suppose that every object of $\CA$ has a simple subquotient, let $S$ be a subset
of the simple objects of $\CA$, and let $I_1,I_2$ be a
injective objects of $\CA$ such that, up to isomorphism:
\begin{enumerate}
\item every simple subquotient of $I_1$ is in $S$,
\item every object in $S$ is a subobject of $I_1$,
\item no simple subquotient of $I_2$ is in $S$, and
\item every simple object of $\CA$ that is not in $S$ is a subobject of $I_2$.
\end{enumerate}
Then every object $M$ of $\CA$ splits canonically
as a product $M_1 \times M_2$, where every simple subquotient of $M_1$ is in $S$, and
and no simple subquotient of $M_2$ is in $S$.  
This gives a decomposition of $\CA$ as a product of the full subcategories $\CA_1$
and $\CA_2$ of $\CA$, where the objects of $\CA_1$ are those objects $M_1$ of $\CA$
such that every simple subquotient of $M_1$ is in $S$,
and the objects of $\CA_2$ are those objects $M_2$ of $\CA$ such that no
simple subquotient of $M_2$ is in $S$.  Moreover, every object of $\CA_1$ has an injective
resolution by direct sums of copies of $I_1$, and every object of $\CA_2$ has an injective
resolution by direct sums of copies of $I_2$.
\end{proposition}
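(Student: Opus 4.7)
The plan is to produce, for each $M \in \CA$, a canonical direct sum decomposition $M = M_1 \oplus M_2$ with $M_1 \in \CA_1$ and $M_2 \in \CA_2$, from which the remaining conclusions follow formally.

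I would first record a Hom-vanishing principle: if $A \in \CA_1$ and $B \in \CA_2$, then $\Hom(A,B) = 0$, because the image of any morphism $A \to B$ is simultaneously a quotient of $A$ (so all its simple subquotients lie in $S$) and a subobject of $B$ (so none of its simple subquotients lies in $S$); its simple subquotients are therefore empty, and by the standing assumption on $\CA$ the image vanishes. The symmetric statement gives $\Hom(B,A) = 0$, and in particular $\CA_1 \cap \CA_2 = 0$.

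For each $M$, I would set
\begin{align*}
M_1 \;=\; \bigcap_{f \colon M \to I_2} \ker f, \qquad M_2 \;=\; \bigcap_{g \colon M \to I_1} \ker g.
\end{align*}
To see that $M_1 \in \CA_1$, suppose some simple subquotient $T = N'/N$ of $M_1$ lies outside $S$. By hypothesis (4) there is an embedding $T \hookrightarrow I_2$; composing with $N' \twoheadrightarrow T$ gives a nonzero morphism $N' \to I_2$ which, by injectivity of $I_2$, extends to $\phi \colon M \to I_2$. By construction $\phi|_{M_1} = 0$, yet $\phi$ is nonzero on $N' \subseteq M_1$, a contradiction. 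The symmetric argument (with $I_1$ in place of $I_2$) shows $M_2 \in \CA_2$, and then $M_1 \cap M_2 \subseteq M_1 \cap M_2$ lies in $\CA_1 \cap \CA_2 = 0$.

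The crucial step is $M_1 + M_2 = M$. By construction $M/M_1$ embeds into $I_2^{\Hom(M, I_2)}$ and $M/M_2$ into $I_1^{\Hom(M, I_1)}$. I would then invoke the fact, valid in the Grothendieck category under consideration, that every simple subquotient of a product of copies of an injective $I$ remains a simple subquotient of $I$ itself (for a simple subobject of the product, projection to some factor is nonzero and hence injective; for a general simple subquotient $N'/N$ one reduces to this after choosing a projection on which $N$ has smaller image than $N'$). Combined with hypotheses (1) and (3) this yields $M/M_1 \in \CA_2$ and $M/M_2 \in \CA_1$. Consequently $M/(M_1 + M_2)$ is a quotient of each, hence lies in $\CA_1 \cap \CA_2 = 0$, so $M = M_1 + M_2$, and finally $M = M_1 \oplus M_2$.

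The decomposition is functorial in $M$ (the definitions of $M_1$ and $M_2$ are by kernels of natural evaluation maps), which gives the product factorization $\CA = \CA_1 \times \CA_2$ with the claimed descriptions of objects. For the injective resolutions: if $M \in \CA_1$ then $M_2 = 0$, so the evaluation map $M \to I_1^{\Hom(M, I_1)}$ is injective; its target is injective in $\CA$, and iterating the construction on the cokernel produces an injective resolution of $M$ by products of copies of $I_1$, and symmetrically for $\CA_2$. The principal obstacle is the technical lemma about simple subquotients of products of injectives; it is category-specific, but holds in the intended setting of smooth $W(k)[\GL_n(F)]$-modules.
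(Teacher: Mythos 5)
Your Hom-vanishing principle, the definitions $M_1 = \bigcap_{f\colon M\to I_2}\ker f$ and $M_2 = \bigcap_{g\colon M\to I_1}\ker g$, and the verifications that $M_1 \in \CA_1$, $M_2 \in \CA_2$, and $M_1 \cap M_2 = 0$ are all correct, and the reduction of the proposition to these facts plus $M_1 + M_2 = M$ is a reasonable plan. However, the step $M_1 + M_2 = M$ rests entirely on the unproved claim that every simple subquotient of a product of copies of an injective $I$ is already a simple subquotient of $I$, and the argument you sketch for this does not work. For a simple \emph{subobject} the projection argument is fine, but for a general simple subquotient $N'/N$ there need not exist a projection on which the image of $N$ is strictly smaller than that of $N'$: already in $\ZZ/2 \times \ZZ/2$ with $N$ the diagonal and $N'$ the whole module, every coordinate projection restricts to a bijection on $N$, so $p_i(N) = p_i(N')$ for all $i$ even though $N \subsetneq N'$. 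So the proposed reduction to subobjects fails, and you have not established the lemma (you yourself flag it as category-specific; in fact it is not clear it holds without further hypotheses even in $\Rep_{W(k)}(G)$, and the proposition as stated is about an arbitrary abelian category with simple subquotients, in which infinite products may not even exist).

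The paper's proof avoids this entirely and is worth comparing. It takes $M_1$ to be the \emph{maximal quotient} of $M$ all of whose simple subquotients lie in $S$, sets $M_2 = \ker(M \to M_1)$, and lets $M_3$ be the maximal quotient with no simple subquotient in $S$. Injectivity of $I_1$ is used, much as in your argument, to show $\Hom(M_2, I_1) = 0$, hence no simple subquotient of $M_2$ lies in $S$; symmetrically every simple subquotient of $\ker(M\to M_3)$ lies in $S$. The intersection $M_2 \cap \ker(M\to M_3)$ then has no simple subquotient at all, hence vanishes, so $M_2 \to M_3$ is injective; and it is surjective because a nonzero cokernel would be simultaneously a quotient of $M_3$ (no simple subquotient in $S$) and a subquotient of $M/M_2 = M_1$ (all simple subquotients in $S$). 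This gives the canonical splitting $M \cong M_1 \times M_2$ using only finite products and maximal quotients (which exist in a well-powered category), with no appeal to subquotients of infinite products of injectives. If you want to salvage your route, you would need an honest proof of the product lemma in the category at hand; otherwise, switching to the maximal-quotient formulation is the cleaner fix.
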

\begin{proof}
Let $M_1$ be the maximal quotient of $M$ such that every simple subquotient of $M_1$ is in $S$,
and let $M_2$ be the kernel of the map $M \rightarrow M_1$.  We first show $\Hom(M_2,I_1) = 0$.
Suppose we have a nonzero map of $M_2$ into $I_1$, with kernel $N$.  Then the injection
of $M_2/N$ into $I_1$ would extend to an injection of $M/N$ into $I_1$, and thus
$M/N$ would be a quotient of $M$, dominating $M_1$, all of whose simple subquotients were in $S$.
It follows that no simple subquotient of $M_2$ lies in $S$, as such a subquotient would
yield a nonzero map of $M_2$ to $I_1$.

If we let $M_3$ be the maximal quotient of $M$ such that no simple subquotient of $M_3$
is in $S$, then the same argument (with $I_1$ and $I_2$ reversed) shows that every
simple subquotient of the kernel of the map $M \rightarrow M_3$ lies in $S$.  In particular
the projection of $M_2$ onto $M_3$ is injective.  Suppose the image of $M_2$ were not all
of $M_3$.  Then (as $M$ surjects onto $M_3$), there is a simple subquotient of $M_3$
that is also a subquotient of $M/M_2$; such an object would have to be in both $S$ and
its complement.  Thus $M_2$ is isomorphic to $M_3$, and hence $M$ splits, canonically,
as a product $M_1 \times M_2$.
The decomposition of $\CA$ as the product $\CA' \times \CA''$
is now immediate, as is the claim about resolutions.
\end{proof}

\begin{remark} It is easy to make a dual argument with projective objects; we state
the proposition in terms of injectives because that is the form of the proposition we
will use.
\end{remark}

\section{The Bernstein center of $\Rep_{\overline{\CK}}(G)$} \label{sec:BD}

Let $G = \GL_n(F)$ be a general linear group over a $p$-adic field $F$, and let
$k$ be an alebraically closed field of characteristic $\ell$ not equal to $p$.  Our goal
is to study the Bernstein center of the category $\Rep_{W(k)}(G)$ of
smooth $W(k)[G]$-modules.  It will be convenient for us to assume throughout that $\ell$ 
is odd, so that $W(k)$ necessarily contains a square root of $q$, where $q$ is the order
of the residue field of $F$.  This is done largely for convenience; when $\ell$ is $2$ all
the arguments we present remain valid after adjoining a square root of $q$ to $W(k)$.
We begin by studying the category $\Rep_{\overline{\CK}}(G)$
of smooth $\overline{\CK}[G]$-modules, where $\CK$ is the field of fractions of $W(k)$.  Most of
the results of this section are standard.  We limit ourselves to the case
of $\GL_n(F)$, although the results of this section have analogues for a general
reductive group.

The description of the center of $\Rep_{\overline{\CK}}(G)$ depends heavily on
the theory of parabolic induction, and particularly
the notions of cuspidal and supercuspidal support, which we now recall.
Let $(M,\pi)$ be an ordered pair consisting of a Levi subgroup $M$ of $G$
and an absolutely irreducible cuspidal representation $\pi$ of $M$.

Let $P$ be a parabolic subgroup of $G$, with Levi subgroup
$M$ and unipotent radical $U$, and let $\pi = \pi_1 \otimes ... \otimes \pi_r$ be
a $W(k)[M]$-module.  We let $i_P^G$ be the normalized parabolic induction functor of~\cite{BZ};
that is, $i_P^G \pi$ is the $W(k)[G]$-module obtained by extending $\pi$ by a trivial
$U$-action to a representation of $P$, twisting by a square root of the modulus character of $P$,
and inducing to $G$.  (This depends on a choice of square root of $q$ in $W(k)$; we fix such a
choice once and for all.)  Similarly, we denote by $r_G^P$ the parabolic restriction
functor from $W(k)[G]$-modules to $W(k)[M]$-modules.

\begin{definition}
Let $M$ be a Levi subgroup of $G$, and let $\pi$ be an absolutely irreducible supercuspidal
representation of $M$ over a field $L$.  Let $\Pi$ be an aboslutely irreducible
representation of $G$ over $L$.  Then the pair $(M,\pi)$ belongs to the
{\em supercuspidal support} of $\Pi$ if 
there exists a parabolic subgroup $P$ of $G$, with
Levi subgroup $M$, such that 
$\Pi$ is isomorphic to a Jordan--H\"older constituent
of the normalized parabolic induction $i_P^G \pi.$
\end{definition}

\begin{definition}
Let $M$ be a Levi subgroup of $G$, and let $\pi$ be a cuspidal
representation of $M$ over a field $L$.  Let $\Pi$ be an
absolutely irreducible of $G$ over $L$.  Then the pair $(M,\pi)$ belongs to the
{\em cuspidal support} if $\Pi$
if there exists a parabolic subgroup $P$ of $G$, with
Levi subgroup $M$, such that
$\Pi$ is isomorphic to a {\em quotient}
of $i_P^G \pi.$
\end{definition}

Over a field $L$ of characteristic zero, the notions of cuspidal and supercuspidal
support are equivalent, but the notions differ over fields of finite characteristic.

Two pairs $(M,\pi)$ and $(M',\pi')$ are conjugate in $G$ if there is an element
$g$ of $G$ that conjugates $M$ to $M'$ and $\pi$ to $\pi'$.  This determines
an equivalence relation on the set of pairs $(M,\pi)$.  It is then well-known that
the cuspidal and supercuspidal supprt of a given representation $\Pi$ are given
by a single conjugacy class of pairs.  For cuspidal support, this is an easy consequence
of Frobenius reciprocity, but for supercuspidal support this is a deep theorem, first
proven by Vigneras (\cite{vig98}, V.4).

\begin{definition}
We say that two representations $\pi$,$\pi'$ of $G$ that differ by a twist
by $\chi \circ \det$, where $\chi$ is an unramified character of $F^{\times}$,
are {\em inertially equivalent}.  More generally, if $M$ is a Levi subgroup of
$G$, two representations $\pi$ and $\pi'$ are inertially equivalent if they
differ by a twist by an unramified character $\chi$ of $M$, that is, a character
$\chi$ trivial on all compact open subgroups of $M$.
\end{definition}

We are primarily interested in cuspidal and supercuspidal support up to inertial equivalence.
Two pairs $(M,\pi)$ and $(M',\pi')$ are inertially equivalent if there is a representation
$\pi''$ of $M$, inertially equivalent to $\pi$, such that $(M,\pi'')$ is conjugate
to $(M',\pi')$.  The
{\em inertial supercuspidal support} (resp. {\em inertial
cuspidal support}) of an absolutely irreducible representation $\Pi$ of $G$ is the inertial
equivalence class of its supercuspidal support (resp. cuspidal support).

\begin{theorem}[Bernstein-Deligne, \cite{BD}, 2.13] \label{thm:B-D}
Let $M$ be a Levi subgroup of $G$, and let $\pi$ be an irreducible cuspidal
representation of $M$.  Let $\Rep_{\overline{\CK}}(G)_{M,\pi}$ be the full subcategory 
of $\Rep_{\overline{\CK}}(G)$ consisting of representations $\Pi$ such that
every simple subquotient of $\Pi$ has inertial supercuspidal support $(M,\pi)$.
Then $\Rep_{\overline{\CK}}(G)_{M,\pi}$ is a direct factor
of $\Rep_{\overline{\CK}}(G)$.
\end{theorem}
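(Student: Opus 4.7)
The plan is to apply Proposition~\ref{prop:decompose} with $S$ the set of isomorphism classes of absolutely irreducible smooth $\overline{\CK}[G]$-modules whose inertial supercuspidal support equals $[M,\pi]$. Since the supercuspidal support of an absolutely irreducible smooth representation is well-defined up to $G$-conjugacy, every simple object of $\Rep_{\overline{\CK}}(G)$ lies in $S$ or outside $S$, so it suffices to construct injective objects $I_1, I_2$ satisfying the hypotheses of the proposition.

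To build $I_1$, fix a parabolic $P = MU$ of $G$ with Levi $M$, and let $\Psi_M$ denote the torus of unramified $\overline{\CK}$-characters of $M$. View the smooth $M$-module $\tilde\pi := \pi \otimes_{\overline{\CK}} \overline{\CK}[\Psi_M]$ as the ``universal unramified family of twists of $\pi$,'' where $m \in M$ acts by $\pi(m)$ on the first factor and by multiplication by the function $\chi \mapsto \chi(m)$ on the second. Form
$$\Sigma := i_P^G(\tilde\pi).$$
Specialization at $\chi \in \Psi_M$ recovers $\Sigma \otimes_{\overline{\CK}[\Psi_M],\chi} \overline{\CK} \cong i_P^G(\pi \otimes \chi)$, and since cuspidal and supercuspidal support coincide in characteristic zero, every $\Pi \in S$ occurs as a quotient of some such $i_P^G(\pi \otimes \chi)$, and hence of $\Sigma$. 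Take $I_1$ to be an injective envelope of $\Sigma$ in the Grothendieck category $\Rep_{\overline{\CK}}(G)$; then each $\Pi \in S$ embeds as a subobject of $I_1$.

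The main obstacle is to verify that every simple subquotient of $I_1$ still lies in $S$. The approach is to establish vanishing of $\Ext^i_G(\Pi', \Sigma)$ for every simple $\Pi' \notin S$ and every $i \geq 0$; this forces any simple subquotient of $I_1$ outside $S$ to produce a nonzero class in some such $\Ext^i$ via the essential structure of the injective envelope, giving a contradiction. The two key inputs are Bernstein's second adjointness, available in characteristic zero, and the Bernstein--Zelevinsky geometric lemma. Second adjointness yields
$$\Ext^i_G(\Pi', i_P^G \tilde\pi) \cong \Ext^i_M(r_{\overline{P}}^G \Pi', \tilde\pi),$$
while the geometric lemma computes the composition factors of $r_{\overline{P}}^G \Pi'$ in terms of the supercuspidal support of $\Pi'$, showing they lie in inertial classes distinct from $[M,\pi]$. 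Induction on length and the fact that $\tilde\pi$ is a direct sum of representations inertially equivalent to $\pi$ then reduce the $\Ext$ computation to the vanishing of $\Hom$ and $\Ext^1$ between cuspidal representations in different inertial classes of $M$, a standard computation.

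Finally, $I_2$ is obtained by repeating the construction above for every inertial class $[M',\pi'] \neq [M,\pi]$ and taking an injective envelope of the direct sum of the resulting objects $\Sigma_{[M',\pi']}$; this direct sum is injective because $\Rep_{\overline{\CK}}(G)$ is locally noetherian. The same second-adjointness/geometric-lemma argument shows that no simple subquotient of $I_2$ lies in $S$, while by construction every simple outside $S$ embeds into it. Proposition~\ref{prop:decompose} then produces the desired direct factor decomposition.
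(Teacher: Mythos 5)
The paper does not give its own proof of this theorem: it cites \cite{BD}, 2.13. So what is at issue is whether your blind attempt actually works. Your overall architecture — realize the block via an injective generator of the form $\Sigma = i_P^G(\tilde\pi)$, use second adjointness plus the geometric lemma to isolate the block, and feed the result into Proposition~\ref{prop:decompose} — is recognizably in the spirit of Bernstein--Deligne. But as written it has gaps.

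First, the step ``every $\Pi\in S$ occurs as a quotient of $\Sigma$\,\ldots\ then each $\Pi\in S$ embeds as a subobject of $I_1$'' is a non sequitur. Being a quotient of $\Sigma$ does not give a map \emph{into} the injective envelope of $\Sigma$ (compare: $\ZZ/2$ is a quotient of $\ZZ$ but does not embed in $\QQ$). What you actually need is $\Hom_G(\Pi,\Sigma)\neq 0$, i.e.\ that $\Pi$ \emph{embeds} in $\Sigma$, since the essential extension $\Sigma\subset I_1$ forces every simple subobject of $I_1$ to already lie inside $\Sigma$. To get this you must invoke second adjointness in the opposite direction already at this stage: $\Hom_G(\Pi, i_P^G\tilde\pi)\cong\Hom_M(r_G^{P^\circ}\Pi,\tilde\pi)$, and then argue that $r_G^{P^\circ}\Pi$ has an unramified twist of $\pi$ as a subquotient while $\tilde\pi$ is injective in $\Rep_{\overline{\CK}}(M)$. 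That injectivity of the universal cuspidal family is itself a nontrivial input, and you do not justify it. Also, $\tilde\pi = \pi\otimes\overline{\CK}[\Psi_M]$ is not a direct sum of twists of $\pi$: as an $\overline{\CK}[\Psi_M]$-module it is free of rank one, so its specializations are twists of $\pi$, but it is not semisimple; the dévissage you invoke later needs to be phrased accordingly.

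Second, and more seriously, the entire weight of the argument is pushed onto ``the vanishing of $\Hom$ and $\Ext^1$ between cuspidal representations in different inertial classes of $M$, a standard computation.'' This is not a footnote-level computation: it \emph{is} the cuspidal base case of the Bernstein decomposition. Proving it requires a genuine idea — either the theory of cuspidal types and the associated idempotents in the Hecke algebra of a compact open subgroup, or a direct analysis of central characters and the structure of compact inductions $\cInd$ of cuspidal data. Without it the induction does not get off the ground, and if $M=G$ the statement you want \emph{is} the cuspidal case, so there is nothing to reduce to. Similarly, your construction of $I_2$ invokes local noetherianity of $\Rep_{\overline{\CK}}(G)$ to conclude that an infinite direct sum of injectives is injective; the noetherianity theorem is another nontrivial input (proved in \cite{BD} in the same circle of ideas), and should at least be flagged as such rather than treated as free. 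As it stands the proof outlines a plausible route but omits the two steps — injectivity of cuspidal families and the cuspidal block decomposition — that carry essentially all of the difficulty.
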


There is thus an idempotent $e_{M,\pi,\overline{\CK}}$ of the Bernstein center
of $\Rep_{\overline{\CK}}(G)$ that acts by the identity on all objects
of $\Rep_{\overline{\CK}}(G)_{M,\pi}$ and annihilates all of the other
Bernstein components.

Moreover, it is possible to give a complete description of the center $A_{M,\pi}$ of
$\Rep_{\overline{\CK}}(G)_{M,\pi}$.  Let $\Psi(M)$ denote the group of
unramified characters of $M$.  Then $\Psi(M)$ can be identified with the
algebraic torus $\Spec \overline{\CK}[M/M_0]$, where $M_0$ is the subgroup of $M$
generated by all compact open subgroups of $M$.  The group $\Psi(M)$ acts transitively
(by twisting) on the space of representations of $M$ inertially equivalent to $\pi$,
and the stabilizer of $\pi$ is a finite subgroup $H$ of $\Psi(M)$.  Note that $H$
depends only on the inertial equivalence class of $\pi$, not $\pi$ itself.  
The group $\Psi(M)/H$ is a torus, isomorphic to
$\Spec \overline{\CK}[M/M_0]^H$; a choice of $\pi$ identifies $\Psi(M)/H$
with the space of representations of $M$ inertially equivalent to $\pi$.

Let $W_M$ be the subgroup of the Weyl group $W(G)$ of $G$ (taken with respect to a maximal
torus contained in $M$) consisting of elements $w$ of $W(G)$ such that $w M w^{-1} = M$.
Define a subgroup $W_M(\pi)$ of $W_M$ consisting of all $w$ in $W_M$ such that
$\pi^w$ is inertially equivalent to $\pi$ (this subgroup depends only
on the inertial equivalence class of $\pi$.)  Then $W_M(\pi)$ acts on the space
of representations of $M$ inertially equivalent to $\pi$, and hence (via a choice of $\pi$)
on the torus $\Spec \overline{\CK}[M/M_0]^H$.  This action is in general a twist of the
usual (permutation) action of $W_M$ on $M/M_0$, but if $\pi$ is {\em invariant} under the action
of $W_M(\pi)$, then the action of $W_M(\pi)$ on $M/M_0$ is untwisted.

We have:

\begin{theorem}[Bernstein-Deligne] \label{thm:B-D presentation}
A choice of $\pi$ identifies $A_{M,\pi}$
with the ring $(\overline{\CK}[M/M_0]^H)^{W_M(\pi)}$.  More canonically,
the space of representations of $M$ inertially equivalent to $\pi$
is naturally a $\Psi/H$-torsor with an action of $W_M(\pi)$,
and the center of $\Rep_{\overline{\CK}}(G)$ is the ring
of $W_M(\pi)$-invariant regular functions on this torsor.
Moreover, if $f$ is an element of $A_{M,\pi}$, and $\Pi$ is an
object of $\Rep_{\overline{\CK}}(G)_{M,\pi}$ with supercuspidal
support $(M,\pi')$, then $f$ acts on $\Pi$ by the scalar $f(\pi')$.
\end{theorem}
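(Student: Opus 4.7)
The plan is to apply the faithfully projective framework of Section~\ref{sec:faithful} directly: I will exhibit an explicit faithfully projective object $P$ in $\Rep_{\overline{\CK}}(G)_{M,\pi}$, compute its endomorphism algebra via Bernstein's geometric lemma together with Frobenius reciprocity, and read off the center.

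To build $P$, fix a supercuspidal $\pi_0$ representing the inertial class $(M,\pi)$, and set $\sigma := \pi_0|_{M_0}$. The compactly induced module $\cInd_{M_0}^M \sigma$ is a projective object of the $(M,\pi_0)$-block of $\Rep_{\overline{\CK}}(M)$; it carries a canonical right action of $\overline{\CK}[M/M_0]$ by unramified twist, and a short verification identifies $\End_M(\cInd_{M_0}^M \sigma)$ with $\overline{\CK}[M/M_0]^H$, where $H \subset \Psi(M)$ is the stabilizer of $\pi_0$ under unramified twist. Each maximal ideal of this ring cuts out a quotient isomorphic to the corresponding unramified twist of $\pi_0$. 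I then set $P := i_P^G(\cInd_{M_0}^M \sigma)$. I claim $P$ is faithfully projective in $\Rep_{\overline{\CK}}(G)_{M,\pi}$: projectivity follows because $i_P^G$ has an exact left adjoint (a twist of the Jacquet functor for the opposite parabolic, via second adjointness); faithfulness follows because every irreducible $\Pi$ in the block is a subquotient of $i_P^G \pi'$ for some $\pi'$ inertially equivalent to $\pi_0$, and Frobenius reciprocity then yields a nonzero map from $P$; and smallness descends from the smallness of $\cInd_{M_0}^M \sigma$ via adjunction.

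The core computation is
$$\End_G(P) = \Hom_M(r_G^P i_P^G \cInd_{M_0}^M \sigma,\, \cInd_{M_0}^M \sigma).$$
Bernstein's geometric lemma expresses $r_G^P i_P^G$ as an iterated extension indexed by double cosets $W_M \backslash W(G)/W_M$. A representative $w$ contributes nonzero $\Hom$ into $\cInd_{M_0}^M \sigma$ only if $w$ normalizes $M$ (so that the contribution lives in the correct block of $M$) and $\pi_0^w$ lies in the inertial class of $\pi_0$, i.e.\ only for $w \in W_M(\pi)$. Each contributing summand is a copy of $\overline{\CK}[M/M_0]^H$ twisted by the action of $w$, and assembling the multiplication realizes $\End_G(P)$ as a crossed product $\overline{\CK}[M/M_0]^H \rtimes W_M(\pi)$. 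Its center is $(\overline{\CK}[M/M_0]^H)^{W_M(\pi)}$, which by Section~\ref{sec:faithful} is the center $A_{M,\pi}$ of the block. The evaluation formula follows at once: if $\Pi$ has supercuspidal support $(M,\pi')$, then $\Pi$ is a subquotient of a quotient of $P$ on which $\overline{\CK}[M/M_0]^H$ acts through the character corresponding to the image of $\pi'$ in $\Psi(M)/H$, so any central $f$ acts on $\Pi$ as the scalar $f(\pi')$.

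The main obstacle is the third step: one must verify that the geometric-lemma filtration effectively splits after applying $\Hom(-, \cInd_{M_0}^M \sigma)$, and that the resulting algebra structure is the expected crossed product, not a nontrivially twisted variant. The vanishing of contributions from $w \notin W_M(\pi)$ uses cuspidality of $\pi_0$ (to discard $w$ not normalizing $M$) together with the fact that unramified twists of $\pi_0$ exhaust the image and that distinct inertial classes give orthogonal $\Hom$-groups. The absence of an exotic $2$-cocycle, and the fact that the action of $W_M(\pi)$ on $\overline{\CK}[M/M_0]^H$ becomes the untwisted permutation action after a judicious choice of $\pi_0$ invariant under $W_M(\pi)$, is a delicate point that relies on features special to $\GL_n$.
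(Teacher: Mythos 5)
The paper quotes this as a classical theorem of Bernstein and Deligne, citing \cite{BD}, and gives no internal proof; there is therefore no ``paper's proof'' against which to compare. Your strategy is the standard Morita-theoretic route (as in Roche~\cite{roche} or Bernstein's lecture notes): exhibit a faithfully projective object in the block, compute its endomorphism algebra by Bernstein's geometric lemma, and read off the center. That is a correct way to establish the result.

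There is, however, a gap in the intermediate claim $\End_M(\cInd_{M_0}^M\sigma)\cong\overline{\CK}[M/M_0]^H$ with $\sigma=\pi_0|_{M_0}$. By Clifford theory the restriction $\pi_0|_{M_0}$ is a direct sum of $|H|$ pairwise non-isomorphic irreducibles $\sigma_1,\dots,\sigma_{|H|}$, permuted transitively by $M/M_0$; hence $\cInd_{M_0}^M\sigma\cong(\cInd_{M_0}^M\sigma_1)^{\oplus |H|}$ and $\End_M(\cInd_{M_0}^M\sigma)$ is an $|H|\times|H|$ matrix algebra over $\End_M(\cInd_{M_0}^M\sigma_1)$. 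It is this latter ring that is canonically $\overline{\CK}[M_1/M_0]\cong\overline{\CK}[M/M_0]^H$, where $M_1\supset M_0$ is the $M$-stabilizer of $\sigma_1$ (of index $|H|$ in $M$). You should either replace $\sigma$ by the single summand $\sigma_1$ throughout --- faithful projectivity is unaffected since the $\cInd_{M_0}^M\sigma_i$ are all isomorphic --- or carry the matrix factor through the geometric-lemma computation explicitly; as written, the ``crossed product'' you assemble is not the actual endomorphism algebra, although because $Z(M_n(R))=Z(R)$ the eventual center is unaffected. On the cocycle points you raise: the inner twist on $\End_M(\cInd_{M_0}^M\sigma_1)$ vanishes because $M_1/M_0$ is free abelian and $\sigma_1$ extends over an algebraically closed field, and a possible nontrivial twist of the $W_M(\pi)$-action on $\Psi(M)/H$ is precisely what the theorem's phrasing permits --- the paper addresses the untwisting separately via a $W_M(\pi)$-invariant choice of $\pi$. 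Finally, invoking second adjointness for the projectivity of $i_P^G$ is legitimate here: in characteristic zero it is classical (Bernstein/Casselman) and independent of the $W(k)$-coefficient version proved later in the paper, so there is no circularity.
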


We conclude with a standard result describing the action of the Bernstein center on
modules arising by parabolic induction.  Let $(M_i,\pi_i)$ be pairs consisting of
a Levi subgroup $M_i$ of $\GL_{n_i}(F)$, and an irreducible cuspidal representation $\pi_i$ of
$M_i$ such that $\pi_i$ is invariant under the action of $W_{M_i}(\pi_i)$.  Let
$M$ be the product of the $M_i$, considered as a subgroup of $\GL_n(F)$, where $n$
is the sum of the $n_i$.  Let $\pi$ be the tensor product of the $\pi_i$; it
is an irreducible cuspidal representation of $M$.  We then have an action of $W_M(\pi)$
on the inertial equivalence class of $(M,\pi)$.

In this setting, the group $(M/M_0)^H$ is the product of the groups $(M_i/(M_i)_0)^{H_i}$, where
$H_i$ is the subgroup of characters fixing $\pi_i$ under twist.  The isomorphism:
$$\overline{\CK}[M/M_0]^H \cong \bigotimes_i \overline{\CK}[M_i/(M_i)_0]^{H_i}$$
then restricts to give an embedding:
$$\Phi: (\overline{\CK}[M/M_0]^H)^{W_M(\pi)} \hookrightarrow \bigotimes_i (\overline{\CK}[M_i/(M_i)_0]^{H_i})^{W_{M_i}(\pi_i)}$$

\begin{proposition} \label{prop:Bernstein induction}
Let $\Pi_i$ be a collection of representations of $\GL_{n_i}$ such that for each $i$,
$\Pi_i$ lies in $\Rep_{\overline{\CK}}(\GL_{n_i}(F))_{M_i,\pi_i}$.  Let
$P = LU$ be a parabolic subgroup of $\GL_n(F)$, with $L$ isomorphic to the product of the
$\GL_{n_i}(F)$, and let $\Pi$ be the tensor product of the $\Pi_i$, considered as a representation of $L$.
Then $i_P^{\GL_n(F)} \Pi$ lies in $\Rep_{\overline{\CK}}(\GL_n(F))_{M,\pi}$.

Moreover, under the identifications
$$A_{M,\pi} \cong
(\overline{\CK}[M/M_0]^H)^{W_M(\pi)},$$ 
$$A_{M_i,\pi_i} \cong (\overline{\CK}[M_i/(M_i)_0]^{H_i})^{W_{M_i}(\pi_i)}$$
induced by $(M,\pi)$ and $(M_i,\pi_i)$,
if $x$ lies in $A_{M,\pi}$, then the endomorphism of $i_P^{\GL_n(F)} \Pi$ induced by $x$
coincides with the endomorphism of $i_P^{\GL_n(F)} \Pi$ arising from the action of $\Phi(x)$ on $\Pi$.
\end{proposition}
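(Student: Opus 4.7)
The first assertion follows from transitivity of parabolic induction. By exactness of $i_P^{\GL_n(F)}$, any Jordan--H\"older constituent $\sigma$ of $i_P^{\GL_n(F)}\Pi$ is a constituent of $i_P^{\GL_n(F)}(\sigma_1\otimes\cdots\otimes\sigma_r)$ for some simple subquotients $\sigma_i$ of $\Pi_i$; each $\sigma_i$ is itself a constituent of $i_{P_i}^{\GL_{n_i}(F)}\pi_i'$ for some $\pi_i'$ inertially equivalent to $\pi_i$, so $\sigma$ is a constituent of $i_Q^{\GL_n(F)}(\pi_1'\otimes\cdots\otimes\pi_r')$ for a suitable parabolic $Q$ of $\GL_n(F)$ with Levi $M$, and therefore has inertial supercuspidal support $(M,\pi)$.

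For the second assertion, write $\alpha(x)$ for the endomorphism of $i_P^{\GL_n(F)}\Pi$ coming from the action of $x\in A_{M,\pi}$, and $\beta(x):=i_P^{\GL_n(F)}(\Phi(x))$ for the endomorphism obtained by letting $\Phi(x)\in\bigotimes_i A_{M_i,\pi_i}$ act on $\Pi$ and then applying $i_P^{\GL_n(F)}$. Both $\alpha$ and $\beta$ are natural in $\Pi$: $\alpha$ because $x$ commutes with every $\GL_n(F)$-morphism, and $\beta$ because $\Phi(x)$ commutes with every $L$-morphism and $i_P^{\GL_n(F)}$ is a functor. They therefore define natural transformations of the functor $i_P^{\GL_n(F)}$ on $\prod_i \Rep_{\overline{\CK}}(\GL_{n_i}(F))_{M_i,\pi_i}$, so it suffices to check $\alpha(x)=\beta(x)$ on a sufficiently rich family of $\Pi$'s.

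I would first verify equality when $\Pi=\pi_1'\otimes\cdots\otimes\pi_r'$ with each $\pi_i'$ irreducible cuspidal and inertially equivalent to $\pi_i$, chosen generically so that the normalised induction $i_P^{\GL_n(F)}\Pi$ is irreducible (this holds on a Zariski-dense open subset of $\prod_i \Psi(M_i)/H_i$ by the standard reducibility criterion in characteristic zero). On such an irreducible, the Bernstein--Deligne theorem shows $\alpha(x)$ is multiplication by $x(\pi_1'\otimes\cdots\otimes\pi_r')$ and $\beta(x)$ is multiplication by $\Phi(x)(\pi_1',\dots,\pi_r')$; these scalars agree by construction, since $\Phi$ is the restriction to $W$-invariants of the canonical ring isomorphism $\overline{\CK}[M/M_0]^H\cong\bigotimes_i\overline{\CK}[M_i/(M_i)_0]^{H_i}$.

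To pass from this dense generic locus to arbitrary $\Pi$ in the product component I would assemble the irreducible cuspidal twists into an algebraic family over $\prod_i \Psi(M_i)/H_i$, parabolically induce it, and observe that $\alpha$ and $\beta$ become regular endomorphisms of the induced family agreeing on a Zariski-dense open set and hence everywhere; the statement for arbitrary $\Pi_i$ then follows by naturality, using that every finitely generated object of the component is a subquotient of an object arising from such a family. The main obstacle is precisely this last extension: two $G$-endomorphisms of a possibly non-semisimple $i_P^{\GL_n(F)}\Pi$ are not determined by their action on simple subquotients alone, so the passage from the generic irreducible case to the general case requires the algebraic family structure of the Bernstein variety together with the fact that parabolic induction varies algebraically over it.
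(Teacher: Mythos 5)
The paper does not actually supply a proof of this proposition; it is stated in Section~3 as ``a standard result,'' so there is no argument of the author's to compare against. Judged on its own, your outline is the right one: the first assertion follows from exactness and transitivity of normalized induction as you say, and the strategy for the second assertion---view $\alpha(x)$ and $\beta(x)$ as natural transformations of the functor $i_P^{\GL_n(F)}$ restricted to $\prod_i \Rep_{\overline{\CK}}(\GL_{n_i}(F))_{M_i,\pi_i}$, check agreement on generic irreducible inductions, extend---is exactly the correct idea.

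But, as you yourself flag, the passage from the dense generic locus to the whole component is the genuine gap, and your last paragraph does not close it: it both invokes ``subquotient'' generation and then immediately (and correctly) observes that agreement on simple subquotients is insufficient. The clean fix replaces the subquotient language with a \emph{faithfully projective generator} and exactness. Put $\Pi^{\mathrm{univ}} := \bigotimes_i \bigl(\pi_i \otimes \overline{\CK}[M_i/(M_i)_0]\bigr)$, the universal unramified twist; this is a projective generator of the product component. Since $i_P^{\GL_n(F)}$ is exact and hence preserves epimorphisms, a natural transformation of $i_P^{\GL_n(F)}$ on this component that vanishes on $\Pi^{\mathrm{univ}}$ vanishes on every object: given any $\Pi$, choose an epimorphism $\bigoplus \Pi^{\mathrm{univ}} \twoheadrightarrow \Pi$, apply $i_P^{\GL_n(F)}$, and transport along the resulting epimorphism. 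It therefore suffices to prove $\alpha(x) = \beta(x)$ as endomorphisms of the single object $V := i_P^{\GL_n(F)}\Pi^{\mathrm{univ}}$. Both commute with the central $\overline{\CK}[M/M_0]$-action, and $V$ is free as a $\overline{\CK}[M/M_0]$-module (the group-algebra factor is free and induction is exact), so $\alpha(x) - \beta(x)$ sends $V$ into $\bigcap_{\mathfrak m} \mathfrak m V = \bigl(\bigcap_{\mathfrak m} \mathfrak m\bigr) V = 0$ provided it vanishes after reduction at every maximal ideal $\mathfrak m$ of $\overline{\CK}[M/M_0]$. Your generic computation gives this vanishing on a Zariski-dense set of $\mathfrak m$; since $(\alpha(x)-\beta(x))(v)$, for $v$ ranging over a free basis, defines a section of a coherent sheaf over the reduced scheme $\Spec \overline{\CK}[M/M_0]$, vanishing on a dense subset forces vanishing everywhere. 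That is the missing step, and with it in place your argument is complete.
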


\section{Construction of projectives} \label{sec:projectives}
Our goal is to apply the theory of section~\ref{sec:faithful}
to the category $\Rep_{W(k)}(G)$ of smooth $W(k)[G]$-modules.  In particular we will factor this
category as a product of blocks, and construct an explicit faithfully projective module in 
each block.  The first step is to obtain a supply of suitable projective $W(k)[G]$-modules,
and study their properties.

\begin{definition} Let $R$ be a $W(k)$-algebra.  By an $R$-type of $G$, we mean a pair $(K,\tau)$, where
$K$ is a compact open subgroup of $G$ and $\tau$ is an $R[K]$-module that is finitely generated
as an $R$-module.  The Hecke algebra $H(G,K,\tau)$ is the ring $\End_{R[G]}(\cInd_K^G \tau)$.
\end{definition}

For the most part we will be concerned with $R$-types for $R=k$, or $R = \CK$, where
$\CK$ is the fraction field of $W(k)$.
Note that for any $R[G]$-module $\pi$, Frobenius reciprocity gives an isomorphism
of $\Hom_{R[K]}(\tau,\pi)$ with $\Hom_{R[G]}(\cInd_K^G \tau, \pi)$, and hence an
action of $H(G,K,\tau)$ on $\Hom_{R[K]}(\tau,\pi)$.  Moreover, if $V$ is the underlying
$R$-module of $\tau$, $H(G,K,\tau)$
can be identified with the convolution algebra of compactly supported smooth functions
$f: G \rightarrow \End_R(V)$ that are left and right $K$-invariant, in the sense that
$f(kgk') = \tau(k)f(g)\tau(k')$.  If $g$ is in $G$, then we denote by $I_g(\tau)$
the space $\Hom_{K \cap gKg^{-1}}(\tau, \tau^g)$, where $\tau^g$ is the representation
of $gKg^{-1}$ defined by $\tau^g(k) = \tau(g^{-1}kg)$.  Then the map $f \mapsto f(g)$
is an isomorphism:
$$H(G,K,\tau)_{KgK} \cong I_g(\tau),$$
where $H(G,K,\tau)_{KgK}$ is the space of functions in $H(G,K,\tau)$ supported
on $KgK$.

In this section, we will primarily be concerned with a certain class of types
which are called maximal distinguished cuspidal types in~\cite{vig98}, IV.3.1B.
We omit the precise definition of these types here; for our purposes it suffices
to know certain specific properties of a maximal distinguished cuspidal $R$-type $(K,\tau)$,
where $R$ is a field.

Such a type arises from a simple stratum $[{\mathfrak A}, n , 0, \beta]$,
together with a character $\theta$ in the set ${\mathcal C}({\mathfrak A}, 0, \beta)$
defined in~\cite{BK}, 3.2.1.  Here ${\mathfrak A}$ is a maximal order in $M_n(F)$,
and $\beta$ is an element of ${\mathfrak A}$ such that $E = F[\beta]$ is
a field.  This allows us to identify $E^{\times}$ with a subgroup of $\GL_n(F)$.
Let $e$ and $f$ denote the ramification index and residue class degree of $E$ over $F$.
One then has:

\begin{itemize}
\item $K$ is the group $J(\beta,{\mathfrak A})$ of~\cite{BK}.  In particular,
$K$ contains a normal pro-$p$ subgroup $K^1$ (called $J^1(\beta,{\mathfrak A})$ in~\cite{BK},) 
such that the quotient $K/K^1$
is isomorphic to $\GL_{\frac{n}{ef}}(\FF_{q^f})$, where $q$ is the order of
the residue field of $F$.
\item $\tau$ has the form $\kappa \otimes \sigma$, where $\sigma$ is a the inflation
of a cuspidal representation of $K/K^1$ over $R$, and $\kappa$ is a representation of $K$
that is a $\beta$-extension of the unique irreducible representation of $K^1$ containing $\theta$.
\end{itemize}

Maximal distinguished cuspidal $R$-types have the following useful properties:

\begin{theorem}[\cite{vig98}, IV.1.1-IV.1.3]
Let $R$ be a field, and let $(K,\tau)$ be a maximal distinguished cuspidal $R$-type arising from
an extension $E/F$.
\begin{enumerate} 
\item There is a unique embedding of $\GL_{\frac{n}{ef}}(E)$ into $G$ such that the center
$E^{\times}$ of $\GL_{\frac{n}{ef}}(E)$ normalizes
$K$ and $K^1$, and acts trivially on $K/K^1$.  We identify $\GL_{\frac{n}{ef}}(E)$ and
$E^{\times}$ with their images under
this embedding.  The intersection of $\GL_{\frac{n}{ef}}(E)$ with $K$ is $\GL_{\frac{n}{ef}}(\OO_E)$.
\item The subgroup $E^{\times}$ of $G$ normalizes $\tau$.  In particular $\tau$ extends
to a representation of $E^{\times}K$, and any two extensions of $\tau$ differ by
a twist by a character of $E^{\times}K/K \cong \ZZ$.
\item The $G$-intertwining of $(K,\tau)$ is equal to $E^{\times}K$.
\item For any extension $\hat \tau$ of $\tau$ to a representation of $E^{\times}K$,
there is an isomorphism of $H(G,K,\tau)$ with the polynomial ring $R[T,T^{-1}]$,
that sends $T$ to the unique element $f_{\hat \tau}$ of $H(G,K,\tau)$ such that
$f_{\hat \tau}$ is supported on $K\unif_E K$ (where $\unif_E$ is a uniformizer of $E$), and
$$f_{\hat \tau}(k\unif_Ek') = \tau(k) {\hat \tau}(\unif_E) \tau(k').$$
\item For any extension $\hat \tau$ of $\tau$ to a representation of $E^{\times} K$,
the representation $\cInd_{E^{\times}K}^G \hat \tau$ is an irreducible cuspidal representation
of $G$ over $R$.
\item Every irreducible cuspidal representation of $G$ over $R$ arises in this fashion.
Those irreducible cuspidal $\pi$ that arise from a given $(K,\tau)$ are precisely those
$\pi$ whose restriction to $K$ contains $\tau$.
\end{enumerate}
\end{theorem}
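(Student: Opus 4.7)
The plan is to follow the Bushnell--Kutzko strategy, as adapted to the modular setting by Vigneras. Parts (1) and (2) are structural consequences of the explicit construction of $(K,\tau)$. The embedding $\GL_{\frac{n}{ef}}(E) \hookrightarrow G$ comes from identifying the centralizer $B$ of $E = F[\beta]$ in $M_n(F)$ with $M_{\frac{n}{ef}}(E)$; since the maximal order $\mathfrak{A}$ is chosen so that $\mathfrak{A} \cap B$ is a maximal hereditary $\OO_E$-order in $B$, the group $E^{\times}$ normalizes $\mathfrak{A}$, and hence the filtration subgroups $K$ and $K^1$. The intersection $K \cap \GL_{\frac{n}{ef}}(E) = \mathfrak{A}^{\times} \cap B^{\times} = \GL_{\frac{n}{ef}}(\OO_E)$, and the action of $E^{\times}$ on $K/K^1 \cong \GL_{\frac{n}{ef}}(\FF_{q^f})$ is trivial because $E^{\times}$ lands in the center under this identification. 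For (2), one checks that $E^{\times}$ normalizes $\theta$ (immediate from the construction of $\theta$ out of $\beta$ via the $\mathfrak{A}$-filtration), and then that both the $\beta$-extension $\kappa$ and the inflated cuspidal $\sigma$ admit extensions to $E^{\times}K$; the quotient $E^{\times}K/K \cong E^{\times}/\OO_E^{\times} \cong \ZZ$ is infinite cyclic, so extensions are unique up to a character twist.

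The crux of the theorem is part (3). The intertwining of the simple character $\theta$ is $J^1 B^{\times} J^1$, a fundamental theorem in the theory of simple characters; combining this with the fact that Green-type cuspidals of $\GL_{\frac{n}{ef}}(\FF_{q^f})$ over $R$ intertwine only by elements of the center (the only nontrivial input at the finite-group level) forces the $G$-intertwining of $\kappa \otimes \sigma$ into $E^{\times}K$. Part (4) is then a direct calculation: the double cosets of $E^{\times}K$ modulo $K$ are indexed by $E^{\times}/\OO_E^{\times}$, generated by $\unif_E$; each intertwining space $I_{\unif_E^n}(\tau)$ is one-dimensional because $E^{\times}$ normalizes $\tau$; and a choice of extension $\hat\tau$ produces canonical generators whose multiplication works out to simple concatenation, yielding the isomorphism $H(G,K,\tau) \cong R[T, T^{-1}]$ with $T$ corresponding to $\unif_E$.

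Part (5) follows from (3) by a standard Mackey argument: since the $G$-intertwining of $\hat\tau$ equals $E^{\times}K$ itself, $\cInd_{E^{\times}K}^G \hat\tau$ has a one-dimensional endomorphism algebra, hence is irreducible; cuspidality follows because matrix coefficients have support compact modulo center, as $E^{\times}K/E^{\times}$ is compact. The main obstacle is part (6), the exhaustion theorem: given an irreducible cuspidal $\pi$ of $G$ over $R$, one must produce some maximal distinguished cuspidal $(K,\tau)$ occurring in $\pi|_K$. My approach would be to analyze the restriction of $\pi$ to suitable compact open subgroups, locate a simple stratum in it by a successive refinement argument, and show that the resulting type captures the entire $K$-isotypic piece. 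Over $\CC$ this is Bushnell--Kutzko; over $R$ of characteristic $\ell$ the argument requires substantial modifications to handle the failure of semisimplicity and potentially reducible $\beta$-extensions, and is the main technical input borrowed from \cite{vig98}.
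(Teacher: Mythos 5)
The paper does not prove this theorem; the header explicitly cites it to \cite{vig98}, IV.1.1--IV.1.3, so there is no internal argument to compare against. Your sketch correctly reconstructs the Bushnell--Kutzko strategy as adapted by Vigneras, and the overall structure (simple strata, intertwining of $\theta$ equal to $J^1 B^{\times} J^1$, the finite-group intertwining of cuspidals pinning things down to $E^{\times}K$, then the Hecke algebra and exhaustion) is the right one.

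There is, however, a real gap in your treatment of part (5). You write that since the $G$-intertwining of $\hat\tau$ equals $E^{\times}K$, the compact induction has one-dimensional endomorphism algebra and is therefore irreducible. The first implication is fine (given $\hat\tau$ absolutely irreducible), but $\End_{R[G]}(V) = R$ does \emph{not} by itself force $V$ irreducible: an indecomposable but reducible module can perfectly well have scalar endomorphism ring. Over $\CC$ the standard fix is that $V$ is admissible and cuspidal, and admissible cuspidal representations are both projective and injective in the category of smooth representations with fixed central character; together with $\End(V) = R$ this gives irreducibility. Over a field of characteristic $\ell$ this projectivity/injectivity is exactly what can fail, and Vigneras has to argue differently (using the compactness of $E^{\times}K$ modulo $F^{\times}$ and the Mackey decomposition of $V|_{E^{\times}K}$ to show any nonzero sub and any nonzero quotient both contain $\hat\tau$ with total multiplicity one). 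You should not treat this step as automatic. A smaller imprecision: for cuspidality the relevant compactness is $E^{\times}K/F^{\times}$ (modulo the center of $G$), not $E^{\times}K/E^{\times}$; the former does hold because $E^{\times}/F^{\times}$ is compact, but the criterion you need to invoke is the former.
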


If $R$ is a field, and $\pi,\pi'$ are two absolutely
irreducible cuspidal $R$-representations containing 
a maximal distinguished cuspidal type $(K,\tau)$, then
$\Hom_{R[K]}(\tau,\pi)$ and $\Hom_{R[K]}(\tau,\pi')$ are modules over $H(G,K,\tau) = R[T,T^{-1}]$ that
are one-dimensional as $R$-vector spaces.  In particular $T$ acts via scalars $c$ and $c'$
on $\Hom_{R[K]}(\tau,\pi)$ and $\Hom_{R[K]}(\tau,\pi')$, respectively.  Let $\chi$ be an unramified
$k$-valued character of $F^{\times}$ such that $\chi(\unif_F)^{\frac{n}{e}} = c'c^{-1}$.
As $T$ is supported on $K\unif_E K$, and $\det \unif_E = \unif_F^{\frac{n}{e}}$, the
$H(G,K,\tau)$-modules $\Hom_{R[K]}(\tau,\pi \otimes \chi \circ \det)$ and
$\Hom_{R[K]}(\tau, \pi')$ are isomorphic, and so $\pi'$ is a twist of $\pi$ by
an unramified character; that is, $\pi$ and $\pi'$ are inertially equivalent.

\begin{remark} \label{rem:canonical}
\rm The isomorphism of $R[T,T^{-1}]$ with $H(G,K,\tau)$ depends on a choice of
extension $\hat \tau$ of $\tau$, and also a uniformizer $\unif_E$.  When $R$ is a field,
this isomorphism may be reinterpreted in the language Bernstein and Deligne
use to describe the Bernstein center, and made independent of $\unif_E$ (but not of $\hat \tau$).
Let $\pi$ be the irreducible cuspidal representation of $G$
whose restriction to $E^{\times}K$ contains $\hat \tau$.  Then every representation of $G$
inertially equivalent to $\pi$ has the form $\pi \otimes \chi$ for some unramified character $\chi$
of $G/G_0$, and we have $\pi = \pi \otimes \chi$ if, and only if, $\pi \otimes \chi$ contains $\hat \tau$.
The latter holds precisely when $\hat \tau = \hat \tau \otimes \chi|_{E^{\times}K}$, which holds
if and only if $\chi(\unif_E) = 1$.  Let $Z$ be the subgroup of $G$ generated by $\unif_E$;
our choice of $\unif_E$ identifies $R[T,T^{-1}]$ with $R[Z]$.  The map
$Z \rightarrow G/G_0$ is injective (but not in general surjective), and induces a map
$\Hom({G/G_0},\GG_m) \rightarrow \Hom(Z,\GG_m)$ by restriction.  Let $H$ be the kernel of this map;
the induced map on rings of regular functions then identifies $R[Z]$ with the $H$-invariants $R[G/G_0]^H$.
The identifications:
$$H(G,K,\tau) \cong R[T,T^{-1}] \cong R[Z] \cong R[G/G_0]^H$$
give an identification of $H(G,K,\tau)$ with $R[G/G_0]^H$ that does not depend on $\unif_E$.
The map $\chi \mapsto \pi \otimes \chi$ describes a bijection between $(\Spec R[G/G_0])/H$
and the set of representations of $G$ inertially equivalent to $\pi$.  Under the above isomorphisms,
the character of $H(G,K,\tau)$ that corresponds to a representation $\pi \otimes \chi$ of $G$
is the character of $R[G/G_0]^H$ obtained by treating $R[G/G_0]$ as the ring of regular functions
on the space of unramified characters of $G$ and evaluating such functions at $\chi$.
\end{remark}

Let $\CK'$ be a finite extension of the field of fractions $\CK$ of $W(k)$, and let $\OO$ be
its ring of integers.  If $\pi$ is an absolutely irreducible cuspidal
integral representation of $G$ over $\CK'$, then $\pi$ contains a unique homothety
class of $G$-stable $\OO$-lattices, and the reduction $r_{\ell} \pi$ of any such
lattice modulo $\ell$ is an absolutely irreducible cuspidal representation of $G$ over $k$.
In this situation we have the following compatibilities between the types
attached to $\pi$ and $r_{\ell} \pi$, due to Vigneras:

\begin{theorem}[\cite{vig98}, IV.1.5]
Let $\pi$ be an irreducible cuspidal representation of $G$ over $\CK'$,
containing a maximal distinguished cuspidal $\CK'$-type $(K,\tilde \tau)$.
\begin{enumerate}
\item There is an unramified character $\chi: F^{\times} \rightarrow (\CK')^{\times}$,
such that $\pi \otimes (\chi \circ \det)$ is integral.  
\item The $K$-representation $\tilde \tau$ is defined over $\CK$, and the mod $\ell$
reduction of $\tilde \tau$ is an irreducible $k$-representation $\tau$ of $K$,
such that $(K,\tau)$ is a maximal distinguished cuspidal $k$-type contained
in $r_{\ell} [\pi \otimes (\chi \circ \det)]$.  
\item Every maximal distinguished cuspidal $k$-type arises from a maximal distinguished 
cuspidal $\CK'$-type via ``reduction mod $\ell$'', for some finite extension $\CK'$
of $\CK$.
\end{enumerate}
\end{theorem}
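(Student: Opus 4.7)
The plan is to follow Vigneras' argument, exploiting the explicit tensor decomposition $\tilde \tau = \tilde \kappa \otimes \tilde \sigma$, where $\tilde \sigma$ is inflated from a cuspidal of $K/K^1 \cong \GL_{n/ef}(\FF_{q^f})$ and $\tilde \kappa$ is a $\beta$-extension of the unique irreducible $K^1$-representation containing some $\theta \in {\mathcal C}({\mathfrak A}, 0, \beta)$. All three parts reduce to integrality/reduction questions for the three pieces $\theta$, $\tilde \kappa$, and $\tilde \sigma$; the first two live on (or extend representations of) the pro-$p$ group $K^1$ and behave well for elementary reasons once $\ell \neq p$, while the third lives on a finite general linear group and is handled by modular representation theory of finite groups of Lie type.

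For (1), smoothness of $\pi$ ensures $\omega_\pi|_{\OO_F^\times}$ has finite image and is already integral; only $\omega_\pi(\unif_F)$ can fail to be a unit. Fix $\unif_F$ and choose an unramified $k$-valued $\chi$ of $F^\times$ such that $\chi(\unif_F)^n \omega_\pi(\unif_F) \in \OO^\times$, possible after enlarging $\CK'$. Then $\pi \otimes (\chi \circ \det)$ has integral central character, and since by the previous theorem it is compactly induced from the compact-mod-center subgroup $E^\times K$ via an extension of the finite-dimensional $\tilde \tau$, forming the compact induction of a $K$-stable lattice (extended equivariantly across the one orbit of $\unif_E$-powers, which now preserves the lattice up to a unit scalar) produces a $G$-stable lattice, proving integrality.

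For (2), fix a $G$-stable $\OO$-lattice $L$ in $\pi \otimes (\chi \circ \det)$. Part (6) of the previous theorem gives $\tilde \tau$ multiplicity one in $\pi$, so $L$ intersected with the $\tilde \tau$-isotypic is a $K$-stable lattice in an absolutely irreducible $\CK'[K]$-module, forcing $\tilde \tau$ to be defined over $\CK$. Now $\tilde \tau = \tilde \kappa \otimes \tilde \sigma$; $\theta$ takes values in $p$-power roots of unity and hence in $W(k)^\times$, so the Heisenberg representation and its $\beta$-extension $\tilde \kappa$ descend to $W(k)$ with mod-$\ell$ reduction still irreducible (the Heisenberg story over $K^1$ is purely $p$-adic and the reduction machinery is insensitive to $\ell \neq p$), and $\tilde \sigma$ admits an integral $W(k)$-model with irreducible cuspidal reduction by Dipper--James. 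Thus $\tau := \kappa \otimes \sigma$ has the required form, and it is contained in $r_\ell[\pi \otimes (\chi \circ \det)]$ via reduction of the lattice in the $\tilde \tau$-isotypic.

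For (3) run the construction in reverse: the simple stratum $[{\mathfrak A}, n, 0, \beta]$ depends only on $p$-adic data, and $\theta \colon H^1(\beta,{\mathfrak A}) \to k^\times$, being a character of a pro-$p$ group, lifts canonically by Teichmuller to $\tilde \theta$ valued in $W(k)^\times$. Heisenberg theory produces the unique $K^1$-representation containing $\tilde \theta$, equipped with a $\beta$-extension $\tilde \kappa$. The cuspidal $\sigma$ of $\GL_{n/ef}(\FF_{q^f})$ over $k$ lifts to an irreducible cuspidal $\tilde \sigma$ over some finite $\CK'$ by Dipper--James, and $\tilde \tau := \tilde \kappa \otimes \tilde \sigma$ is the desired lift. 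The main obstacle throughout is the finite-group input in (2) and (3): one needs that cuspidal $k$-representations of $\GL_m(\FF_{q^f})$ admit unique integral models (up to homothety) whose mod-$\ell$ reductions are again irreducible cuspidal, and conversely that every cuspidal $k$-representation arises in this way. This is nontrivial but is the content of Dipper--James' modular representation theory for finite general linear groups.
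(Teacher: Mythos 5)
The paper does not prove this statement; it is quoted verbatim from Vigneras~\cite{vig98}, IV.1.5, so there is no internal argument to compare against. Your reconstruction has the right overall shape and tracks what Vigneras actually does: exploit the factorization $\tilde\tau = \tilde\kappa\otimes\tilde\sigma$, reduce $\tilde\kappa$ (and the underlying $\theta$) to pro-$p$-group considerations that are harmless once $\ell\neq p$, feed $\tilde\sigma$ into the modular representation theory of $\GL_m(\FF_{q^f})$, and deduce cuspidal integrality from the fact that $\pi$ is compactly induced from the compact-mod-centre group $E^\times K$, so integrality is controlled by the scalar through which $\unif_E$ acts. Your flag on part~(1), that extracting $\chi(\unif_F)$ may force an enlargement of $\CK'$, is also appropriate: one needs $n\mid v(\omega_\pi(\unif_F))$ for $\chi$ to exist over $\CK'$ itself, and a priori only $e\mid v(\omega_\pi(\unif_F))$ is guaranteed.

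There is, however, a genuine gap in your treatment of part~(2). The deduction ``$L\cap(\tilde\tau\text{-isotypic})$ is a $K$-stable $\OO'$-lattice in an absolutely irreducible $\CK'[K]$-module, forcing $\tilde\tau$ to be defined over $\CK$'' is a non sequitur: producing a $K$-stable $\OO'$-lattice shows $\tilde\tau$ has a model over $\OO'$, which is exactly what you need for the reduction $\tau:=r_\ell\tilde\tau$ to make sense, but it does \emph{not} descend the field of definition from the totally ramified extension $\CK'$ down to $\CK$. The subsequent assertion that ``$\tilde\sigma$ admits an integral $W(k)$-model'' is similarly not automatic and is in fact false in general: $\tilde\sigma$ is parametrized by a regular character $\theta'$ of $\FF_{q^{n/e}}^\times$, and when $\theta'$ has nontrivial $\ell$-part (i.e.\ when $\sigma$ is cuspidal but not supercuspidal), the traces of $\tilde\sigma$ involve $\ell$-power roots of unity and live in a ramified extension of $\CK=\operatorname{Frac}W(k)$. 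Either the statement ``defined over $\CK$'' is shorthand for ``has an $\OO'$-integral model with the stated reduction'' --- in which case your lattice argument already proves it and the ensuing tensor-factor analysis is what pins down $\tau=\kappa\otimes\sigma$ --- or a sharper rationality statement is intended and must be cited explicitly rather than derived from the existence of a lattice. As written, you should separate these two claims: ``$\tilde\tau$ is integral with irreducible reduction'' (which you prove) and ``$\tilde\tau$ is rational over $\CK$'' (which you do not), and not conflate them.
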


We can use this theory of reduction mod $\ell$ to turn inertial equivalence into
an equivalence relation on simple cuspidal smooth $W(k)[G]$-modules $\pi$.  Such
$\pi$ fall into two classes: either $\ell$ annihilates $\pi$, in which case
$\pi$ is an irreducible cuspidal $k$-representation of $G$, or $\ell$ is invertible on
$\pi$, in which case $\pi$ is an absolutely irreducible cuspidal representation of $G$
over some finite extension $\CK'$ of $\CK$.

\begin{lemma}
Let $\pi$ be a simple smooth $W(k)[G]$-module on which $\ell$ is invertible,
and let $\CK'$ be a finite extension of $\CK$ such that every $\CK'[G]$-simple
subquotient of $\pi \otimes_{\CK} \CK'$ is absolutely simple.  Then $\pi \otimes_{\CK} \CK'$
is a direct sum of absolutely simple $\CK'[G]$-modules, and $\Gal(\overline{\CK}/\CK)$
acts transitively on these summands.
\end{lemma}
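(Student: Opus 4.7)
The plan is to combine Galois descent for semilinear actions with a finite-length argument, using simplicity of $\pi$ to force both the semisimplicity of $L := \pi \otimes_{\CK} \CK'$ and transitivity of the Galois action on its summands.

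First I would reduce to the case where $\CK'/\CK$ is Galois: if not, replace $\CK'$ by its Galois closure inside $\overline{\CK}$, which preserves the hypothesis since an absolutely simple $\CK'[G]$-module remains absolutely simple after further scalar extension (its base change to $\overline{\CK}$ is already simple). Set $\Gamma = \Gal(\CK'/\CK)$; then $\Gamma$ acts $\CK'$-semilinearly on $L$ through the second factor, commutes with the $G$-action, and satisfies $L^{\Gamma} = \pi$. Since $\pi$ is simple as a $\CK[G]$-module and $[\CK':\CK] < \infty$, $L$ has finite $\CK[G]$-length, hence also finite $\CK'[G]$-length (any $\CK'[G]$-composition series is in particular a $\CK[G]$-filtration). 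Thus $L$ admits a simple $\CK'[G]$-submodule $\sigma$, which is absolutely simple by hypothesis.

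The central step will be Galois descent applied to $\sigma$. For each $g \in \Gamma$, the subset $g\sigma \subset L$ is again a simple $\CK'[G]$-submodule: it is $G$-stable since the $\Gamma$-action commutes with $G$, and it is a $\CK'$-subspace because for $\mu \in \CK'$ and $v \in \sigma$ one has $\mu \cdot g(v) = g(g^{-1}(\mu) \cdot v) \in g\sigma$. Consequently $W := \sum_{g \in \Gamma} g\sigma$ is a non-zero $\Gamma$-stable $\CK'[G]$-submodule of $L$. By the classical bijection between $\Gamma$-stable $\CK'[G]$-submodules of $L$ and $\CK[G]$-submodules of $L^{\Gamma} = \pi$ (via $W \mapsto W^{\Gamma}$, with inverse $V \mapsto V \otimes_{\CK} \CK'$), the submodule $W^{\Gamma} \subset \pi$ is non-zero, so simplicity of $\pi$ forces $W^{\Gamma} = \pi$ and hence $W = L$.

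To finish, $L$ is covered by its simple $\CK'[G]$-submodules $\{g\sigma\}_{g \in \Gamma}$ and is therefore semisimple; in any decomposition of $L$ as a direct sum of simples, each summand is a simple subquotient of $L$ and hence absolutely simple by hypothesis, and each is $\Gamma$-conjugate to $\sigma$. Since $\Gal(\overline{\CK}/\CK)$ acts on these summands through its quotient $\Gamma$, it acts transitively. The main point requiring care will be the semilinear-subspace check for $g\sigma$ together with correct invocation of Galois descent for $\Gamma$-stable subspaces --- this is where all the non-formal content is concentrated; once this is in hand the simplicity of $\pi$ does all the remaining work.
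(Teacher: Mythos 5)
Your proposal takes essentially the same approach as the paper's proof: locate an absolutely simple $\CK'[G]$-submodule, form the sum of its Galois conjugates, observe this is a Galois-stable submodule that descends to a nonzero $\CK[G]$-submodule of $\pi$, and invoke simplicity of $\pi$. You fill in details the paper elides---that $\pi\otimes_{\CK}\CK'$ has finite $\CK'[G]$-length (hence possesses a simple, and by hypothesis absolutely simple, submodule), the check that the semilinear translate $g\sigma$ is again a $\CK'$-subspace, and the precise Galois-descent bijection---and these are the right points to make explicit.

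One step is asserted but not completed: the reduction to the case $\CK'/\CK$ Galois. You verify that passing to the Galois closure $\CK''$ preserves the \emph{hypothesis} (every simple subquotient remains absolutely simple), but you do not explain why establishing the \emph{conclusion} for $\CK''$ yields it for the original $\CK'$. One still has to descend semisimplicity: if $\pi\otimes_{\CK}\CK''$ is semisimple over $\CK''$, then given a $\CK'[G]$-submodule $N$ of $\pi\otimes_{\CK}\CK'$, one chooses a $\CK''[G]$-linear projection of $\pi\otimes_{\CK}\CK''$ onto $N\otimes_{\CK'}\CK''$ and averages it over $\Gal(\CK''/\CK')$ to obtain a $\Gal(\CK''/\CK')$-equivariant projection, which then descends to a $\CK'[G]$-splitting of $N$; after that the summands of $\pi\otimes_{\CK}\CK'$ base-change to the summands over $\CK''$ and transitivity carries over. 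This is a short, standard argument, and in fairness the paper's own wording has the same imprecision (it speaks of $\pi_0^g$, $g\in\Gal(\overline{\CK}/\CK)$, as submodules of $\pi\otimes_{\CK}\CK'$, which only makes literal sense when $\CK'/\CK$ is Galois), but your write-up would be cleaner if you either stated the lemma for Galois $\CK'$ or supplied the averaging step.
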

\begin{proof}
Let $\pi_0$ be an absolutely simple $\CK'[G]$-submodule of $\pi \otimes_{\CK} \CK'$.  Then
the sum of the submodules $\pi_0^g$ for $g$ in $\Gal(\overline{\CK}/\CK)$
is a Galois-stable $\CK'[G]$-submodule of $\pi \otimes_{\CK} \CK'$, and hence descends
to a $\CK[G]$-submodule of $\pi$.  This submodule must be all of $\pi$, and the result follows.
\end{proof}

\begin{definition}
Let $(K,\tau)$
be a maximal distinguished cuspidal $k$-type, and let $\pi$ be a simple cuspidal smooth
$W(k)[G]$-module.  We say that $\pi$ {\em belongs to the mod $\ell$ inertial equivalence class
determined by $(K,\tau)$} if either $\ell$ annihilates $\pi$ and $\pi$ contains $(K,\tau)$, or
if $\ell$ is invertible on $\pi$ and there exists a finite extension $\CK'$ of
$\CK$ such that one (equivalently, every) absolutely simple summand of $\pi \otimes_{\CK} \CK'$ 
is inertially equivalent to an integral
representation of $G$ over $\CK'$ whose mod $\ell$ reduction contains $(K,\tau)$.
\end{definition}

Fix a maximal distinguished cuspidal $k$-type $(K,\tau)$, with $\tau = \kappa \otimes \sigma$,
and let $\CP_{\sigma} \rightarrow \sigma$ be the projective envelope of $\sigma$ in the
category of $W(k)[GL_{\frac{n}{ef}}(\FF_{q^f})]$-modules.  We then have:

\begin{lemma}
The representation $\kappa$ lifts to a representation $\tkappa$ of $K$ over
$W(k)$.
\end{lemma}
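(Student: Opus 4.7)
The plan is to construct $\tkappa$ in three steps: first lift $\eta := \kappa|_{K^1}$ to $W(k)$, then extend this lift to $K$ using characteristic-zero theory, and finally twist by a character of $K/K^1$ so that the reduction mod $\ell$ matches $\kappa$.

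First, the representation $\eta$ factors through a finite $p$-group quotient of $K^1$ (through $J^1/H^1$ in Bushnell--Kutzko's notation). Since $\ell \neq p$, the order of this quotient is invertible in $W(k)$, so $W(k)[J^1/H^1]$ is a separable order; in particular, reduction mod $\ell$ induces a bijection on isomorphism classes of $W(k)$-free finitely generated modules, and every $K^1$-stable $W(k)$-lattice in a simple such module is unique up to homothety. Let $\tilde\eta$ be the unique lift of $\eta$.

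Second, the character $\theta$ lifts uniquely to a $W(k)$-valued character $\tilde\theta$ of $K^1$, and Bushnell--Kutzko's classical construction of $\beta$-extensions over characteristic zero (which requires only roots of unity of $p$-power order and of order dividing $q^f - 1$, all present in $W(k)$ via Teichm\"uller) produces a $\beta$-extension $\tilde\kappa'$ of $\tilde\eta \otimes_{W(k)} \CK$ to $K$, as a $\CK[K]$-module. Since $K$ is compact and $\tilde\kappa'$ is finite-dimensional, the $W(k)$-submodule generated by the $K$-orbit of any nonzero vector is a $K$-stable $W(k)$-lattice $\tilde\kappa_0 \subset \tilde\kappa'$. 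Its restriction to $K^1$ is a lattice in $\tilde\eta \otimes \CK$, hence homothetic to $\tilde\eta$ by the uniqueness noted above. Consequently the reduction $\kappa_0 := \tilde\kappa_0 \otimes_{W(k)} k$ restricts on $K^1$ to $\eta$, and so is an extension of $\eta$ to $K$, necessarily irreducible since its restriction to the normal subgroup $K^1$ is.

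Third, any two irreducible extensions of $\eta$ to $K$ differ by a character of $K/K^1$: indeed, $\Hom_{K^1}(\kappa_0, \kappa)$ is a one-dimensional $k$-vector space on which $K$ acts through a character $\chi: K/K^1 \to k^\times$, giving $\kappa \cong \kappa_0 \otimes \chi$. Since $K/K^1 \cong \GL_{n/ef}(\FF_{q^f})$ has abelianization $\FF_{q^f}^\times$, cyclic of prime-to-$p$ order $q^f - 1$, the character $\chi$ is determined by a root of unity in $k^\times$ of order dividing $q^f - 1$; Teichm\"uller provides a unique lift $\tilde\chi: K/K^1 \to W(k)^\times$. Setting $\tkappa := \tilde\kappa_0 \otimes \tilde\chi$ yields the desired $W(k)$-representation of $K$ reducing to $\kappa$. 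The main technical point is the uniqueness (up to homothety) of $K^1$-stable lattices in $\tilde\eta \otimes \CK$ used in step~2, which is what guarantees that the reduction of an arbitrary $W(k)$-lattice $\tilde\kappa_0$ restricts on $K^1$ to the correct representation $\eta$, thereby avoiding any delicate obstruction-theoretic lifting of the $K/K^1$-action itself.
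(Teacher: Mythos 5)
Your proposal takes a genuinely different route from the paper's. The paper lifts $\kappa|_{K^1}$ uniquely to $\tkappa_1$, identifies the obstruction to extending $\tkappa_1$ to $K$ as an element $\alpha \in H^2(K, W(k)^\times)$, shows $\alpha$ is $p$-power torsion (citing~\cite{BK}, Prop.~5.2.4), and kills it by restriction-corestriction: $\alpha$ restricts to zero on a $p$-Sylow subgroup $U \supseteq K^1$ of $K$ (because the lift over the pro-$p$ group $U$ is unique), so corestriction gives $[K:U]\alpha = 0$ with $[K:U]$ prime to $p$, forcing $\alpha = 0$. Your strategy is instead to produce an explicit characteristic-zero extension over $\CK$, reduce a lattice mod $\ell$, and twist by a Teichm\"uller character.

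The gap is in Step~2: you assert that the Bushnell--Kutzko $\beta$-extension exists as a $\CK[K]$-module, supported only by the parenthetical claim that the construction ``requires only roots of unity of $p$-power order and of order dividing $q^f-1$.'' That claim is the crux of the lemma and is not established. If the characteristic-zero extension is a priori only defined over a finite extension $\CK'$ of $\CK$, your construction yields a lift of $\kappa$ over the ring of integers of $\CK'$, not over $W(k)$, and descending such a lift to $W(k)$ is a further nontrivial step (the residue fields agree, but there is no automatic descent of absolutely irreducible lifts along a ramified extension of complete DVRs). The paper's argument is designed precisely to sidestep this: knowing that $\alpha$ is $p$-power torsion and killed by a prime-to-$p$ integer suffices, without ever producing an extension over $\CK$. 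To close your gap you would effectively have to invoke the same cocycle computation from~\cite{BK}, Prop.~5.2.4 that the paper does, after which restriction-corestriction is the more economical finish. Steps~1 and~3 are sound, modulo a small misstatement in Step~1: $\eta$ does not factor through $J^1/H^1$, since its restriction to $H^1$ is a nontrivial multiple of $\theta$; it does, however, factor through some finite $p$-group quotient of $K^1$, which is all you need. Your observation in Step~3 that a character $K/K^1 \to k^\times$ automatically has prime-to-$\ell$ order, hence a unique Teichm\"uller lift, is correct and is the right way to handle the twist.
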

\begin{proof}
As $K_1$ is a pro-$p$-group, the restriction $\kappa_1$ of $\kappa$ to $K_1$
lifts uniquely to a representation $\tkappa_1$ of $K_1$ over $W(k)$, normalized by
$K$.  The obstruction to extending $\tkappa_1$ to $K$ is thus an element of
$H^2(K,W(k)^{\times})$.  The first two paragraphs of~\cite{BK}, Proposition 5.2.4,
show that this element can be represented by a cocycle taking values in the $p$-power
roots of unity, and is thus a $p$-power torsion element $\alpha$ of
$H^2(K,W(k)^{\times})$.  (In~\cite{BK} the authors work over $\CC$ rather than $W(k)$, but their
argument adapts without difficulty.  Note that they denote by $\eta_M$ the representation
we call $\tkappa_1$, by $J_M$ the group we call $K$, and $J^1_M$ the group we call $K_1$.)

Let $U$ be a $p$-sylow subgroup of $K$ containing $K_1$.  The restriction of $\kappa$
to $U$ lifts uniquely to a representation over $W(k)$, extending $\tkappa_1$.  It
follows that the image of $\alpha$ in $H^2(U,W(k)^{\times})$ under restriction vanishes.
But the corestriction of this image to $H^2(K,W(k)^{\times})$ is equal to $r\alpha$,
where $r$ is the index of $U$ in $K$.  Thus $\alpha$ is killed by a power of $p$ and an integer
prime to $p$, and must therefore vanish.
\end{proof}

\begin{lemma}
The tensor product
$\tkappa \otimes \CP_{\sigma}$ is a projective envelope of $\kappa \otimes \sigma$
in the category of $W(k)[K]$-modules.
\end{lemma}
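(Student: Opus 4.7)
The plan is to establish projectivity of $\tkappa\otimes\CP_\sigma$ via a projection formula, and then to verify the envelope condition by computing its endomorphism ring.

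For projectivity, the starting observation is that $\tkappa$ is $W(k)$-finite free (since it lifts the finite-dimensional $k$-representation $\kappa$), so the functor $\tkappa\otimes_{W(k)}-$ is exact. Since $\CP_\sigma$ is projective in smooth $W(k)[K/K_1]$-Mod, it is a direct summand of a free object, which via inflation to $K$ is a sum of copies of $W(k)[K/K_1]\cong \cInd_{K_1}^K\mathbf 1$. The projection formula gives
$$\tkappa\otimes_{W(k)}\cInd_{K_1}^K\mathbf 1\;\cong\;\cInd_{K_1}^K(\tkappa|_{K_1})\;=\;\cInd_{K_1}^K\tkappa_1,$$
and by Frobenius reciprocity this is projective in smooth $W(k)[K]$-Mod provided $\tkappa_1$ is projective in smooth $W(k)[K_1]$-Mod. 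The latter holds because $K_1$ is pro-$p$, $\ell\neq p$, and $\tkappa_1$ is $W(k)$-free: on each finite quotient $K_1/U$ a standard Maschke argument splits any surjection of $W(k)[K_1/U]$-modules whose target is $W(k)$-free.

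For the envelope condition, I would compute $\End_{W(k)[K]}(\tkappa\otimes\CP_\sigma)$. Since $\kappa_1$ is absolutely irreducible over $k$ one has $\End_{W(k)[K_1]}(\tkappa)=W(k)$. The tensor--hom adjunction together with projectivity of $\tkappa_1$ (which allows $\Hom_{W(k)[K_1]}(\tkappa,-)$ to commute with $\tkappa\otimes\CP_\sigma$) then identifies this endomorphism ring canonically with $\End_{W(k)[K/K_1]}(\CP_\sigma)$. Since $\CP_\sigma$ is a projective envelope, the latter ring is local, and therefore $\tkappa\otimes\CP_\sigma$ is an indecomposable projective with a unique simple quotient. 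The obvious surjection $\tkappa\otimes\CP_\sigma\twoheadrightarrow\kappa\otimes\sigma$ (tensor $\CP_\sigma\twoheadrightarrow\sigma$ with $\tkappa$ and reduce mod~$\ell$) exhibits $\kappa\otimes\sigma$ as this unique simple quotient; the irreducibility of $\kappa\otimes\sigma$ as a $K$-representation follows from that of $\sigma$ as a $K/K_1$-representation by matching $K$-stable subspaces with $K/K_1$-stable subspaces of $\sigma$ via the $\kappa_1$-isotypic decomposition on $K_1$.

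The main obstacle is the endomorphism computation: it depends on both $\End_{W(k)[K_1]}(\tkappa)=W(k)$ and the exactness of $\Hom_{W(k)[K_1]}(\tkappa,-)$. Both follow from projectivity of $\tkappa_1$ as a smooth $W(k)[K_1]$-module together with irreducibility of $\kappa_1$, and the pro-$p$ / prime-to-$\ell$ dichotomy for $K_1$ is the only place where the assumption $\ell\neq p$ enters the argument.
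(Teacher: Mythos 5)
Your proof is correct but routes the argument differently from the paper. The paper proves projectivity by identifying $\CP_\sigma \cong \Hom_{K_1}(\tkappa, \tkappa \otimes \CP_\sigma)$ as a $W(k)[K/K_1]$-module and then lifting maps directly: given $\tkappa\otimes\CP_\sigma\to\theta$ and a surjection $\theta'\to\theta$, it forms the induced map $\CP_\sigma\to\Hom_{K_1}(\tkappa,\theta)$, lifts it through the surjection $\Hom_{K_1}(\tkappa,\theta')\to\Hom_{K_1}(\tkappa,\theta)$ (which holds because $K_1$ is pro-$p$) using projectivity of $\CP_\sigma$, and tensors back with $\tkappa$. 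It then asserts indecomposability, which is implicitly supplied by the same $\Hom$ identification. You instead factor the projectivity argument through the projection formula $\tkappa\otimes\cInd_{K_1}^K\mathbf 1\cong\cInd_{K_1}^K\tkappa_1$ plus Frobenius reciprocity, reducing to projectivity of $\tkappa_1$; and you settle the envelope condition by computing $\End_{W(k)[K]}(\tkappa\otimes\CP_\sigma)\cong\End_{W(k)[K/K_1]}(\CP_\sigma)$ and observing it is local. Both arguments ultimately rest on the same two inputs — $\tkappa_1$ projective over $W(k)[K_1]$ (pro-$p$ and $\ell\neq p$) and $\CP_\sigma$ projective over $W(k)[K/K_1]$ — and are of comparable length. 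Your route buys a more structural presentation: projectivity becomes a two-line consequence of standard adjunctions, and the endomorphism-ring computation makes the indecomposability (and hence the ``envelope'' claim) fully explicit rather than asserted, which is a small but genuine gain in rigor. One small note: the reduction $\tkappa\otimes_{W(k)}\sigma\cong\kappa\otimes_k\sigma$ is immediate since $\sigma$ is already a $k$-module, so the phrase ``reduce mod $\ell$'' is unnecessary; otherwise the argument is clean.
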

\begin{proof}
The restriction of $\kappa$ to $K_1$ is irreducible, and the restriction
of $\tkappa \otimes \CP_{\sigma}$ to $K_1$ is a direct sum of copies of $\tkappa_1$.
We thus have isomorphisms of $W(k)[K/K_1]$-modules:
$$\Hom_{K_1}(\tkappa,\tkappa \otimes \CP_{\sigma}) \cong \CP_{\sigma}.$$  
(Here $g \in K/K_1$ acts on $\Hom_{K_1}(\tkappa, \tkappa \otimes \CP_{\sigma})$ by
$f \mapsto f^{g}$, where $f^g(x) = gf(g^{-1}x)$; note that this action
depends on $\tkappa$, not just its restriction to $K_1$.)

Now suppose we have a surjection:
$$\theta' \rightarrow \theta$$
of $W(k)[K]$-modules.  We need to show that any map
$\tilde \kappa \otimes \CP_{\sigma} \rightarrow \theta$ lifts to a map to $\theta'$.
As we have identified $\CP_{\sigma}$ with $\Hom_{K_1}(\tkappa,\tkappa \otimes \CP_{\sigma})$,
such a map induces a map $\CP_{\sigma} \rightarrow \Hom_{K_1}(\tkappa, \theta)$.  This
latter map is $K/K_1$-equivariant.

As $K_1$ is a pro-$p$ group, the surjection of $\theta' \rightarrow \theta$
induces a surjection 
$$\Hom_{K_1}(\tkappa,\theta') \rightarrow \Hom_{K_1}(\tkappa,\theta)$$
of $W(k)[K/K_1]$-modules.  As $\CP_{\sigma}$ is projective, the map
$$\CP_{\sigma} \rightarrow \Hom_{K_1}(\tkappa,\theta)$$
lifts to a map
$$\CP_{\sigma} \rightarrow \Hom_{K_1}(\tkappa,\theta').$$
Tensoring with $\tkappa$, we obtain the desired map $\tkappa \otimes \CP_{\sigma} \rightarrow \theta'$,
so $\tkappa \otimes \CP_{\sigma}$ is projective.

On the other hand, $\tilde \kappa \otimes \CP_{\sigma}$ is indecomposable over $W(k)[K]$,
and is therefore a projective envelope of $\kappa \otimes \sigma$ in the category
of $W(k)[K]$-modules.  
\end{proof}

As the functor $\cInd_K^G$ is a left adjoint of an exact functor, it takes projectives to projectives.
In particular the module $\CP_{K,\tau}$ defined by $\CP_{K,\tau} := \cInd_K^G 
\tkappa \otimes \CP_{\sigma}$ is a projective object in $\Rep_{W(k)}(G)$.

\begin{proposition} \label{prop:cuspidal quotient}
Let $\pi$ be a simple cuspidal smooth $W(k)[G]$-module in the mod $\ell$
inertial equivalence class determined by $(K,\tau)$.  Then there exists
a surjection $\CP_{K,\tau} \rightarrow \pi$.
\end{proposition}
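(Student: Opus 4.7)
The plan is to apply Frobenius reciprocity: since $\CP_{K,\tau} = \cInd_K^G(\tkappa \otimes \CP_\sigma)$, we have
$$\Hom_{W(k)[G]}(\CP_{K,\tau}, \pi) \cong \Hom_{W(k)[K]}(\tkappa \otimes \CP_\sigma, \pi|_K),$$
and because $\pi$ is simple as a $W(k)[G]$-module, any nonzero $G$-homomorphism is automatically surjective. So it suffices to produce a nonzero $K$-homomorphism $\tkappa \otimes \CP_\sigma \to \pi|_K$. The construction splits into two cases.

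In the case $\ell \pi = 0$, $\pi$ is an irreducible cuspidal $k[G]$-module, and the hypothesis simply says that $\tau = \kappa \otimes \sigma$ embeds into $\pi|_K$. Composing any such embedding with the evident surjection $\tkappa \otimes \CP_\sigma \twoheadrightarrow \tau$ -- obtained from the reductions $\tkappa \twoheadrightarrow \kappa$ and $\CP_\sigma \twoheadrightarrow \sigma$ -- produces the required $K$-homomorphism.

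In the case $\ell$ is invertible on $\pi$, the representation $\pi$ is an irreducible cuspidal $\CK'[G]$-module for some finite extension $\CK'$ of $\CK$. After enlarging to a finite extension $\CK''/\CK'$ if necessary, choose an absolutely simple summand $\pi_0$ of $\pi \otimes_{\CK'} \CK''$ using the lemma preceding the proposition. By hypothesis, an unramified twist $\pi_0' := \pi_0 \otimes \chi$ is integral with $r_\ell \pi_0'$ containing $(K,\tau)$, so Vigneras' lifting theorem (IV.1.5(2)) produces a maximal distinguished cuspidal $\CK''$-type $(K,\tilde\tau')$ inside $\pi_0'$ with $r_\ell \tilde\tau' = \tau$. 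Since $\chi$ is an unramified character of $G$ and hence trivial on the compact subgroup $K$, the restrictions satisfy $\pi_0|_K = \pi_0'|_K$, so $\tilde\tau'$ also embeds into $\pi_0|_K$.

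Fix an $\OO[K]$-stable lattice $L \subset \tilde\tau'$, where $\OO$ is the ring of integers of $\CK''$. Since $r_\ell \tilde\tau' \cong \tau$ is irreducible and of the same $k$-dimension as $L/\pi_\OO L$, one has $L/\pi_\OO L \cong \tau$ as $k[K]$-modules, giving a $W(k)[K]$-surjection $L \twoheadrightarrow \tau$. By projectivity of $\tkappa \otimes \CP_\sigma$, the projective envelope map $\tkappa \otimes \CP_\sigma \twoheadrightarrow \tau$ lifts through this surjection to a nonzero $W(k)[K]$-map $\tkappa \otimes \CP_\sigma \to L$. Composing with the chain $L \hookrightarrow \tilde\tau' \hookrightarrow \pi_0|_K \hookrightarrow \pi|_K \otimes_{\CK'} \CK''$ produces a nonzero element of
$$\Hom_{W(k)[K]}(\tkappa \otimes \CP_\sigma,\, \pi|_K \otimes_{\CK'} \CK'') \cong \Hom_{W(k)[K]}(\tkappa \otimes \CP_\sigma,\, \pi|_K) \otimes_{\CK'} \CK'',$$
the isomorphism holding because $\tkappa \otimes \CP_\sigma$ is finitely presented over $W(k)$ and $\CK''/\CK'$ is a finite (hence flat) extension. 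This forces the original Hom space to be nonzero and finishes the proof. The principal technical point is the invocation of Vigneras' theorem in Case 2 to realize $\tau$ as the reduction of an honest $K$-subrepresentation $\tilde\tau'$ sitting inside $\pi_0$; once this is in hand, projectivity of $\tkappa \otimes \CP_\sigma$ together with a routine lattice-lifting argument does the rest.
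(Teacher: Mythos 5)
Your proof is correct and takes essentially the same approach as the paper: reduce to producing a nonzero map via simplicity, handle the $\ell$-torsion case by composing the projective-envelope surjection with the containment of $\tau$ in $\pi|_K$, and handle the $\ell$-invertible case by passing to an absolutely simple summand $\pi_0$, finding an integral type $(K,\tilde\tau)$ in $\pi_0$ with reduction $(K,\tau)$ via Vigneras' lifting theorem, lifting the map $\tkappa\otimes\CP_\sigma\to\tau$ to a lattice using projectivity, and descending from $\CK''$ to $\CK$. You supply a bit more detail than the paper on why the twist $\chi$ can be ignored (it is trivial on $K$) and on the lattice reduction; these are the same implicit steps the paper glosses.
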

\begin{proof}
First suppose that $\ell$ annihilates $\pi$.
The surjection of $\CP_{\sigma}$ onto $\sigma$ gives rise to a surjection
$\CP_{K,\tau} \rightarrow \cInd_K^G \tau$.  As the restriction of $\pi$ to $K$
contains $\tau$, we have a nonzero (thus surjective) map $\cInd_K^G \tau \rightarrow \pi$ as claimed.

On the other hand, if $\ell$ is invertible in $\pi$, then fix an absolutely simple
summand $\pi_0$ of $\pi \otimes_{\CK} \CK'$ for some finite extension $\CK'$ of $\CK$.
As $\pi_0$ is in the mod $\ell$ inertial equivalence class determined by $(K,\tau)$,
there is a maximal distinguished
cuspidal $\CK'$-type $(K, \tilde \tau)$ contained in $\pi_0$; its mod $\ell$ reduction is $(K,\tau)$.  If we
regard $\tilde \tau$ as a representation of $K$ over the ring of integers
$\OO'$ of $\CK'$, we have surjections
$$\tilde \tau \rightarrow \tau$$
$$\tkappa \otimes \CP_{\sigma} \rightarrow \tau,$$
and thus obtain a map $\tkappa \otimes \CP_{\sigma} \rightarrow \tilde \tau$ by projectivity
of $\tkappa \otimes \CP_{\sigma}$.  This map is necessarily surjective,
so by applying the functor $\cInd_K^G$, we obtain a surjection
$$\CP_{K,\tau} \otimes_{W(k)} \OO' \rightarrow \cInd_K^G \tilde \tau.$$
Composing this surjection with the nonzero maps
$$\cInd_K^G \tilde \tau \rightarrow \cInd_K^G (\tilde \tau) \otimes_{W(k)} \CK \rightarrow \pi_0$$
yields a nonzero map $\CP_{K,\tau} \otimes_{\CK} \CK' \rightarrow \pi_0$, and hence a nonzero map
$$\CP_{K,\tau} \otimes_{\CK} \CK' \rightarrow \pi \otimes_{\CK} \CK'.$$
As
$\Hom_{\CK'[G]}(\CP_{K,\tau} \otimes_{\CK} \CK', \pi \otimes_{\CK} \CK')$ is isomorphic to
$\Hom_{\CK[G]}(\CP_{K,\tau}, \pi) \otimes \CK'$, there exists a nonzero map
from $\CP_{K,\tau}$ to $\pi$, which must be surjective by simplicity
of $\pi$.
\end{proof}

It will follow from results in section~\ref{sec:generic} that not every simple quotient of $\CP_{K,\tau}$ 
has the above form.

Our next goal is to use the $\CP_{K,\tau}$ to construct projectives that admit surjections
onto representations with given cuspidal support.  We must first introduce some additional language.
As a maximal distinguised cuspidal $k$-type
determines an inertial equivalence class of cuspidal representations, we will
sometimes say that the supercuspidal or cuspidal support of a representation $\Pi$ is given by
a collection $\{(K_1,\tau_1), \dots, (K_r,\tau_r)\}$ of maximal distinguished
cuspidal $k$-types; this means that $\Pi$ has supercuspidal (or cuspidal) support
$(M,\pi)$, where $M$ is a ``block diagonal'' subgroup of the form
$\GL_{n_1} \times \dots \times \GL_{n_r}$, and $\pi$ is a tensor product
$\pi_1 \otimes \dots \otimes \pi_r$ where $\pi_i$ is in the inertial equivalence class
determined by $(K_i,\tau_i)$ for all $i$.

\begin{definition}
Let $M$ be a Levi subgroup of $G$ and let $\pi$ be an irreducible
cuspidal representation of $M$ over $k$.  Let $\Pi$ be a simple smooth $W(k)$-module.
We say that $(M,\pi)$ belongs to the {\em mod $\ell$ inertial cuspidal support} of $\Pi$
if either:
\begin{enumerate}
\item $\Pi$ is killed by $\ell$, and its cuspidal support
is the inertial equivalence class of $(M,\pi)$, or
\item $\ell$ is invertible on $\Pi$, and there exists a finite extension $\CK'$ of $\CK$,
with ring of integers $\OO'$,
such that $\Pi \otimes_{\CK} \CK'$ is a direct sum of absolutely simple $\CK'[G]$-modules,
and for some (equivalently every) absolutely simple summand $\Pi_0$ of $\Pi \otimes_{\CK} \CK'$,
there exists a smooth $\OO'$-integral
representation $\tpi$ of $M$ lifting $\pi$, such 
that the cuspidal support of $\Pi_0$
is the inertial equivalence class of $(M,\tpi)$.
\end{enumerate}
\end{definition}

In this language, Proposition~\ref{prop:cuspidal quotient} says that every simple
$W(k)[G]$-module with mod $\ell$ inertial cuspidal support given by $(K,\tau)$ is a quotient
of $\CP_{K,\tau}$.  

We will also need a notion of mod $\ell$ inertial supercuspidal support.  We first 
recall a standard result about the behavior of supercuspidal support under
reduction mod $\ell$:

\begin{proposition}[\cite{vigss}, 1.5] \label{prop:supercuspidal support reduction}
Let $\tPi$ be an absolutely
irreducible smooth integral representation of $G$ over a finite extension
$\CK'$ of $\CK$, with supercuspidal support $(M,\tpi)$.  Then $\tpi$ is an integral
representation of $M$.  Moreover, let $\Pi$ and $\pi$ denote the mod $\ell$
reductions of $\tPi$ and $\tpi$, respectively.  Then $\pi$ is irreducible and cuspidal
(but not necessarily supercuspidal).  Moreover, the supercuspidal support
of any simple subquotient of $\Pi$ is equal to the supercuspidal support
of $\pi$.
\end{proposition}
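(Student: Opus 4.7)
The plan is to establish the three assertions in turn. Throughout, fix a parabolic subgroup $P$ of $G$ with Levi $M$ such that $\tPi$ occurs as a Jordan--H\"older constituent of $i_P^G \tpi$.

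For integrality of $\tpi$: Frobenius reciprocity yields a nonzero $M$-equivariant map $r_G^P \tPi \to \tpi$ (after passing to the opposite parabolic if necessary). The functor $r_G^P$ is defined by taking $U$-coinvariants and twisting by the square root of the modulus character; the latter takes integral values (using that $W(k)$, hence $\OO'$, contains a square root of $q$), and $U$-coinvariants carry a $G$-stable $\OO'$-lattice in $\tPi$ to an $M$-stable $\OO'$-submodule of $r_G^P \tPi$ spanning it over $\CK'$. The image in $\tpi$ of such a submodule is a nonzero $M$-stable $\OO'$-submodule, which generates a lattice by absolute irreducibility; thus $\tpi$ is integral.

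For cuspidality of $\pi$: since $\tpi$ is supercuspidal, $r_M^Q \tpi = 0$ for every proper parabolic $Q$ of $M$. The compatibility of $r_M^Q$ with reduction mod $\ell$ (from exactness of coinvariants and integrality of the half-modulus twist) then forces $r_M^Q \pi = 0$, so $\pi$ is cuspidal. Irreducibility follows from the type-theoretic picture recalled from \cite{vig98}, IV.1.5: after a suitable unramified twist, $\tpi$ contains a maximal distinguished cuspidal $\CK'$-type whose mod $\ell$ reduction is a maximal distinguished cuspidal $k$-type $(K_M, \tau_M)$ that must appear in $\pi$. Since any cuspidal $k[M]$-module containing such a type is irreducible by the structure theorem recalled earlier in the section, $\pi$ is irreducible.

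For the third assertion, choose an $M$-stable $\OO'$-lattice $L_{\tpi}$ in $\tpi$. Normalized parabolic induction is exact and commutes with formation of $\OO'$-lattices, so $i_P^G L_{\tpi}$ is a $G$-stable $\OO'$-lattice in $i_P^G \tpi$ whose reduction mod $\ell$ is $i_P^G \pi$. A Brauer--Nesbitt argument applied to $G$-stable $\OO'$-lattices in $i_P^G \tpi$ shows that the semisimplification of $(i_P^G L_{\tpi})/\ell = i_P^G \pi$ agrees with the direct sum of the semisimplifications of the mod $\ell$ reductions of the Jordan--H\"older constituents of $i_P^G \tpi$; in particular, every simple subquotient of $\Pi$ occurs as a subquotient of $i_P^G \pi$. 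Since every Jordan--H\"older constituent of $i_P^G \pi$ has supercuspidal support equal to that of $\pi$, the result follows. The main technical point is this Brauer--Nesbitt step linking subquotients of $\Pi$ to subquotients of $i_P^G \pi$; once it is in place, the supercuspidal support assertion is immediate from the invariance of supercuspidal support under Jordan--H\"older constituents of parabolic induction.
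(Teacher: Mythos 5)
The paper states this proposition as a ``standard result'' and gives no proof of its own, so there is no in-paper argument to compare against; your proof follows the standard route (Jacquet functors, exactness of coinvariants, type theory/Vigneras, and Brauer--Nesbitt) and the overall structure is sound.

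Two spots need sharper justification. In the integrality step, ``generates a lattice by absolute irreducibility'' mislocates the real content. Absolute irreducibility guarantees uniqueness of a lattice up to homothety but not its existence: a nonzero $M$-stable $\OO'$-submodule of an absolutely irreducible $\CK'$-representation that spans can fail to be a lattice (it may not be $\ell$-adically separated). What one actually needs is (a) that $r_G^{P^{\circ}} L$ is a genuine $\OO'$-lattice in $r_G^{P^{\circ}} \tPi$ --- this uses Casselman admissibility of Jacquet modules together with the fact that $U$-coinvariants (a filtered colimit of idempotent projections, since $U$ is a union of pro-$p$ groups and $p$ is invertible in $\OO'$) preserve torsion-freeness --- and (b) that the image of a lattice under a surjection to an admissible $\CK'$-representation is again a lattice, which for $\ell \neq p$ follows by taking $K$-invariants (exact) and reducing to a quotient of a finite free $\OO'$-module. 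Second, the type-theoretic re-derivation of irreducibility of $\pi$ is not quite complete as written: knowing that $\pi$ is cuspidal, admissible, and contains $(K_M, \tau_M)$ does not by itself force $\pi$ to be irreducible, since cuspidal mod-$\ell$ representations need not be semisimple. The irreducibility of the reduction of an irreducible integral cuspidal representation is precisely the theorem of Vigneras already recalled in the paper (in the paragraph preceding the citation of \cite{vig98}, IV.1.5), and it is cleanest simply to invoke it directly rather than attempt to re-derive it from type-containment. With these two adjustments the argument is complete; Parts 2 (cuspidality via exactness applied to the inclusion $L \hookrightarrow \tpi$, giving $r_M^Q L \hookrightarrow r_M^Q \tpi = 0$) and 3 (Brauer--Nesbitt plus transitivity of parabolic induction) are correct as you have them.
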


\begin{definition} \label{def:mod ell support}
Let $M$ be a Levi subgroup of $G$ and let $\pi$ be an irreducible
supercuspidal representation of $M$ over $k$.  Let $\Pi$ be a simple smooth
$W(k)[G]$-module.  We say that $(M,\pi)$ belongs to the {\em mod $\ell$ inertial supercuspidal support}
of $\Pi$ if there exists a Levi subgroup $M'$ of $G$ containing $M$,
and an irreducible cuspidal representation $\pi'$ of $M'$ over $k$,
such that $\Pi$ has mod $\ell$ inertial cuspidal support containing $(M',\pi')$,
and $\pi'$ has supercuspidal support containing $(M,\pi)$.
\end{definition}

\begin{proposition}
Let $\Pi$ be an irreducible smooth integral representation of $G$ over
a finite extension $\CK'$ of $\CK$.  The following are equivalent:
\begin{enumerate}
\item $\Pi$ has mod $\ell$ inertial supercuspidal support $(M,\pi)$.
\item Every simple subquotient of the mod $\ell$ reduction
of $\Pi$ has supercuspidal support inertially equivalent to $(M,\pi)$.
\end{enumerate}
\end{proposition}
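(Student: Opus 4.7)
The key tool in both directions is Proposition~\ref{prop:supercuspidal support reduction}, which provides the bridge between the supercuspidal support of an integral representation in characteristic zero and the supercuspidal support of the simple subquotients of its mod~$\ell$ reduction. I would begin by reducing to the case where $\Pi$ is absolutely irreducible: if $\CK''/\CK'$ is a finite extension large enough that every simple summand of $\Pi \otimes_{\CK'} \CK''$ is absolutely irreducible, then by the lemma proved above, $\Pi \otimes \CK''$ is a direct sum of $\Gal(\overline\CK/\CK)$-conjugates of any one absolutely simple summand $\Pi_0$. Since $k$ is algebraically closed, the residue field of $\OO''$ is again $k$, and a standard argument shows the multiset of simple subquotients of the mod $\ell$ reduction of $\Pi$ coincides (with multiplicities) with that of the mod $\ell$ reduction of $\Pi_0$. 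So in both directions it suffices to phrase the statement in terms of $\Pi_0$.

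For (1)~$\Rightarrow$~(2), assume $\Pi$ has mod $\ell$ inertial supercuspidal support $(M,\pi)$; choose $M' \supseteq M$ and a cuspidal $\pi'$ of $M'$ over $k$ witnessing this, together with an integral lift $\tilde{\pi}'$ of $\pi'$ such that the cuspidal support of $\Pi_0$ is inertially equivalent to $(M', \tilde{\pi}')$. Because $\CK''$ has characteristic zero, cuspidal and supercuspidal support coincide, so the supercuspidal support of $\Pi_0$ is $(M', \tilde{\pi}' \otimes \chi)$ for some unramified character $\chi$. Applying Proposition~\ref{prop:supercuspidal support reduction} to $\Pi_0$, every simple subquotient of its mod $\ell$ reduction has supercuspidal support equal to the supercuspidal support of the reduction of $\tilde{\pi}' \otimes \chi$, which is $\pi' \otimes \overline{\chi}$. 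Supercuspidal support is obviously compatible with unramified twists, so the supercuspidal support of $\pi' \otimes \overline{\chi}$ is inertially equivalent to that of $\pi'$, namely $(M,\pi)$.

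For (2)~$\Rightarrow$~(1), let $(M', \tilde{\pi}')$ be the supercuspidal support of $\Pi_0$ over $\CK''$. Proposition~\ref{prop:supercuspidal support reduction} gives that $\tilde{\pi}'$ is integral, its mod $\ell$ reduction $\pi'$ is irreducible cuspidal over $k$, and the supercuspidal support of every simple subquotient of the reduction of $\Pi_0$ equals the supercuspidal support of $\pi'$. By hypothesis this common supercuspidal support is inertially equivalent to $(M,\pi)$; hence the supercuspidal support of $\pi'$, as a pair up to conjugacy, is inertially equivalent to $(M,\pi)$. Since the inertial equivalence class of $(M,\pi)$ is invariant under $G$-conjugation, we can choose representatives inside this class so that $M \subseteq M'$ and the supercuspidal support of $\pi'$ equals $(M,\pi)$ on the nose. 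Because cuspidal and supercuspidal support agree in characteristic zero, $(M', \tilde{\pi}')$ is also the cuspidal support of $\Pi_0$, so $\Pi$ has mod $\ell$ inertial cuspidal support $(M',\pi')$, and (1) holds with this choice.

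The only real subtlety is the bookkeeping in passing between a representation and its absolutely simple summand after enlarging scalars, together with the cosmetic step of choosing representatives of an inertial equivalence class so that the Levi containment $M \subseteq M'$ is literal and not merely up to conjugation. Once those points are clarified, the proof reduces essentially to applying Proposition~\ref{prop:supercuspidal support reduction} and using that cuspidal equals supercuspidal in characteristic zero.
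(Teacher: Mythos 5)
Your proof is correct and follows essentially the same route as the paper, which simply invokes Proposition~\ref{prop:supercuspidal support reduction}; the extra work you do in reducing to an absolutely irreducible summand $\Pi_0$ and tracking inertial equivalence through twists and conjugations is legitimate bookkeeping that the paper's one-line proof elides. The one step worth tightening is the claim that one can ``choose representatives'' so that the supercuspidal support of $\pi'$ equals $(M,\pi)$ on the nose: what is really needed is that the unramified character relating the two extends from $M$ to the ambient Levi $M'$, and this holds because the supercuspidal support of a cuspidal representation of a factor $\GL_{n'}(F)$ of $M'$ has the form $\sigma_0^{\otimes m}$, so the relevant twist is diagonal and pulls back from the determinant of that factor.
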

\begin{proof}
This is immediate from Proposition~\ref{prop:supercuspidal support reduction}.
\end{proof}

If $M$ is a Levi subgroup of $G$, and $\pi$ is an irreducible cuspidal representation
of $M$ over $k$,
define $\CP_{(M,\pi)}$ to be the normalized parabolic induction
$$\CP_{(M,\pi)} := i_P^G [\CP_{K_1,\tau_1} \otimes \dots \otimes \CP_{K_r,\tau_r}],$$
where $P$ is a parabolic subgroup whose associate Levi subgroup is $M$, and
the $(K_i,\tau_i)$, are a sequence of maximal distinguished cuspidal $k$-types
whose associated mod $\ell$ inertial equivalence class is $(M,\pi)$.

\begin{remark} \rm Strictly speaking, $\CP_{(M,\pi)}$ may depend on the choice of $P$, as well
as the particular pairs $(K_i,\tau_i)$; we suppress
these dependences from the notation.  In fact, it seems likely that changing $P$, or replacing
some of the $(K_i,\tau_i)$ with equivalent types,
give rise to isomorphic modules $\CP_{(M,\pi)}$, but we will not need this and do not attempt to
prove it.
\end{remark}

\begin{lemma} Let $\Pi$ be an absolutely irreducible representation of $G$ over $k$ or a finite
extension $\CK'$ of $\CK$, and let
$\pi_1, \dots, \pi_r$ be a sequence of absolutely irreducible cuspidal representations such that
$\Pi$ is a quotient of $i_P^G [\pi_1 \otimes \dots \otimes \pi_r]$.  Let $s_i$ be the permutation
of $1, \dots, r$ that interchanges $i$ and $i+1$ and fixes all other integers.  Then
either $\Pi$ is a quotient of $i_P^G [\pi_{s_i(1)} \otimes \dots \otimes \pi_{s_i(r)}]$,
or $\pi_i$ and $\pi_{i+1}$ are inertially equivalent.
\end{lemma}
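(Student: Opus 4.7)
The plan is to apply Bernstein's second adjointness (available for smooth $W(k)[\GL_n(F)]$-modules by Dat's theorem, cf.\ the introduction) and then transport the resulting Jacquet-module statement across the swap $s_i$ via a Weyl element conjugation. Concretely, second adjointness equates the hypothesis ``$\Pi$ is a quotient of $i_P^G[\pi_1\otimes\cdots\otimes\pi_r]$'' with the existence of an embedding $\pi_1\otimes\cdots\otimes\pi_r\hookrightarrow r_G^{P^-}\Pi$ as $M$-submodules. I fix a Weyl element $w\in G$ lifting $s_i$, so that conjugation by $w$ sends $M$ to the swapped Levi $M'$ and the cuspidal $M$-rep to $\pi_{s_i(1)}\otimes\cdots\otimes\pi_{s_i(r)}$ on $M'$. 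The natural isomorphism $r_G^{wP^-w^{-1}}\Pi\cong\Ad(w)(r_G^{P^-}\Pi)$ carries the given embedding to one of $\pi_{s_i(1)}\otimes\cdots\otimes\pi_{s_i(r)}$ into $r_G^{wP^-w^{-1}}\Pi$, and a second application of second adjointness produces a surjection from $i_{wPw^{-1}}^G[\pi_{s_i(1)}\otimes\cdots\otimes\pi_{s_i(r)}]$ onto $\Pi$.

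An elementary computation (performed with $w$ realized as a block-swap permutation matrix) identifies $wPw^{-1}$ with $(P')^-$, the opposite of the natural parabolic $P'$ whose Levi is $M'$ and which is compatible with the reordered composition. This shows $\Pi$ is a quotient of $i_{(P')^-}^G[\pi_{s_i(1)}\otimes\cdots]$, and the remaining task is to promote this to a quotient of $i_{P'}^G[\pi_{s_i(1)}\otimes\cdots]$ itself --- equivalently, by one more application of second adjointness, to produce an embedding $\pi_{s_i(1)}\otimes\cdots\otimes\pi_{s_i(r)}\hookrightarrow r_G^{(P')^-}\Pi$, given that the analogous embedding into $r_G^{P'}\Pi$ has already been established.

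The main obstacle is precisely this final promotion step, and this is where the inertial inequivalence hypothesis is essential. The approach is to show that, under $\pi_i\not\sim\pi_{i+1}$, the standard intertwining operator $i_{P'}^G[\pi_{s_i(1)}\otimes\cdots]\to i_{(P')^-}^G[\pi_{s_i(1)}\otimes\cdots]$ is an isomorphism, whence the two parabolic inductions share the same irreducible quotients. The cleanest route is via the Bernstein--Zelevinsky geometric lemma applied to $r_G^{(P')^-}i_{P'}^G[\pi_{s_i(1)}\otimes\cdots]$: the resulting two-step filtration (indexed by the identity and block-swap cosets in $W_{M'}\backslash W(G)/W_{M'}$ when $n_i=n_{i+1}$, and by a single coset when $n_i\ne n_{i+1}$) has graded pieces that are inertially inequivalent as $M'$-reps exactly when $\pi_i\not\sim\pi_{i+1}$; in that case the filtration splits and the intertwining map is forced to be an isomorphism. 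When $\pi_i\sim\pi_{i+1}$ the graded pieces lie in the same inertial class, the splitting argument collapses, and the lemma has nothing to assert --- explaining the escape clause.
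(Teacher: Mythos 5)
The paper's proof is a one-liner: it invokes the standard fact (Zelevinsky in characteristic zero, Vigneras mod $\ell$) that $i_P^G[\pi_1 \otimes \cdots \otimes \pi_r]$ and $i_P^G[\pi_{s_i(1)} \otimes \cdots \otimes \pi_{s_i(r)}]$ are isomorphic unless $\pi_i = (\abs \circ \det)^{\pm 1}\pi_{i+1}$, a twist by an unramified character and hence a special case of inertial equivalence. You instead attempt to re-derive that isomorphism from second adjointness, a Weyl conjugation, and the geometric lemma; the strategy is reasonable in spirit but the argument as written has two gaps.

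First, the claim $wPw^{-1} = (P')^-$ is false once $r > 2$: conjugating $P$ by the block transposition $w$ of blocks $i$ and $i+1$ produces a \emph{mixed} parabolic, standard outside the $(i,i+1)$ window and opposite to $P'$ inside it. In $\GL_3$ with $P$ upper-triangular and $w$ swapping the first two blocks, $wPw^{-1}$ has block shape $\left(\begin{smallmatrix} * & 0 & * \\ * & * & * \\ 0 & 0 & * \end{smallmatrix}\right)$, which is neither $P'$ nor $(P')^-$. The repair is to first use transitivity of parabolic induction to factor through the parabolic whose Levi merges blocks $i$ and $i+1$, reducing to an $r=2$ situation inside that Levi, where $wPw^{-1}$ really is the opposite parabolic. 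Second, the closing paragraph is not yet a proof: knowing that a filtration on $r_G^{(P')^-}i_{P'}^G[\pi^w]$ splits (because its graded pieces lie in distinct Bernstein blocks of $M'$) does not by itself force the intertwining map $i_{P'}^G[\pi^w] \to i_{(P')^-}^G[\pi^w]$ to be an isomorphism, and in any case the geometric-lemma filtration has nonzero pieces indexed by the full normalizer of $M'$ in $W(G)$, not merely a single swap coset, so the ``two-step'' description is not accurate for general $r$. What is actually needed is an argument at the level of $\Pi$ itself --- e.g.\ that the $(M',\pi^w)$-Bernstein component of $r_G^{P'}\Pi$ agrees with that of $r_G^{(P')^-}\Pi$, so your embedding into one transports to the other and one more use of second adjointness finishes --- or, more simply, one should invoke the known irreducibility criterion for $\pi_i \times \pi_{i+1}$, which is precisely what the paper does.
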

\begin{proof}
The parabolic induction $i_P^G [\pi_1 \otimes \dots \otimes \pi_r]$ is isomorphic to 
$i_P^G [\pi_{s_i(1)} \otimes \dots \otimes \pi_{s_i(r)}]$ unless 
$\pi_i = (\abs \circ \det)^{\pm 1} \pi_{i+1}$.  If this is the case then
$\pi_i$ is inertially equivalent to $\pi_{i+1}$.
\end{proof}

\begin{proposition} \label{prop:cuspidal support}
let $\Pi$ be a simple smooth $W(k)[G]$-module with mod $\ell$ cuspidal support
given by the inertial equivalence class $(M,\pi)$.  Then
$\Pi$ is a quotient of $\CP_{(M,\pi)}$.
\end{proposition}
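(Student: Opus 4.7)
The plan is to split on whether $\ell$ annihilates $\Pi$ or acts invertibly on $\Pi$, and in each case reduce to Proposition~\ref{prop:cuspidal quotient} by exploiting the exactness of normalized parabolic induction.

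Suppose first that $\ell \cdot \Pi = 0$, so $\Pi$ is a simple $k[G]$-module whose cuspidal support is inertially equivalent to $(M,\pi_1\otimes\cdots\otimes\pi_r)$. Then there exist a parabolic subgroup $Q$ with Levi $M$ and cuspidal representations $\pi'_i$ such that $\Pi$ is a quotient of $i_Q^G[\pi'_1\otimes\cdots\otimes\pi'_r]$, with each $\pi'_i$ inertially equivalent to $\pi_j$ for some $j$. The ordering produced by the cuspidal support of $\Pi$ need not match the ordering built into $\CP_{(M,\pi)}$, so I would invoke the preceding lemma iteratively on adjacent transpositions: whenever a swap fails to preserve the quotient relation, the lemma forces the offending pair to be inertially equivalent, in which case the associated tensor factors $\CP_{K_i,\tau_i}$ in the definition of $\CP_{(M,\pi)}$ arise from the same maximal distinguished cuspidal $k$-type and may be swapped without altering the inductively produced module. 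After finitely many such bubble-sort moves I may assume the parabolic is $P$ and $\pi'_i$ is inertially equivalent to $\pi_i$ for each $i$ in the correct order. Proposition~\ref{prop:cuspidal quotient} then yields surjections $\CP_{K_i,\tau_i}\to\pi'_i$; tensoring and applying the exact functor $i_P^G$ gives a surjection $\CP_{(M,\pi)}\to i_P^G[\pi'_1\otimes\cdots\otimes\pi'_r]$, which composed with the surjection onto $\Pi$ finishes the case.

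Now suppose $\ell$ is invertible on $\Pi$. Choose a finite extension $\CK'/\CK$ such that $\Pi\otimes_\CK\CK'$ decomposes into absolutely simple summands and fix one such summand $\Pi_0$. By the definition of mod $\ell$ inertial cuspidal support, there is an integral smooth lift $\tpi=\tpi_1\otimes\cdots\otimes\tpi_r$ of $\pi$ such that the cuspidal support of $\Pi_0$ is inertially equivalent to $(M,\tpi)$. The preceding lemma applies verbatim to $\Pi_0$ over $\CK'$, and the same rearrangement argument reduces to the case where $\Pi_0$ is a quotient of $i_P^G[\tpi_1\otimes\cdots\otimes\tpi_r]$. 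Proposition~\ref{prop:cuspidal quotient} furnishes surjections $\CP_{K_i,\tau_i}\to\tpi_i$ of $W(k)[\GL_{n_i}(F)]$-modules; tensoring and applying $i_P^G$ produces a nonzero $W(k)[G]$-linear map $\CP_{(M,\pi)}\to\Pi_0$. The adjunction $\Hom_{\CK'[G]}(\CP_{(M,\pi)}\otimes_\CK\CK',\Pi\otimes_\CK\CK')\cong \Hom_{\CK[G]}(\CP_{(M,\pi)},\Pi)\otimes_\CK\CK'$, exactly as invoked at the end of the proof of Proposition~\ref{prop:cuspidal quotient}, descends this to a nonzero, and hence surjective (by simplicity of $\Pi$), map $\CP_{(M,\pi)}\to\Pi$.

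The principal obstacle is the combinatorial bookkeeping in the rearrangement step: one must match the ordering of tensor factors baked into $\CP_{(M,\pi)}$ with the ordering produced by the cuspidal support, and this is precisely what the lemma immediately preceding the proposition is designed to handle. Once one has observed that the only obstruction to rearranging is inertial equivalence of adjacent factors, and that the corresponding $\CP_{K_i,\tau_i}$ are then identical, everything else is a routine application of exactness of $i_P^G$, the scalar extension adjunction, and the already-established Proposition~\ref{prop:cuspidal quotient}.
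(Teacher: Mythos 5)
Your proposal matches the paper's proof step for step: in both cases you invoke the adjacent-transposition lemma to align the cuspidal support with the ordering of types built into $\CP_{(M,\pi)}$, then apply Proposition~\ref{prop:cuspidal quotient} factor by factor, use exactness of $i_P^G$, and (when $\ell$ is invertible) descend via the scalar-extension adjunction. The only imprecision is in the $\ell$-invertible case: $\tpi_i$ is a $\CK'[\GL_{n_i}(F)]$-module, not a simple $W(k)[\GL_{n_i}(F)]$-module, so Proposition~\ref{prop:cuspidal quotient} does not literally furnish a surjection $\CP_{K_i,\tau_i}\to\tpi_i$; what one obtains, exactly as the paper says, is a nonzero map that becomes surjective after inverting $\ell$, and that is all the remainder of your argument needs.
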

\begin{proof}
Either $\Pi$ is defined and absolutely irreducible over $k$, or $\ell$ is invertible
on $\Pi$ and there exists
a finite extension $\CK'$ of $\CK$ such that $\Pi \otimes_{\CK} \CK'$
is a direct sum of absolutely simple $\CK'[G]$-modules.
Choose an
absolutely irreducible cuspidal representation $\tpi$ of $M$,
defined over the appropriate field ($k$ or $\CK'$) such that, over this field, $\Pi$ admits a
nonzero map from $i_P^G \pi.$  Write $\pi = \pi_1 \otimes \dots \otimes \pi_r$,
and choose $(K_i,\tau_i)$ maximal distinguished cuspidal types such that $\pi_i$
is in the inertial equivalence class determined by $(K_i,\tau_i)$ for all $i$.
Then, for a suitable parabolic subgroup $P$, we have 
$$\CP_{M,\pi} = i_P^G [\CP_{(K_1,\tau_1)} \otimes \dots \otimes \CP_{(K_r,\tau_r)}].$$
Reorder the $\tpi_i$ such
that for all $i$ $\tpi_i$ is in the mod $\ell$ inertial equivalence class determined by
$(K_i,\tau_i)$.  By the previous lemma, we may do this and still assume that there is a nonzero map
from $i_P^G \tpi$ to $\Pi$.  Now we have a nonzero
map of $\CP_{K_i,\tau_i}$ into $\tpi_i$ for each $i$, that is surjective if $\tpi$ is
defined over $k$ and that becomes surjective after inverting $\ell$ if $\tpi$ is
defined over some $\CK'$.  Hence, after induction, we obtain
a nonzero map of $\CP_{(M,\pi)}$ to $\i_P^G \tpi$
that is surjective if $\Pi$ is defined over $k$, and becomes surjective after tensoring
with $\CK'$ if $\ell$ is invertible on $\Pi$.  If $\Pi$ is defined over $k$,
then composing this surjection with
the surjection of $i_P^G \tpi$ onto $\Pi$ yields a nonzero map of $\CP_{(M,\pi)}$
to $\Pi$; this map must be surjective as $\Pi$ is simple.  On the other hand, if $\ell$
is invertible on $\Pi$, we have a nonzero map
$$\CP_{(M,\pi)} \otimes_{W(k)} \CK' \rightarrow \Pi \otimes_{\CK} \CK'$$
obtained by composing the surjection of $\CP_{(M,\pi)} \otimes \CK'$ onto $i_P^G \tpi$
with the map $i_P^G \tpi \rightarrow \Pi \otimes_{\CK} \CK'$.  We thus have a nonzero
map of $\CP_{(M,\pi)}$ onto $\Pi$, and the result follows.
\end{proof}

We will see later that in fact the $W(k)[G]$-modules $\CP_{(M,\pi)}$ are
projective, and once we have established this they will form the basic building blocks
of our theory.  

\section{Finite group theory} \label{sec:finite}

Our first step in understanding the projectives $\CP_{(M,\pi)}$ and
$\CP_{K,\tau}$ is to understand the projective envelope $\CP_{\sigma}$
of a representation $\sigma$ of $\GL_n(\FF_q)$.  This mostly uses
standard facts from the representation theory 
of $\GL_n(\FF_q)$ that we now recall.  For conciseness,
we let $\overline{G}$ denote the group $\GL_n(\FF_q)$,
and fix an $\ell$ prime to $q$.

If $\overline{P} = \overline{M}\overline{U}$ is a parabolic subgroup of $\overline{G}$, with 
Levi subgroup $\overline{M}$
and unipotent radical $\overline{U}$, we have parabolic induction and restriction functors:
$$i_{\overline{P}}^{\overline{G}}: \Rep_{W(k)}(\overline{M}) \rightarrow \Rep_{W(k)}(\overline{G})$$
$$r_{\overline{G}}^{\overline{P}}: \Rep_{W(k)}(\overline{G}) \rightarrow \Rep_{W(k)}(\overline{M}).$$
Here $i_{\overline{P}}^{\overline{G}}$ takes a $W(k)[\overline{M}]$-module, considers it as a $W(k)[\overline{P}]$-module
by letting $\overline{U}$ act trivially, and induces to $\overline{G}$.  Its adjoint
$r_{\overline{G}}^{\overline{P}}$ (which is both a left and right adjoint in the finite group case)
takes the $\overline{U}$-invariants of a $W(k)[\overline{G}]$-module and considers the resulting space
as a $W(k)[\overline{M}]$-module.

Just as in the representation theory of $\GL_n$ over a local field, we can then define:
\begin{definition}
A $W(k)[\overline{G}]$-module $\pi$ is {\em cuspidal} if $r_{\overline{G}}^{\overline{P}} \pi = 0$
for all proper parabolics $\overline{P}$ of $\overline{G}$.
An irreducible $k[\overline{G}]$ or $\overline{\CK}[\overline{G}]$-module $\pi$ is
{\em supercuspidal} if it does not arise as a subquotient of $i_{\overline{P}}^{\overline{G}} \pi'$
for any proper parabolic subgroup $\overline{P}=\overline{M}\overline{U}$ of $\overline{G}$ and
any representation $\pi'$ of $\overline{M}$ (over $k$ or $\overline{\CK}$, as appropriate).
\end{definition}

Over $\overline{\CK}$, an irreducible representation $\pi$ is cuspidal
if and only if it is supercuspidal; over $k$ a supercuspidal representation
is cuspidal but the converse need not hold.  We also define:

\begin{definition}
Let $\overline{P} = \overline{M}\overline{U}$ be a parabolic subgroup of $\overline{G}$, and let $\pi'$ be an
irreducible representation of $\overline{M}$.
\begin{enumerate}
\item An irreducible $k[\overline{G}]$ or $\overline{\CK}[\overline{G}]$-module $\pi$
has {\em cuspidal support} $(\overline{M},\pi')$ if $\pi'$ is cuspidal and $\pi$
is a quotient of $i_{\overline{P}}^{\overline{G}} \pi'$.
\item An irreducible $k[\overline{G}]$ or $\overline{\CK}[\overline{G}]$-module $\pi'$
has {\em supercuspidal support} $(\overline{M},\pi')$ if $\pi'$ is supercuspidal
and $\pi$ is a subquotient of $i_{\overline{P}}^{\overline{G}} \pi'$.
\end{enumerate}
\end{definition}

The cuspidal and supercuspidal support of an irreducible $\pi$ always exist,
and are unique up to $\overline{G}$-conjugacy.  Over $\overline{\CK}$ the two notions coincide,
but this is not true over $k$ because of the existence of cuspidal representations
that are not supercuspidal.  As with representations of $\GL_n(F)$, the uniqueness of
cuspidal support (up to $\overline{G}$-conjugacy) is an immediate consequence of
Frobenius reciprocity, but the uniqueness of supercuspidal support is deep and requires
work (in the setting of general linear groups over finite fields, it follows from
the classification of irreducible modular representations of $\overline{G}$ due to James~\cite{james}.

As the parabolic induction and restriction functors are defined on the level
of $W(k)[\overline{G}]$-modules, it is clear that
the reduction mod $\ell$ of a cuspidal representation is cuspidal.  The notion of
supercuspidal support is compatible with reduction mod $\ell$ in the following sense:
if $\tpi$ is an irreducible representation of $\overline{G}$ over $\overline{\CK}$,
and $\pi$ is any subquotient of its mod $\ell$ reduction, then the supercuspidal support
of $\pi$ is equal to the supercuspidal support of the mod $\ell$ reduction of
the supercuspidal support of $\tpi$.

Deligne-Lusztig theory provides a parameterization of the irreducible cuspidal representations
of $\overline{G}$ over $\overline{\CK}$ in terms of semisimple elements $s$ of
$\overline{G}$ whose characteristic polynomials are irreducible, up to conjugacy.  To an arbitrary
semisimple element $s$ (up to conjugacy), we associate a subset of the irreducible representations
of $\overline{G}$ over $\overline{\CK}$ as follows: let $\overline{M}_s$ be the 
minimal split Levi subgroup of $\overline{G}$ containing $s$.
Then $\overline{M}_s$ is a product of general linear groups $\overline{G}_{n_i}$, and the factors
$s_i$ of $s$ under this decomposition all have irreducible characteristic polynomials.
We call the $s_i$ the ``irreducible factors'' of $s$, and refer to an $s$ with only one irreducible factor as ``irreducible.''
Thus each $s_i$ yields a cuspidal representation of $\overline{G}_{n_i}$, and hence $s$ yields a
cuspidal representation $\pi'$ of $\overline{M}_s$.  Let $\CI(s)$ be the
set of irreducible representations of $\overline{G}$ over $\overline{\CK}$
with cuspidal support $(\overline{M}_s,\pi')$.  Note that if $s$ and $t$ are conjugate
then the pair $(\overline{M}_t,\pi'')$ attached to $t$ is conjugate to the pair $(\overline{M}_s,\pi')$, so that
$\CI(s) = \CI(t)$.  More generally, if $\overline{M}$ is a Levi subgroup of $\overline{G}$,
and $s$ is a semisimple conjugacy class in $\overline{M}$, we let $\CI_{\overline{M}}(s)$ be the set of irreducible
representations of $\overline{M}$ over $\overline{\CK}$ whose cuspidal support is $\overline{M}$-conjugate
to $(\overline{M}_s,\pi')$.  We let $I_s$ denote the parabolic induction
$i_{\overline{P}}^{\overline{G}} \pi'$; $I_s$ depends only on the conjugacy class of $s$, and the irreducible
representations that appear with nonzero multiplicity in $I_s$ 
are precisely the elements of $\CI(s)$.

We now recall the concept of a generic representation of $\overline{G}$.  Let $\overline{U}$ be the unipotent
radical of a Borel subgroup of $\overline{G}$, and let $\Psi: \overline{U} \rightarrow W(k)^{\times}$ be
a generic character.  (For instance, if $U$ is the subgroup of upper triangular matrices
with $1$'s on the diagonal, we can fix a nontrivial map $\psi$ of $\FF_{q}^{+}$ into
$W(k)^{\times}$ and set $\Psi(u) = \psi(u_{12} + ... + u_{n-1,n})$.)  We say an
irreducible representation of $\overline{G}$ over $k$ (resp. $\overline{\CK}$)
is {\em generic} if its restriction to $\overline{U}$ contains a copy of $\Psi \otimes_{W(k)} k$
(resp. $\Psi \otimes_{W(k)} \overline{\CK}$).  By Frobenius reciprocity a representation
is generic if and only if it admits a nontrivial map from $\cInd_{\overline{U}}^{\overline{G}} \Psi$.

We summarize the relevant facts about generic representations that we will need below:
\begin{enumerate}
\item If $\pi$ is an irreducible generic representation over $k$ (resp. $\overline{\CK}$)
then its restriction to $\overline{U}$ contains exactly one copy of $\Psi \otimes_{W(k)} k$
(resp. $\Psi \otimes_{W(k)} \overline{\CK}$.)  (Uniqueness of Whittaker models.)
\item Every cuspidal representation is generic.
\item If $\overline{P}$ is a parabolic subgroup of $\overline{G}$, with Levi subgroup $\overline{M}$, and
$\pi$ is an irreducible generic representation of $\overline{M}$, then $i_{\overline{P}}^{\overline{G}} \pi$ has a
unique irreducible generic subquotient.  (In particular there is, up to isomorphism, a unique
generic irreducible representation with given supercuspidal support.)  On the other hand,
if $\pi$ is irreducible but not generic, then $i_{\overline{P}}^{\overline{G}} \pi$ has no generic
subquotient.
\end{enumerate}

In light of these facts, for any
semisimple element $s$ of $\overline{G}$, we let $\St_s$ denote the unique 
irreducible generic representation of $\overline{G}$ over $\overline{\CK}$ that lies in $\CI(s)$.
(Of course, $\St_s$ only depends on $s$ up to
conjugacy.)  Note that $\St_s$ is a direct summand of $I_s$.  It will be necessary to understand the behavior
of $I_s$ and $\St_s$ under parabolic restriction.

\begin{proposition} \label{prop:steinberg restriction}
We have a decomposition:
$$r_{\overline{G}}^{\overline{P}} \St_s \cong \bigoplus_{t} \St_{\overline{M},t},$$
where $t$ runs over a set of representatives for $\overline{M}$-conjugacy classes of semisimple elements
of $\overline{M}$ that are $\overline{G}$-conjugate to $s$, and $\St_{\overline{M},t}$ is the unique irreducible generic
representation of $\overline{M}$ over $\overline{\CK}$ that lies in $\CI_{\overline{M}}(t)$.
\end{proposition}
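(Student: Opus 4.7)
The plan is to combine the Mackey formula for $r_{\overline{G}}^{\overline{P}} \circ i_{\overline{P}_s}^{\overline{G}}$ with the biadjointness of parabolic induction and restriction (valid for finite groups, as noted earlier) and the enumerated properties of generic representations.

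First, I would apply the Mackey-style geometric lemma to decompose $r_{\overline{G}}^{\overline{P}} I_s = r_{\overline{G}}^{\overline{P}} i_{\overline{P}_s}^{\overline{G}} \pi'$ as a sum indexed by double cosets $w \in W(\overline{M}) \backslash W(\overline{G}) / W(\overline{M}_s)$, each term being a parabolic restriction, a twist by $w$, and a parabolic induction to $\overline{M}$. Because $\pi'$ is cuspidal on $\overline{M}_s$, the inner parabolic restriction kills every contribution for which $w\overline{M}_s w^{-1}$ is not contained in $\overline{M}$, and is the identity on the remaining ones. The surviving summand indexed by $w$ then has the form $I_{t_w,\overline{M}}$ with $t_w = w\cdot s$, and standard bookkeeping shows that as $w$ ranges over the surviving double cosets, the $t_w$ range (without repetition) over $\overline{M}$-conjugacy classes of semisimple elements of $\overline{M}$ that are $\overline{G}$-conjugate to $s$. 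This yields
$$r_{\overline{G}}^{\overline{P}} I_s \;\cong\; \bigoplus_t I_{t,\overline{M}}.$$

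Next, since $\St_s$ is a direct summand of $I_s$ in the semisimple category $\Rep_{\overline{\CK}}(\overline{G})$, $r_{\overline{G}}^{\overline{P}} \St_s$ is a direct summand of the right-hand side above; in particular it is semisimple, so it suffices to identify its irreducible summands with multiplicity. For the exclusion of spurious summands, let $\sigma$ be an irreducible subquotient of some $I_{t,\overline{M}}$ other than $\St_{\overline{M},t}$; such a $\sigma$ is non-generic by uniqueness of the generic subquotient in fact~(3), so fact~(3) itself forces $i_{\overline{P}}^{\overline{G}} \sigma$ to have no generic subquotient at all, and in particular it cannot contain the generic representation $\St_s$. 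Biadjunction then gives
$$\Hom_{\overline{M}}(r_{\overline{G}}^{\overline{P}} \St_s, \sigma) \;\cong\; \Hom_{\overline{G}}(\St_s, i_{\overline{P}}^{\overline{G}} \sigma) \;=\; 0,$$
so no such $\sigma$ contributes to $r_{\overline{G}}^{\overline{P}} \St_s$.

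Finally, for each $t$ in the index set I would verify that $\St_{\overline{M},t}$ actually occurs, and with multiplicity one, by applying biadjunction in the other direction. Transitivity of parabolic induction gives $i_{\overline{P}}^{\overline{G}} I_{t,\overline{M}} \cong I_s$, so $i_{\overline{P}}^{\overline{G}} \St_{\overline{M},t}$ is a direct summand of $I_s$. As the induction of the generic representation $\St_{\overline{M},t}$, it has a unique generic subquotient by fact~(3); this subquotient is a constituent of $I_s$, hence has cuspidal support $(\overline{M}_s,\pi')$, and must therefore equal $\St_s$. Consequently $\Hom_{\overline{G}}(\St_s, i_{\overline{P}}^{\overline{G}} \St_{\overline{M},t})$ is one-dimensional, and so is $\Hom_{\overline{M}}(r_{\overline{G}}^{\overline{P}} \St_s, \St_{\overline{M},t})$ by adjunction; combined with the previous paragraph this forces the claimed decomposition. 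I expect the most delicate point to be the first step, namely verifying that the surviving double cosets $W(\overline{M}) \backslash W(\overline{G}) / W(\overline{M}_s)$ with $w\overline{M}_s w^{-1} \subseteq \overline{M}$ are in canonical bijection with $\overline{M}$-conjugacy classes of semisimple elements of $\overline{M}$ lying in the $\overline{G}$-orbit of $s$; after that everything is a formal consequence of biadjunction and the quoted facts about generic representations.
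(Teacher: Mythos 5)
Your proof is correct in substance and follows essentially the same strategy as the paper: biadjunction of $i_{\overline{P}}^{\overline{G}}$ and $r_{\overline{G}}^{\overline{P}}$ together with the listed facts about generic representations (uniqueness of Whittaker models, unique generic constituent of a parabolic induction, and compatibility of cuspidal support). The paper's proof is terser: it applies Frobenius reciprocity $\Hom_{\overline{M}}(\sigma, r_{\overline{G}}^{\overline{P}} \St_s) \cong \Hom_{\overline{G}}(i_{\overline{P}}^{\overline{G}}\sigma, \St_s)$ directly to an arbitrary irreducible $\sigma$ and reads off from genericity and cuspidal support that the $\Hom$ space is at most one-dimensional and vanishes unless $\sigma \cong \St_{\overline{M},t}$ with $t$ $\overline{G}$-conjugate to $s$. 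You reach the same conclusion by first routing through the Mackey decomposition of $r_{\overline{G}}^{\overline{P}} I_s$ — something the paper does separately, in Proposition~\ref{prop:induction restriction} — to pin down the possible cuspidal supports before applying adjunction. Both routes are sound; yours is somewhat more explicit about the ``lower bound'' (that each $\St_{\overline{M},t}$ really occurs, via the reverse adjunction and transitivity of induction), a point the paper leaves implicit.

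One small but genuine inaccuracy: your assertion that, as $w$ runs over the surviving double cosets in the Mackey formula, the classes $t_w$ range \emph{without repetition} over $\overline{M}$-conjugacy classes is false in general. The Mackey terms for $r_{\overline{G}}^{\overline{P}} I_s$ are indexed by $W(\overline{M}) \backslash W(\overline{M}_s, \overline{M})$, whereas the $\overline{M}$-conjugacy classes of $t$ are indexed by the further quotient $W(\overline{M}) \backslash W(\overline{M}_s, \overline{M}) / W_{\overline{M}}(s)$; see Propositions~\ref{prop:steinberg restriction 2} and~\ref{prop:induction restriction}, where this distinction is exactly the point. So $r_{\overline{G}}^{\overline{P}} I_s$ can contain a given $I_{t,\overline{M}}$ with multiplicity greater than one, even though $\St_{\overline{M},t}$ occurs in $r_{\overline{G}}^{\overline{P}} \St_s$ exactly once. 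This doesn't damage your argument — you only need that every irreducible summand of $r_{\overline{G}}^{\overline{P}} \St_s$ is a subquotient of some $I_{t,\overline{M}}$, and the multiplicity one statement comes from the adjunction computation, not from the Mackey bookkeeping — but the claim as written should be corrected.
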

\begin{proof}
By Frobenius Reciprocity, for any irreducible representation $\sigma$ of $\overline{M}$, we
have an isomorphism:
$$\Hom_{\overline{M}}(\sigma, r_{\overline{G}}^{\overline{P}} \St_s) = 
\Hom_{\overline{G}}(i_{\overline{P}}^{\overline{G}} \sigma, \St_s).$$
As $\St_s$ is generic and irreducible, the right hand side is zero unless $i_{\overline{P}}^{\overline{G}} \sigma$
has an irreducible generic summand; if this is the case then $\sigma$ is irreducible and generic,
$i_{\overline{P}}^{\overline{G}} \sigma$ has a unique irreducible generic summand, so the right hand side has 
dimension at most one.
In particlar, every irreducible summand of $r_{\overline{G}}^{\overline{P}} \St_s$ 
is generic and occurs with multiplicity one.
Moreover, if the right hand side is nonzero, then the cuspidal support of one (hence every) 
summand of $\sigma$ is given by $s$, so the $\overline{M}$-cuspidal support of $\sigma$ is given by a
conjugacy class $t$ of $\overline{M}$ that is $\overline{G}$-conjugate to $s$.
\end{proof}

We can rewrite this isomorphism as follows: Let $\overline{M}_s$ be the minimal split Levi of $\overline{G}$ containing
$s$, so that $\St_{\overline{M}_s,s}$ is cuspidal.  Fix a maximal torus of
$\overline{M}$; then conjugating $s$ appropriately we may assume that it is also a maximal torus of
$\overline{M}_s$.  Consider the set $W(\overline{M}_s,\overline{M})$ of elements $w$ of $W(\overline{G})$ such that
$w \overline{M}_s w^{-1}$ lies in $\overline{M}$.  Then $W(\overline{M}_s,\overline{M})$ has a left
action by $W(\overline{M})$ and a right action by the subgroup $W_{\overline{M}_s}(s)$ of $W(\overline{G})$ consisting
of those $w$ in $W(\overline{G})$ such that $w\overline{M}_s w^{-1} = \overline{M}_s$ and $w s w^{-1}$ is 
$\overline{M}_s$-conjugate to $s$.
Moreover, the map $s \mapsto w s w^{-1}$ then yields a bijection between 
$W(\overline{M}) \backslash W(\overline{M}_s,\overline{M}) / W_{\overline{M}}(s)$ and the set of $\overline{M}$-conjugacy
classes $t$ of elements that are $\overline{G}$-conjugate to $s$.  We thus obtain a decomposition:

\begin{proposition} \label{prop:steinberg restriction 2}
We have a decomposition:
$$r_{\overline{G}}^{\overline{P}} \St_s \cong \bigoplus_{w} \St_{\overline{M},wsw^{-1}},$$
where $w$ runs over a set of representatives for
$W(M) \backslash W(\overline{M}_s,\overline{M}) / W_{\overline{M}}(s)$. 
\end{proposition}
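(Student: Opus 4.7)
The plan is to deduce this from Proposition~\ref{prop:steinberg restriction} by reindexing: Proposition~\ref{prop:steinberg restriction} expresses $r_{\overline{G}}^{\overline{P}}\St_s$ as a direct sum over $\overline{M}$-conjugacy classes $t$ of semisimple elements of $\overline{M}$ that are $\overline{G}$-conjugate to $s$, so it suffices to construct a bijection between such classes and the double coset space $W(M) \backslash W(\overline{M}_s,\overline{M}) / W_{\overline{M}_s}(s)$, sending the class of $w$ to the $\overline{M}$-conjugacy class of $wsw^{-1}$. Once this is established, we identify $\St_{\overline{M},t}$ with $\St_{\overline{M},wsw^{-1}}$ for any class representative $w$ and the decomposition follows.

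To verify that the map $w \mapsto [wsw^{-1}]$ descends to double cosets: if we replace $w$ by $m w w_0$ for $m \in W(M)$ and $w_0 \in W_{\overline{M}_s}(s)$, then $(mww_0)s(mww_0)^{-1} = m\bigl(w(w_0 s w_0^{-1})w^{-1}\bigr)m^{-1}$. Since $w_0 s w_0^{-1}$ is $\overline{M}_s$-conjugate to $s$ by definition of $W_{\overline{M}_s}(s)$, the inner expression $w(w_0sw_0^{-1})w^{-1}$ is $w\overline{M}_sw^{-1}$-conjugate (hence $\overline{M}$-conjugate, since $w\overline{M}_sw^{-1}\subseteq\overline{M}$) to $wsw^{-1}$; left-multiplying by a representative of $m \in W(M)$ preserves the $\overline{M}$-conjugacy class.

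For surjectivity, given a semisimple $t\in\overline{M}$ that is $\overline{G}$-conjugate to $s$, I would first conjugate $t$ within $\overline{M}$ to place it in the fixed common maximal torus $T$. Two semisimple elements of $T$ that are $\overline{G}$-conjugate are conjugate by an element of $W(\overline{G})$ (the standard fact that two diagonalizable matrices with the same eigenvalues are Weyl-group-conjugate, applied inside $\overline{G}$), so we may write $t = wsw^{-1}$ for some $w\in W(\overline{G})$; then $w\overline{M}_sw^{-1}$ is the minimal split Levi containing $t$, and this Levi is contained in any split Levi containing $t$, so $w\overline{M}_sw^{-1}\subseteq\overline{M}$ and hence $w\in W(\overline{M}_s,\overline{M})$. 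For injectivity, if $w_1sw_1^{-1}$ and $w_2sw_2^{-1}$ are $\overline{M}$-conjugate, the same Weyl-group-conjugacy fact applied inside $\overline{M}$ produces $m \in W(M)$ with $m(w_1sw_1^{-1})m^{-1}=w_2sw_2^{-1}$; then $w_2^{-1}mw_1$ centralizes $s$, so in particular it preserves $\overline{M}_s$ and sends $s$ to $s$, placing it in $W_{\overline{M}_s}(s)$ and showing $w_1$ and $w_2$ represent the same double coset.

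There is no serious obstacle here beyond bookkeeping; the only nontrivial inputs are Proposition~\ref{prop:steinberg restriction} itself and the standard statement that two semisimple elements of a maximal torus of a connected reductive group are conjugate in the group if and only if they are Weyl-conjugate, which for $\overline{G}=\GL_n(\FF_q)$ reduces to the elementary classification of semisimple conjugacy classes by characteristic polynomial.
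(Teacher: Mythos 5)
Your proof is correct and follows the same approach the paper takes (the paper simply asserts the bijection between $W(M)\backslash W(\overline{M}_s,\overline{M})/W_{\overline{M}_s}(s)$ and the $\overline{M}$-conjugacy classes of elements $\overline{G}$-conjugate to $s$, leaving the verification implicit, and then plugs this into Proposition~\ref{prop:steinberg restriction}). Your argument fills in precisely the well-definedness, surjectivity, and injectivity details that the paper omits, using exactly the inputs the paper intends: the reindexing of Proposition~\ref{prop:steinberg restriction} together with the fact that for $\GL_n(\FF_q)$ two semisimple elements of a fixed maximal torus are conjugate in a (split Levi) subgroup if and only if they are conjugate under its Weyl group.
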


We will need to understand the compatibility of this decomposition with a decomposition of
$r_{\overline{G}}^{\overline{P}} I_s$.  We have:

\begin{proposition} \label{prop:induction restriction}
There is a direct sum decomposition:
$$r_{\overline{G}}^{\overline{P}} I_s \cong \bigoplus_w I_{\overline{M},w s w^{-1}},$$
where $w$ runs over a set of representatives for
$W(M) \backslash W(\overline{M}_s,\overline{M})$, and $I_{\overline{M},w s w^{-1}}$
is the parabolic induction:
$i_{w \overline{P}_s w^{-1} \cap \overline{M}}^{\overline{M}} \St_{\overline{M}_s,s}^w.$
Moreover, on a summand $\St_{\overline{M},w's(w')^{-1}}$ of $r_{\overline{G}}^{\overline{P}} \St_s$,
the map $r_{\overline{G}}^{\overline{P}} \St_s \rightarrow r_{\overline{G}}^{\overline{P}} I_s$ induces an injective map
$$\St_{\overline{M},w's(w')^{-1}} \rightarrow \bigoplus_w I_{\overline{M}, w s w^{-1}}.$$
\end{proposition}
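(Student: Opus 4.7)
\medskip

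\noindent\textbf{Proof plan.} The first decomposition is essentially the finite-group Mackey formula for parabolic induction followed by parabolic restriction (the ``geometric lemma''), while the injectivity statement is a formal consequence of $\St_s$ being a direct summand of $I_s$.

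The plan for the decomposition of $r_{\overline{G}}^{\overline{P}} I_s$ is as follows. Recall $I_s = i_{\overline{P}_s}^{\overline{G}} \St_{\overline{M}_s,s}$, where $\overline{P}_s$ is a parabolic with Levi $\overline{M}_s$. Combining Frobenius reciprocity with the ordinary Mackey formula for $\Ind$ and $\Res$ between $\overline{P}_s$ and $\overline{P}$ (and then taking $\overline{U}$-invariants to pass from $\Res$ to $r_{\overline{G}}^{\overline{P}}$), one obtains the standard geometric lemma isomorphism
$$r_{\overline{G}}^{\overline{P}} i_{\overline{P}_s}^{\overline{G}} \St_{\overline{M}_s,s} \;\cong\; \bigoplus_{w} i_{w\overline{P}_s w^{-1} \cap \overline{M}}^{\overline{M}} \Bigl( r_{w\overline{M}_s w^{-1}}^{w\overline{P}_s w^{-1} \cap \overline{M}} \St_{\overline{M}_s,s}^{w} \Bigr),$$
with $w$ ranging over representatives of $W(\overline{M}) \backslash W(\overline{G}) / W(\overline{M}_s)$. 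Since $\St_{\overline{M}_s,s}$ is cuspidal, the inner Jacquet module vanishes whenever $w\overline{P}_s w^{-1} \cap \overline{M}$ is a proper parabolic of $w\overline{M}_s w^{-1}$. The surviving terms are therefore indexed by those double cosets having a representative $w$ with $w\overline{M}_s w^{-1} \subset \overline{M}$, that is, by $W(\overline{M}) \backslash W(\overline{M}_s, \overline{M})$; for such $w$ the inner restriction is just $\St_{\overline{M}_s,s}^{w}$, and the corresponding term is $i_{w\overline{P}_s w^{-1} \cap \overline{M}}^{\overline{M}} \St_{\overline{M}_s,s}^{w} = I_{\overline{M}, wsw^{-1}}$, giving the claimed decomposition.

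For the injectivity statement, I would observe that $\St_s$ is a direct summand of $I_s$ by the discussion preceding Proposition~\ref{prop:steinberg restriction}, so exactness of $r_{\overline{G}}^{\overline{P}}$ makes $r_{\overline{G}}^{\overline{P}} \St_s$ a direct summand of $r_{\overline{G}}^{\overline{P}} I_s$. In particular the map $r_{\overline{G}}^{\overline{P}} \St_s \to r_{\overline{G}}^{\overline{P}} I_s$ is injective, and remains injective when restricted to the summand $\St_{\overline{M}, w's(w')^{-1}}$ arising from Proposition~\ref{prop:steinberg restriction 2}. Composing with the decomposition of $r_{\overline{G}}^{\overline{P}} I_s$ established in the first step yields the desired injection into $\bigoplus_w I_{\overline{M}, wsw^{-1}}$.

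The main technical obstacle is the bookkeeping in the first step: one must carefully justify passing from the double coset set $W(\overline{M}) \backslash W(\overline{G}) / W(\overline{M}_s)$ to $W(\overline{M}) \backslash W(\overline{M}_s, \overline{M})$, which requires choosing representatives $w$ so that the condition $w\overline{M}_s w^{-1} \subset \overline{P}$ (forced by cuspidality) becomes the sharper $w\overline{M}_s w^{-1} \subset \overline{M}$, and verifying that the resulting parameterization is well-defined up to $W(\overline{M})$ on the left. Once this combinatorics is in place, the rest of the argument is formal.
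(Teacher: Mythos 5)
Your argument is correct and is essentially the same as the paper's: the paper's proof is the one-line remark ``this is a consequence of Mackey's induction-restriction formula, together with Proposition~\ref{prop:steinberg restriction},'' and you have simply unpacked the geometric lemma, used cuspidality of $\St_{\overline{M}_s,s}$ to discard the vanishing Jacquet modules, and used the direct-summand structure (which is what Proposition~\ref{prop:steinberg restriction} gives) for the injectivity.
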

\begin{proof}
This is a consequence of Mackey's induction-restriction formula, together with Proposition~\ref{prop:steinberg restriction}.
\end{proof}

We now turn to considerations related to reduction modulo $\ell$.
Given an irreducible cuspidal representation $\pi$ of $\overline{G}$ over $\overline{\CK}$,
its mod $\ell$ reduction is irreducible and cuspidal.  Every cuspidal
representation of $\overline{G}$ over $k$ arises by mod $\ell$ reduction from some such $\pi$.
If $\pi$ corresponds to a
semisimple conjugacy class $s$, then the reduction mod $\ell$ of $\pi$ is supercuspidal
if, and only if, the characteristic polynomial of the $\ell$-regular part $s^{\reg}$
of $s$ is irreducible.  Moreover, if $\pi$ and $\pi'$ are irreducible cuspidal representations
correspond to semisimple conjugacy classes $s$ and $s'$, then the mod $\ell$ reductions
of $\pi$ and $\pi'$ coincide if, and only if, $s^{\reg} = (s')^{\reg}$.  Thus the supercuspidal
representations of $\overline{G}$ over $k$ are parameterized by $\ell$-regular semisimple conjugacy classes
in $\overline{G}$ with irreducible characteristic polynomial, and the cuspidal representations of
$\overline{G}$ over $k$ are parameterized by $\ell$-regular semisimple conjugacy classes $s'$
such that there exists a semisimple conjugacy class $s$, with irreducible characteristic
polynomial, such that $s' = s^{\reg}$.  

Let $\pi$ be a cuspidal but not supercuspidal representation of $\overline{G}$ over $k$,
and let $s'$ be the corresponding semisimple element.  Such an $s'$, factors
into $m$ identical irreducible factors $s'_0$ for some $m$ dividing $n$.
Moreover, the supercuspidal
support of the $\pi$ that corresponds to $s'$ is the tensor product of $m$ copies
of the supercuspidal representation of $\overline{G}_{\frac{n}{m}}$ corresponding to $s'_0$.

Fix an $\ell$-regular semisimple element $s'$ of $\overline{G}$, and let
${\mathcal E}(s')$ be the union of the sets $\CI(s)$ for those semisimple $s$ with $s^{\reg} = s'$.
Let
$e_{s'}$ be the idempotent in $\overline{\CK}[\overline{G}]$ that is the sum of the primitive idempotents
$e_{\pi}$ for all $\pi$ in ${\mathcal E}(s')$.  Then one has:

\begin{theorem} \label{thm:integrality}
The element $e_{s'}$ lies in $W(k)[\overline{G}]$.
\end{theorem}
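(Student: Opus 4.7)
The plan is to identify $e_{s'}$ as a sum of $\ell$-block idempotents of $W(k)[\overline{G}]$. I would proceed in two stages: first show $e_{s'}\in\CK[\overline{G}]$ by Galois invariance, then establish that $\CE(s')$ is a union of $\ell$-blocks of $\overline{G}$, so the corresponding idempotent is automatically integral.

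For Galois invariance, $\sigma\in\Gal(\overline{\CK}/\CK)$ sends $e_\pi$ to $e_{\pi^\sigma}$. Since $k$ is algebraically closed of characteristic $\ell$, the ring $W(k)$ contains all roots of unity of order prime to $\ell$, so $\sigma$ acts trivially on these and permutes only $\ell$-power roots of unity. In the Deligne--Lusztig (or Green) parameterization of cuspidal characters by semisimple classes (via characters of $F$-stable tori), the Galois action sends the cuspidal attached to $s$ to the cuspidal attached to $s^{a(\sigma)}$ for an integer $a(\sigma)$ congruent to $1$ modulo the prime-to-$\ell$ part of the order of $s$. Since $s^{\reg}$ has order prime to $\ell$, we get $(s^{a(\sigma)})^{\reg}=s^{\reg}$, and applying this factor-by-factor on $\overline{M}_s$ one sees that $\pi\in\CI(s)$ implies $\pi^\sigma\in\CI(s^{a(\sigma)})\subset\CE(s')$. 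Hence $\CE(s')$ is $\Gal$-stable and $e_{s'}\in\CK[\overline{G}]$.

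For integrality, recall that the primitive central idempotents of $W(k)[\overline{G}]$ biject (via Hensel lifting, $W(k)$ being $\ell$-adically complete) with those of $k[\overline{G}]$, which by definition correspond to the $\ell$-blocks of $\overline{G}$; after base change to $\overline{\CK}$, each such block idempotent becomes the sum $\sum_{\pi\in B}e_\pi$ over ordinary characters in the block $B$. So it suffices to show $\CE(s')$ is closed under the ``same $\ell$-block'' relation, equivalently: if $\pi_1\in\CI(s_1)$ and $\pi_2\in\CI(s_2)$ have mod $\ell$ reductions sharing an irreducible constituent, then $s_1^{\reg}=s_2^{\reg}$. This is a direct consequence of the compatibility noted just before the theorem: every irreducible constituent of the mod $\ell$ reduction of $\pi_i$ is a subquotient of the parabolic induction of the mod $\ell$ reduction of the cuspidal of $\overline{M}_{s_i}$ attached to $s_i$, and hence has supercuspidal support determined by $s_i^{\reg}$; so any shared constituent forces $s_1^{\reg}$ and $s_2^{\reg}$ to be $\overline{G}$-conjugate.

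The main obstacle is the precise Galois equivariance of the Deligne--Lusztig parameterization, i.e.\ tracking how $\Gal(\overline{\CK}/\CK)$ acts on characters of $F$-stable tori through their prime-to-$\ell$ and $\ell$-power components; once this and the standard block-theoretic lifting of central idempotents are in hand, the identification of $\CE(s')$ as a union of $\ell$-blocks is essentially immediate from the already-stated compatibility between cuspidal reduction mod $\ell$ and $\ell$-regular semisimple parts.
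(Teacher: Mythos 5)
Your proof is correct, but it takes a genuinely different route from the paper's. The paper's proof is a one-line citation of Cabanes--Enguehard Theorem 9.12, which is the Brou\'e--Michel theorem on the partition of Lusztig series into $\ell$-blocks, valid for arbitrary finite reductive groups. You instead derive the $\GL_n$ case from two ingredients: the compatibility of supercuspidal support with reduction mod $\ell$ (which the paper asserts, without proof, just before the theorem), and the standard block-theoretic fact that central idempotents of $W(k)[\overline{G}]$ correspond to connected components of the linkage graph on ordinary characters. Your key observation --- that if $\pi_1 \in \CI(s_1)$ and $\pi_2 \in \CI(s_2)$ have mod $\ell$ reductions sharing a constituent then $s_1^{\reg}$ and $s_2^{\reg}$ are conjugate, so $\CE(s')$ is a union of linkage classes and hence of $\ell$-blocks --- is correct and is essentially a special-case reproof of Brou\'e--Michel for $\GL_n$, leveraging the specific structure of $\GL_n$ decomposition matrices. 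Note, though, that your preliminary Galois-invariance step is entirely redundant: once $\CE(s')$ is a union of $\ell$-blocks, $e_{s'}$ is already a sum of primitive central idempotents of $W(k)[\overline{G}]$, so you get integrality over $W(k)$ directly rather than merely rationality over $\CK$. The descent argument via $\Gal(\overline{\CK}/\CK)$ adds nothing and, if kept, would also need more care about the precise Galois equivariance of Green's parameterization than you give. Both approaches are sound; the paper's is shorter by citation, yours is more self-contained modulo the compatibility statement.
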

\begin{proof}
This is an immediate consequence of~\cite{CE}, Theorem 9.12.
\end{proof}

Fix an irreducible cuspidal representation $\pi$ of $\overline{G}$ over $k$ that 
is not supercuspidal, corresponding to an $\ell$-regular semisimple element
$s'$ (up to conjugacy).  The representation $\pi$ arises as the mod $\ell$ reduction of an irreducible
cuspidal representation $\tpi$ of $\pi$ over $\overline{\CK}$; there is thus a semisimple element
$s$ of $\overline{G}$, with irreducible characteristic polynomial, such that $s' = s^{\reg}$.  
It is then easy to see that there exists an $m > 1$ dividing $n$ such
that, up to conjugacy, $s'$ factors as a block matrix consisting of $m$ irreducible factors $s'_0$.
Moreover, $m$ lies in the set $\{1,e_q,\ell e_q, \ell^2 e_q, \dots \}$,
where $e_q$ is the order of $q$ modulo $\ell$.
The supercuspidal support of $\pi$ is the tensor product of
$m$ copies of the supercuspidal representation $\pi_0$ corresponding to $s'$.

\begin{proposition}
Suppose we have an irreducible representation $\tpi$ of $\overline{G}$ over $\CK$
whose mod $\ell$ reduction contains $\pi$ as a subquotient.  Let $s$ be the semisimple
conjugacy class corresponding to the supercuspidal support of $\tpi$.  Then $s^{\reg} = s'$.
\end{proposition}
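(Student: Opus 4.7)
The plan is to use the block idempotents $e_t$ provided by Theorem~\ref{thm:integrality} to show that $\tpi$ and $\pi$ must lie in the same block of $W(k)[\overline{G}]$, which will force $s^{\reg}$ and $s'$ to coincide up to $\overline{G}$-conjugacy. The entire argument is essentially a formal application of the integrality theorem plus the standard lattice manipulation.

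First, I would recall that in $\overline{\CK}[\overline{G}]$ the primitive central idempotents partition according to the sets ${\mathcal E}(t)$ as $t$ ranges over representatives of $\ell$-regular semisimple conjugacy classes of $\overline{G}$. Summing the primitive idempotents in each class gives an orthogonal decomposition
$$1 = \sum_t e_t.$$
By Theorem~\ref{thm:integrality}, each $e_t$ already lies in $W(k)[\overline{G}]$, so this decomposition holds in $W(k)[\overline{G}]$ and, after reduction mod $\ell$, in $k[\overline{G}]$ as well.

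Next, since the supercuspidal support of $\tpi$ corresponds to $s$, every absolutely irreducible constituent of $\tpi \otimes_{\CK} \overline{\CK}$ lies in $\CI(s) \subseteq {\mathcal E}(s^{\reg})$; hence $e_{s^{\reg}}$ acts as the identity on $\tpi$ and every other $e_t$ annihilates it. Here is the key step: choose a $W(k)[\overline{G}]$-stable lattice $L \subset \tpi$; integrality of $e_{s^{\reg}}$ ensures $e_{s^{\reg}} L \subseteq L$, and since $e_{s^{\reg}}$ is the identity on $\tpi$ it must be the identity on $L$. Reducing modulo $\ell$, $e_{s^{\reg}}$ acts as the identity on $L/\ell L$ and therefore on every one of its subquotients, in particular on $\pi$.

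On the other hand, the cuspidal $k$-representation $\pi$ arises as the mod $\ell$ reduction of some cuspidal representation $\tpi'$ of $\overline{G}$ over $\overline{\CK}$ attached to a semisimple conjugacy class $\hat s$ with irreducible characteristic polynomial and $\hat s^{\reg} = s'$; by the same lattice argument, $e_{s'}$ acts as the identity on $\pi$. Because the $e_t$ are pairwise orthogonal idempotents in $k[\overline{G}]$ summing to $1$, at most one of them can act as the identity on the simple module $\pi$, and so $s^{\reg}$ and $s'$ must define the same $\overline{G}$-conjugacy class. I do not anticipate a serious obstacle; the only thing to verify carefully is that the $\overline{\CK}$-level orthogonality of the $e_t$ genuinely descends through the inclusion $W(k)[\overline{G}] \hookrightarrow \overline{\CK}[\overline{G}]$, which is immediate once integrality is known.
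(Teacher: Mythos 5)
Your proof is correct, but it takes a genuinely different route from the paper's. The paper argues directly through the combinatorics of supercuspidal supports: it uses the earlier-stated fact that supercuspidal support commutes with reduction mod $\ell$, then traces the factorization of $s$ into irreducible pieces, observes that the $\ell$-regular parts of these pieces give the irreducible factorization of $s^{\reg}$, and compares with the factorization of $s'$. Your approach instead invokes Theorem~\ref{thm:integrality} (Cabanes--Enguehard) to get the integral block idempotents $e_t$, transports the action of $e_{s^{\reg}}$ through a $W(k)[\overline{G}]$-stable lattice in $\tpi$ to its mod $\ell$ subquotient $\pi$, and lets orthogonality force $s^{\reg}=s'$. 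This is cleaner once the idempotents are in hand, and it bypasses any explicit bookkeeping of supercuspidal supports; the cost is that it leans on the CE block theorem, which the paper's own argument avoids here (reserving it for Proposition~\ref{prop:endomorphisms}, where it really is needed). One point worth making explicit in your write-up: the reason at most one $e_t$ can act as the identity on $\pi$ is that $e_t e_{t'}=0$ for $t\neq t'$ already in $W(k)[\overline{G}]$, hence in $k[\overline{G}]$; so if both $e_{s^{\reg}}$ and $e_{s'}$ acted as the identity on $\pi\neq 0$ you would get $\pi=e_{s^{\reg}}e_{s'}\pi=0$. You state this, but it is the load-bearing step and deserves that one line of justification.
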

\begin{proof}
The supercuspidal support of $\tpi$ is given by the pair $(M_s,\tpi_{M,s})$, where $M_s$ is the minimal
split Levi containing $s$ and $\tpi_{M,s}$ is the corresponding supercuspidal representation of $M_s$ over $\overline{\CK}$.
The mod $\ell$ reduction $\tpi$ is thus isomorphic to a subquotient of a parabolic induction $i_{\overline{P}}^{\overline{G}} \pi_{M,s}$,
where $\pi_{M,s}$ is the mod $\ell$ reduction of $\tpi_{M,s}$.  By the above discussion, $\pi_{M,s}$ is the cuspidal
representation of $M_s$ attached to $s^{\reg}$, which has supercuspidal support
$(M_{s^{\reg}}, \pi_{s^{\reg}})$.  Since $\pi$ is a subquotient of a parabolic induction of $\pi_{M,s}$,
the supercuspidal support of $\pi$ is also $(M_{s^{\reg}}, \pi_{s^{\reg}})$.  In particular $s^{\reg}$ and
$s'$ represent the same conjugacy class.
\end{proof}

Fix an irreducible cuspidal representation $\pi$ of $\overline{G}$ over $k$,
and let $s'$ be a representative of the corresponding $\ell$-regular semisimple
conjugacy class.  Let $\CP_{\pi}$ be a projective envelope of $\pi$. 

\begin{proposition} \label{prop:endomorphisms}
The $W(k)[\overline{G}]$-module $e_{s'} \Ind_{\overline{U}}^{\overline{G}} \Psi$ is a projective envelope
of $\pi$, and hence is isomorphic to $\CP_{\pi}$.  In particular
$\CP_{\pi} \otimes_{W(k)} \overline{\CK}$ is isomorphic to the direct sum 
$$\bigoplus_{s: s^{\reg} = s'} \St_s.$$
Moreover, the endomorphism
ring $\End_{W(k)[\overline{G}]}(\CP_{\pi})$ is a reduced, commutative, free $W(k)$-module of
finite rank.
\end{proposition}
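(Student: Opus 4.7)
The plan is to study the Gelfand--Graev representation $\Gamma := \Ind_{\overline{U}}^{\overline{G}} \Psi$ together with the block idempotent $e_{s'}$, and identify $e_{s'} \Gamma$ with the projective envelope $\CP_\pi$. First I would verify that $\Gamma$ is a projective $W(k)[\overline{G}]$-module: since $\overline{U}$ is a $p$-group and $p$ is a unit in $W(k)$, every $W(k)[\overline{U}]$-module is relatively projective, so $\Psi$ is a projective $W(k)[\overline{U}]$-module and the induced module $\Gamma = W(k)[\overline{G}] \otimes_{W(k)[\overline{U}]} \Psi$ is projective over $W(k)[\overline{G}]$. By Theorem~\ref{thm:integrality} the idempotent $e_{s'}$ lies in $W(k)[\overline{G}]$, so $e_{s'} \Gamma$ is a direct summand of $\Gamma$ and hence also projective.

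Next I would identify the head of $e_{s'} \Gamma$. For any simple $k[\overline{G}]$-module $V$, Frobenius reciprocity gives
\[
\Hom_{W(k)[\overline{G}]}(e_{s'} \Gamma, V) = \Hom_{W(k)[\overline{U}]}(\Psi, e_{s'} V).
\]
If $V$ is not in the block cut out by $e_{s'}$ then $e_{s'} V = 0$; otherwise $e_{s'}$ acts as the identity on $V$, and the right-hand side becomes the space of $\Psi$-Whittaker functionals on $V$, which is nonzero precisely when $V$ is generic and then is one-dimensional by uniqueness of Whittaker models. The simple modules in the block $e_{s'}$ are those whose supercuspidal support is (the one determined by) $s'$, and among these the only generic one is $\pi$ itself, by uniqueness of the generic irreducible with a given supercuspidal support. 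Thus $\dim_k \Hom(e_{s'}\Gamma, \pi) = 1$ and $\Hom(e_{s'}\Gamma, V) = 0$ for every other simple $V$, so $e_{s'}\Gamma$ is an indecomposable projective with unique simple quotient $\pi$, i.e.\ a projective envelope of $\pi$.

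For the decomposition after inverting $\ell$, I would use that $\Gamma \otimes_{W(k)} \overline{\CK}$ is the Gelfand--Graev representation over $\overline{\CK}$, whose irreducible constituents are the generic irreducibles of $\overline{G}$, each occurring with multiplicity one. Applying $e_{s'}$ picks out those in $\CE(s') = \bigsqcup_{s^{\reg} = s'} \CI(s)$, and in each $\CI(s)$ the unique generic constituent is $\St_s$; this yields the claimed isomorphism $\CP_\pi \otimes_{W(k)} \overline{\CK} \cong \bigoplus_{s: s^{\reg} = s'} \St_s$.

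Finally, the structure of the endomorphism ring follows formally. Since $\CP_\pi$ is a direct summand of the finitely generated free $W(k)$-module $\Gamma$, it is itself finitely generated and free over $W(k)$. Therefore $\End_{W(k)[\overline{G}]}(\CP_\pi)$ is a $W(k)$-submodule of $\End_{W(k)}(\CP_\pi)$, hence finitely generated and torsion-free, hence free of finite rank over the DVR $W(k)$. Tensoring with $\overline{\CK}$ and using the decomposition above,
\[
\End(\CP_\pi) \otimes_{W(k)} \overline{\CK} \cong \End_{\overline{\CK}[\overline{G}]}\Bigl(\bigoplus_{s^{\reg}=s'} \St_s\Bigr) \cong \prod_{s^{\reg}=s'} \overline{\CK},
\]
a finite product of fields; this is commutative and reduced, and $\End(\CP_\pi)$ embeds into it, so $\End(\CP_\pi)$ is commutative and reduced as well. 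The main subtlety in the argument is ensuring that passing between the block idempotent $e_{s'}$ (defined via characteristic zero representation theory and then shown integral) and the mod $\ell$ supercuspidal support is correctly handled so that the simple objects of the block $e_{s'}$ are exactly those with mod $\ell$ supercuspidal support $s'$.
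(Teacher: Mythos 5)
Your proof is correct and follows essentially the same route as the paper: projectivity of the Gelfand--Graev representation, Frobenius reciprocity together with uniqueness of Whittaker models and the block-theoretic characterization of supercuspidal support to pin down the head, the multiplicity-one decomposition of the Gelfand--Graev representation over $\overline{\CK}$ into the generics $\St_s$, and the formal passage from a decomposition over $\overline{\CK}$ into pairwise non-isomorphic irreducibles to reducedness and commutativity of the endomorphism ring. The only differences are cosmetic (you spell out why $\Psi$ is projective over $W(k)[\overline{U}]$ and flag explicitly the Brou\'e--Michel/Cabanes--Enguehard input identifying the simples of the block $e_{s'}$ with those of supercuspidal support determined by $s'$, a point the paper leaves implicit).
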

\begin{proof}
The module $e_{s'} \Ind_{\overline{U}}^{\overline{G}} \Psi$ is projective, as induction takes
projectives to projectives.  Suppose $\pi'$ is an irreducible representation
of $\overline{G}$ over $k$ that admits a nonzero map from $e_{s'} \Ind_{\overline{U}}^{\overline{G}} \Psi$.
Then $\pi'$ is generic and has supercuspidal support given by $s'$, so
$\pi' = \pi$.  Thus $e_{s'} \Ind_{\overline{U}}^{\overline{G}} \Psi$ is a projective envelope of $\pi$.

As $\CP_{\pi}$ is free of finite rank over $W(k)$, so is $\End_{W(k)[\overline{G}]}(\CP_{\pi})$.
Thus $\End_{W(k)[\overline{G}]}(\CP_{\pi})$ embeds into
$\End_{W(k)[\overline{G}]}(\CP_{\pi}) \otimes_{W(k)} \overline{\CK}$.  The latter is
the endomorphism ring of $e_{s'} \Ind_{\overline{U}}^{\overline{G}} (\Psi \otimes_{W(k)} \overline{\CK})$.
This module is a direct sum of the generic representations $\tpi$ of $\overline{G}$ over
$\overline{\CK}$ whose mod $\ell$ reduction contains $\pi$, each with multiplicity one.
In particular its endomorphism ring is reduced and commutative.
\end{proof}

Let $\overline{P}$ be a parabolic subgroup of $\overline{G}$, let $\overline{U}$ be its unipotent radical, and let
$\overline{M}$ be the corresponding Levi subgroup.  We will need to understand the restriction
$r_{\overline{G}}^{\overline{P}} \CP_{\pi}$.  The following lemma is an immediate consequence of 
Proposition~\ref{prop:steinberg restriction}:

\begin{lemma} \label{lemma:projective restriction}
We have an isomorphism:
$$r_{\overline{G}}^{\overline{P}} \CP_{\pi} \otimes \overline{\CK} = 
\bigoplus_{s: s^{\reg} = s'} \bigoplus_{t \sim s} \St_{\overline{M},t}.$$
In particular,
the endomorphism ring $\End_{W(k)[\overline{M}]}(r_G^P \CP_{\pi})$ is reduced and commutative.
\end{lemma}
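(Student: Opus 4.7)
The plan is to reduce the lemma to Proposition~\ref{prop:steinberg restriction} by combining it with the description of $\CP_{\pi}\otimes\overline{\CK}$ given in Proposition~\ref{prop:endomorphisms}, after observing that parabolic restriction commutes with the base change $\otimes_{W(k)}\overline{\CK}$.

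First I would note that for a finite group the parabolic restriction functor $r_{\overline{G}}^{\overline{P}}$ is just the functor of $\overline{U}$-invariants. Since $\overline{U}$ is a $p$-group and $\ell \neq p$, its order is invertible in $W(k)$, so this functor is exact and commutes with arbitrary coefficient extension. In particular,
$$r_{\overline{G}}^{\overline{P}}(\CP_{\pi}) \otimes_{W(k)} \overline{\CK} \;\cong\; r_{\overline{G}}^{\overline{P}}(\CP_{\pi}\otimes_{W(k)}\overline{\CK}).$$
Applying Proposition~\ref{prop:endomorphisms} to rewrite the right-hand side as $\bigoplus_{s:s^{\reg}=s'} r_{\overline{G}}^{\overline{P}} \St_s$, and then applying Proposition~\ref{prop:steinberg restriction} to each summand, I obtain the displayed decomposition.

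For the commutativity and reducedness of $\End_{W(k)[\overline{M}]}(r_{\overline{G}}^{\overline{P}}\CP_{\pi})$, I would first note that by Proposition~\ref{prop:endomorphisms} the module $\CP_{\pi}$ is free of finite rank over $W(k)$; since taking $\overline{U}$-invariants with $|\overline{U}|$ invertible realizes $r_{\overline{G}}^{\overline{P}}\CP_{\pi}$ as a $W(k)$-direct summand of $\CP_{\pi}$, it too is free of finite rank over $W(k)$. Hence its endomorphism ring injects into its base change,
$$\End_{W(k)[\overline{M}]}(r_{\overline{G}}^{\overline{P}}\CP_{\pi}) \;\hookrightarrow\; \End_{\overline{\CK}[\overline{M}]}\bigl(r_{\overline{G}}^{\overline{P}}\CP_{\pi}\otimes_{W(k)}\overline{\CK}\bigr),$$
so it suffices to check commutativity and reducedness of the right-hand ring. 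The last display of the lemma exhibits the base change as a direct sum of irreducibles $\St_{\overline{M},t}$; these are pairwise non-isomorphic because the $\overline{M}$-conjugacy class $t$ determines its $\overline{G}$-closure $s$ (so no collision across different $s$), while within a fixed $s$ Proposition~\ref{prop:steinberg restriction} already guarantees distinct $t$'s with multiplicity one. Therefore the $\overline{\CK}$-endomorphism ring is a finite product of copies of $\overline{\CK}$, which is both commutative and reduced.

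There is no real obstacle: the lemma is a direct combination of Propositions~\ref{prop:endomorphisms} and~\ref{prop:steinberg restriction}, and the only point requiring care is the commutation of $r_{\overline{G}}^{\overline{P}}$ with $-\otimes_{W(k)}\overline{\CK}$, which follows from the pro-$p$ nature of $\overline{U}$ together with $\ell \neq p$.
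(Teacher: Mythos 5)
Your proposal is correct and is essentially the same argument the paper intends: the paper declares the lemma ``an immediate consequence of Proposition~\ref{prop:steinberg restriction},'' and your write-up simply expands that, combining the decomposition of $\CP_\pi\otimes\overline{\CK}$ from Proposition~\ref{prop:endomorphisms} with Proposition~\ref{prop:steinberg restriction}, together with the standard observation that $r_{\overline{G}}^{\overline{P}}$ (being $\overline{U}$-invariants with $|\overline{U}|$ invertible) is exact and commutes with $-\otimes_{W(k)}\overline{\CK}$, and that a $W(k)$-torsion-free endomorphism ring injecting into a finite product of copies of $\overline{\CK}$ must be reduced and commutative.
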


Suppose that each block
of $\overline{M}$ has size divisible by $\frac{n}{m}$.  Then $s'$ is conjugate 
to an element of $\overline{M}$, and this element is unique up to $\overline{M}$-conjugacy.
Then $\St_{\overline{M},s'}$ is the unique generic representation of $\overline{M}$ determined
by this conjugacy class in $\overline{M}$.  Let $\CP_{\overline{M},s'}$ be a
projective envelope of $\St_{\overline{M},s'}$.

\begin{proposition} \label{prop:projective restriction}
The restriction $r_{\overline{G}}^{\overline{P}} \CP_{\pi}$ is zero unless each block of
$\overline{M}$ has size divisible by $\frac{n}{m}$.  When the latter occurs there is an
isomorphism:
$$r_{\overline{G}}^{\overline{P}} \CP_{\pi} \cong \CP_{\overline{M},s'}.$$
\end{proposition}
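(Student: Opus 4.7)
The plan is to pin down the $\overline{\CK}$-fiber $r_{\overline{G}}^{\overline{P}} \CP_{\pi} \otimes_{W(k)} \overline{\CK}$ using Lemma~\ref{lemma:projective restriction} and then to promote the $\overline{\CK}$-level comparison to an integral isomorphism using the general fact that a finitely generated projective $W(k)[\overline{M}]$-module is determined up to isomorphism by the class of its $\overline{\CK}$-fiber in the Grothendieck group of $\overline{\CK}[\overline{M}]$-modules (since for a finite group the indecomposable projectives have linearly independent classes in the Grothendieck group of $\overline{\CK}$-representations). Throughout I will use that $r_{\overline{G}}^{\overline{P}}$ is exact, preserves projectives (its right adjoint $i_{\overline{P}}^{\overline{G}}$ is exact), and sends $W(k)$-free modules to $W(k)$-free ones (the averaging idempotent $|\overline{U}_{\overline{P}}|^{-1}\sum_u u$ realizes $\overline{U}_{\overline{P}}$-invariants as a direct summand, since $|\overline{U}_{\overline{P}}|$ is a power of $p$ and hence invertible in $W(k)$).

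For the vanishing statement, an $\overline{M}$-conjugacy class $t$ contributing to the decomposition in Lemma~\ref{lemma:projective restriction} is represented by an element of $\overline{M}$ that is $\overline{G}$-conjugate to some $s$ with $s^{\reg} = s'$; taking $\ell$-regular parts, $t^{\reg}$ is $\overline{G}$-conjugate to $s'$. Since $s'$ has $m$ irreducible factors of identical degree $n/m$, every irreducible factor of $t^{\reg}$ has degree a multiple of $n/m$; as this irreducible factorization refines the block decomposition of $t^{\reg}$ induced by $\overline{M}$, each block size $n_i$ of $\overline{M}$ is forced to be divisible by $n/m$. When the divisibility condition fails, the sum in the lemma is empty, so $r_{\overline{G}}^{\overline{P}} \CP_{\pi} \otimes \overline{\CK} = 0$, and the torsion-freeness recalled above forces $r_{\overline{G}}^{\overline{P}} \CP_{\pi}$ itself to vanish.

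When the divisibility condition holds, one observes that there is a unique $\overline{M}$-conjugacy class in $\overline{M}$ that is $\overline{G}$-conjugate to $s'$, namely the class obtained by placing $m_i := n_i/(n/m)$ copies of $s'_0$ into block $i$. Applying Proposition~\ref{prop:endomorphisms} to $\overline{M}$ and this class identifies $\CP_{\overline{M}, s'} \otimes \overline{\CK}$ with the direct sum of $\St_{\overline{M}, \tilde s}$ as $\tilde s$ runs over $\overline{M}$-conjugacy classes whose $\ell$-regular part is the chosen class. I would then verify that the assignment $\tilde s \mapsto (s = [\tilde s]_{\overline{G}},\ t = \tilde s)$ defines a bijection onto the indexing set of Lemma~\ref{lemma:projective restriction}: the inverse sends $(s,t)$ to $\tilde s = t$, and the divisibility-uniqueness observation ensures that $t^{\reg}$ automatically coincides with the chosen $\overline{M}$-class. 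The corresponding $\overline{\CK}$-summands match term by term, giving $r_{\overline{G}}^{\overline{P}} \CP_{\pi} \otimes \overline{\CK} \cong \CP_{\overline{M}, s'} \otimes \overline{\CK}$; the Grothendieck-group uniqueness of projectives then upgrades this to $r_{\overline{G}}^{\overline{P}} \CP_{\pi} \cong \CP_{\overline{M}, s'}$.

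The main obstacle is the combinatorial bookkeeping for the indexing-set bijection in the divisibility case; one must argue precisely that, under divisibility, passage between $\overline{G}$- and $\overline{M}$-conjugacy of the relevant semisimple elements is controlled solely by the distribution of irreducible factors into blocks, so that no extraneous $\overline{M}$-conjugacy classes contribute. The vanishing argument and the invocation of the projective-module classification by $\overline{\CK}$-fibers are, by comparison, routine.
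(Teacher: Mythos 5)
Your argument is correct and gives a genuinely different route from the paper's, so let me compare the two. The paper produces an explicit map: it observes that $i_{\overline{P}}^{\overline{G}} \St_{\overline{M},s'}$ has a unique generic Jordan--H\"older constituent, necessarily isomorphic to $\pi$, so projectivity of $\CP_\pi$ yields a map $\CP_\pi \to i_{\overline{P}}^{\overline{G}} \St_{\overline{M},s'}$; Frobenius reciprocity converts this into a surjection $r_{\overline{G}}^{\overline{P}} \CP_\pi \to \St_{\overline{M},s'}$, exhibiting $\CP_{\overline{M},s'}$ as a direct summand of the (projective) restriction, and only then does the paper invoke the $\overline{\CK}$-fiber decomposition of Lemma~\ref{lemma:projective restriction} to rule out further summands. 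You instead match the $\overline{\CK}$-fibers completely first and then appeal to the classification of finitely generated projective $W(k)[\overline{M}]$-modules by their images in $K_0(\overline{\CK}[\overline{M}])$ --- that is, linear independence of the columns of the decomposition matrix together with Krull--Schmidt over the complete local ring $W(k)$. This is sound: restriction preserves projectivity and finite generation, and the indexing bijection you set up between $\{\tilde s : \tilde s^{\reg}$ is $\overline{M}$-conjugate to $\overline{s'}\}$ and $\{(s,t) : s^{\reg}=s', t \sim_{\overline{G}} s, t \subset \overline{M}\}$ is correct precisely because of the uniqueness of the $\overline{M}$-class lying over $s'$. Your vanishing argument is also equivalent in substance but packaged differently: the paper argues directly from supercuspidal supports of Jordan--H\"older constituents, while you read off vanishing of the $\overline{\CK}$-fiber and then use $W(k)$-flatness. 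What your route buys is uniformity --- both parts of the proposition fall out of the single $\overline{\CK}$-fiber computation --- at the cost of relying on a classification theorem rather than constructing the isomorphism; the paper's route is slightly more hands-on in that it produces the summand map explicitly before comparing multiplicities.
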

\begin{proof}
Every Jordan-H\"older constituent of $\CP_{\pi}$ has supercuspidal support corresponding to
a tensor product of $m$ copies of the representation with supercuspidal support $s'_0$, and thus
$r_{\overline{G}}^{\overline{P}} \CP_{\pi} = 0$ unless the condition on the block size of $M$ holds.
When this condition does hold, $r_{\overline{G}}^{\overline{P}} \CP_{\pi}$ is projective, and hence
a direct sum of indecomposable projectives.  Moreover, $i_{\overline{P}}^{\overline{G}} \St_{\overline{M},s'}$
contains a unique generic Jordan-H\"older constituent; this constituent is necessarily isomorphic to $\pi$,
and thus yields a map $\CP_{\pi} \rightarrow i_{\overline{P}}^{\overline{G}} \St_{\overline{M},s'}$.
By Frobenius reciprocity we obtain a map:
$$r_{\overline{G}}^{\overline{P}} \CP_{\pi} \rightarrow \St_{\overline{M},s'},$$
and therefore $r_{\overline{G}}^{\overline{P}} \CP_{\pi}$ has a summand isomorphic to
$\CP_{\overline{M},s'}$.  It is then easy to see that this is the only summand, by tensoring with $\overline{\CK}$
and applying Lemma~\ref{lemma:projective restriction}.
\end{proof}

\begin{corollary} \label{cor:finite endomorphisms}
The map $$e_{s'} Z(W(k)[\overline{G}]) \rightarrow \End_{W(k)[\overline{G}]}(\CP_{\pi})$$
admits a section:
$$\End_{W(k)[\overline{G}]}(\CP_{\pi}) \rightarrow e_{s'} Z(W(k)[\overline{G}])$$
such that the composition:
$$\End_{W(k)[\overline{G}]}(\CP_{\pi}) \rightarrow e_{s'} Z(W(k)[\overline{G}]) \rightarrow
\End_{W(k)[\overline{G}]}(\CP_{\pi})$$
is the identity.
\end{corollary}
\begin{proof}
The $W(k)[\overline{G}]$-module $e_{s'} W(k)[\overline{G}]$ is a projective module,
and is therefore a direct sum (with multiplicities)
of projective envelopes of modules in the block containing $\pi$.  Any such module $\pi'$ has
supercuspidal support corresponding to $(s'_0)^m$, and hence cuspidal support
of the form $(\overline{M},\pi_{\overline{M},s'})$ for some Levi $\overline{M}$ of $G$.
It is easy to see that $i_{\overline{P}}^{\overline{G}} \CP_{\overline{M},s'}$ is then a
projective envelope of $\pi'$.  Thus $e_{s'} W(k)[\overline{G}]$ is isomorphic
to a direct sum of modules of the form $i_{\overline{P}}^{\overline{G}} \CP_{\overline{M},s'}$
for various $\overline{M}$.

As $\CP_{\overline{M},s'}$ is isomorphic to a parabolic restriction of $\CP_{\pi}$,
the endomorphisms of $\CP_{\pi}$ act naturally on $\CP_{\overline{M},s'}$, and hence
on $i_{\overline{P}}^{\overline{G}} \CP_{\overline{M},s'}$.  This gives an action
of $\End_{W(k)[\overline{G}]}(\CP_{\pi})$ on $e_{s'} W(k)[\overline{G}]$, via
elements of the center of $e_{s'} W(k)[\overline{G}]$.  The resulting map
$\End_{W(k)[\overline{G}]}(\CP_{\pi}) \rightarrow e_{s'} Z(W(k)[\overline{G}])$ is
the desired section.
\end{proof}

We now obtain more precise results about $\CP_{\pi}$ under the additional hypothesis that
$\ell > n$.

Let $\Phi_e(x)$ be the $e$th cyclotomic polynomial.
\begin{lemma}
Let $r$ be the order of $q$ mod $\ell$, and suppose there exists an
$e$ not equal to $r$ such that $\ell$ divides $\Phi_e(q)$.  Then $\ell$ divides $e$.
\end{lemma}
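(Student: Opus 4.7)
The plan is to exploit the factorization $x^e - 1 = \prod_{d \mid e} \Phi_d(x)$ in $\ZZ[x]$ and show that if the conclusion fails, then the reduction of $q$ mod $\ell$ would be a repeated root of $x^e - 1 \pmod \ell$, which contradicts separability.

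First I would verify that $r$ divides $e$: since $\ell \mid \Phi_e(q)$ and $\Phi_e(q) \mid q^e - 1$, we have $q^e \equiv 1 \pmod \ell$, forcing the multiplicative order $r$ of $q$ to divide $e$. By hypothesis $r \neq e$, so $r$ is a proper divisor of $e$, and hence $\Phi_r(x)$ and $\Phi_e(x)$ are two distinct irreducible factors of $x^e - 1$ in $\ZZ[x]$.

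Next I would show that $\ell$ also divides $\Phi_r(q)$. This follows from the factorization $q^r - 1 = \prod_{d \mid r} \Phi_d(q)$: the prime $\ell$ must divide some $\Phi_d(q)$ with $d \mid r$, but for any proper divisor $d < r$ one has $q^d \not\equiv 1 \pmod \ell$ by minimality of $r$, so $\ell \nmid q^d - 1 = \prod_{d' \mid d} \Phi_{d'}(q)$ and in particular $\ell \nmid \Phi_d(q)$. Therefore $\ell \mid \Phi_r(q)$.

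Finally I would combine these observations. The image $\bar q$ of $q$ in $\FF_\ell$ is simultaneously a root of the distinct factors $\Phi_r(x)$ and $\Phi_e(x)$ of $x^e - 1 \pmod \ell$, so it is at least a double root of $x^e - 1 \pmod \ell$. But the derivative of $x^e - 1$ is $e x^{e-1}$, which is coprime to $x^e - 1$ modulo $\ell$ whenever $\ell \nmid e$ (recall $\gcd(q,\ell)=1$, so $\bar q \neq 0$). Hence $\ell \mid e$, as required. There is no serious obstacle here; the only subtle point is checking that $\ell \mid \Phi_r(q)$, and that follows directly from the telescoping factorization of $q^r - 1$.
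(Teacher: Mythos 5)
Your argument is correct and follows essentially the same route as the paper's: show $r \mid e$, show $\ell \mid \Phi_r(q)$, and conclude that $\bar q$ is a double root of $x^e - 1$ modulo $\ell$, which forces $\ell \mid e$. You merely spell out two steps the paper treats as obvious, namely the verification that $\ell \mid \Phi_r(q)$ via the telescoping factorization of $q^r-1$, and the derivative computation showing a double root requires $\ell \mid e$.
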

\begin{proof}
As $q$ has exact order $r$ mod $\ell$, it is clear that $\ell$ divides $\Phi_r(q)$,
and also that $r$ divides $e$.  The polynomial $x^e - 1$ is divisible by 
$\Phi_r(x)\Phi_e(x)$; as $q$ is a root of both of these polynomials
mod $\ell$ we see that $x^e - 1$ has a double root mod $\ell$, and so $\ell$ divides $e$ as
required.
\end{proof}

It follows that for $\ell > n$, there exists at most one $e$ dividing $n$ with
$\Phi_e(q)$ divisible by $\ell$.

As a result, one has:
\begin{lemma}
Let $s'$ be an $\ell$-regular semisimple element of $\GL_n(\FF_q)$,
whose characteristic polynomial is reducible, and
suppose that there exists a semisimple element $s$ of $\GL_n(\FF_q)$
with irreducible characteristic polynomial such that $s' = s^{\reg}$.
Suppose also that $\ell > n$.  Then for any $s$ such that $s' = s^{\reg}$,
either $s = s'$ or the characteristic polynomial of $s$ is irreducible.
\end{lemma}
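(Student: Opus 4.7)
The plan is to couple the prime-to-$\ell$ and $\ell$-parts of eigenvalues, using the existence of one $s$ with irreducible characteristic polynomial satisfying $s^{\reg}=s'$ to pin down numerical invariants of $s'$ that then apply to every such $s$.

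First I would extract the shape of $s'$. Since the eigenvalues of the given $s$ form a single Galois orbit $\{\lambda^{q^j}\}$ with $\lambda$ generating $\FF_{q^n}$, writing $\lambda = \lambda'\lambda''$ (prime-to-$\ell$ part times $\ell$-part) shows that the eigenvalues of $s^{\reg} = s'$ are the $(\lambda')^{q^j}$, so the characteristic polynomial of $s'$ equals $f(x)^m$ with $f$ irreducible of degree $d = [\FF_q(\lambda') : \FF_q]$ and $m = n/d$. Reducibility forces $m > 1$, and by the recollection preceding the lemma (applied to the cuspidal-but-not-supercuspidal representation corresponding to $s'$) one has $m \in \{1, e_q, \ell e_q, \ell^2 e_q, \dots\}$; combined with $m \mid n < \ell$ this yields $m = e_q$.

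I would then establish the coprimality $\gcd(d,m) = 1$. Writing $\lambda''$ of order $\ell^a$ with $a \geq 1$ (if $a=0$ then $\lambda \in \FF_{q^d}$ and $m=1$, a contradiction), the condition that $\lambda$ generates $\FF_{q^n}$ translates to $\mathrm{lcm}(d, N_a) = n = dm$, where $N_a$ denotes the order of $q$ modulo $\ell^a$. Since $N_a \mid n < \ell$ we have $\ell \nmid N_a$; and because the kernel of $(\ZZ/\ell^a)^\times \to (\ZZ/\ell)^\times$ is an $\ell$-group, $N_a/e_q$ is a power of $\ell$. These two facts together force $N_a = e_q = m$, so $\mathrm{lcm}(d,m) = dm$, i.e.\ $\gcd(d,m) = 1$.

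Now for the main statement, let $s$ be any semisimple element with $s^{\reg} = s'$, and suppose $s \neq s'$. Then some eigenvalue $\mu = \mu'\mu''$ of $s$ has $\mu''$ of order $\ell^a$ with $a \geq 1$. The prime-to-$\ell$ part $\mu'$ is an eigenvalue of $s'$ and therefore generates $\FF_{q^d}$. Because $\mu$ lies in $\FF_{q^e}$ for some $e \leq n$, the same argument as above applied to $\mu$ in place of $\lambda$ gives $N_a = e_q = m$, so the Galois orbit of $\mu$ has size $\mathrm{lcm}(d, m) = dm = n$. Hence the minimal polynomial of $\mu$ over $\FF_q$ has degree $n$ and coincides with the characteristic polynomial of $s$, which is therefore irreducible. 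The main obstacle is the number-theoretic bookkeeping with orders of $q$ modulo powers of $\ell$; the hypothesis $\ell > n$ is used precisely to exclude $\ell \mid N_a$, which would otherwise leave room for $\mu''$ large enough to break the dichotomy.
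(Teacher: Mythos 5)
Your argument is correct and follows essentially the same path as the paper's: both decompose eigenvalues into $\ell$-regular and $\ell$-power parts and use $\ell > n$ to force the $\ell$-power part (when nontrivial) to generate an extension of degree $e_q$, whence any eigenvalue with nontrivial $\ell$-part already generates $\FF_{q^n}$. You phrase the arithmetic via the orders $N_a$ of $q$ modulo $\ell^a$ and make the coprimality $\gcd(d,m)=1$ an explicit intermediate step, whereas the paper uses the preceding cyclotomic-polynomial lemma and works with the quantity $e_\ell$ directly, but these are cosmetic differences.
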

\begin{proof}
Fix a semisimple element $s$ with irreducible characteristic polynomial such that
$s' = s^{\reg}$.  Then $s$ is contained in a subgroup of $G_n$ isomorphic to
$\FF_{q^n}^{\times}$, and (when considered as an element of $\FF_{q^n}^{\times}$),
$s$ generates $\FF_{q^n}$ over $\FF_q$.  We regard $s$ and $s'$ as elements of
$\FF_{q^n}$.  Write $s = s's_{\ell}$, where $s_{\ell}$ is $\ell$-power torsion.
Then there are integers $e_{\ell}$ and $e'$ dividing $n$ such that
$s'$ generates $\FF_{q^{e'}}$ over $\FF_q$ and $s_{\ell}$ generates $\FF_{q^{e_{\ell}}}$
over $\FF_q$.  The least common multiple of $e'$ and $e_{\ell}$ is equal to $n$.
Note that $e'$ is not equal to $n$, as this would imply that the characteristic polynomial
of $s'$ was irreducible.

In particular, $e_{\ell}$ cannot be equal to $1$, so $e_{\ell}$ is a nontrivial root
of unity.  Thus $\ell$ divides $\Phi_{e_{\ell}}(q)$, and hence $e_{\ell}$ is the unique
$m < n$ such that $\ell$ divides $\Phi_m(q)$.

Now if $s$ is an arbitrary element with $s^{\reg} = s$, we can write $s$ as
$s's_{\ell}$ for some $\ell$-power root of unity $s_{\ell}$.  
Either $s_{\ell}$ is trivial or $s_{\ell}$ lies in $\FF_{q^{e_{\ell}}}$;
as we know that the least common multiple of $e'$ and $e_{\ell}$ is $n$ the latter
case implies that $s$ generates $\FF_{q^n}$ over $\FF_q$, as required.
\end{proof}

In particular, if $\ell > n$, and $\sigma$ is a representation that is cuspidal but not
supercuspidal, corresponding to a semisimple element $s'$, then every irreducible
representation $\tsigma$ of $G_n$ over $\overline{\CK}$ whose mod $\ell$ reduction contains
$\sigma$ is either a supercuspidal lift of $\sigma$, or has supercuspidal support given by
$s'$.  We thus have:

\begin{corollary}
For $\ell > n$, and $\pi$ cuspidal but not supercuspidal, we have:
$$\CP_{\sigma} \otimes \overline{\CK} \cong \St_{s'} \oplus \bigoplus_{\tsigma} \tsigma,$$
where $\tsigma$ runs over the supercuspidal representations lifting $\sigma$.
\end{corollary}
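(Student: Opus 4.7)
The plan is to combine Proposition~\ref{prop:endomorphisms} with the dichotomy established by the preceding lemma. Proposition~\ref{prop:endomorphisms} already gives an identification
$$\CP_{\sigma} \otimes_{W(k)} \overline{\CK} \;\cong\; \bigoplus_{s:\, s^{\reg}=s'} \St_s,$$
so the content of the corollary is really just a description of the set of semisimple $s$ with $s^{\reg}=s'$ under the hypothesis $\ell>n$, together with an identification of the resulting $\St_s$'s with the representations in the corollary's statement.

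First, I would invoke the preceding lemma: since $\ell>n$ and $\sigma$ is assumed to be cuspidal but not supercuspidal (so the hypotheses of the lemma are in force for $s'$), every semisimple $s$ with $s^{\reg}=s'$ either satisfies $s=s'$ or has irreducible characteristic polynomial. This cleanly partitions the index set of the direct sum into two pieces.

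Next, I would handle each piece of the partition. The contribution from $s=s'$ is the single summand $\St_{s'}$, which gives the first term in the asserted decomposition. For the $s$ with irreducible characteristic polynomial, $\St_s$ is itself the cuspidal representation of $\overline{G}$ over $\overline{\CK}$ attached to $s$ by Deligne--Lusztig theory (in the cuspidal case, $\overline{M}_s=\overline{G}$ so $I_s=\St_s$ and both are irreducible cuspidal). By the paragraph on reduction mod $\ell$ recalled in this section, its reduction mod $\ell$ is the irreducible cuspidal representation attached to $s^{\reg}=s'$, namely $\sigma$ itself. Thus each such $\St_s$ is an irreducible cuspidal lift of $\sigma$ to characteristic zero, which in the $\overline{\CK}$-setting is the same as a supercuspidal lift. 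Conversely, every supercuspidal lift $\tsigma$ of $\sigma$ corresponds to some semisimple $s$ with irreducible characteristic polynomial and $s^{\reg}=s'$, so this piece of the sum enumerates precisely the supercuspidal lifts $\tsigma$ of $\sigma$, each appearing once.

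There is no substantive obstacle here beyond carefully checking that no $\tsigma$ is double-counted and that no lift is missed; both follow from the bijection between semisimple conjugacy classes with $s^{\reg}=s'$ and irreducible cuspidal representations reducing to $\sigma$, which is already part of the summary of reduction mod $\ell$ recalled earlier in the section.
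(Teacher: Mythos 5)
Your argument is correct and is essentially the same as the paper's (which gives only the single sentence preceding the corollary): apply Proposition~\ref{prop:endomorphisms} to get $\CP_{\sigma} \otimes \overline{\CK} \cong \bigoplus_{s: s^{\reg}=s'} \St_s$, then use the preceding lemma's dichotomy (for $\ell > n$, each $s$ with $s^{\reg}=s'$ is either $s'$ itself or has irreducible characteristic polynomial) together with the identification of $\St_s$ for $s$ irreducible with the corresponding supercuspidal lift of $\sigma$.
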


\section{Structure of $\CP_{K,\tau}$ in depth zero} \label{sec:zero generic}

The goal of the next four sections will be to apply the finite group theory
of section~\ref{sec:finite} to understand the structure of $\CP_{K,\tau}$.
Recall that, by definition, we have

$$\CP_{K,\tau} = \cInd_K^G \tkappa \otimes \CP_{\sigma}.$$

The representation $\sigma$ is inflated from a cuspidal representation
of $\GL_{\frac{n}{ef}}(\FF_{q^f})$; such a representation is of the
form $\St_{s'}$ for some semisimple $\ell$-regular element
$s'$ of $\GL_{\frac{n}{ef}}(\FF_{q^f})$.  For conciseness we abbreviate
$\GL_{\frac{n}{ef}}(\FF_{q^f})$ by $\overline{G}$ for this section.

The decomposition
$$\CP_{\sigma} \otimes \overline{\CK} \cong \bigoplus_s \St_s$$
of Proposition~\ref{prop:endomorphisms} gives rise to a decomposition:
$$\CP_{K,\tau} \otimes \overline{\CK} \cong \bigoplus_s \cInd_K^G \tkappa \otimes \St_s,$$
where $s$ runs over a set of representatives for the $\overline{G}$-conjugacy 
classes of semisimple elements $s$ whose $\ell$-regular part is conjugate to $s'$.

\begin{definition} Let $(K,\kappa \otimes \sigma)$ be a maximal distinguished cuspidal
$k$-type.  A {\em generic pseudo-type} attached to $(K,\kappa \otimes \sigma)$
is a $\overline{\CK}$-type of the form $(K,\tkappa \otimes \St_s)$, where $s$ is
a semisimple element of $\overline{G}$ whose $\ell$-regular part is conjugate to $s'$.
\end{definition}

If $\St_s$ is cuspidal, then the generic pseudo-type $(K,\tkappa \otimes \St_s)$
is a maximal distinguished $\overline{\CK}$-type, whose mod $\ell$ reduction is the type
$(K,\kappa \otimes \sigma)$.  In general a generic pseudo-type
is not a type in the usual sense of the word; we will see that in some sense the Hecke
algebras attached to generic pseudo-types are analogues of spherical Hecke algebras.
(In particular, they can be identified with centers of Hecke algebras attached to types.)

In this section and section~\ref{sec:generic} we will prove several
basic results about the structure of representations induced from generic pseudo-types, as well
as certain natural $W(k)[G]$-submodules of these representations.

Our approach makes use of the theory of $G$-covers, which can be found
in~\cite{BK} for a field of characteristic zero, and~\cite{vig98} 
over a general base ring.  We largely follow
the presentation of~\cite{vig98}, section II.
Let $P = MU$ be a parabolic subgroup of $G$, with Levi subgroup $M$, unipotent
radical $U$, and opposite parabolic subgroup $P^{\circ} = MU^{\circ}$.
Let $(K,\tau)$ be an $R$-type of $G$, and let $K_M, K^+, K^-$ denote the intersections
$K \cap M$,  $K \cap U$; $K \cap U^{\circ}$, respectively.
Let $\tau_M$ be the restriction of $\tau$ to $K_M$.  

\begin{definition} The pair $(K,\tau)$ is {\em decomposed with respect to $P$}
if $K = K^- K_M K^+$, and $\tau$ is trivial on $K^+$ and $K^-$.  If $\tau$ is
the trivial representation we will sometimes say that $K$ is {\em decomposed with respect to $P$}.
\end{definition} 

The key example that motivates this definition is as follows: Let $P$ be a standard
parabolic subgroup of $G$, with Levi subgroup $M$, and let $\overline{P}$ and $\overline{M}$
be the images of $P \cap \GL_n(\OO_F)$ and $M \cap \GL_n(\OO_F)$ under the reduction map
$\GL_n(\OO_F) \rightarrow \overline{G} = \GL_n(\FF_q)$.  If we then take
$K_P$ to be the preimage of $\overline{P}$ in $\GL_n(\OO_F)$, and let $\tau$ be any representation
of $K_P$ inflated from $\overline{M}$ via the maps:
$$K_P \rightarrow \overline{P} \rightarrow \overline{M},$$
then one checks easily that the pair $(K_P,\tau)$ is decomposed with respect to $P$.  We will be
concerned primarily with this example for the majority of this section and the next.

Returning for a moment to
the general situation,
suppose that $(K,\tau)$ is decomposed with respect to $P$, and let $\lambda$ be
an element of $M$.  Following~\cite{vig98}, II.4, we say that $\lambda$ is
{\em positive} if $\lambda K^+ \lambda^{-1}$ is contained in $K^+$, and $\lambda^{-1} K^- \lambda$
is contained in $\lambda$.  We say $\lambda$ is {\em negative} if $\lambda^{-1}$ is positive.

We say that $\lambda$ is {\em strictly positive} if the following two conditions hold:
\begin{itemize}
\item For any pair of open compact subgroups $U_1, U_2$ of $U$, there exists a positive $m$ such that
$\lambda^m U_1 \lambda^{-m}$ is contained in $U_2$.
\item For any pair of open compact subgroups $U^{\circ}_1, U^{\circ}_2$ of 
$U^{\circ}$, there exists a positive $m$ such that
$\lambda^{-m} U^{\circ}_1 \lambda^{m}$ is contained in $U^{\circ}_2$.
\end{itemize}

The discussion of~\cite{vig98}, II.3 shows that when $(K,\tau)$ is decomposed with respect
to $P$, there is a natural $R$-linear injection: $T': H(M,K_M,\tau_M) \rightarrow H(G,K,\tau)$,
that takes every element of $H(M,K_M,\tau_M)$ supported on $K_M \lambda K_M$ to an element
supported on $K\lambda K$.  Moreover, let $H(M,K_M,\tau_M)^+$ be the subalgebra of $H(M,K_M,\tau_M)$ 
consisting of elements supported on double cosets $K_M \lambda K_M$ with $\lambda$ positive.  
Then the restriction $T^+$ of $T'$ to $H(M,K_M,\tau_M)^+$ is an algebra homomorphism.

We now have the following result:
\begin{proposition}[\cite{vig98}, II.6] \label{prop:G-cover}
Suppose there exists a central, strictly positive element $\lambda$ of $M$ such that
$T^+(1_{K_M \lambda K_M})$ is an invertible element of $H(G,K,\tau)$, where
$1_{K_M \lambda K_M}$ is the unique element of $H(M,K_M,\tau_M)$ that is supported on $K_M \lambda K_M$, and
whose values at $\lambda$ is the identity endomorphism of $\tau_M$.  Then $T^+$ extends uniquely to an algebra map:
$$T: H(M,K_M,\tau_M) \rightarrow H(G,K,\tau).$$
\end{proposition}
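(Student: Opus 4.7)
The plan is to define $T$ by ``inverting'' the image of $\ell := 1_{K_M \lambda K_M}$: for any $f \in H(M,K_M,\tau_M)$, I will find $n \geq 0$ such that $\ell^n f$ lies in $H(M,K_M,\tau_M)^+$, and then set
$$T(f) := T^+(\ell)^{-n}\, T^+(\ell^n f),$$
which makes sense in $H(G,K,\tau)$ thanks to the hypothesis that $T^+(\ell)$ is invertible. Because $\lambda$ is central in $M$, the double coset $K_M \lambda K_M$ equals $\lambda K_M$; the element $\ell$ is therefore central in $H(M,K_M,\tau_M)$, a direct support computation shows $\ell^n$ is supported on $K_M \lambda^n K_M$, and $\ell^n f$ is supported on $K_M \lambda^n \mu K_M$ whenever $f$ is supported on $K_M \mu K_M$.

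The key input is that the strict positivity of $\lambda$ forces $\lambda^n \mu$ to be positive for every $\mu \in M$ and every sufficiently large $n$. Indeed, $\mu K^+ \mu^{-1}$ is an open compact subgroup of $U$, and the first condition in the definition of strict positivity produces an $n$ with $\lambda^n (\mu K^+ \mu^{-1}) \lambda^{-n} \subseteq K^+$; centrality of $\lambda$ then gives $(\lambda^n \mu) K^+ (\lambda^n \mu)^{-1} \subseteq K^+$. Applying the corresponding condition on the opposite side to $\mu^{-1} K^- \mu \subseteq U^\circ$ and enlarging $n$ if necessary yields $(\lambda^n \mu)^{-1} K^- (\lambda^n \mu) \subseteq K^-$. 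Thus $\ell^n f \in H(M,K_M,\tau_M)^+$ as required, and this is the only genuinely nonformal point in the argument.

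The remaining checks are formal. Well-definedness: if $\ell^n f \in H(M,K_M,\tau_M)^+$, then so is $\ell^{n+1} f = \ell \cdot \ell^n f$, and because $T^+$ is an algebra map on the positive part we get $T^+(\ell)^{-(n+1)} T^+(\ell^{n+1} f) = T^+(\ell)^{-(n+1)} T^+(\ell) T^+(\ell^n f) = T^+(\ell)^{-n} T^+(\ell^n f)$, so the definition is independent of $n$. Extension of $T^+$ is the case $n = 0$. Multiplicativity: given $f, g$, pick $n$ large enough that $\ell^n f$, $\ell^n g$ both lie in $H(M,K_M,\tau_M)^+$; centrality of $\ell$ gives $\ell^{2n} fg = (\ell^n f)(\ell^n g)$ in $H(M,K_M,\tau_M)^+$, and $T(fg) = T^+(\ell)^{-2n} T^+(\ell^n f) T^+(\ell^n g) = T(f) T(g)$.

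Uniqueness is immediate: any extension $T'$ that is an algebra map must satisfy $T^+(\ell)^n T'(f) = T'(\ell)^n T'(f) = T'(\ell^n f) = T^+(\ell^n f)$ whenever $\ell^n f$ is positive, so $T'(f) = T^+(\ell)^{-n} T^+(\ell^n f) = T(f)$. The main obstacle is the positivity reduction in the first paragraph; everything afterward is formal manipulation built on centrality of $\ell$ and the fact that $T^+$ is already known to be an algebra map on $H(M,K_M,\tau_M)^+$.
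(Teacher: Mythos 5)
The paper does not prove this proposition; it cites it directly from Vign\'eras \cite{vig98}, II.6, so there is no internal proof to compare against. Your blind argument is correct and is essentially the standard Ore-localization argument underlying the result in Vign\'eras (and, in the complex case, in Bushnell--Kutzko): one localizes $H(M,K_M,\tau_M)^+$ at the central element $\ell = 1_{K_M\lambda K_M}$, and the invertibility of $T^+(\ell)$ is precisely the universal property needed to extend $T^+$. Your treatment of the positivity reduction---conjugating $K^+$ and $K^-$ by $\lambda^n\mu$ and using centrality of $\lambda$ to convert strict positivity into the needed containments---is the right nonformal step, and it is carried out correctly.

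One point in the formal part is worth spelling out. In the multiplicativity check you pass from $T^+(\ell)^{-2n}\,T^+(\ell^n f)\,T^+(\ell^n g)$ to $T^+(\ell)^{-n}\,T^+(\ell^n f)\,T^+(\ell)^{-n}\,T^+(\ell^n g)$; this requires $T^+(\ell)^{-n}$ to commute with $T^+(\ell^n f)$ inside $H(G,K,\tau)$, which is not automatic just because $\ell$ is central in $H(M,K_M,\tau_M)$---a priori $T^+(\ell)$ need not be central in the larger algebra. What saves it is that $\ell^n$ and $\ell^n f$ both lie in $H(M,K_M,\tau_M)^+$ and commute there, so applying the algebra homomorphism $T^+$ gives that $T^+(\ell)^n=T^+(\ell^n)$ commutes with $T^+(\ell^n f)$, hence so does its inverse. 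You implicitly rely on exactly this, so it is not a gap, but it deserves a sentence. Similarly, it is worth noting that the hypothesis $T^+(\ell)\in H(G,K,\tau)$ already forces $\ell\in H(M,K_M,\tau_M)^+$, i.e.\ $\lambda$ positive (not merely strictly positive in the sense of the two displayed conditions), so that $\ell^n f\in H^+$ really does imply $\ell^{n+1}f\in H^+$, which your well-definedness induction uses.
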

 
If $(K,\tau)$ is decomposed with respect to $P$, and the hypothesis of Proposition~\ref{prop:G-cover}
is satisfied, we say that $(K,\tau)$ is a $G$-cover of $(K_M,\tau_M)$.  We then have:

\begin{theorem}[\cite{BK-semisimple}, p. 55] \label{thm:G-cover}
Suppose that $(K,\tau)$ is a $G$-cover of $(K_M,\tau_M)$, with $\tau$ and $\tau_M$
defined over $\overline{\CK}$, and that the functor $\Hom_K(\tau,-)$ (resp. $\Hom_{K_M}(\tau_M,-)$)
is an equivalence of categories between a block of $\Rep_{\overline{\CK}}(G)$ (resp. $\Rep_{\overline{\CK}}(M)$)
and the category of right $H(G,K,\tau)$-modules (resp. $H(M,K_M,\tau_M)$-modules.)

Then for any $\overline{\CK}[M]$-module $\Pi$, we have an isomorphism of $H(G,K,\tau)$-modules:
$$\Hom_K(\tau, i_P^G \Pi) \cong \Hom_{H(M,K_M,\tau_m)}(H(G,K,\tau), \Hom_{K_M}(\tau_M, \Pi)).$$
\end{theorem}

The first goal of this section is to understand the $\overline{\CK}[G]$-module
$\cInd_K^G \tkappa \otimes \St_s$, in the so called {\em depth zero} case where $K$ is
the subgroup $\GL_n(\OO_F)$ of $G$ and $\tkappa$ is trivial.  Let $\overline{M}$
be the minimal split Levi of $\overline{G}$ containing $s$; conjugating $s$ if necessary we
assume that $\overline{M}$ is standard.  Then there is a unique standard Levi subgroup $M$ of $G$
such that the image of $M \cap \GL_n(\OO_F)$ in $\overline{G}$ is $\overline{M}$.  We also fix a
standard parabolic $P$ with Levi subgroup $M$, and let $\overline{P}$ be the image of
$P \cap \GL_n(\OO_F)$ in $\overline{G}$.  We then set $K_M = K \cap M$ and let $K_P$ be
the preimage of $\overline{P}$ in $K$.  We regard $\St_s$ as a representation of $K$ via inflation,
and $\St_{\overline{M},s}$ as a representation of $K_M$ or $K_P$ (depending on context) via
inflation from $\overline{M}$. 

There is then a close relationship between the pair $(K_P,\St_{\overline{M},s})$ and
the pair $(K,\St_s)$.  Indeed, we have an isomorphism:
$$\cInd_{K_P}^G \St_{\overline{M},s} \cong \cInd_K^G i_{\overline{P}}^{\overline{G}} \St_{\overline{M},s}$$
and (since $\St_s$ is a direct summand of $i_{\overline{P}}^{\overline{G}} \St_{\overline{M},s}$) it follows
that $\cInd_K^G \St_s$ is a direct summand of $\cInd_{K_P}^G \St_{\overline{M},s}$.

On the other hand, the pair $(K_P,\St_{\overline{M},s})$ is decomposed with respect to $P$, and indeed,
one has:

\begin{theorem}
The pair $(K_P,\St_{\overline{M},s})$ is a $G$-cover of the pair $(K_M,\St_{\overline{M},s})$.  Moreover, these
pairs satisfy the hypotheses of Theorem~\ref{thm:G-cover}.
\end{theorem}
\begin{proof}
That the pair in question is a $G$-over follows from~\cite{BK-semisimple}, Theorem 7.2, or, alternatively,
from our description of the Hecke algebra $H(G,K_P,\St_{\overline{M},s})$ below.

By construction, $(K_M,\St_{\overline{M},s})$ is a maximal distinguished cuspidal $M$-type,
and hence $\Hom_{K_M}(\St_{\overline{M},s},-)$ is an equivalence of categories from 
the block of $\Rep_{\overline{\CK}}(M)$ corresponding to the pair $(K_M,\St_{\overline{M},s})$
to the category of right $H(M,K_M,\St_{\overline{M},s})$-modules.  It then follows
from~\cite{BK-types}, Theorem 8.3 that $Hom_{K_P}(\St_{\overline{M},s},-)$
is an equivalence on a block of $\Rep_{\overline{\CK}}(G)$. 
\end{proof}

Let $\pi$ be an irreducible cuspidal $\overline{\CK}$-representation of $M$ containing
the type $(K_M,\St_{\overline{M},s})$.  As an immediate consequence of the above theorem
and Theorem~\ref{thm:G-cover} we deduce:

\begin{corollary} \label{cor:depth zero bernstein component}
The representations $\cInd_{K_P}^G \St_{\overline{M},s}$ and $\cInd_K^G \St_s$ are objects of the block 
$\Rep_{\overline{\CK}}(G)_{(M,\pi)}$.
\end{corollary}

We now turn to the question of computing endomorphism ring of
$\cInd_{K_P}^G \St_{\overline{M},s}$.  This ring is the Hecke algebra
$H(G,K_P,\St_{\overline{M},s})$.  The Mackey formula gives an isomorphism:
$$H(G,K_P,\St_{\overline{M},s}) \cong \bigoplus_g I_g(\St_{\overline{M},s})$$
where $g$ runs over a set of representatives for the double cosets $K_P g K_P$.
Without loss of generality we may restrict our attention to $g$ of the form
$w t$, where $w$ is a permutation matrix and $t$ lies in the standard maximal torus $T$ of $G$.
We may further assume that every entry of $t$ is a power of a fixed uniformizer $\unif$ of $F$.
Let $Z$ be the subgroup of $T$ consisting of elements $t$ such that every entry of $t$ is a power
of $\unif$.

We then have:
\begin{prop} Let $W_{\overline{M}}(s)$ be the group of permutation matrices $w$ that normalize $\overline{M}$, and such that
$w s w^{-1}$ is $\overline{M}$-conjugate to $s$.  Let $g$ have the form $w t$, with $w$ a permutation matrix and $t \in Z.$
Then $I_g(\St_{\overline{M},s})$ is one dimensional if $w$ is in $W_{\overline{M}}(s)$ and $t$ is in the center $Z_M$ of $M$,
and zero-dimensional otherwise.
\end{prop}
\begin{proof}
Suppose first that $t$ does not lies in $Z_M$, or that $w$ does not normalize $M$.  Then there is a unipotent
subgroup $\overline{U}$ of $\overline{M}$, contained in the image of the composition:
$$K_P \cap g K_P g^{-1} \rightarrow K \rightarrow \overline{G},$$ 
that acts
trivially on $\St_{\overline{M},s}^g$.  An element of $I_g(\St_{\overline{M},s})$ is, by definition, 
a $K_P \cap g K_P g^{-1}$-equivariant
map from $\St_{\overline{M},s}$ to $\St_{\overline{M},s}^g$; such a map must factor through the $\overline{U}$-coinvariants
of $\St_{\overline{M},s}$.  As $\St_{\overline{M},s}$ is a cuspidal representation of $\overline{M}$, this space
of coinvariants vanishes.

Now suppose that $t$ lies in $Z_M$ and $w$ normalizes $M$.  Then the image of $K_P \cap g K_P g^{-1}$
under reduction from $\GL_n(\OO_F)$ to $\overline{G}$ contains $\overline{M}$, and so we have an isomorphism:
$$\Hom_{K_P \cap g K_P g^{-1}}(\St_{\overline{M},s},\St_{\overline{M},s}^g) \cong
\Hom_{\overline{M}}(\St_{\overline{M},s},\St_{\overline{M},wsw^{-1}}).$$  The latter
is one-dimensional if, and only if, $w$ lies in $W_{\overline{M}}(s)$ and zero otherwise.
\end{proof}

The characteristic polynomial of $s$ factors as a product of irreducible polynomials $f_1^{m_1} \dots f_r^{m_r}$;
this induces a factorization of $W_{\overline{M}}(s)/W(M)$ as a product of symmetric groups $S_{m_i}$, and a
parallel factorization of $Z$ as a product of (free abelian) groups $Z_i$, such that 
the action of $W_{\overline{M}}(s)/W(M)$ on $Z_i$ factors through $S_{m_i}$ for all $i$.  We may regard elements of $Z_i$
in two ways: first, they are elements of $Z$, and can thus be considered as $n$ by $n$ diagonal matrices whose elements
are powers of $\unif$.  On the other hand, if we fix an identification of the quotient $S_{m_i}$ of $W_{\overline{M}}(s)$
with the Weyl group of $\GL_{m_i}(F_i)$, where $F_i$ is the unramified extension of $F$ of degree $d_i$,
and let $Z'_i$ denote the group 
of diagonal matrices in $\GL_{m_i}(F_i)$ whose entries are powers of $\unif$, 
then there is a unique identification of $Z_i$ with $Z'_i$ 
that is compatible with the actions of
$S_{m_i}$ on $Z_i$ and the Weyl group of $\GL_{m_i}(F_i)$ on $Z'_i$.

If we make these identifications, then the subspace $H_i$ of $H(G,K_P,\St_{\overline{M},s})$ supported on
double cosets of the form $K_P w z K_P$, with $z \in Z_i$ and $w \in S_{m_i}$ is a subalgebra, and in fact a familiar one.
Indeed, the construction of~\cite{BK}, 5.6, gives an isomorphism from the affine Hecke algebra $H(q,m_i)$
(regarded as the Hecke algebra $H(\GL_{m_i}(F_i), I)$, where $I$ is the Iwahori subgroup of $\GL_{m_i}(F_i)$)
to $H_i$.  This isomorphism is support-preserving, in the sense that it identifies the one-dimensional subspace
of $H(\GL_{m_i}(F_i), I)$ supported on $I w z I$, for $z \in Z_i$ and $w$ in the Weyl group of $\GL_{m_i}(F_i)$
with the subspace of $H_i$ supported on $K_P w z K_P$.  (This isomorphism depends on certain choices, but these
will be irrelevant for our purposes.)

In this way we find that $H(G,K_P,\St_{\overline{M},s})$ is a tensor product of Iwahori Hecke algebras $H_i$.
We will now relate this to $H(G,K,\St_s)$.  We have seen that $\cInd_K^G \St_s$ is a summand of
$\cInd_{K_P}^G \St_{\overline{M},s}$; in particular any central endomorphism of the latter commutes with
projection onto $\cInd_K^G \St_s$, and hence induces an endomorphism of $\cInd_K^G \St_s$.  We thus
have a natural map:
$$Z(H(G,K_P,\St_{\overline{M},s})) \rightarrow H(G,K,\St_s).$$
Moreover, since we can identify $H(G,K_P,\St_{\overline{M},s})$ with
$H(G,K,i_{\overline{P}}^{\overline{G}} \St_{\overline{M},s})$, this natural map
is support preserving, in the sense that an element of the domain supported on a union of double cosets
$K_P g_i K_P$ maps to an element of $H(G,K,\St_s)$ supported on the union of the double cosets $K g_i K$.

Moreover, we have:
\begin{proposition} \label{prop:depth zero center injectivity}
The map $Z(H(G,K_P,\St_{\overline{M},s})) \rightarrow H(G,K,\St_s)$ is injective.
\end{proposition}
\begin{proof}
As $\cInd_K^G \St_s$ is a direct summand of $\cInd_K^G i_{\overline{P}}^{\overline{G}} \St_{\overline{M},s}$,
any endomorphism of the former extends by zero to an endomorphism of the latter.  This allows us
to view $H(G,K,\St_s)$ as a $Z(H(G,K,i_{\overline{P}}^{\overline{G}} \St_{\overline{M},s})$-submodule 
of $H(G,K,i_{\overline{P}}^{\overline{G}} \St_{\overline{M},s})$; 
from this point of view the claim is that this submodule is not annihilated by any nonzero
element of $Z(H(G,K,i_{\overline{P}}^{\overline{G}} \St_{\overline{M},s}))$.  But 
$H(G,K,\tkappa \otimes i_{\overline{P}}^{\overline{G}} \St_{\overline{M},s})$ is
a tensor product of affine Hecke algebras; in particular (for instance, by Bernstein's presentation
of $H(q,n)$~\cite{Lu}), it is free over its center and
its center is a domain.  Thus {\em no} element of $H(G,K, i_{\overline{P}}^{\overline{G}} \St_{\overline{M},s})$ is
annihilated by any element of the center of $H(G,K,i_{\overline{P}}^{\overline{G}} \St_{\overline{M},s})$.
\end{proof}

In fact, we will show that this map is an isomorphism.  In light of the injectivity shown above,
and the fact that this map is support-preserving, this amounts to a dimension count. 

Let $z$ be an element of $Z$, and let $\tP_z$ be the intersection: $\GL_n(\OO_F) \cap z \GL_n(\OO_F) z^{-1}$.
Then the image of $\tP_z$ under the projection $\GL_n(\OO_F) \rightarrow \overline{G}$ is a parabolic subgroup
$\overline{P}_z$ of $\overline{G}$; this parabolic subgroup contains a standard Levi $\overline{M}_z$.

Since $G$ is the union of the double cosets $K z K$, for $z \in Z$, it suffices to compute
$H(G,K,\St_s)_{K z K} \cong I_z(\St_s)$ for all such $z$.  By definition, we have
$$I_z(\St_s) = \Hom_{\tP_z}(\St_s, \St_s^z) = \Hom_{\overline{P}_z}(\St_s, \St_s^z),$$
where the last equality follows by noting that the kernel of the map $\tP_z \rightarrow \overline{P}_z$
acts trivially on both $\St_s$ and $\St_s^z$. 
Let $\overline{U}_z$ be the unipotent radical of $\overline{P}_z$.  Then for any $u \in \tP_z$ that maps
to $\overline{U}_z$, the element $z^{-1} u z$ acts trivially on $\St_s$, and so $u$ acts trivially on
$\St_s^z$.  In particular any $\overline{P}_z$-equivariant map from $\St_s$ to $\St_s^z$
factors through the $\overline{U}_z$-coinvariants of $\St_s$; this space of coinvariants is
precisely $r_{\overline{G}}^{\overline{P}_z} \St_s$.  A parallel argument shows that the
image of this map lies in a submodule of $\St_s^z$ that is isomorphic to
$r_{\overline{G}}^{\overline{P}_z} \St_s$.  In this way an element of $I_z(\St_s)$
gives rise to an element of $\End_{\overline{M}_z}(r_{\overline{G}}^{\overline{P}_z}\St_s)$, and the resulting map
$$I_z(\St_s) \rightarrow \End_{\overline{M}_z}(r_{\overline{G}}^{\overline{P}_z} \St_s)$$
is an isomorphism.  

By Proposition~\ref{prop:steinberg restriction} we have a direct sum decomposition:
$$r_{\overline{G}}^{\overline{P}_z}(\St_s) \cong \bigoplus_w \St_{\overline{M}_z, w s w^{-1}},$$
where $w$ runs over a set of representatives for 
$W(\overline{M}_z) \backslash W(\overline{M},\overline{M}_z) / W_{\overline{M}}(s)$,
and $\St_{\overline{M}_z, w s w^{-1}}$ is isomorphic to $\St_{\overline{M}_z, w' s (w')^{-1}}$
if, and only if $w$ and $w'$ lie in the same double coset.  We thus find:

\begin{proposition} For $z \in Z$, the dimension of $H(G,K,\St_s)_{K z K}$ is equal
to the cardinality of $W(\overline{M}_z) \backslash W(\overline{M},\overline{M}_z) / W_{\overline{M}}(s)$.
\end{proposition}

It remains to compute the dimension of $Z(H(G,K_P,\St_{\overline{M},s}))$ supported on $K z K$.
As $H(G,K_P,\St_{\overline{M},s})$ is a tensor produce of Iwahori Hecke algebras, it is first useful to observe:

\begin{lemma} \label{lemma:Iwahori central support}
For any element $z$ of $Z'_i$, 
the subspace elements of $H(\GL_{m_i}(F_i),I)$ that are central and supported on the union of
the double cosets $I w z w' I$, for $w,w'$ in the Weyl group of $\GL_{m_i}(F_i)$,
is one-dimensional.  Moreover, the sum of these spaces as $z$ varies is the entire center
of this Hecke algebra.
\end{lemma}
\begin{proof}
We are grateful to an anonymous referee for pointing out that this is essentially the content of Corollary 3.1 
of~\cite{dat-bernstein}.  We include an alternative argument for completeness.

Let $J$ be the subgroup $\GL_{m_i}(\OO_{F_i})$ of $\GL_{m_i}(F_i)$.  Then the induction $\cInd_I^J 1$ contains
the trivial character of $J$, and so $\cInd_J^{\GL_{m_i}(F_i)} 1$
is a direct summand of $\cInd_I^{\GL_{m_i}(F_i)} 1 = \cInd_J^{\GL_{m_i}(F)} \cInd_I^J 1$.  In particular the center
of $H(\GL_{m_i}(F_i),I)$ preserves the summand $\cInd_J^{\GL_{m_i}(F_i)} 1$; this gives a support-preserving map
from the center of $H(\GL_{m_i}(F_i),I)$ to the spherical Hecke algebra $H(\GL_{m_i}(F_i),J)$.  (This map
simply gives the action of the center of the unipotent block on $\cInd_J^{\GL_{m_i}(F_i)} 1$.)
One verifies easily that the center of $H(\GL_{m_i}(F_i),1)$ acts faithfully on $\cInd_J^{\GL_{m_i}(F_i)} 1$
(this is the same injectivity argument as in the proof of the injectivity of $Z(H(G,K_P,\St_{\overline{M},s}))$
into $H(G,K,\St_s)$.)

On the other hand, the standard description of the action of the spherical Hecke algebra on the irreducible
quotients of $\cInd_J^{\GL_{m_i}(F_i)} 1$, together with Bernstein-Deligne's description of the action
of the center of $\Rep_{\overline{\CK}}(\GL_{m_i}(F_i))$ on these irreducibles, shows that for any element $x$ of
the spherical Hecke algebra, there is an element of the center of $\Rep_{\overline{\CK}}(\GL_{m_i}(F_i))$
whose action on $\cInd_J^{\GL_{m_i}(F_i)} 1$ coincides with that of $x$.  Since this element also
gives an element of $Z(H(\GL_{m_i}(F_i), I))$, we find that the map $Z(H(\GL_{m_i}(F_i),I)) \rightarrow H(\GL_{m_i}(F_i),J)$
is surjective, and hence an isomorphism.

Now the dimension of $H(\GL_{m_i}(F_i),J)_{J z J}$ is one for all $z$, so there is a one-dimensional subspace
of the center of $H(\GL_{m_i}(F_i),I)$ supported on $J z J$.  Since $J z J$ is the union of $I w z w' I$,
as $w$ and $w'$ range over elements of the Weyl group of $\GL_{m_i}(F_i)$, the remaining claims follow.
\end{proof}

Our support preserving isomorphism of $H(G,K_P,\St_{\overline{M},s})$ with the tensor product of the spaces
$H(\GL_{m_i}(F_i),I)$ shows that for any $z$, there is a one-dimensional subspace of $Z(H(G,K_P,\St_{\overline{M},s})$
supported on the union of double cosets $K_P w z w' K_P$, for $w$ and $w'$ in $W_{\overline{M}}(s)$, and that the full
center is the sum of these subspaces.  Fix $z$, and let $S_z$ be the the set of $z' \in Z$ such that $K z' K = K z K$.
For any such $z'$ we have $z' = w^{-1} z w$ for some $w$ in the Weyl group of $G$.  On the other hand, if $w z w^{-1}$
is an element $z'$ of $Z$, then $w^{-1} M_z w = M_{z'} \subseteq M$, so $w$ lies in $W(\overline{M},\overline{M}_z)$.
We thus have a surjection $W(\overline{M},\overline{M}_z) \rightarrow S_z$, defined by $w \mapsto w^{-1} z w$.  This
surjection descends to a bijection of $W(\overline{M}_z) \backslash W(\overline{M},\overline{M}_z)$ with $S_z$.
Define an equivalence relation on $S_z$ by setting $z' \sim z''$ if the collection of cosets $K_P w z' w' K_P$ ($w,w' \in W_{\overline{M}}(s)$)
coincides with the collection $K_P w z'' w' K_P$ ($w,w' \in W_{\overline{M}}(s)$).  
Then $z' \sim z''$ if, and only if, $K_P z' K_P = K_P w z'' w' K_P$, for some $w$ and $w'$ in $W_{\overline{M}}(s)$.
This happens if, and only if, we have $z' = w z'' w^{-1}$ for some $w$ in $W_{\overline{M}}(s)$.

We thus obtain:

\begin{proposition} \label{prop:cosets}
The map $w \mapsto w^{-1} z w$ induces a bijection between: i
$$W(\overline{M}_z) \backslash W(\overline{M},\overline{M}_z) / W_{\overline{M}}(s) \rightarrow S_z / \sim.$$
\end{proposition}

\begin{corollary}
The map $Z(H(G,K_P,\St_{\overline{M},s})) \rightarrow H(G,K,\St_s)$ is an isomorphism.
\end{corollary}
\begin{proof}
We have shown that this map is an injection, and that the dimension of the subspace of the target supported on $K z K$
is equal to the cardinality of $W(\overline{M}_z) \backslash W(\overline{M},\overline{M}_z) / W_{\overline{M}}(s)$.
On the other hand, for each equivalence class $[z']$ in $S_z$, we have a one-dimensional subspace of $Z(H(G,K_P,\St_{\overline{M},s}))$
supported on the union of cosets $K_P w z' w' K_P$; for distinct equivalence classes these spaces have disjoint supports contained in $K z K$.  
Thus the dimension of $Z(H(G,K_P,\St_{\overline{M},s}))_{K z K}$ is at least the cardinality of $S_z$, and the result is immediate
from Proposition~\ref{prop:cosets}.
\end{proof}

On the other hand, we have shown that the map $\Hom_{K_P}(\St_{\overline{M},s}, -)$ is an equivalence of categories
between $\Rep_{\overline{\CK}}(G)_{M,\pi}$ and the category of $H(G,K_P,\St_{\overline{M},s})$-modules.  It follows that
the map $A_{M,\pi} \rightarrow H(G,K_P,\St_{\overline{M},s})$ giving the action of $A_{M,\pi}$ on $\cInd_{K_P}^G \St_{\overline{M},s}$
identifies $A_{M,\pi}$ with the center of $H(G,K_P,\St_{\overline{M},s})$.  From this point of view, the corollary above
asserts that the map $A_{M,\pi} \rightarrow H(G,K,\St_s)$ giving the action of $A_{M,\pi}$ on $\cInd_K^G \St_s$ is an isomorphism.

\section{Endomorphisms of $\CP_{K,\tau}$ in depth zero} \label{sec:zero endomorphisms}

Our next objective is to refine the results of the previous section, in order to obtain results that hold
over $W(k)$ rather than $\overline{\CK}$.  We retain the assumption that we are in depth zero, so that $K$ is a maximal
compact and $\tau$ is inflated from a cuspidal representation of the $k$-points of a general linear group.

For our purposes it will be necessary to work inductively, with a sequence of cuspidal types.  Fix an integer $n_1$ and 
let $\overline{G}_1$ be the group $\GL_{n_1}(k)$.  If we fix a supercuspidal representation $\sigma_1$ of $\overline{G}_1$,
corresponding to an irreducible $\ell$-regular conjugacy class $s'_1$ in $\overline{G}_1$
then, for each $m$ in the set $\{1, e_q, \ell e_q, \ell^2 e_q, \dots\}$ (where $e_q$ is the order of $q$ mod $\ell$) we have
a cuspidal representation $\sigma_m$ of $\overline{G}_m := \GL_{n_1m}(k)$ corresponding to the conjugacy class $(s'_1)^m$ in
$\overline{G}_m$. 

For each $m$, we obtain a cuspidal type $(K_m,\tau_m)$ in $G_m := \GL_{n_1m}(F)$ by setting $K_m = \GL_{n_1m}(\OO_F)$ and taking
$\tau_m$ to be the inflation of $\sigma_m$ to $K_m$.  Then if we write $\CP_m$ for the projective envelope of $\sigma_m$, we
have $\CP_{K_m,\tau_m} = \cInd_{K_m}^{G_m} \CP_m.$ 

Let $E_m$ denote the endomorphism ring of $\CP_{K_m,\tau_m}$.  The isomorphism:
$$\CP_m \otimes \overline{\CK} \cong \bigoplus_s \St_s,$$
where $s$ runs over the conjugacy classes in $\overline{G}_m$ with $\ell$-regular part $(s'_1)^m$, induces an isomorphism:
$$\CP_{K_m,\tau_m} \otimes \overline{\CK} \cong \bigoplus_s \cInd_{K_m}^{G_m} \St_s.$$

By the previous section, the summand $\cInd_{K_m}^{G_m} \St_s$ lies in $\Rep_{\overline{\CK}}(G_m)_{M_s,\pi_s}$, and the pairs
$(M_s,\pi_s)$ are not inertially equivalent for distinct $s$.  Thus any endomorphism of $\CP_{K_m,\tau_m}$ preserves each of the summands
$\cInd_{K_m}^{G_m} \St_s$, and we have an isomorphism:
$$E_m \otimes \overline{\CK} \cong \prod_s A_{M_s,\pi_s},$$
where for each $s$, $\pi_s$ is a supercuspidal representation of $M_s$ containing the type $(K_{M_s},\St_{\overline{M}_s,s})$.

We will inductively construct a family of endomorphisms $\Theta_{1,m}, \dots, \Theta_{m,m}$ of $\CP_{K_m,\tau_m}$.  These endomorphisms
will be characterized by their images in $A_{M_s,\pi_s}$ for each $s$.  We describe the rings $A_{M_s,\pi_s}$ more concretely as follows:
fix a uniformizer $\unif$ of $F$, and let $Z_s$ denote the subgroup of $M_s$ consisting of elements of the center of $M_s$ whose characteristic
polynomial has the form $(t - 1)^a(t - \unif)^b$ for integers $a$ and $b$.  We then have a map $\overline{\CK}[Z_s] \rightarrow \overline{\CK}[M_s/(M_s)^0]$ that takes
an element of $Z_s$ to its class modulo $(M_s)^0$, and its image is the subalgebra $\overline{\CK}[M_s/(M_s)^0]^{H_s}$, where $H_s$ is the group of 
unramified characters $\chi$ of $M_s$ such that $\pi_s \otimes \chi$ is isomorphic to $\pi_s$.

A particular choice of $\pi_s$ gives rise to an isomorphism: 
$$A_{M_s,\pi_s} \cong \left(\overline{\CK}[M_s/(M_s)^0]^{H_s}\right)^{W_{M_s}(s)}$$
as in~\ref{thm:B-D presentation}.
It will therefore be important to have a systematic way of choosing the $\pi_s$.  In depth zero this is rather straightforward:
fix a uniformizer $\unif$ of $F$.  Then for a given $m$, and any irreducible conjugacy class $s$ in $\overline{G}_m$ with $\ell$-regular part $(s_0)^m$,
we extend $\St_s$ to a representation $\Lambda_s$ of $F^{\times}K_m$ by letting $\unif$ act via the identity.  Then
$\cInd_{F^{\times}K_m}^{G_m} \Lambda_s$ is an irreducible supercuspidal representation of $G_m$, which we denote by $\pi_{m,s}$.
We call the collection $\{ \pi_{m,s} \}$ the {\em compatible family of cuspidals} attached to $s'_1$ and our choice of $\unif$.

For an arbitrary $s$ with $\ell$-regular part $(s'_1)^m$, we let $s_1, \dots, s_r$ be the irreducible factors of $s$, of block sizes $n_1m_1, \dots, n_1m_r$,
and let $M_s$ be standard Levi of $\GL_{n_0m}(F)$ with block sizes $n_1m_1, \dots, n_1m_r$, and take $\pi_{m,s}$ to be the tensor product
of the $\pi_{m_i,s_i}$.  This fixes, for all $s$, a corresponding isomorphism:
$$A_{M_s,\pi_s} \cong \left(\overline{\CK}[M_s/(M_s)^0]^{H_s}\right)^{W_{M_s}(s)}.$$

We next define distinguished elements of $A_{M_s,\pi_s}$:
\begin{definition} 
For $1 \leq i \leq m$, and $s$ a semisimple conjugacy class in $\overline{G}_m$ with $\ell$-regular part $(s'_1)^m$, let $\theta_{i,s}$
be the image, in $\overline{\CK}[M_s/(M_s)^0]$ of the sum of the elements $z$ in $Z_s$ with characteristic polynomial $(t-1)^{n_1(m-i)}(t-\unif)^{n_1i}$.
(Note that this sum may be empty, in which case $\theta_{i,s} = 0$.)
\end{definition}

The elements $\theta_{i,s}$ are clearly invariant under the actions of both $H_s$ and $W_{M_s}(s)$, and we may thus regard them as elements of $A_{M_s,\pi_s}$
via our chosen isomorphism.  This allows us to give a characterization of the endomorphisms $\Theta_{i,m}$.
\begin{theorem} \label{thm:depth zero Theta}
For $i \leq i \leq m$, there exists a unique endomorphism $\Theta_{i,m}$ in $E_m$ such that for all $s$ with $\ell$-regular part $(s'_1)^m$, the composed map
$$E_m \rightarrow E_m \otimes \overline{\CK} \rightarrow A_{M_s,\pi_s}$$
takes $\Theta_{i,m}$ to $\theta_{i,s}$.  Moreover, $\Theta_{m,m}$ is invertible.
\end{theorem}

It is clear that such a $\Theta_{i,m}$ is unique if it exists; the construction of the $\Theta_{i,m}$ will occupy the remainder of the section.

The construction of $\Theta_{m,m}$ for any $m$ is straightforward: it is simply the endomorphism of $\cInd_{K_m}^{G_m} \CP_m$ given by the action
of the central element $\unif$ of $G_m$.  This is clearly invertible, and its image in $A_{M_s,\pi_s}$ is equal to $\theta_{m,s}$.

The $\Theta_{i,m}$ for $i < m$ will be constructed by an inductive argument.  Let $m' < m$ be two consecutive elements of the set
$\{1, e_q, \ell e_q, \ell^2 e_q, \dots \}$, and set $j = \frac{m}{m'}$, so that $j = \ell$ or $j = e_q$.  Let $\overline{M}$ be the standard Levi
of $\overline{G}_m$ given by $j$ blocks of size $m'$, and let $\overline{P}$ be the standard (upper triangular) parabolic with Levi $\overline{M}$.
Further let $\CP_{\overline{M}}$ be the representation of $\CP_{m'}^{\otimes j}$ of $\overline{M}$,
and let $K'$ be the preimage of $\overline{P}$ in $K_m$.  Also let $M$ denote the standard Levi of $G_m$ given by $j$ blocks of size $m'$ and
let $K_M$ be the maximal compact subgroup $K' \cap M$ of $M$.  We may regard $\CP_{\overline{M}}$ as a representation of $K'$ or of $K_M$.
The key point in our inductive argument is then:

\begin{theorem} \label{thm:depth zero G-cover}
The pair $(K',\CP_{\overline{M}})$ is a $G$-cover of $(K_M,\CP_{\overline{M}})$.
\end{theorem}

The proof of this is somewhat complicated and will be postponed to the end of the section.  An immediate consequence is that we have a sequence
of maps:
$$E_{m'}^{\otimes j} \cong H(M,K_M,\CP_{\overline{M}}) \rightarrow H(G_m,K',\CP_{\overline{M}}) \cong 
H(G_m,K_m,i_{\overline{P}}^{\overline{G}_m} \CP_{\overline{M}})$$
where $\overline{P}$ is the standard (block upper triangular) parabolic with Levi $\overline{M}$.  Our first step is to compare the
latter with $E_m$.

By Proposition~\ref{prop:projective restriction}, we have an isomorphism: $r_{\overline{G}_m}^{\overline{P}_M} \CP_m \cong \CP_{\overline{M}}$.
Frobenius reciprocity then gives us a map:
$$i_{\overline{P}}^{\overline{G}_m} \CP_{\overline{M}} \rightarrow \CP_m.$$
Let $\CP'_m$ be the image of this map.

\begin{lemma} \label{lemma:cuspidal discrepancy}
The quotient $\CP_m/\CP'_m$ is cuspidal.  Moreover, every endomorphism of $\CP_m$ preserves $\CP'_m$.
\end{lemma}
\begin{proof}
Note that the composition:
$$\CP_{\overline{M}} \rightarrow r_{\overline{G}_m}^{\overline{P}} i_{\overline{P}}^{\overline{G}_m} \CP_{\overline{M}} \rightarrow 
r_{\overline{G}_m}^{\overline{P}} \CP_m$$
is an isomorphism by construction.  In particular the map
$$r_{\overline{G}_m}^{\overline{P}} i_{\overline{P}}^{\overline{G}_m} \CP_{\overline{M}} \rightarrow r_{\overline{G}_m}^{\overline{P}} \CP_m$$
is surjective, and factors through $r_{\overline{G}_m}^{\overline{P}} \CP'_m$.  The inclusion of $\CP'_m in \CP_m$ thus inducts
an isomorphism:
$$r_{\overline{G}_m}^{\overline{P}} \CP'_m \cong r_{\overline{G}_m}^{\overline{P}} \CP_m.$$

Now let $\pi$ be a Jordan-H\"older constituent of $\CP_m/\CP'_m$.  Then $r_{\overline{G}_m}^{\overline{P}} \pi = 0$
by the above calculation.  Suppose $\pi$ were not cuspidal.  Then it would have cuspidal support $(\overline{M}',\pi')$ for some
proper Levi subgroup $\overline{M}'$ of $\overline{G}_m$, with $\pi'$ a product of cuspidal representations $\sigma_{m''}$, with
$m'' <  m$.  In particular $\overline{M}'$ would then be conjugate to a subgroup of $\overline{M}$, and $r_{\overline{G}_m}^{\overline{P}} \pi$
could not be trivial.

Let $f$ be an endomorphism of $\CP_m$, and suppose $f$ does not preserve $\CP'_m$.  Then the image of $f(\CP'_m)$ in $\CP_m/\CP'_m$ is nonzero,
so we have a nonzero map from $\CP'_m$ to a cuspidal representation of $\overline{G_m}$.  This induces a nonzero map
of $i_{\overline{P}}^{\overline{G_m}} \CP_{\overline{M}}$ to a cuspidal representation of $\overline{G_m}$, which is impossible.
\end{proof}

Our next step is to compare $H(G_m,K_m,\CP_m)$ with $H(G_m,K_m,\CP'_m)$.  Let $Z_m$ be the subgroup of $G_m$ consisting of diagonal
matrices whose entries are powers of $\unif$.  Then $G_m$ is the union of double cosets $K_m z K_m$ for $z \in Z_m$.  Moreover, for any $z$, 
we have isomorphisms:
$$H(G_m, K_m, \CP_m)_{K_m z K_m} \cong I_z(\CP_m) \cong \End_{\overline{M}_z}(r_{\overline{G}_m}^{\overline{P}_z} \CP_m)$$
$$H(G_m, K_m, \CP'_m)_{K_m z K_m} \cong I_z(\CP'_m) \cong \End_{\overline{M}_z}(r_{\overline{G}_m}^{\overline{P}_z} \CP'_m)$$
where $\overline{P}_z$ is the image in $\overline{G}_m$ of $K_m \cap z K_m z^{-1}$, and $\overline{M}_z$ is the associated Levi.

If $z$ is not central in $G_m$, then $\overline{M}_z$ is a proper subgroup of $\overline{G}_m$.  Hence parabolic restriction to
$\overline{M}_z$ annihilates the cokernel of the map $\CP'_m \rightarrow \CP_m$, and so this inclusion induces an isomorphism
of $r_{\overline{G}_m}^{\overline{P}_z} \CP'_m$ with $r_{\overline{G}_m}^{\overline{P}_z} \CP_m$.  By contrast, if $z$ is central,
then $H(G_m,K_m,\CP_m)_{K_m z K_m}$ is given by the endomorphism ring of $\CP_m$, and hence, by Lemma~\ref{lemma:cuspidal discrepancy}
maps naturally to the endomorphism ring of $\CP'_m$ (and thus to $H(G_m,K_m,\CP'_m)$.)  Combining these maps for all $z$ (central or otherwise)
we find:

\begin{prop} \label{prop:surjection}
There is a surjection:
$$H(G_m,K_m,\CP_m) \rightarrow H(G_m,K_m,\CP'_m)$$
compatible with the inclusion of $\cInd_{K_m}^{G_m} \CP'_m$ in $\cInd_{K_m}^{G_m} \CP_m$ and the actions of the respective Hecke algebras
on these spaces.  Moreover, this map is an isomorphism away from double cosets of the form $K_m z K_m$ for $z$ central in $G_m$.
\end{prop}

Our next objective will be to compare $H(G_m,K_m,\CP'_m)$ with $H(G_m,K_m,i_{\overline{P}}^{\overline{G}_m} \CP_{\overline{M}})$.
By construction there is a surjection:
$$i_{\overline{P}}^{\overline{G}_m} \CP_{\overline{M}} \rightarrow \CP'_m$$
and this surjection induces a surjection of $\cInd_{K_m}^{G_m} i_{\overline{P}}^{\overline{G}_m}$ onto $\cInd_{K_m}^{G_m} \CP'_m$.
We then have:

\begin{lemma} \label{lemma:depth zero center}
Let $c$ be a central element of $H(G_m,K_m,i_{\overline{P}}^{\overline{G}_m} \CP_{\overline{M}})$.  Then $c$ preserves the
kernel of the map: 
$$\cInd_{K_m}^{G_m} i_{\overline{P}}^{\overline{G}_m} \rightarrow \cInd_{K_m}^{G_m} \CP'_m$$
and therefore descends to an element of $H(G_m,K_m,\CP'_m)$ with the same support as $c$.
\end{lemma}
\begin{proof}
After inverting $\ell$, $\CP'_m$ becomes a direct summand of $i_{\overline{P}}^{\overline{G}_m} \CP_{\overline{M}}$,
and so the module $\cInd_{K_m}^{G_m} \CP'_m \otimes \CK$ is a direct summand of
$\cInd_{K_m}^{G_m} i_{\overline{P}}^{\overline{G}_m} \CP_{\overline{M}} \otimes \CK$.  The element $c$
commutes with projection onto this summand, and so preserves the kernel of this projection.  On the other hand,
$\cInd_{K_m}^{G_m}  \CP'_m$ is $\ell$-torsion free, and so the kernel of the map
from $\cInd_{K_m}^{G_m} i_{\overline{P}}^{\overline{G}_m} \CP_{\overline{M}}$ onto
$\cInd_{K_m}^{G_m} \CP'_m$ consists of those elements of the kernel of the projection
that lie in $\cInd_{K_m}^{G_m} i_{\overline{P}}^{\overline{G}_m} \CP_{\overline{M}}$.  If $x$ is such an element,
it is clear that $cx$ is as well.
\end{proof}

We can now sketch an inductive construction of the $\Theta_{i,m}$.  Assume that for each $1 \leq i \leq m'$, we have
constructed elements $\Theta_{i,m'}$ as in Theorem~\ref{thm:depth zero Theta}.  Assume further that for $i < m'$
the element $\Theta_{i,m'}$ of $H(G_{m'}, K_{m'}, \CP_{m'})$ is supported away from double cosets of the form $K_{m'} z K_{m'}$
for $z \in Z_{m'}$ central.  Then we can construct the $\Theta_{i,m}$ for $i < m'$, as follows:

First, let $\tTheta_{i,m}$ be the element of $E_{m'}^{\otimes j}$ by the formula:
$$\tTheta_{i,m} = \sum\limits_{r_1 + \dots + r_j = i} \Theta_{r_1,m'} \otimes \dots \otimes \Theta_{r_j,m'},$$
where $0 \leq r_k \leq i$ for all $k$ and by convention $\Theta_{0,m'} = 1$.  For any $s_1, \dots, s_j$, each with $\ell$-regular
part $(s'_1)^{m'}$, the maps $E_{m'} \rightarrow \overline{\CK}[Z_{s_k}]$ takes $\Theta_{i,m'}$ to $\theta_{i,s_k}$.  Taking the
tensor product of each of these maps gives, for $s$ the conjugacy class in $\overline{G}_m$ with ``blocks'' $s_k$, a map:
$$E_{m'}^{\otimes j} \rightarrow \overline{\CK}[Z_s]$$
that takes $\tTheta_{i,m}$ to $\theta_{i,s}$.

Let $\overline{\Theta}_{i,m}$ be the image of $\tTheta_{i,m}$ under the map
$E_{m'}^{\otimes j} \rightarrow H(G_m,K'_m, \CP_{\overline{M}})$ coming from Theorem~\ref{thm:depth zero G-cover}.  Note that,
for $i < m$, the element $\overline{\Theta}_{i,m}$ is supported away from double cosets of the form $K_m z K_m$ with $z$ central in $G_m$.
(This is because $\overline{\Theta}_{i,m} = u T^+(\tTheta_{i,m}) u^{-1}$ for an invertible element $u$ of
$H(G_m,K_m,i_{\overline{P}}^{\overline{G_m}} \CP_{\overline{M}})$.  Since for $z$ central, $z$ normalizes $K_m$, so
conjugation by $u$ preserves $H(G_m,K_m,i_{\overline{P}}^{\overline{G}_m})_{K_m z K_m}.$  Since $T^+(\tTheta_{i,m})$ has no
support on $K_m z K_m$, neither can $\overline{\Theta}_{i,m}$.)

It now suffices to show that $\overline{\Theta}_{i,m}$ lies in the center of $H(G_m,K'_m,\CP_{\overline{M}})$.
Suppose this is the case.  Then $\overline{\Theta}_{i,m}$ gives rise to an element of $H(G_m,K_m,\CP'_m)$ supported away from
$K_m z K_m$ for $z$ central.  There is thus a unique lift of the image of $\overline{\Theta}_{i,m}$ to an element 
$\Theta_{i,m}$ of $H(G_m,K_m,\CP_m)$ supported away from $K_m z K_m$ for $z$ central.  

Now let $s$ be a semisimple conjugacy class in $\overline{G}_m$ with $\ell$-regular part $(s')^{m}$.  If $s$ is irreducible,
then $\theta_{i,s} = 0$ for $i < m$, and any element of $H(G_m,K_m,\CP_m)$ supported away from the cosets $K_m z K_m$ for
$z$ central maps to zero in $H(G_m,K_m,\St_s)$.  Thus $\Theta_{i,m}$ maps to $\theta_{i,s}$ for such $s$.

On the other hand, if $s$ is not irreducible, we may choose semisimple conjugacy classes $s_1, \dots, s_j$ in $G_{m'}$
such that $s$ is conjugate to the element of $G_m$ with ``blocks'' $s_1, \dots, s_j$, and let $\St_{\overline{M}, \vec{s}}$
denote the tensor product of the representations $\St_{s_i}$, considered as a representation of $\St_{\overline{M}}$.
We have a map $i_{\overline{P}}^{\overline{G}_m} \St_{\overline{M}, \vec{s}} \rightarrow \St_s$, and the action of
$\Theta_{i,m}$ on $\cInd_{K_m}^{G_m} \St_s$ is compatible, via this map, with the action of $\overline{\Theta}_{i,m}$ on
the summand $\cInd_{K_m}^{G_m} i_{\overline{P}}^{\overline{G}_m} \St_{\overline{M}, \vec{s}}$ of
$\cInd_{K_m}^{G_m} i_{\overline{P}}^{\overline{G}_m} \CP_M \otimes \overline{\CK}$. 

On the other hand, because the maps of Hecke algebras corresponding to $G$-covers are compatible with parabolic induction, the
action of $\overline{\Theta}_{i,m}$ on the compact induction $\cInd_{K_m}^{G_m} i_{\overline{P}}^{\overline{G}_m} \St_{\overline{M}, \vec{s}}$
is compatible with the action of $\tTheta_{i,m}$ on $\cInd_{K_M}^M \St_{\overline{M}, \vec{s}}$, in the following sense:
We have commutative diagram:
$$
\begin{array}{ccc}
E_{m'}^{\otimes j} & \rightarrow & \bigotimes_k H(G_{m'}, K_{m'}, \St_{s_k})\\
\downarrow & & \downarrow\\
H(G_m,K_m,i_{\overline{P}}^{\overline{G}_m} \CP_{M'}) & \rightarrow & H(G_m,K_m, i_{\overline{P}}^{\overline{G}} \bigotimes_k \St_{s_k}).
\end{array}
$$

Recall that we have a fixed identifications of $A_{M_s,\pi_s}$ with a subalgebra of $\overline{\CK}[Z_s]$.  Similarly we may
identify the center of $\Rep_{\overline{\CK}}(G_{m'})_{M_{s_k},\pi_{s_k}}$ with a subalgebra of $\overline{\CK}[Z_{s_k}]$,
and $Z_s$ is isomorphic to the product of the $Z_{s_k}$.  We then have a commutative diagram:
$$
\begin{array}{ccc}
A_{M_s,\pi_s} & \rightarrow & \overline{\CK}[Z_s]\\
\downarrow & & \downarrow\\
\bigotimes_k A_{M_{s_k},\pi_{s_k}} & \rightarrow & \bigotimes_k \overline{\CK}[Z_{s_k}]
\end{array}
$$
where the left-hand vertical map is given by Proposition~\ref{prop:Bernstein induction}.

The element $\theta_{i,s}$ of $\overline{\CK}[Z_s]$ corresponds to an element of $A_{M_s,\pi_s}$ and this acts
on $\cInd_{K_m}^{G_m} i_{\overline{P}}^{\overline{G}_m} \St_{\overline{M}, \vec{s}}$; the above commutative diagram
shows that this action is induced, via the map $T$, from the action of $\theta_{i,s}$ (considered as an element of
$\bigotimes_k A_{M_{s_k},\pi_{s_k}}$) on $\cInd_{K_M}^M \bigotimes_k \St_{s_k}$.  Since the action of $\theta_{i,s}$
coincides with that of $\tTheta_{i,m}$, and $T(\tTheta_{i,m})$ is equal to $\overline{\Theta}_{i,m}$ by definition,
we conclude that the action of $\overline{\Theta}_{i,m}$ on 
$\cInd_{K_m}^{G_m} i_{\overline{P}}^{\overline{G}_m} \St_{\overline{M}, \vec{s}}$
coincides with $\theta_{i,s}$.  Together with the compatibility of $\overline{\Theta}_{i,m}$ and $\Theta_{i,m}$,
this shows that $\Theta_{i,m}$ maps to $\theta_{i,s}$ for all $s$.

It thus remains to show that the element $\overline{\Theta}_{i,m}$ is central in $H(G_m,K'_m,\CP_{\overline{M}})$.
Since $\cInd_{K'_m}^{G_m} \CP_{\overline{M}}$ is $\ell$-torsion free, it suffices to prove this after tensoring
with $\overline{\CK}$.  We will show that the action of $\overline{\Theta}_{i,m}$ coincides with the action
of an element of the center of $\Rep_{\overline{\CK}}(G_m)$, proving the claim.

Indeed, let $x_i$ be an element of this center that acts via $\theta_{i,s}$ on the block
$\Rep_{\overline{\CK}}(G_m)_{M_s,\pi_s}$
for each semisimple conjugacy class $s$ with $\ell$-regular part $(s'_1)^m$.  Since the pairs $(M_s,\pi_s)$
give distinct blocks for distinct conjugacy classes $s$, such elements $x_i$ exist.

On the other hand, we have a direct sum decomposition:
$$\cInd_{K_m}^{G_m} i_{\overline{P}}^{\overline{G}_m} \CP_{\overline{M}} \otimes \CK \cong 
\bigoplus\limits_{s_1, \dots, s_j} \cInd_{K_m}^{G_m} i_{\overline{P}}^{\overline{G}_m} \otimes_k \St_{s_k}$$
and (for $s$ the semisimple conjugacy class given by a block diagonal matrix whose blocks are the $s_k$) the action
of $A_{M_s,\pi_s}$ on the summand corresponding to the $s_k$ is given by $\theta_{i,s}$.  Thus the action of
$\overline{\Theta}_{i,m}$ on $\cInd_{K_M}^{G_m} i_{\overline{P}}^{\overline{G}_m} \CP_{\overline{M}}$ coincides with
that of $x_i$, and is in particular central.  Our construction of the $\Theta_{i,m}$ is thus complete, modulo the proof
of Theorem~\ref{thm:depth zero G-cover}, which will occupy the remainder of this section.

\begin{lemma} \label{lemma:G-cover reduction}
Let $P = MU$ be a parabolic subgroup of $G_m$ with Levi $M$, and let $(K,\tau)$ be a pair consisting of
a compact open subgroup $K$ of $G_m$ and a $W(k)[K]$-module $\tau$ that is decomposed with respect to $P$.  Set $K_M = K \cap M$,
and let $\tau_M$ be the restriction of $\tau$ to $K_M$.  
Suppose that there are positive central elements $\lambda_1, \dots, \lambda_r$ of
$M$, whose product is strictly positive, and elements $x_1, \dots, x_r$ of $H(M,K_M,\tau_M)$ such that for each $i$,
$x_i$ is supported on $\lambda_i K_M$, and $T^+x_i$ is invertible in
$H(G_m,K,\tau)$.  Then $(K,\tau)$ is a $G_m$-cover of $(K_M,\tau_M)$.
\end{lemma}
\begin{proof}
Let $\lambda$ be the product of the $\lambda_i$, and $x$ the product of the $x_i$.  Then $x$ is supported on $K_m \lambda K_m$,
and $T^+ x$ is the product of the $T^+ x_i$, hence invertible.
We must show that there is a strictly positive central element $\lambda'$ of $M$ such that $1_{K_m \lambda' K_m}$ maps,
via $T^+$, to an invertible element of $H(G_m,K,\tau)$.  Fix any strictly positive central $\lambda'$ in $M$.  Then for
some sufficiently large $r$, $\lambda^r (\lambda')^{-1}$ is strictly positive.  We have:
$$T^+ [(x^r) 1_{K_m (\lambda')^{-1} K_m}] T^+ [1_{K_m \lambda' K_m}] = T^+ x^r,$$
and since $T^+ x^r$ is invertible one must have $T^+ [1_{K_m \lambda' K_m}]$ invertible as well.
\end{proof}

For $1 \leq i \leq j$, let $V_i$ be the span of the basis vectors $e_{m'(i-1) + 1}, \dots, e_{m'i}$ in $F^n$, so that
$M$ is the Levi of $G_m$ preserving the $V_i$.  Let $z$ be the central element of $M$ that acts by multiplication
by $\unif$ on $V_1$ and the identity on all other $V_i$, and let $w$ be the permutation matrix that maps
$e_i$ to $e_{(j-1)m' + i}$ for $1 \leq i \leq m'$ and $e_i$ to $e_{i-m'}$ for $m'+1 \leq i \leq jm'.$  Set $\Pi = wz$.
Then $\Pi$ normalizes $K'$.  Thus, if we let $\beta$ denote the automorphism of $\CP_{\overline{M}} = \CP_{m'}^{\otimes j}$ that 
cyclically permutes the tensor factors, we have a unique element $y$ of $H(G_m,K',\CP_{\overline{M}})$ supported on $K' \Pi K'$
that takes the value $\beta$ at $\Pi$.  Moreover, left multiplication by $y$ induces an isomorphism:
$$H(G_m,K',\CP_M)_{K' x K'} \rightarrow H(G_m,K',\CP_M)_{K' \Pi x K'}$$
for any $x$.

For $1 \leq i \leq j$, let $\lambda_i = w^{-i} \Pi^i$.  Then $\lambda_i$ is a central, positive element of $M$
(but is not strictly positive).  Let $x_i$ be the element of $H(M,K_M,\CP_{\overline{M}})$ giving
the action of $\lambda_i$ on $\cInd_{K_M}^M \CP_{\overline{M}}$.  Then $x_i$ is supported on $K_M \lambda_i K_M$.
It thus suffices to show that for each $i$, $T^+ x_i$ invertible in
$H(G_m,K',\CP_{\overline{M}})$.  

Equivalently, it suffices to show that the element $v_i$ defined by $v_i = T^+(x_i) y^{-i}$
is invertible.  Note that $v_i$ is supported on $K' w^{-i} K' \subset K$.  It follows that the
endomorphism $v_i$ of $\cInd_{K'}^{G_m} \CP_{\overline{M}} \cong \cInd_K^{G_m} i_{\overline{P}}^{\overline{G}_m} \CP_{\overline{M}}$
is induced by an endomorphism $\overline{v}_i$ of $i_{\overline{P}}^{\overline{G}_m} \CP_{\overline{M}}$.  We will show that
$\overline{v}_i$ is an isomorphism.

Indeed, unwinding the definition of $v_i$, we see that when considered as an element of the Hecke algebra
$H(\overline{G}_m, \overline{P}, \CP_{\overline{M}})$, the support of $\overline{v}_i$ is on the double coset
$\overline{P} w^{-i} \overline{P}$, and $\overline{v}_i$ is the unique element supported on this double coset that is given
at $w^{-i}$ by the isomorphism of $\CP_{\overline{M}}$ with $\CP_{\overline{M}}^{w^{-i}}$ that permutes the tensor factors. 

Note that the map $\overline{U} \rightarrow (\overline{P} \cap w^i \overline{P} w^{-i}) \setminus \overline{P}$ is bijective.
Thus the double coset $\overline{P} w^{-i} \overline{P}$ decomposes as the disjoint union of cosets $\overline{P} w^{-i} u$
as $u$ runs over the elements of $\overline{U}$.  It follows that, if we regard elements of $i_{\overline{P}}^{\overline{G}} \CP_{\overline{M}}$
as functions from $G$ to $\CP_{\overline{M}}$, then the action of $\overline{v}_i$ on a function $f$ this space is given (up to a power of $p$)
by first 
applying the ``permute the tensor factors'' automorphism of $\CP_{\overline{M}}$ to the values of $f$, and then
summing over all the translates of this new function by elements of the form $w^{-i} u$, for $u \in \overline{U}$.  This is precisely
the map described in Theorem 2.4 of~\cite{howlett-lehrer}, where it is shown to be an isomorphism.  Thus $v_i$ is an isomorphism for all $i$,
completing the proof.

\section{Generic pseudo-types} \label{sec:generic}

In this section and the next we will extend the techniques and arguments of the previous two sections to
arbitrary generic pseudo-types.  In every case the final results we obtain will be direct analogues of those from
the depth zero case, but there are many more technicalities that need to be addressed.  

Let $(K,\tau)$ be a maximal distinguished cuspidal $k$-type, with $\tau$ of the form $\kappa \otimes \sigma$.  For
suitable choice of $s$ we then obtain a generic pseudo-type $(K,\tkappa \otimes \St_s)$.  Attached to the type $(K,\tau)$
is a finite extension $E$ of $F$, of ramification index $e$ and residue class degree $f$ over $F$, and an embedding
$\GL_{\frac{n}{ef}}(E)$ into $G$ such $K$ contains $\GL_{\frac{n}{ef}}(\OO_E)$ and the chain of maps:
$$\GL_{\frac{n}{ef}}(\OO_E) \rightarrow K \rightarrow K/K_1$$
identifies $K/K_1$ with the quotient $\overline{G} = \GL_{\frac{n}{ef}}(\FF_{q^f})$ of $\GL_{\frac{n}{ef}}(\OO_E)$.

We will use these identifications to construct analogues of the Levi subgroups $M$ and $\overline{M}$ of
section~\ref{sec:zero generic}.  First,
let $\overline{M}$ be the minimal split Levi of $\overline{G}$ containing $s$.
We will make a choice of Levi subgroup
$M$ of $G$ depending on $\overline{M}$, as follows: let $V$ be an $n$-dimensional $F$-vector
space on which $G$ acts via the standard representation, so that $G \cong \GL(V)$.  
The distinguished subgroup $\GL_{\frac{n}{ef}}(E)$ of $G$ coming from
the type $(K,\tau)$ gives $V$ the structure of an $E$-vector space.  Let $L$ be an $\OO_E$-lattice in $V$
stable under $\GL_{\frac{n}{ef}}(\OO_E)$; then we have a map: $\GL_{\frac{n}{ef}}(\OO_E) \rightarrow \overline{G}$
coming from the isomorphism: $\overline{G} \cong \GL_{\FF_{q^f}}(L/\unif_E L)$.  The Levi subgroup
$\overline{M}$ of $\overline{G}$ then gives a direct sum decomposition:
$$L/\unif_E L = \oplus_i \overline{L}_i,$$ 
where the $\overline{L}_i$ are the minimal subspaces of $L/\unif_E L$ stable under $\overline{M}$.  Choose a lift
of this to a direct sum decomposition:
$$L = \oplus_i L_i$$ 
of $\OO_E$-modules, and hence a decomposition:
$$V = \oplus_i V_i$$ 
of $E$-vector spaces.  Let $M$ be the corresponding Levi subgroup of
$G$ (consisting of matrices that preserve each of the $V_i$).  
Then $M$ is a product of linear groups $M_i = \Aut_F(V_i)$.
Note that the image of $M \cap \GL_{\frac{n}{ef}}(\OO_E)$ in $\overline{G}$ is precisely $\overline{M}$.

Let $P$ be the parabolic subgroup of $G$ that preserves the subspaces $V_1$, $V_1 + V_2$, etc.,
and let $U$ be its unipotent radical.
Let $\overline{P}$ and $\overline{U}$ be the images of $P \cap \GL_{\frac{n}{ef}}(\OO_E)$
and $U \cap \GL_{\frac{n}{ef}}(\OO_E)$ in $\overline{G}$.  Then $\overline{P}$ is a parabolic
subgroup of $\overline{G}$ with Levi $\overline{M}$ and unipotent radical $\overline{U}.$  

As in section~\ref{sec:zero generic}, the next step is to relate the pair $(K,\tkappa \otimes \St_s)$
to a $G$-cover of a certain cuspidal $M$-type $(K_M,\tau_M)$.  Our construction of the pair $(K_M,\tau_M)$ 
closely parallels sections 7.1 and 7.2 of~\cite{BK}, and is more or less the ``reverse'' of the construction
of section 7 of~\cite{BK-semisimple}.

Recall that the maximal distinguished cuspidal type $(K,\tau)$
arises from a simple stratum $[{\mathfrak A},n,0,\beta]$, together with a character
$\theta$ in ${\mathcal C}({\mathfrak A}, 0, \beta)$.
Given this data, the group $K$ is the group $J(\beta,{\mathfrak A})$ in~\cite{BK}, and the representation
$\tkappa$ of $K$ is a $\beta$-extension of the unique irreducible representation of
$J^1(\beta,{\mathfrak A})$ whose restriction to the subgroup
$H^1(\beta,{\mathfrak A})$ contains the character $\theta$.

Let ${\mathfrak A}_i$ be order in $\End_F(V_i)$ induced by ${\mathfrak A}$; that is,
the image of the subring of ${\mathfrak A}$ that preserves $V_i$ in $\End_F(V_i)$.  Then conjugation by
$E^{\times}$ stabilizes ${\mathfrak A}_i$ and ${\mathfrak A}_i \cap \End_E(V_i) = \End_{\OO_E}(L_i)$.
Given these orders, the procedure at the beginning of~\cite{BK-semisimple}, 7.2 constructs
an order ${\mathfrak A}'$ in $\End_F(V)$ (this is the order denoted by ${\mathfrak A}$ in
section 7 of~\cite{BK-semisimple}.)  The order ${\mathfrak A}'$ is contained in the maximal order
${\mathfrak A}$.  Set $K' = J(\beta,{\mathfrak A}')$.

Let $K_P$ be the preimage of $\overline{P}$ in $K$, via our identification
of $K/K_1$ with $\overline{G}$.  Then by Theorem 5.2.3 (ii) of~\cite{BK}, there is a
unique $W(k)[K']$-module $\tkappa'$ of $K'$ such that we have:
$$\Ind_{K_P}^{({\mathfrak A}')^{\times}} \tkappa|_{K_P}
\cong \Ind_{K'}^{({\mathfrak A}')^{\times}} \tkappa'.$$
(Strictly speaking, this is proved in~\cite{BK} over an algebraically closed field of
characteristic zero, rather than over $W(k)$; in the proof of~\cite{vigbook}, III.4.21
Vigneras observes that the same result holds for the $W(k)$-representations we use here.)
Over $\CK$, the representation $\tkappa'$ can alternatively be described, up to twist, in
the following way: the character $\theta$ gives rise to an endo-class of ps-characters
$(\Theta,0,\beta)$ in the sense of~\cite{BK-semisimple}, section 4.  Then $(\Theta,0,\beta)$
in particular give rise to a character $\theta'$ in ${\mathcal C}({\mathfrak A}', 0, \beta)$.
From this perspective $\tkappa'$ is a $\beta$-extension of the unique irreducible representation
of $J^1(\beta,{\mathfrak A}')$ whose restriction to $H^1(\beta,{\mathfrak A}')$ contains 
the character $\theta'$.

We now apply the construction of~\cite{BK}, 7.2 to the representation $\tkappa'$.
That is, let $K''$ be the subset $(J(\beta,{\mathfrak A}') \cap P)H^1(\beta,{\mathfrak A}')$;
it is shown in~\cite{BK}, 7.1 and 7.2 that $K''$ is a group, and that the
$K' \cap U$-fixed vectors in $\tkappa'$ are stable under $K''$.  Thus these fixed vectors
give a representation $\tkappa''$ of $K''$.  Let $K_M$ be the intersection $K'' \cap M$,
and let $\tkappa_M$ be the restriction of $\tkappa''$ to $K_M$.  
We then have the following:

\begin{lemma}
The pair $(K'',\tkappa'')$ satisfies the conditions (7.2.1) of~\cite{BK-semisimple}.
Explicitly:
\begin{enumerate}
\item The restriction of $\tkappa''$ to $H^1(\beta,{\mathfrak A}')$ is a multiple of $\theta'$.
\item The representation $\tkappa''$ is trivial on $K'' \cap U$ and $K'' \cap U^{\circ}$.
\item The group $K_M$ is the product of the groups $K_i = K \cap M_i$, and
the representation $\tkappa_M$ is a tensor product of irreducible representations
$\tkappa_i$ of $K_i$ for each $i$.  Moreover, the subgroup $K_i$ of $M_i$ is equal
to $J(\beta,{\mathfrak A}_i)$, and each $\tkappa_i$ is a $\beta$-extension
of the unique representation of $J^1(\beta,{\mathfrak A}_i)$ whose
restriction to $H^1(\beta,{\mathfrak A}_i)$ contains the element
$\theta_i$ of ${\mathcal C}({\mathfrak A}_i, 0, \beta)$ determined by the ps-character
$(\Theta,0,\beta)$.
\end{enumerate}
Moreover, the map:
$$\Ind_{K''}^{K'} \tkappa'' \rightarrow \tkappa'$$
(obtained by Frobenius reciprocity from the realization of $\tkappa''$ as the $U \cap K''$-invariants
of $\tkappa'$) is an isomorphism.
\end{lemma}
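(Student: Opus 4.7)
The plan is to import the proofs from Bushnell--Kutzko (mainly \cite{BK}, 7.1--7.2 and \cite{BK-semisimple}, 7.2), verifying that they carry over from $\CC$- to $W(k)$-coefficients as already done for $\tkappa'$ via \cite{vigbook}, III.4.21. Most of the four claims are translations of the cited references to the present setup; the real content is the bookkeeping of the intersections $K'' \cap U$, $K'' \cap U^{\circ}$, $K'' \cap M$ in relation to the filtration $H^1(\beta,{\mathfrak A}') \subset J^1(\beta,{\mathfrak A}') \subset J(\beta,{\mathfrak A}')$.

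First I would dispatch claims (1) and (2). By construction $\tkappa''$ is the space of $K' \cap U$-fixed vectors in $\tkappa'$, so $K'' \cap U \subset K' \cap U$ acts trivially on $\tkappa''$, and $H^1(\beta, {\mathfrak A}')$ normalizes $K' \cap U$, so its action on $\tkappa'$ restricts to $\tkappa''$; by construction of $\tkappa'$ this restriction is a multiple of $\theta'$, giving (1). For triviality of $\tkappa''$ on $K'' \cap U^{\circ}$, the explicit description of $K''$ in \cite{BK}, 7.1 gives $K'' \cap U^{\circ} \subset H^1(\beta,{\mathfrak A}') \cap U^{\circ}$, on which $\theta'$ is trivial by the defining properties of simple characters in ${\mathcal C}({\mathfrak A}', 0, \beta)$.

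For claim (3), the direct sum decomposition $V = \oplus_i V_i$ produces a compatible decomposition of the orders ${\mathfrak A}'$ into blocks ${\mathfrak A}_i$, of the filtration subgroups into products, and in particular of $K'' \cap M$ into $\prod_i (K'' \cap M_i)$. A direct check using the description of $K''$ identifies $K'' \cap M_i$ with $J(\beta, {\mathfrak A}_i)$, giving $K_M = \prod K_i$ as claimed. The tensor factorization $\tkappa_M = \bigotimes \tkappa_i$ and the identification of each $\tkappa_i$ as a $\beta$-extension of the representation of $J^1(\beta,{\mathfrak A}_i)$ containing $\theta_i$ are essentially \cite{BK-semisimple}, 7.2; the character $\theta_i$ arises as the restriction of the ps-character $(\Theta, 0, \beta)$ to $H^1(\beta,{\mathfrak A}_i)$.

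The isomorphism $\Ind_{K''}^{K'} \tkappa'' \risom \tkappa'$ is the $W(k)$-analogue of the main result of \cite{BK}, 7.2. Frobenius reciprocity applied to the inclusion $\tkappa'' \hookrightarrow \tkappa'|_{K''}$ produces a natural map; since both sides are free $W(k)$-modules, it suffices to check the rank. One uses the Iwahori-type decomposition $K' = (K' \cap U^{\circ}) K'' (K' \cap U)$, which gives $[K' : K''] = |K' \cap U^{\circ}| / |K'' \cap U^{\circ}|$, and then matches this against the rank of $\tkappa'$ relative to $\tkappa''$ using the filtration structure. Injectivity is forced by the irreducibility of $\tkappa'|_{J^1(\beta,{\mathfrak A}')}$, since any nonzero $K''$-equivariant map into $\tkappa'$ generates a nonzero $J^1(\beta,{\mathfrak A}')$-subrepresentation, which must be all of $\tkappa'$.

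The main obstacle is ensuring uniform $W(k)$-coefficient versions of the full Bushnell--Kutzko apparatus --- ps-characters, uniqueness and existence of $\beta$-extensions, the combinatorics of $J^1$ and $H^1$ relative to ${\mathfrak A}'$ and the ${\mathfrak A}_i$. Vign\'eras' work (\cite{vig98}, \cite{vigbook}, III.4) handles most of this, but verifying the precise identification of $\theta_i$ with the image of $(\Theta, 0, \beta)$ under block restriction, and checking that the uniqueness of $\tkappa''$ up to twist is preserved in the integral setting, will require careful case-by-case adaptation of the characteristic-zero arguments.
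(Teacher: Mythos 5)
Your proposal takes essentially the same route as the paper: both reduce the lemma to results of Bushnell--Kutzko (\cite{BK}, 7.1--7.2, and \cite{BK-semisimple}, 7.2), with the coefficient ring upgraded from $\CC$ to $W(k)$. The paper's proof is terse and simply names the specific references (5.1.1 for claim (1); 7.2.14 and 7.2.15 for the tensor factorization and $\beta$-extension property; 7.1.19 for the identification of $\theta_i$; 7.2.15 again for the final induction isomorphism), whereas you attempt to reconstruct the underlying arguments. Your reconstructions are directionally right, but a few details are off.

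For the isomorphism $\Ind_{K''}^{K'}\tkappa'' \to \tkappa'$, you label your argument ``injectivity,'' but what you actually sketch is surjectivity (the image is a nonzero $J^1(\beta,{\mathfrak A}')$-submodule of $\tkappa'$, hence all of $\tkappa'$). Over $W(k)$ one also cannot literally invoke irreducibility of a free module: one should argue mod $\ell$ (where $\tkappa'|_{J^1}$ is irreducible since $J^1$ is pro-$p$), apply Nakayama to get surjectivity, and then conclude the map is an isomorphism from equality of $W(k)$-ranks. The rank-matching step, which you describe as using ``the filtration structure,'' is precisely the content of the Mackey/dimension computation carried out in \cite{BK}, 7.2.15, and is not an afterthought --- your index formula $[K':K''] = |K'\cap U^{\circ}|/|K''\cap U^{\circ}|$ is the right input but the comparison with $\dim\tkappa'/\dim\tkappa''$ is where the real work happens. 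Separately, your claims (1) and (2) tacitly use that the space of $K'\cap U$-invariants in $\tkappa'$ is actually a $K''$-submodule; this is not obvious from $H^1$ normalizing $K'\cap U$ alone and is itself a result of \cite{BK}, 7.1--7.2, which the paper invokes before the lemma. None of these are fatal, but they mean your sketch is closer to a signpost toward \cite{BK} than an independent proof --- which, to be fair, is also the status of the paper's own proof.
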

\begin{proof}
The first claim follows from~\cite{BK}, 5.1.1 and our description of $\tkappa''$ as a $\beta$-extension.
The second is clear from the construction of $\tkappa''$.  The decomposition of $\tkappa$
as a tensor product of $\tkappa_i$ is~\cite{BK}, 7.2.14, as is the fact that each $\tkappa_i$
is a $\beta$-extension (the necessary intertwining property on the $\tkappa_i$ is verified
as part of~\cite{BK}, 7.2.15.)  The fact that the characters $\theta_i$ are the ones claimed in the theorem
follows from~\cite{BK}, 7.1.19.  The isomorphism is~\cite{BK}, 7.2.15.
\end{proof}

Let $s_i$ be the projection of $s$ to $\overline{M}_i$; then $s_i$ is a semisimple element of
$\overline{M}_i$ with irreducible characteristic polynomial.  
As $K_i$ contains $J^1(\beta,{\mathfrak A}_i)$ as a normal subgroup, and the quotient is naturally isomorphic
to $\overline{M}_i$, we may regard the cuspidal representation $\St_{s_i}$ of $\overline{M}_i$
as a representation of $K_i$.  We set $\tau_i = \tkappa_i \otimes \St_{s_i}$; the pair
$(K_i,\tau_i)$ is then a maximal distinguished cuspidal type in $M_i$.
Let $\tau_M$ be the tensor product of the $\tau_i$; it is then a representation of $K_M$, and we have
$\tau_M = \tkappa_M \otimes \St_{\overline{M},s}$,
where $\St_{\overline{M},s}$ is the tensor product of the cuspidal representations $\St_{s_i}$.
The pair $(K_M,\tau_M)$ is
a maximal distinguished cuspidal $M$-type.  

On the other hand, by construction, the quotient of $K''$ by $J^1(\beta,{\mathfrak A}')$
is naturally isomorphic to $\overline{M}$.  We can thus regard $\St_{\overline{M},s}$
as a representation of $K''$, and form the representation $\tau'' = \tkappa'' \otimes \St_{\overline{M},s}.$
We will show that $(K'',\tau'')$ is a $G$-cover of $(K_M,\tau_M)$.  

Note, however, that unlike in the level zero case, there is further work to be done: the pair $(K'',\tau'')$
must be related to the original pair $(K,\tau)$.  To this end, 
consider the representation $\tau_P = \tkappa|_{K_P} \otimes \St_{\overline{M},s},$
where we regard
$\St_{\overline{M},s}$ as a representation of $K_P$ via the surjection
$$K_P \rightarrow \overline{P} \rightarrow \overline{M}.$$

We then have:

\begin{theorem} \label{thm:type cover}
The pair $(K'',\tau'')$ is a $G$-cover of $(K_M,\tau_M)$.  Moreover,
there are natural isomorphisms:
$$\cInd_{K''}^{({\mathfrak A}')^{\times}} \tau'' \cong
\cInd_{K_P}^{({\mathfrak A}')^{\times}} \tau_P $$
$$\cInd_{K''}^{{\mathfrak A}^{\times}} \tau'' \cong 
\cInd_{K}^{{\mathfrak A}^{\times}} \tkappa \otimes I_s.$$
where we regard
$I_s$ as a representation of $K$
via the surjection of $K$ onto $\overline{G}$.
\end{theorem}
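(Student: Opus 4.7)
The plan is as follows. Part (1) requires verifying that $(K'',\tau'')$ is decomposed with respect to $P$ and then checking the $G$-cover criterion of Proposition~\ref{prop:G-cover}. The decomposition $K'' = (K'' \cap U^{\circ})(K'' \cap M)(K'' \cap U)$ follows from the standard Iwahori-type decomposition of $J(\beta,{\mathfrak A}')$ established by Bushnell--Kutzko, together with the definition $K'' = (K' \cap P) H^1(\beta,{\mathfrak A}')$; in particular $K'' \cap M = K_M$. The triviality of $\tau''$ on $K'' \cap U$ and $K'' \cap U^{\circ}$ reduces to the same property for $\tkappa''$, which is part~(2) of the previous lemma, combined with the fact that $\St_{\overline{M},s}$, viewed as a representation of $K''$ via the quotient $K''/J^1(\beta,{\mathfrak A}') \cong \overline{M}$, automatically kills those intersections since they map into the unipotent radicals of $\overline{P}$ in $\overline{G}$.

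For the $G$-cover criterion one must exhibit a central strictly positive $\lambda \in M$ whose Hecke operator $T^+(1_{K_M \lambda K_M})$ is invertible in $H(G,K'',\tau'')$. Take $\lambda$ to be a block-diagonal element with scalar blocks of the form $\unif_E^{N_i} I$ where the $N_i$ decrease rapidly with $i$; the resulting $\lambda$ is central in $M$ and (for $N_i$'s sufficiently spread out) strictly positive with respect to $P$. The invertibility statement is the main obstacle: it requires enough control over $H(G,K'',\tau'')$ to see that the operator corresponding to $\lambda$ has an inverse, and one proceeds by paralleling the argument of \cite{BK}, Section~7.2, adapted to $W(k)$-coefficients and to the generic pseudo-type $\St_s$ in place of a cuspidal $\sigma$.

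For part~(2), begin from the previous lemma's identity $\Ind_{K''}^{K'} \tkappa'' \cong \tkappa'$ together with the defining isomorphism $\Ind_{K_{\overline{P}}}^{({\mathfrak A}')^{\times}} \tkappa|_{K_{\overline{P}}} \cong \Ind_{K'}^{({\mathfrak A}')^{\times}} \tkappa'$. Transitivity of induction gives
\[
\Ind_{K''}^{({\mathfrak A}')^{\times}} \tkappa'' \cong \Ind_{K_{\overline{P}}}^{({\mathfrak A}')^{\times}} \tkappa|_{K_{\overline{P}}}.
\]
Next, inflate $\St_{\overline{M},s}$ via the quotient $({\mathfrak A}')^{\times} \twoheadrightarrow \overline{M}$ coming from the hereditary structure of ${\mathfrak A}'$; this extends $\St_{\overline{M},s}$ to a representation of $({\mathfrak A}')^{\times}$ whose restriction to either $K''$ or $K_{\overline{P}}$ agrees with the corresponding action used to form $\tau''$ or $\tau_{\overline{P}}$. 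The projection formula then yields
\[
\Ind_{K''}^{({\mathfrak A}')^{\times}}(\tkappa'' \otimes \St_{\overline{M},s}) \cong \bigl(\Ind_{K''}^{({\mathfrak A}')^{\times}} \tkappa''\bigr) \otimes \St_{\overline{M},s},
\]
and analogously on the other side, establishing the first isomorphism.

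For the second isomorphism, compactly induce the first from $({\mathfrak A}')^{\times}$ to ${\mathfrak A}^{\times}$, reducing to the identity
\[
\cInd_{K_{\overline{P}}}^{{\mathfrak A}^{\times}} \tau_{\overline{P}} \cong \cInd_{K}^{{\mathfrak A}^{\times}}(\tkappa \otimes I_s).
\]
Since $\tkappa$ is already defined on $K$ (containing $K_{\overline{P}}$), the projection formula lets one pull $\tkappa$ outside the induction from $K_{\overline{P}}$ to $K$, leaving $\Ind_{K_{\overline{P}}}^K \St_{\overline{M},s}$; this induction, when interpreted at the level of $\overline{G} = K/K^1$, is exactly the parabolic induction $i_{\overline{P}}^{\overline{G}} \St_{\overline{M},s} = I_s$ inflated to $K$. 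The main obstacle throughout is Part~(1), specifically verifying invertibility of the Hecke operator attached to a positive central element, which is where the generic pseudo-type and $W(k)$-coefficient setting requires genuinely new work beyond the classical Bushnell--Kutzko argument.
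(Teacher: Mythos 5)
Your treatment of the two isomorphisms in the second half (the transitivity of induction, extracting the tensor factor via the projection formula, and recognizing $\Ind_{K_{\overline{P}}}^K \St_{\overline{M},s}$ as the inflation of $i_{\overline{P}}^{\overline{G}} \St_{\overline{M},s} = I_s$) matches the paper's argument.

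However, there is a real misreading in your approach to the $G$-cover assertion. You frame verifying invertibility of $T^+(1_{K_M\lambda K_M})$ as "the main obstacle," requiring "genuinely new work beyond the classical Bushnell--Kutzko argument" because of "$W(k)$-coefficients" and "the generic pseudo-type $\St_s$ in place of a cuspidal $\sigma$." Neither concern applies. First, for this theorem the coefficients are $\overline{\CK}$, not $W(k)$: $\St_s$ and $\St_{\overline{M},s}$ are characteristic-zero representations, and the whole of section~\ref{sec:generic} is working with $\overline{\CK}$-types. Second, and more importantly, the object to which the $G$-cover criterion is applied is not the non-cuspidal $\St_s$ on $\overline{G}$ but the pair $(K_M, \tau_M)$ with $\tau_M = \tkappa_M \otimes \St_{\overline{M},s}$, where $\St_{\overline{M},s}$ \emph{is} cuspidal on $\overline{M}$. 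That is the entire point of the descent from $K$ to $K_{\overline{P}}$ to $K''$ to $K_M$: one lands on a bona fide maximal distinguished cuspidal $M$-type over a field of characteristic zero, whose tensor factors all arise from the single endo-class $(\Theta,0,\beta)$ (this was checked in the preceding lemma). The $G$-cover claim is then a direct citation of Theorem~7.2 of \cite{BK-semisimple}, with nothing left to redo. Your proposed route of choosing a cleverly spread-out $\lambda$ and re-running the Hecke-algebra invertibility argument would, if completed, reprove that theorem rather than use it; and as written it leaves a genuine gap, since you only assert that "one proceeds by paralleling" Bushnell--Kutzko without supplying the invertibility proof you identify as the crux. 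You also omit the verification that each tensor factor of $\tau_M$ comes from the same endo-class of ps-character, which is precisely the hypothesis that makes the Bushnell--Kutzko theorem apply here.
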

\begin{proof}
We have verified that $(K'',\tkappa'')$ satisfies the list of properties in~\cite{BK-semisimple},
7.2.1.  In particular, if one applies the procedure of~\cite{BK-semisimple}, section 7.2 to
the type $(K_M,\tau_M)$, one arrives at the representation $(K'',\tau'')$.  Thus $(K'',\tau'')$ is
a $G$-cover of $(K_M,\tau_M)$ by Theorem 7.2 of~\cite{BK-semisimple}.  (Notice that the procedure given there
in particular applies to the type $(K_M,\tau_M)$ because we have verified that each
of the types $(K_i,\tau_i)$ arises from the same endo-class $(\Theta,0,\beta)$ of ps-character.)

Whenever we have $H'$ a subgroup of $H$, and representations $A$ of $H$ and $A'$ of $H'$,
we have a general identity:
$$\Ind_{H'}^H A|_{H'} \otimes B \cong A \otimes \Ind_H^{H'} B.$$
It follows from this and the isomorphism:
$$\tkappa' \cong \Ind_{K''}^{K'} \tkappa'$$
that we have an isomorphism:
$$\cInd_{K''}^{{\mathfrak A}^{\times}} \tau'' \cong
\cInd_{K'}^{{\mathfrak A}^{\times}} \tau',$$
where $\tau' = \tkappa' \otimes \St_{\overline{M},s}$.  (Recall that the surjections of $K'$ and $K''$
onto $\overline{M}$ are compatible with the inclusion of $K''$ in $K'$, so we can regard
$\St_{\overline{M},s}$ as a representation of $K'$ here.)

Next, the isomorphism:
$$\Ind_{K'}^{({\mathfrak A}')^{\times}} \kappa' \cong
\Ind_{K_P}^{({\mathfrak A}')^{\times}} \kappa|_{K_P}$$
induces an isomorphism:
$$\Ind_{K'}^{({\mathfrak A}')^{\times}} \tau' \cong
\Ind_{K_P}^{({\mathfrak A}')^{\times}} \tau_P$$

Finally, we have an isomorphism:
$$
\Ind_{K_P}^{{\mathfrak A}^{\times}} \kappa|_{K_P} \otimes \St_{\overline{M},s} \cong
\Ind_K^{{\mathfrak A}^{\times}} \kappa \otimes I_s$$
obtained by inducing the previous isomorphism from $({\mathfrak A}')^{\times}$ to ${\mathfrak A}^{\times}$
and applying the tensor product identity.
\end{proof}

As in Corollary~\ref{cor:depth zero bernstein component},
the maximal distinguished cuspidal type $(K_M,\tau_M)$ gives rise to a unique inertial equivalence class
of cuspidal representations of $M$; let $\pi$ be an irreducible representation of $M$ over $\overline{\CK}$
that lies in this inertial equivalence class (or equivalently, that contains the type $(K_M,\tau_M)$.)  We then
have:
\begin{corollary} \label{cor:steinberg Bernstein component}
The $\overline{\CK}[G]$-modules $\cInd_K^G \tkappa \otimes I_s$
and 
$\cInd_K^G \tkappa \otimes \St_s$
are objects of $\Rep_{\overline{\CK}}(G)_{(M,\pi)}$.
\end{corollary}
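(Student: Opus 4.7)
The second assertion follows from the first because $\St_s$ is a direct summand of $I_s$ over $\overline{\CK}$ (cf.\ Section~\ref{sec:finite}), so $\cInd_K^G(\tkappa \otimes \St_s)$ is a direct summand of $\cInd_K^G(\tkappa \otimes I_s)$ and hence lies in any block containing the latter. For the first assertion, combining Theorem~\ref{thm:type cover} with transitivity of compact induction gives
$$\cInd_K^G(\tkappa \otimes I_s) \;\cong\; \cInd_{{\mathfrak A}^\times}^G \cInd_{K''}^{{\mathfrak A}^\times} \tau'' \;\cong\; \cInd_{K''}^G \tau'',$$
so it suffices to show that this compactly induced module lies in the block $(M, \pi)$.

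Since $(K'', \tau'')$ is a $G$-cover of $(K_M, \tau_M)$, Frobenius reciprocity together with Theorem~\ref{thm:G-cover}(1) yields, for every smooth $\overline{\CK}[G]$-module $\Pi$, a natural isomorphism
$$\Hom_G(\cInd_{K''}^G \tau'', \Pi) \;\cong\; \Hom_{K''}(\tau'', \Pi) \;\cong\; \Hom_{K_M}(\tau_M, r_G^P \Pi).$$
Now $(K_M, \tau_M)$ is a maximal distinguished cuspidal $M$-type attached to the inertial equivalence class of $\pi$, so any smooth $\overline{\CK}[M]$-module $N$ with $\Hom_{K_M}(\tau_M, N) \neq 0$ contains a nonzero image of $\tau_M$, and the $M$-submodule this generates is a nonzero quotient of $\cInd_{K_M}^M \tau_M$ and hence lies in the $M$-block $(M, \pi)$. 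Consequently, if $\Pi$ is an irreducible smooth $\overline{\CK}[G]$-module with $\Hom_G(\cInd_{K''}^G \tau'', \Pi) \neq 0$, then $r_G^P \Pi$ has a subrepresentation in the $M$-block $(M, \pi)$. By the Bernstein--Deligne decomposition (Theorem~\ref{thm:B-D}) and its compatibility with parabolic restriction, this forces $\Pi$ itself to lie in $\Rep_{\overline{\CK}}(G)_{(M, \pi)}$.

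Finally, $\cInd_{K''}^G \tau''$ is finitely generated as a $\overline{\CK}[G]$-module (by $\tau''$). Writing the Bernstein decomposition
$$\cInd_{K''}^G \tau'' \;=\; \bigoplus_{(M',\pi')} N_{(M',\pi')},$$
each summand $N_{(M',\pi')}$ is a finitely generated quotient, and if nonzero it admits an irreducible quotient in the block $(M',\pi')$ by Zorn's lemma. By the previous paragraph, any such quotient must lie in the $(M, \pi)$-block, forcing $N_{(M',\pi')} = 0$ whenever $(M',\pi')$ is not inertially equivalent to $(M, \pi)$. The main subtlety is the transfer from $G$ to $M$ via the $G$-cover property, which is exactly what Theorem~\ref{thm:type cover} was set up to provide; the final finite-generation step, promoting ``no irreducible quotients outside the block'' to ``all simple subquotients are in the block,'' is standard but essential.
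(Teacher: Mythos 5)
Your proof is correct, and the opening steps are the same as the paper's: apply Theorem~\ref{thm:type cover} together with transitivity of compact induction to identify $\cInd_K^G \tkappa \otimes I_s$ with $\cInd_{K''}^G \tau''$, and deduce the $\St_s$ case by exhibiting it as a direct summand. Where you diverge is in how you conclude that $\cInd_{K''}^G \tau''$ lies in the block $\Rep_{\overline{\CK}}(G)_{(M,\pi)}$: the paper simply cites Theorem 8.3 of Bushnell--Kutzko's type-theoretic structure paper, which is precisely the statement that compact induction from a $G$-cover of a type $(K_M,\tau_M)$ lands in the corresponding $G$-block. You instead re-derive this fact from the Hecke-module isomorphism in Theorem~\ref{thm:G-cover}(1): the adjunction $\Hom_G(\cInd_{K''}^G \tau'',\Pi) \cong \Hom_{K_M}(\tau_M, r_G^P \Pi)$ forces any irreducible quotient $\Pi$ to have $r_G^P\Pi$ containing a cuspidal constituent inertially equivalent to $\pi$, which pins down the supercuspidal support of $\Pi$; the finite-generation observation then upgrades ``no quotients outside the block'' to ``no subquotients outside the block.'' Both routes are genuinely the same underlying mechanism (the $G$-cover controls Jacquet modules), but your version is self-contained given the material the paper has already assembled, whereas the paper's is shorter by leaning on the external reference.
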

\begin{proof}
Theorem~\ref{thm:type cover} implies that we have an isomorphism:
$$\cInd_K^G \tkappa \otimes I_s
\cong \cInd_{K''}^G \tau''.$$
As $(K'',\tau'')$ is a $G$-cover of the maximal distinguished cuspidal type $(K_M,\tau_M)$,
it follows immediately from~\cite{BK-types}, Theorem 8.3 that $\cInd_{K''}^G \tau''$ 
lies in $\Rep_{\overline{\CK}}(G)_{(M,\pi)}$.
As for $\cInd_K^G \tkappa \otimes \St_s$, note that, 
by definition, $\St_s$ is the (unique) generic summand of $I_s$,
and so this $\overline{\CK}[G]$-module is a direct summand of the first.
\end{proof}

We now turn to the question of understanding the Hecke algebra $H(G,K,\tkappa \otimes \St_s)$, or equivalently
the endomorphism ring $\End_{\overline{\CK}[G]}(\cInd_K^G \tkappa \otimes \St_s)$.  The isomorphisms of
Theorem~\ref{thm:type cover} induce isomorphisms:
$$H(G,K'',\tau'') \cong H(G,({\mathfrak A}')^{\times}, \Ind_{K''}^{({\mathfrak A}')^{\times}} \tau'')$$
$$H(G,K_P,\tau_P) \cong 
H(G,({\mathfrak A}')^{\times}, \Ind_{K''}^{({\mathfrak A}')^{\times}} \tau'')$$
These isomorphisms are compatible with support in the sense that an element of one of the left hand Hecke algebras
supported on a double coset $K'' g K''$ or $K_P g K_P$ gets sent to an element
of the right hand Hecke algebra supported on $({\mathfrak A}')^{\times} g ({\mathfrak A}')^{\times}$.

We would like to use this observation to compare the spaces $H(G,K'',\tau'')_{K''gK''}$ and
$H(G,K_P,\tau_P)_{K_P g K_P}$ for various $g$ in $G$.

On the one hand, the space $H(G,K'',\tau'')$ is well-understood; just as in the
depth zero case, the discussion in section 1 of~\cite{BK-semisimple}
shows that it is a tensor product of affine Hecke algebras.  As in the depth zero
case, there is a sense in which the isomorphism to a tensor product of affine Hecke algebras
is compatible with support, but (as we are no longer able to work with standard Levi and parabolic subgroups)
it will take somewhat more work to make this precies.  To begin with, recall that $M$ is the subgroup
of $G$ consisting of endomorphisms of $V$ that preserve each summand $V_i$ of $V$, and let $Z$ be the subgroup
of $M$ consisting of elements that act by a power of $\unif_E$ on each $V_i$.

Choose a maximal torus $T_E$ of $\GL_{\frac{n}{ef}}(E)$, and let $\overline{T}$ be its reduction mod $\unif_E$.
Then $\overline{T}$ is a maximal torus of $\overline{G}$; we assume it is contained in $\overline{M}$.
Let $W(\overline{G})$ be the Weyl group of $\overline{G}$ with respect to $\overline{T}$.
The choices of $T_E$ and $\overline{T}$ give an isomorphism of $W(\overline{G})$ with the
Weyl group of $\GL_{\frac{n}{ef}}(E)$.

Now choose a maximal torus $T_F$ of $G$.  We will say that $T_F$ is {\em compatible with $T_E$}
if every $T_E$-stable $E$-line in $F^n$ is a sum of $T_F$-stable $F$-lines.  A choice of $T_F$ compatible
with $T_E$ identifies the Weyl group of $\GL_{\frac{n}{ef}}(E)$ with a subgroup of $W(G)$, and thus
lets us consider $W(\overline{G})$ as a subgroup of $W(G)$.  In what follows, whenever we have a trio
of groups $G,\GL_{\frac{n}{ef}}(E),\overline{G}$, we will choose maximal tori of these groups
related in the sense described above, and implicitly make the corresponding identifications on
Weyl groups.

Let $W_{\overline{M}}$ be the subgroup of $W(\overline{G})$ normalizing $\overline{M}$,
and let $W_{\overline{M}}(s)$ be the subgroup of $W_{\overline{M}}$ consisting of $w$ such that
$w s w^{-1}$ is $M$-conjugate to $s$.
If $W_M$ is the subgroup of $W(G)$ normalizing $M$,
we can identify $W_{\overline{M}}$ with a subgroup of $W_M$.  Then
$H(G,K'',\tau'')$ is supported on the double cosets $K'' g K''$ for $g$ in $W_{\overline{M}}(s) Z$,
and each $H(G,K'',\tau'')_{K'' g K''}$ is a one-dimensional $\overline{\CK}$-vector space.
(Observe that if $w,w'$ are in $W_{\overline{M}}(s)$, and $z,z'$ lie in $Z$, then
$K'' w z K'' = K'' w' z' K''$ if, and only if, $z = z'$ and $w^{-1}w'$ lies in $W(M)$.)

We now decompose $Z$ into a product of factors $Z_j$, just as we did in the depth zero setting.
Specifically, if we write the characteristic polynomial of $s$ as a product of irreducible polynomials
$f_1^{m_1} \dots f_r^{m_r}$, with $\deg f_j = d_j$, then the quotient of $W_{\overline{M}}(s)$ by the subgroup $W(M)$ 
of $W_{\overline{M}}(s)$ is a product of permutation groups $W_j \cong S_{m_j}$, where 
$W_j$ permutes the ``blocks'' of $s$ with characteristic polynomial $f_j$.  If we
let $Z_j$ be the subgroup of $Z$ consisting of elements that are the identity away from those blocks,
then $Z_j$ is a subgroup of $Z$ invariant under the conjugation action of $W_{\overline{M}}(s)$,
and the conjugation action of $W_{\overline{M}}(s)$ on $Z_j$ factors through $S_{m_j}$.  Moreover,
$W_j Z_j$ is a subgroup of $\GL_n$, and the subspace $H_j$ of $H(G,K'',\tau'')$ supported
on cosets of the form $K g K$ for $g$ in $W_j Z_j$ is a subalgebra of $H(G,K'',\tau'')$ isomorphic
to the affine Hecke algebra $H(\GL_n(E_j),I)$, where $E_j$ is the unramified extension of $E$ of
degree $d_j$, and $I$ is the Iwahori subgroup.  (This isomorphism depends on certain choices and
is therefore not canonical; we refer the reader to~\cite{BK}, 5.6, for its construction.)

The algebra $H(G,K'',\tau'')$ is then the tensor product
of the $H_j$.  Moreover, the map from $H(\GL_n(E_j),I)$ to $H_j$ is compatible with supports in a 
manner exactly analogous to the depth zero case.  We may embed $\GL_{m_j}(E'_j)$ in
$M_j$ in such a way that the image of $\GL_{m_j}(\OO_{E_j})$ is equal to the intersection of ${\mathfrak A}$
with the image of $\GL_{m_j}(E_j)$, and so that
the maximal tori of $M_j$ and $M_j \cap \GL_{\frac{n}{ef}}(E_j)$ arising from $T_F$ and $T_E$
are compatible with the standard maximal torus of $\GL_{m_j}(E_j)$.  Then the reduction mod $\unif_E$
of $\GL_{m_j}(\OO_{E_j})$ is a subgroup of $\overline{M}_j$ isomorphic to $\GL_{m_j}(\FF_{q^{fd_j}})$;
we assume we have chosen our embedding so that the standard maximal torus of
$\GL_{m_j}(\FF_{q^{fd_j}})$ is contained in the Levi $\overline{M}_{s_j}$.  

This embedding allows
us to identify $W_j$ with the (standard) Weyl group $W_j'$ of $\GL_{m_j}(E_j)$.  Our choices identify $Z_j$ with
a subgroup $Z'_j$ of the diagonal matrices in $\GL_{m_j}(E_j)$, and then $\GL_{m_j}(E_j)$ is a union of
double cosets $I_j w' z' I_j$, with $w'$ in $W'_j$ and $z'$ in $Z'_j$.  The identification of
$H(\GL_{m_j}(E_j),I)$ with $H_j$ then takes the subspace $H(\GL_{m_j}(E_j),I)_{I w' z' I}$ to $H(G,K'',\tau'')_{K'' w z K''}$,
where $w$ and $z$ are the elements of $W_j$ and $Z_j$ corresponding to $w'$ and $z'$.

Our identification of $H(G,K'',\tau'')$ with a tensor product of Iwahori Hecke algebras
allows us to establish an ``integral'' version of the fact that $(K'',\tau'')$ is a $G$-cover
of $(K_M,\tau_M)$.  More precisely, we have:

\begin{lemma} \label{lemma:lattice G-cover}
Let $L$ be a $W(k)$-lattice in $\St_{\overline{M},s}$, and let $L_M$ be the lattice $\tkappa_M \otimes L$ in
$\tau_M$.  Similarly, let $L''$ be the lattice $\tkappa'' \otimes L$ in $\tau''$.  Then $(K'',L'')$
is a $G$-cover of $(K_M,L_M)$.  Moreover, if $H_j^{\circ}$ denotes the intersection of $H_j$
with $H(K'',L'')$, and $H^{\circ}(\GL_{m_j}(E_j),I)$ is the Iwahori Hecke algebra over $W(k)$,
then there is an isomorphism of $H_j$ with $H(\GL_{m_j}(E_j),I)$ that
restricts to an isomorphism of $H_j^{\circ}$ with $H^{\circ}(\GL_{m_j}(E_j),I)$.
\end{lemma}
\begin{proof}
This is closely related to the arguments of~\cite{vig98}, IV.2.5 and IV.2.6.

It is clear that the only condition that needs to be verified is that there
is an element of $H(K_M,L_M)$ with totally positive support whose image in
$H(K'',L'')$ is invertible.  This follows immediately if there is an isomorphism of $H_j^{\circ}$
with $H^{\circ}(\GL_{m_j}(E_j),I)$ compatible with supports, as in the latter algebra the
characteristic function of any totally positive double coset is invertible.  It thus suffices
to establish this second claim.

Let $\overline{H}(\GL_{m_j}(E_j),I)$ be the subalgebra of $H(\GL_{m_j}(E_j),I)$
consisting of elements supported on double cosets of the form $I w' I$ with $w' \in W_j$,
and $\overline{H}_j$ the subalgebra of $H_j$ supported on double cosets of the form $K'' w K''$
with $w \in W_j$.  Define $\overline{H}^{\circ}(\GL_{m_j}(E_j),I)$ and
$\overline{H}_j^{\circ}$ analogously.  The construction of~\cite{BK}, 5.6.1-5.6.4, yields a canonical isomorphism
of $\overline{H}(\GL_{m_j}(E_j),I)$ with $\overline{H}_j$ compatible with supports, and their argument
shows that this isomorphism sends $\overline{H}^{\circ}(\GL_{m_j}(E_j),I)$ to $\overline{H}^{\circ}_j$.

The extension of this isomorphism to an isomorphism of $H(\GL_{m_j}(E_j),I)$ with
$H_j$ depends on a choice of element $x$ of $H_j$ supported on $K'' \Pi_j K''$, where $\Pi_j$ is
the element of the extended affine Weyl group denoted by $\Pi({\mathcal B})$ in~\cite{BK}, section 5.5
(here we consider $\Pi_j$ to be an element of $M_j$.)
Indeed, the main result of~\cite{BK}, section 5.6 shows that given any such $x$ there is a unique
isomorphism of $H(\GL_{m_j}(E_j),I)$ with $H_j$ sending the characteristic function of $I \Pi_j I$
to $x$ and extending the canonical isomorphism of $\overline{H}(\GL_{m_j}(E_j))$ with $\overline{H_j}$.

Choose $x$ to lie in $H_j^{\circ}$, and to generate the $W(k)$-submodule of $H_j^{\circ}$ supported on
$K'' \Pi_j K''$ (note that the latter is free of rank one over $W(k)$; c.f. the proof of~\cite{BK}, 5.6.7.)
By the argument of~\cite{BK}, 5.6.8, such an $x$ is invertible in $H_j^{\circ}$.

As $H^{\circ}(\GL_{m_j}(E_j),I)$ is generated by $\overline{H}^{\circ}(\GL_{m_j}(E_j))$, together with
the characteristic function of $I \Pi_j I$ and its inverse, the
isomorphism $H(\GL_{m_j}(E_j),I) \rightarrow H_j$ takes $H^{\circ}(\GL_{m_j}(E_j),I)$ to $H_j^{\circ}$.  It thus
remains to show that the image of $H^{\circ}(\GL_{m_j}(E_j),I)$ is all of $H_j^{\circ}.$

Note that for any element $w$ of $W_j$, and any integer $a$, left multiplication by the characteristic function
of $I \Pi_j I$ is an isomorphism:
$$H(\GL_{m_j}(E_j),I)_{I \Pi_j^a w I} \rightarrow H(\GL_{m_j}(E_j),I)_{I \Pi_j^{a+1} w I}.$$
Thus left multiplication by $x$ maps $(H^{\circ}_j)_{K'' \Pi^a_j w' K''}$ to $(H^{\circ}_j)_{K'' \Pi_j^{a+1} w K''}$,
and this map is bijective since $x$ is invertible.  On the other hand, every double coset on which $H^{\circ}_j$
is nonzero has the form $\Pi_j^a w$ for some $a$ and some $w$.  Thus $H^{\circ}_j$ is generated by $x$
and $\overline{H}^{\circ}_k$ and the result follows.
\end{proof}

Our next step is to translate this detailed structure theory for $H(G,K'',\tau'')$
into a corresponding theory for $H(G,K_P,\tau_P)$,
via the isomorphisms of Theorem~\ref{thm:type cover}.  (This is not necessary in the depth zero
case, as in that case we have $K_P = K''$ and $\tau_P = \tau''$.)
To do so we must study the intertwining
of $\tau_P$ and of $\tau$.

\begin{lemma} \label{lemma:intertwining}
Let $\tP$ be the preimage of $\overline{P}$ under the map 
$$\GL_{\frac{n}{ef}}(\OO_E) \rightarrow \GL_{\frac{n}{ef}}(\FF_{q^f}).$$
\begin{enumerate}
\item Let $\xi$ be a $W(k)$-representation of $K$ trivial on $K_1$, and let $g$ be an element
of $G$ that intertwines $\tkappa \otimes \xi$.  Then $g$ lies in
$K \GL_{\frac{n}{ef}}(E) K$.  Moreover, if $g$ lies in $\GL_{\frac{n}{ef}}(E)$,
then $g$ intertwines $\xi$ when $\xi$ is considered as a representation of
$\GL_{\frac{n}{ef}}(\OO_E)$ inflated from $\GL_{\frac{n}{ef}}(\FF_{q^f})$,
and there is a natural isomorphism:
$$I_g(\tkappa \otimes \xi) \cong I_g(\tkappa) \otimes I_g(\xi).$$
\item Let $\xi$ be a $W(k)$-representation of $K_P$ trivial on $K_1$,
and let $g$ be an element of $G$ that intertwines $\tkappa|_{K_P} \otimes \xi$.
Then $g$ lies in $K_P \GL_{\frac{n}{ef}}(E) K_P$.  Moreover,
if $g$ lies in $\GL_{\frac{n}{ef}}(E)$, and then $g$ intertwines $\xi$ when $\xi$ is
considered as a representation of $\tP$ inflated from $\overline{P}$,
and there is a natural isomorpism:
$$I_g(\tkappa|_{K_P} \otimes \xi) \cong \I_g(\tkappa) \otimes I_g(\xi).$$
\end{enumerate}
\end{lemma}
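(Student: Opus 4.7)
The plan is to prove both parts of the lemma by the same two-step strategy. First I would establish the support claims by restricting to $K_1$, where the extra factor $\xi$ is trivial, so everything reduces to the intertwining of the $\beta$-extension $\tkappa$ (equivalently, of its restriction $\eta := \tkappa|_{K_1}$, which is the unique irreducible representation of $K_1$ containing $\theta$). Second, for $g = m \in \GL_{\frac{n}{ef}}(E)$, I would combine the irreducibility of $\eta$ with the one-dimensionality of $I_m(\tkappa)$ to produce the claimed tensor factorization of the intertwining space.

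For the support statements in both (1) and (2), let $\phi$ be a nonzero intertwiner by $g$. Restricting $\phi$ to $K_1 \cap g K_1 g^{-1}$ (and noting $K_1 \subseteq K_{\overline{P}} \subseteq K$) and using that $\xi|_{K_1}$ is trivial, I obtain a nonzero element of $I_g(\eta)$. By the structure theory of simple types (\cite{BK}, 5.1.8), the $G$-intertwining of $\eta$ equals $K_1 \GL_{\frac{n}{ef}}(E) K_1$; since $K_1$ lies in both $K$ and $K_{\overline{P}}$, this yields the required inclusions $g \in K \GL_{\frac{n}{ef}}(E) K$ in part (1) and $g \in K_{\overline{P}} \GL_{\frac{n}{ef}}(E) K_{\overline{P}}$ in part (2).

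For the tensor factorization, assume $g = m \in \GL_{\frac{n}{ef}}(E)$, and let $W$ denote the underlying $W(k)$-module of $\xi$. By the theory of $\beta$-extensions, $I_m(\tkappa)$ is one-dimensional, spanned by a distinguished $K \cap m K m^{-1}$-equivariant intertwiner $\iota_m: \tkappa \risom \tkappa^m$. Given $\phi \in I_m(\tkappa \otimes \xi)$, restriction to $K_1 \cap m K_1 m^{-1}$, combined with the triviality of $\xi|_{K_1}$, the irreducibility of $\eta$, and the one-dimensionality of $I_m(\eta)$, forces $\phi = \iota_m \otimes \psi$ for a unique $\psi \in \End_{W(k)}(W)$. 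Expanding the full $K \cap m K m^{-1}$-equivariance of $\iota_m \otimes \psi$ and cancelling using the equivariance of $\iota_m$, the remaining condition on $\psi$ reduces to $\psi \circ \xi(k) = \xi(m^{-1}km) \circ \psi$ for all $k \in K \cap m K m^{-1}$. Since $\xi$ is inflated from $K/K_1 \cong \GL_{\frac{n}{ef}}(\FF_{q^f})$ in part (1) (respectively from $K_{\overline{P}}/K_1 \cong \overline{P}$ in part (2)), only images in the finite quotient matter, and this is exactly the condition that $\psi$ lies in $I_m(\xi)$ viewed at the level of $\GL_{\frac{n}{ef}}(\OO_E)$ (resp.\ $\tP$).

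The main technical point will be verifying that the equivariance condition on $\psi$ as $k$ ranges over $K \cap m K m^{-1}$ is already imposed by its restriction to $\GL_{\frac{n}{ef}}(\OO_E) \cap m \GL_{\frac{n}{ef}}(\OO_E) m^{-1}$. This amounts to showing that every element of $K \cap m K m^{-1}$ has the same image in $K/K_1$ as some element of $\GL_{\frac{n}{ef}}(\OO_E) \cap m \GL_{\frac{n}{ef}}(\OO_E) m^{-1}$, which I would extract from an Iwahori-type decomposition of $K$ adapted to the Levi structure induced by $m$, of the kind exploited throughout~\cite{BK}.
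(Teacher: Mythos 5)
Your proposal is essentially a reconstruction of the argument in~\cite{BK}, Proposition 5.3.2, which is exactly what the paper's proof does: it simply cites that proposition, notes that the coefficient ring is $W(k)$ rather than $\CC$, and asserts that the argument adapts and that case (2) is identical. So you are following the "same route" in substance, except that you spell out the mechanics rather than delegating to the reference. Your reduction of the support claim to the intertwining of $\eta = \tkappa|_{K_1}$, and your use of the rank-one intertwining of the $\beta$-extension together with $\End_{K_1}(\eta) = W(k)$ to force the tensor factorization $\phi = \iota_m \otimes \psi$, are both sound and carry over to $W(k)$-coefficients with no surprises (the only point worth flagging is that one uses that $W(k)$ is a domain to cancel the $\iota_m$-factor, and that $\Hom_{K_1 \cap mK_1m^{-1}}(\eta \otimes W, \eta^m \otimes W) \cong I_m(\eta) \otimes \End_{W(k)}(W)$ because $\eta$ is a free $W(k)$-module on which $\End_{K_1}$ is just $W(k)$).

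The one place you defer a genuine argument is the "main technical point" at the end, and you are right to single it out: the natural map $I_m(\tkappa) \otimes I_m(\xi) \to I_m(\tkappa \otimes \xi)$ is only well-defined once one knows that equivariance of $\psi$ over $\GL_{\frac{n}{ef}}(\OO_E) \cap m\GL_{\frac{n}{ef}}(\OO_E)m^{-1}$ already forces equivariance over $K \cap mKm^{-1}$. This is not a formality: it is the content of the Iwahori-type decompositions of $J(\beta,\mathfrak{A}) \cap yJ(\beta,\mathfrak{A})y^{-1}$ for $y \in B^{\times}$ developed in~\cite{BK}, \S5.1, and it constitutes the bulk of the proof of~\cite{BK}, 5.3.2. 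In particular it is not enough to invoke $K = K_1 \cdot \GL_{\frac{n}{ef}}(\OO_E)$ and reduce modulo $K_1$ naively, since a priori $m^{-1}k_1m$ for $k_1 \in K_1$ need not land in $K_1$; one really does need the finer decomposition of $K \cap mKm^{-1}$ (and its $K_{\overline{P}}$-analogue in case (2)). If you intend your argument to be self-contained rather than a citation, this is the step that must be written out; if you are content to cite the decomposition, you are simply reproducing BK's proof, which is what the paper does.
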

\begin{proof}
Case (1) is almost precisely~\cite{BK}, Proposition 5.3.2, except that the coefficient space
here is $W(k)$ rather than $\CC$.  In spite of this the argument of~\cite{BK} adapts
without difficulty, and the argument in case (2) is identical.
\end{proof}

\begin{corollary} \label{cor:P-intertwining}
Let $g$ be an element of $G$ that intertwines $\tau_P$.  Then
$g$ lies in the double coset $K_P \GL_\frac{n}{ef}(E) K_P$.  Moreover, if
$g$ lies in $K_P W_{\overline{M}}(s) Z K_P$, then
$I_g(\tau_P)$ is one-dimensional.
\end{corollary}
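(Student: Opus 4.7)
The first assertion falls out of Lemma~\ref{lemma:intertwining}(2) with essentially no work: the representation $\xi := \St_{\overline{M},s}$, viewed as a representation of $K_{\overline{P}}$ via the surjection $K_{\overline{P}} \to \overline{P} \to \overline{M}$, is trivial on $K^1$ by construction, so the lemma applies to $\tau_{\overline{P}} = \tkappa|_{K_{\overline{P}}} \otimes \xi$ and yields $g \in K_{\overline{P}} \GL_{\frac{n}{ef}}(E) K_{\overline{P}}$ as required.

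For the dimension statement, since $I_g(\tau_{\overline{P}})$ depends only on the double coset $K_{\overline{P}} g K_{\overline{P}}$, I may assume $g \in W_{\overline{M}}(s) Z$. Applying Lemma~\ref{lemma:intertwining}(2) again produces a canonical factorization
$$I_g(\tau_{\overline{P}}) \cong I_g(\tkappa) \otimes I_g(\xi),$$
so it suffices to show each factor on the right is one-dimensional.

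The factor $I_g(\tkappa)$ is one-dimensional for every $g \in \GL_{\frac{n}{ef}}(E)$; this is a standard consequence of the theory of $\beta$-extensions (see \cite{BK}, 5.1.8 and 5.2.3), which says that the $G$-intertwining of $\tkappa$ agrees with the $G$-intertwining of the underlying Heisenberg representation of $K^1$ and is supported on $\GL_{\frac{n}{ef}}(E) K$, with one-dimensional intertwining at each point. The key point is that the same argument that identifies the intertwining of the full type $\tau = \tkappa \otimes \sigma$ (used in the theorem describing maximal distinguished cuspidal types) applies to $\tkappa$ alone, giving the one-dimensional intertwining at each $g \in \GL_{\frac{n}{ef}}(E)$.

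The factor $I_g(\xi)$, where $\xi = \St_{\overline{M},s}$ is inflated from $\overline{P}$, is handled by the finite group theory of Section~\ref{sec:finite}. For $g \in Z$, reduction modulo $\unif_E$ is trivial, so conjugation by $g$ induces the identity on $\tP$, hence $\xi^g = \xi$ and $I_g(\xi) = \End_{\overline{M}}(\St_{\overline{M},s})$, which is one-dimensional by irreducibility of $\St_{\overline{M},s}$ and Schur's lemma. For $g \in W_{\overline{M}}(s)$, conjugation by $g$ permutes the block factors of $\overline{M}$; because $g s g^{-1}$ is $\overline{M}$-conjugate to $s$, the conjugate $\xi^g$ is again the unique generic irreducible representation of $\overline{M}$ with cuspidal support in $\CI_{\overline{M}}(s)$, and thus isomorphic to $\xi$. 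Once this isomorphism is fixed, Schur's lemma again forces $I_g(\xi)$ to be one-dimensional. The main subtlety is keeping track of the fact that Weyl elements $w \in W_{\overline{M}}(s)$ can be represented in $\GL_{\frac{n}{ef}}(\OO_E)$ so that conjugation on $\overline{P}$ matches the combinatorial action on $\overline{M}$; this lets the two reductions (for $Z$ and for $W_{\overline{M}}(s)$) combine to cover all of $W_{\overline{M}}(s) Z$.
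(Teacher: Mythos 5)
Your overall strategy matches the paper's: apply Lemma~\ref{lemma:intertwining}(2) for the first assertion and for the tensor factorization $I_g(\tau_{\overline{P}}) \cong I_g(\tkappa) \otimes I_g(\xi)$, observe $I_g(\tkappa)$ is one-dimensional, and reduce to showing $I_g(\xi)$ is one-dimensional. However, your treatment of $I_g(\xi)$ has a real gap.

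The claim that for $g \in Z$ ``reduction modulo $\unif_E$ is trivial, so conjugation by $g$ induces the identity on $\tP$'' is not correct. An element $z \in Z$ is block-scalar with entries that are nontrivial powers of $\unif_E$; in particular $z \notin \GL_{\frac{n}{ef}}(\OO_E)$ unless $z = 1$, and conjugation by $z$ does not preserve $\tP$ --- $z\tP z^{-1}$ is a different parahoric, and $\tP \cap z\tP z^{-1}$ is a proper subgroup of $\tP$. What is actually true, and what the paper uses, is more delicate: the intersection $\tP \cap g\tP g^{-1}$ still surjects onto $\overline{M}$, and because $z$ is central in $M$, the two maps $\tP \cap g\tP g^{-1} \to \overline{M}$ (the direct one, and the one obtained by first conjugating by $g^{-1}$ and then projecting) agree on Levi components. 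That is the precise sense in which ``conjugation by $g$ is trivial on $\overline{M}$'' --- but it is a statement about the restricted projection, not about conjugation on $\tP$.

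More seriously, the last sentence of your proposal hands away the hardest part. Establishing one-dimensionality separately for $g = w \in W_{\overline{M}}(s)$ and $g = z \in Z$ does not, by itself, establish it for a product $g = wz$: the intertwining space $I_{wz}(\xi) = \Hom_{\tP \cap (wz)\tP(wz)^{-1}}(\xi, \xi^{wz})$ lives over a group that is neither $\tP \cap w\tP w^{-1}$ nor $\tP \cap z\tP z^{-1}$, and there is no general multiplicativity of $I_g$ in $g$ to invoke. The paper avoids this by treating an arbitrary $g \in W_{\overline{M}}(s)Z$ at once: one checks directly that $\tP \cap g\tP g^{-1} \to \overline{M}$ is surjective and that both $\St_{\overline{M},s}|_{\tP \cap g\tP g^{-1}}$ and $\St_{\overline{M},s}^g|_{\tP \cap g\tP g^{-1}}$ are inflated from $\overline{M}$ via this surjection; then one is reduced to $\Hom_{\overline{M}}(\St_{\overline{M},s}, \St_{\overline{M},wsw^{-1}})$, which is one-dimensional precisely because $w \in W_{\overline{M}}(s)$. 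You should replace the two-case reduction and the ``this lets the two reductions combine'' sentence with this direct computation for a general $g$.
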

\begin{proof}
The previous lemma shows that $g$ lies in $K_P \GL_{\frac{n}{ef}}(E) K_P$,
so the first statement is clear.  For the second statement, it suffices to
consider $g$ in $W_{\overline{M}}(s) Z$.  For such $g$,
$I_g(\tau_P)$ is equal to $I_g(\St_{\overline{M},s})$, where $\St_{\overline{M},s}$ is considered as a
representation of the parahoric subgroup $\tP$ of $\GL_{\frac{n}{ef}}(E)$ inflated from
$\overline{M}$.  We thus have
$$I_g(\St_{\overline{M},s}) = \Hom_{\tP \cap g \tP g^{-1}}(\St_{\overline{M},s},\St_{\overline{M},s}^g).$$
It is easy to see that for $g$ in $W_{\overline{M}}(s) Z$, we have a surjection
$$\tP \cap g \tP g^{-1} \rightarrow \overline{M}$$
induced by the surjection of $\tP$ onto $\overline{M}$,
and that the representations $\St_{\overline{M},s}$ and $\St_{\overline{M},s}^g$, when considered
as representations of $\tP \cap g \tP g^{-1}$, are both inflated from $\St_{\overline{M}}$ via this surjection.
The result follows.
\end{proof}

As a result we obtain:
\begin{proposition} \label{prop:support}
The isomorphism:
$$ H(G,K'',\tau'') \cong H(G,K_P,\tau_P) $$ 
of Theorem~\ref{thm:type cover} is support-preserving, in the sense that for all $g$ in $\GL_{\frac{n}{ef}}(E)$,
it induces an isomorphism:
$$ H(G,K'',\tau'')_{K''gK''} \cong H(G,K_P,\tau_P)_{K_P g K_P}.$$
\end{proposition}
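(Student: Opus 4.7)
The plan is to factor the isomorphism of Theorem~\ref{thm:type cover} through the Hecke algebra of the common induced module $\CE := \cInd_{K''}^{({\mathfrak A}')^\times} \tau'' \cong \cInd_{K_{\overline{P}}}^{({\mathfrak A}')^\times} \tau_{\overline{P}}$, and then to refine the resulting support statement from $({\mathfrak A}')^\times$-double cosets down to $K''$- and $K_{\overline{P}}$-double cosets using the intertwining analysis of Corollary~\ref{cor:P-intertwining}.

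First I would invoke the canonical support-compatible identification $H(G, K, \sigma) \cong H(G, H, \cInd_K^H \sigma)$ valid for any pair of compact open subgroups $K \subset H$ of $G$. A direct calculation (unwinding the identification via $\End_G(\cInd_K^G \sigma) = \End_G(\cInd_H^G \cInd_K^H \sigma)$) shows that this sends $H(G, K, \sigma)_{KgK}$ into $H(G, H, \cInd_K^H \sigma)_{HgH}$, and conversely decomposes $H(G, H, \cInd_K^H \sigma)_{HgH}$ as the direct sum of $H(G, K, \sigma)_{Kg'K}$ over $K$-double cosets $Kg'K \subset HgH$. Both isomorphisms produced by Theorem~\ref{thm:type cover} are instances of this identification with $H = ({\mathfrak A}')^\times$ and $K \in \{K'', K_{\overline{P}}\}$, matched via $\CE$. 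Thus the proposition reduces to showing: for $g \in \GL_{\frac{n}{ef}}(E)$, the $({\mathfrak A}')^\times$-double coset $({\mathfrak A}')^\times g ({\mathfrak A}')^\times$ contains a unique $K''$-double coset supporting nonzero intertwining of $\tau''$ (namely $K'' g K''$), and similarly a unique $K_{\overline{P}}$-double coset supporting nonzero intertwining of $\tau_{\overline{P}}$ (namely $K_{\overline{P}} g K_{\overline{P}}$).

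For the $K_{\overline{P}}$ statement, Mackey's decomposition gives $\dim I_g(\CE) = \sum_{g'} \dim I_{g'}(\tau_{\overline{P}})$, where $g'$ ranges over representatives of $K_{\overline{P}}$-double cosets in $({\mathfrak A}')^\times g ({\mathfrak A}')^\times$. By Corollary~\ref{cor:P-intertwining}, any summand with nonzero intertwining requires $g' \in K_{\overline{P}} \GL_{\frac{n}{ef}}(E) K_{\overline{P}}$, so it suffices to verify the set-theoretic equality
$$
({\mathfrak A}')^\times g ({\mathfrak A}')^\times \cap K_{\overline{P}} \GL_{\tfrac{n}{ef}}(E) K_{\overline{P}} \;=\; K_{\overline{P}} g K_{\overline{P}}.
$$
I would verify this by reducing modulo the common pro-$p$ radical $J^1(\beta, {\mathfrak A}')$: the finite quotient $({\mathfrak A}')^\times / J^1(\beta, {\mathfrak A}')$ carries the structure of a parabolic subgroup of a product of finite general linear groups with Levi image containing the image of $\GL_{\frac{n}{ef}}(\OO_E) \cap ({\mathfrak A}')^\times$, and the intersection above collapses to a single double coset by the standard Bruhat-style analysis in this finite quotient, exploiting the fact that $g \in \GL_{\frac{n}{ef}}(E)$ normalizes the relevant parahoric filtration. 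An entirely parallel argument on the $K''$ side—using the $G$-cover structure of $(K'', \tau'')$ and the intertwining description of $\tkappa''$ (by the same reasoning as in Lemma~\ref{lemma:intertwining}) to obtain the analog of Corollary~\ref{cor:P-intertwining} for $\tau''$—handles the $K''$ case, whereupon the preceding dimension-matching yields the claim.

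The main obstacle is the set-theoretic equality displayed above (and its $K''$ counterpart): it requires a careful interplay between the hereditary-order filtration of $({\mathfrak A}')^\times$ and the embedding $\GL_{\frac{n}{ef}}(E) \hookrightarrow G$, tracking how the resulting double-coset decompositions align in the finite reductive quotient. Everything else in the proof is formal manipulation of Hecke algebras via the standard Mackey decomposition and support compatibility for induction.
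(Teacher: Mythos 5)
Your overall strategy matches the paper's: factor the isomorphism through the Hecke algebra of the common module $\cInd_{K''}^{({\mathfrak A}')^\times}\tau'' \cong \cInd_{K_{\overline{P}}}^{({\mathfrak A}')^\times}\tau_{\overline{P}}$ over $({\mathfrak A}')^\times$, decompose each support space on a $({\mathfrak A}')^\times$-double coset via Mackey, and argue that only one $K_{\overline{P}}$-summand (resp.\ $K''$-summand) survives. The divergence is in how the collapse to one summand is established, and your route has a genuine gap there.

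The set-theoretic identity you propose to prove, namely $({\mathfrak A}')^\times g ({\mathfrak A}')^\times \cap K_{\overline{P}}\GL_{\frac{n}{ef}}(E)K_{\overline{P}} = K_{\overline{P}}gK_{\overline{P}}$ for \emph{all} $g \in \GL_{\frac{n}{ef}}(E)$, is both stronger than what is needed and not established: it asserts that the natural map from $K_{\overline{P}}$-double cosets of $\GL_{\frac{n}{ef}}(E)$-elements to $({\mathfrak A}')^\times$-double cosets is injective on all of $\GL_{\frac{n}{ef}}(E)$, and the finite-quotient Bruhat argument you gesture at would require real work (you yourself flag this as the main obstacle). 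The paper sidesteps this entirely by exploiting the a priori structure of $H(G,K'',\tau'')$ coming from the $G$-cover theory: it is supported on $K'' W_{\overline{M}}(s) Z K''$ with each $H(G,K'',\tau'')_{K''gK''}$ one-dimensional, and the only double-coset fact required is the much tamer observation that for $g, g' \in W_{\overline{M}}(s)Z$, coincidence of $({\mathfrak A}')^\times$-double cosets forces coincidence of $K''$- and $K_{\overline{P}}$-double cosets. That immediately makes each relevant $({\mathfrak A}')^\times$-Hecke space one-dimensional; combined with the one-dimensionality of $I_g(\tau_{\overline{P}})$ for $g \in W_{\overline{M}}(s)Z$ from Corollary~\ref{cor:P-intertwining}, the $g'=g$ summand already exhausts the dimension and all other $K_{\overline{P}}$-summands vanish, with no need for the global double-coset intersection identity or for a separate $\tau''$-analogue of Corollary~\ref{cor:P-intertwining}. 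If you want to salvage your route, restrict your set-theoretic claim to representatives in $W_{\overline{M}}(s)Z$ (where it becomes the paper's "easy calculation") and import the one-dimensional support structure of $H(G,K'',\tau'')$ rather than reproving it via intertwining.
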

\begin{proof}
For any $g$ in $G$, we have an isomorphism:
$$\bigoplus_{g'} H(G,K'',\tau'')_{K'' g' K''} \cong 
H(G,({\mathfrak A'})^{\times},\cInd_{K''}^{({\mathfrak A}')^{\times}} \tau'')_{({\mathfrak A}')^{\times} g ({\mathfrak A}')^{\times}}$$ 
where $g'$ lies in a set of representatives for the double cosets $K'' g' K''$ in
$({\mathfrak A}')^{\times} g ({\mathfrak A}')^{\times}$.
An easy calculation shows that if $g$ and $g'$ lie in $W_{\overline{M}}(s) Z$, and
$$({\mathfrak A}')^{\times} g ({\mathfrak A}')^{\times} = 
({\mathfrak A}')^{\times} g' ({\mathfrak A}')^{\times},$$
then $K'' g K'' = K'' g' K''$ and $K_P g K_P = K_P g' K_P.$
As $H(G,K'',\tau'')$ is supported on $K'' W_{\overline{M}}(s) Z K''$, it follows that 
$H(G,({\mathfrak A}')^{\times},\cInd_{K''}^{({\mathfrak A}')^{\times}} \tau'')$ is supported
on the double cosets $({\mathfrak A}')^{\times} W_{\overline{M}}(s) Z ({\mathfrak A}')^{\times}$.  Moreover,
each $H(G,K'',\tau'')_{K'' g K''}$ is one-dimensional for $g$ in $W_{\overline{M}}(s) Z$.  It follows that
$H(G,({\mathfrak A}')^{\times},\cInd_{K''}^{({\mathfrak A}')^{\times}} \tau'')_{({\mathfrak A}')^{\times} g
({\mathfrak A}')^{\times}}$ is as well. 

On the other hand, for $g$ in $G$ we also have an isomorphism
$$\bigoplus_{g'} H(G,K_P,\tau_P)_{K_P g' K_P} \cong
H(G,({\mathfrak A'})^{\times},\cInd_{K_P}^{({\mathfrak A}')^{\times}} \tau_P)_{({\mathfrak A}')^{\times} g ({\mathfrak A}')^{\times}},$$
where $g'$ runs over a set of representatives for the cosets $K_P g' K_P$
in $({\mathfrak A}')^{\times} g ({\mathfrak A}')^{\times}$.  For $g$ in $W_{\overline{M}}(s) Z$,
the right-hand side is one-dimensional, and the summand on the left corresponding to $g' = g$
is also one-dimensional.  On the other hand, if $g$ doesn't lie in $({\mathfrak A}')^{\times} W_{\overline{M}}(s) Z
({\mathfrak A}')^{\times}$ then the right hand side is zero, so all of the summands on the left vanish as well.
It follows that $H(G,K_P,\tau_P)$ is supported on $K_P W_{\overline{M}}(s) Z
K_P$, and that for $g$ in $W_{\overline{M}}(s) Z$, the isomorphisms above
identify $H(G,K_P,\tau_P)_{K_P g K_P}$
with $H(G,K'',\tau'')_{K'' g K''}$ as required.
\end{proof}

With Proposition~\ref{prop:support} established, the remainder of the argument goes more or less exactly
as in Section~\ref{sec:zero generic}.  The next step is
to understand the relationship between
$H(G,K_P,\tau_P)$ and $H(G,K,\tkappa \otimes \St_s)$.  As
$\St_s$ is a direct summand of $I_s$,
$\cInd_K^G \tkappa \otimes \St_s$ is a direct summand of 
$\cInd_K^G \tkappa \otimes I_s.$  We therefore obtain a map:
$$Z(H(G,K_P,\tau_P)) \rightarrow H(G,K,\tkappa \otimes \St_s).$$
that is support-preserving in the sense that elements supported on a double
coset $K_P g K_P$ for some $g \in \GL_{\frac{n}{ef}}(E)$
get sent to elements of $H(G,K,\tkappa \otimes \St_s)$
supported on $K g K$.  As in depth zero, we have:

\begin{proposition} \label{prop:center injectivity}
The map $Z(H(G,K,\tkappa \otimes I_s)) \rightarrow H(G,K,\tkappa \otimes \St_s)$ is injective.
\end{proposition}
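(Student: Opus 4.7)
The plan is to exploit support-preservation and reduce injectivity to a pointwise calculation on each double-coset support component. Both Hecke algebras decompose by double cosets $KgK$ of $G$, and the map $Z(H(G,K,\tkappa \otimes I_s)) \to H(G,K,\tkappa \otimes \St_s)$ respects this grading. Indeed, it is induced by the $K$-equivariant projection $I_s \to \St_s$ (which is $K$-equivariant via the surjection $K \to \overline{G}$), so by Lemma~\ref{lemma:intertwining} the map on the $KgK$-support component is
$$I_g(\tkappa) \otimes I_g(I_s) \longrightarrow I_g(\tkappa) \otimes I_g(\St_s),$$
where the second factor is induced by $\phi \mapsto p \circ \phi \circ \iota$ for the canonical splitting $\iota: \St_s \hookrightarrow I_s$ and $p: I_s \twoheadrightarrow \St_s$.

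Under the identification $Z(H(G,K,\tkappa \otimes I_s)) \cong \overline{\CK}[Z]^{W_{\overline{M}}(s)}$ established above, a nonzero central element has a nonzero one-dimensional support component for some $g \in W_{\overline{M}}(s) Z$. It therefore suffices to show that for every such $g$, the map $I_g(I_s) \to I_g(\St_s)$ is nonzero on the one-dimensional ``central'' line carried by this support component.

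Here the key point is that the splitting $\St_s \hookrightarrow I_s$ realizes $I_g(\St_s)$ as a canonical direct summand of $I_g(I_s)$, on which $\phi \mapsto p \circ \phi \circ \iota$ acts as the identity. Writing $g = wz$ with $w \in W_{\overline{M}}(s)$ and $z \in Z$, Propositions~\ref{prop:steinberg restriction 2} and~\ref{prop:induction restriction} together identify this canonical summand with the unique generic component $\St_{\overline{M},wsw^{-1}}$ of $r_{\overline{G}}^{\overline{P} \cap g \overline{P} g^{-1}} I_s$, and identify the central line of $I_g(I_s)$ (coming from the Lusztig-style description of the center of $H(G,K'',\tau'')$) as lying inside this very summand. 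The projection therefore acts as the identity on the central line, giving the required injectivity.

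The main technical obstacle I anticipate is the last step: matching the central line in $I_g(I_s)$, defined via the abstract identification of the center with $\overline{\CK}[Z]^{W_{\overline{M}}(s)}$, with the concrete image of the splitting $I_g(\St_s) \hookrightarrow I_g(I_s)$. This matching should follow from the uniqueness of generic Jordan--H\"older constituents under parabolic induction and restriction (Propositions~\ref{prop:steinberg restriction} and~\ref{prop:induction restriction}), combined with the support-compatibility of the Lusztig isomorphism; once the identification is set up the injectivity statement is immediate.
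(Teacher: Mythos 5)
Your approach is genuinely different from the paper's, which is short and abstract: the paper observes that $H(G,K,\tkappa\otimes\St_s)$ is the corner $eHe$ of $H=H(G,K,\tkappa\otimes I_s)$ (with $e$ the idempotent projecting onto the $\St_s$-summand), so the map in question is $c\mapsto ce$; since $H$ is a tensor product of affine Hecke algebras it is free over its center, which is a domain, so multiplication by a nonzero central element is injective and $ce\neq 0$. You instead attempt a direct support-by-support analysis. This is an interesting idea, but as written there is a real gap.

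The crucial unjustified step is the claim that ``the central line of $I_g(I_s)$ \dots [lies] inside this very summand'' $I_g(\St_s)$, so that the projection acts as the identity on it. What centrality of $c$ actually buys you is weaker: comparing the $KgK$-components of $c\ast e$ and $e\ast c$ shows that the value $c(g)$ \emph{commutes} with the idempotent $e(1)=\iota p\in\End(V)$, hence is block-diagonal,
$$c(g)=e(1)\,c(g)\,e(1)+(1-e(1))\,c(g)\,(1-e(1)),$$
but this does not put $c(g)$ inside the summand $e(1)I_g(I_s)e(1)\cong I_g(\St_s)$. The image of $c(g)$ under the map you wrote is the $e(1)$-block $e(1)c(g)e(1)$, and a priori this block could vanish while the complementary block does not. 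Nothing in Propositions~\ref{prop:steinberg restriction 2} or~\ref{prop:induction restriction} forces $c(g)$ to be supported entirely on the generic summand: those propositions identify the summand, not the location of the Satake components inside $\End(r_{\overline G}^{\overline P_g}I_s)$. There is also a secondary confusion of levels: the one-dimensionality you invoke is a statement about $K''$-double cosets (equivalently Satake components); at the level of $K$-double cosets the space of central elements supported on $KzK$ has dimension $|W(\overline M_z)\backslash W(\overline M,\overline M_z)/W_{\overline M}(s)|$, as the proof of Theorem~\ref{thm:generic hecke} makes explicit. Even if nonvanishing of each Satake component's image were established, you would still need linear independence of those images inside the $KzK$-component of $H(G,K,\tkappa\otimes\St_s)$ to conclude injectivity. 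The paper's freeness-over-a-domain argument sidesteps all of this.
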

\begin{proof}
The proof of this is identical to the proof of Proposition~\ref{prop:depth zero center injectivity}.
\end{proof}

To show that this map is an isomorphism, it remains to compare
the dimensions
of $H(G,K,\tkappa \otimes \St_s)_{K g K}$ and $Z(H(G,K,\tkappa \otimes I_s))_{K g K}$ for
a suitable set of $g$.  This is essentially an intertwining calculation, analogous to the
one done in section~\ref{sec:zero generic}, except that we must now respect the distinction
between $\GL_{\frac{n}{ef}}(E)$ and $\GL_n(F)$, which introduces a few subtleties:

Fix an element $g$ of $M \cap \GL_{\frac{n}{ef}}(E)$.
For such a $g$, let $\overline{P}_g$ be the image of $g \GL_{\frac{n}{ef}}(\OO_E) g^{-1} \cap \GL_{\frac{n}{ef}}(\OO_E)$
in $\overline{G}$.  Then
$\overline{P}_g$ is a parabolic subgroup with unipotent radical $\overline{U}_g$, and Levi
$\overline{M}_g$.  

Let $\tP_g$ be the subgroup $g \GL_{\frac{n}{ef}}(\OO_E) g^{-1} \cap \GL_{\frac{n}{ef}}(\OO_E)$ of
$\GL_{\frac{n}{ef}}(\OO_E)$.  Then
$g^{-1} \tP_g g$ is also a subgroup of $\GL_{\frac{n}{ef}}(\OO_E)$, and its
image under the reduction map to $\overline{G}$ is opposite parabolic $\overline{P}_g^{\circ}$ of $\overline{P}_g$.
Let $\tU_g$ be the preimage
of $\overline{U}_g$ in $\tP_g$; then for any $u$ in $\tU_g$, the conjugate $g^{-1} u g$ maps to
the identity under the composition $g^{-1} \tP_g g \rightarrow \overline{P}_g \rightarrow \overline{M}_g$.
The conjugation map $\tP_g \rightarrow g^{-1} \tP_g g$
thus descends to a map $\overline{M}_g \rightarrow \overline{M}_g$; we say $g$
is $\overline{M}_g$-central if the resulting automorphism of $\overline{M}_g$ is trivial.
It is clear that for any $g$, there exists a $k$ in $\GL_{\frac{n}{ef}}(\OO_E)$
such that $gk$ is $\overline{M}_g$-central.
We then have:

\begin{lemma} \label{lemma:xi-intertwining}
Let $g$ be an element of $M \cap \GL_{\frac{n}{ef}}(E)$, and let
$\xi$ be a $W(k)[\overline{G}]$-module considered as a module over
$W(k)[\GL_{\frac{n}{ef}}(\OO_E)]$ by inflation.  Then $I_g(\xi)$ is free of
rank one over $\End_{W(k)[\overline{M}_g]}(r_{\overline{G}}^{\overline{P}_g} \xi).$
Moreover, if $g$ is $\overline{M}_g$-central, we have a natural isomorphism:
$$I_g(\xi) \cong \End_{W(k)[\overline{M}_g]}(r_{\overline{G}}^{\overline{P}_g} \xi).$$
\end{lemma}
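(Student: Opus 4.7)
The plan is to compute $I_g(\xi) = \Hom_{\tP_g}(\xi|_{\tP_g}, \xi^g|_{\tP_g})$ by exploiting that both $\xi$ and $\xi^g$ are inflated from $\overline{G}$-representations, but via two different maps $\tP_g \to \overline{G}$: the direct reduction $\tP_g \twoheadrightarrow \overline{P}_g$ for $\xi|_{\tP_g}$, and the twisted reduction $x \mapsto \overline{g^{-1}xg}$ landing in $\overline{P}_g^\circ$ for $\xi^g|_{\tP_g}$. I will then invoke the Iwahori decomposition of $\tP_g$ with respect to $\overline{P}_g$, namely $\tP_g = \tU_g \cdot L_g \cdot \tU_g^-$, where $\tU_g$ is the paper's unipotent piece (so $\xi^g|_{\tU_g}$ is trivial, by the established fact that $g^{-1}\tU_g g$ reduces to the identity), $\tU_g^-$ is the opposite unipotent part lying in $K_1$ (so $\xi|_{\tU_g^-}$ is trivial), and $L_g$ is a lift of $\overline{M}_g$.

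An intertwiner $\phi \in I_g(\xi)$ then decomposes into three linearity conditions. The $\tU_g$-linearity forces $\phi \circ \xi(u) = \phi$ for every $u \in \overline{U}_g$, so $\phi$ factors through the $\overline{U}_g$-coinvariants $\xi_{\overline{U}_g}$, which is canonically identified with $r_{\overline{G}}^{\overline{P}_g}\xi = \xi^{\overline{U}_g}$ (coinvariants equal invariants, since $\overline{U}_g$ is a $p$-group and $p$ is invertible in $W(k)$). Dually, the $\tU_g^-$-linearity forces $\mathrm{im}(\phi) \subseteq \xi^{\overline{U}_g^\circ} = r_{\overline{G}}^{\overline{P}_g^\circ}\xi$; the canonical Bruhat map $\xi^{\overline{U}_g^\circ} \hookrightarrow \xi \twoheadrightarrow \xi_{\overline{U}_g}$ (an $\overline{M}_g$-linear isomorphism since $p$ is invertible in $W(k)$) identifies this target with $r_{\overline{G}}^{\overline{P}_g}\xi$. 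Finally, $L_g$-linearity forces $\overline{M}_g$-equivariance, twisted by the automorphism $\alpha_g$ of $\overline{M}_g$ induced by conjugation by $g$, which measures the failure of the two reduction maps to agree on the Levi.

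Assembling these identifications yields
\[
I_g(\xi) \;\cong\; \Hom_{\overline{M}_g}\!\bigl(r_{\overline{G}}^{\overline{P}_g}\xi,\; (r_{\overline{G}}^{\overline{P}_g}\xi)^{\alpha_g}\bigr).
\]
When $g$ is $\overline{M}_g$-central, $\alpha_g$ is the identity, and the right-hand side collapses canonically to $\End_{\overline{M}_g}(r_{\overline{G}}^{\overline{P}_g}\xi)$, giving the asserted natural isomorphism. For general $g$, write $g = g_0 k$ with $g_0$ being $\overline{M}_g$-central and $k \in \GL_{\frac{n}{ef}}(\OO_E)$, as permitted by the remark preceding the lemma; the image $\bar k \in \overline{G}$ then normalizes $\overline{M}_g$ and realizes $\alpha_g$, so $(r_{\overline{G}}^{\overline{P}_g}\xi)^{\alpha_g}$ is abstractly isomorphic to $r_{\overline{G}}^{\overline{P}_g}\xi$ as an $\overline{M}_g$-module (with isomorphism depending on the choice of $k$). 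This exhibits $I_g(\xi)$ as a free rank-one module over $\End_{\overline{M}_g}(r_{\overline{G}}^{\overline{P}_g}\xi)$. The main technical obstacle will be setting up the Iwahori decomposition cleanly enough that the three linearity conditions separate as stated, and pinning down the $g$-twist in the non-central case sufficiently to see the rank-one structure (while losing naturality of the identification).
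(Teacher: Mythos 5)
Your proof takes the same route as the paper: an intertwiner $\phi:\xi\to\xi^g$ factors through $\overline{U}_g$-coinvariants because $\xi^g\vert_{\tU_g}$ is trivial, lands in $\overline{U}_g^\circ$-invariants because $\xi$ is trivial on the congruence piece, and thus corresponds to an $\overline{M}_g$-module map twisted by the inner automorphism $\alpha_g$ of $\overline{M}_g$; rank one then follows because $\alpha_g$ is inner. You simply make explicit the two-sided Iwahori decomposition and the converse that the paper leaves terse. One small slip: the opposite unipotent piece $\tU_g^-$ does not lie in $K_1=J^1(\beta,\mathfrak{A})$ — it lies in the kernel of the reduction map $\GL_{\frac{n}{ef}}(\OO_E)\to\overline{G}$ (a principal congruence subgroup of $\GL_{\frac{n}{ef}}(\OO_E)$), which is what actually makes $\xi\vert_{\tU_g^-}$ trivial since $\xi$ is inflated along that reduction map, not along $K\to K/K_1$.
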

\begin{proof}
By definition, $I_g(\xi) = \Hom_{W(k)[\tP_g]}(\xi,\xi^g)$.  
Note that for $u$ in $\tU_g$, the element $g^{-1} u g$ acts trivially on $\xi$.  Thus $u$ acts trivially on $\xi^g$.
In particular any element of $I_g(\xi)$ gives rise to a map:
$$\Hom_{W(k)[\tP_g]}(r_{\overline{G}}^{\overline{P}_g} \xi, r_{\overline{G}}^{\overline{P}_g} \xi^g).$$
Conversely, any such map gives rise to an element of $I_g(\xi)$.
(Here we are identifying the $\overline{U}_g$-invariants of $\xi$ with the $\overline{U}_g$-coinvariants
via the natural map from invariants to coinvariants.  Moreover, the representation
$r_{\overline{G}}^{\overline{P}_g} \xi$ is a representation of $\overline{M}_g$ considered
as a representation of $\tP_g$ by inflation.) 
As conjugation by $g$ descends to an inner automorphism of $\overline{M}_g$,
$r_{\overline{G}}^{\overline{P}_g} \xi^g$ is isomorphic to $r_{\overline{G}}^{\overline{P}_g} \xi$,
and we can take this isomorphism to be the identity when $g$ is $\overline{M}_g$-central.  The result is
then clear.
\end{proof}

Let $z$ be an element of $Z$.  Then our fixed maximal torus of $\overline{G}$ is a maximal torus of
$\overline{M}_g$.
As in section~\ref{sec:zero generic},  we define $W(\overline{M},\overline{M}_z)$
as the set of $w$ in $W(\overline{G})$ such that $w \overline{M} w^{-1}$ is contained in $\overline{M}_z$.  
We then have:

\begin{proposition} \label{prop:K-intertwining}
The spaces $H(G,K,\tkappa \otimes \St_s)$ and $H(G,K,\tkappa \otimes i_{\overline{P}}^{\overline{G}} \St_{\overline{M},s})$
are supported on $K Z K$.  Moreover, for $z$ in $Z$, we have:
$$H(G,K,\tkappa \otimes I_s)_{K z K}
\cong I_z(\tkappa) \otimes \End_{\overline{\CK}[\overline{M}_z]}(\oplus_{w} I_{\overline{M}_z, w s w^{-1}}) $$
$$H(G,K,\tkappa \otimes \St_s)_{K z K}
\cong I_z(\tkappa) \otimes \End_{\overline{\CK}[\overline{M}_z]}(\oplus_{w'} \St_{\overline{M}_z,w' s (w')^{-1}})$$
where $w$ runs over a set of representatives for 
$W(\overline{M}_z) \backslash W(\overline{M},\overline{M}_z)$,
and $w'$ runs over a set of representatives for
$W(\overline{M}_z) \backslash W(\overline{M},\overline{M}_z)/W_{\overline{M}}(s)$,
\end{proposition}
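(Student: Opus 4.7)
Both parts of the proposition run through the same mechanism: Lemma~\ref{lemma:intertwining}(1) restricts attention to $g\in\GL_{n/ef}(E)$ and yields the factorisation $I_g(\tkappa\otimes\xi)\cong I_g(\tkappa)\otimes I_g(\xi)$ for $\xi\in\{I_s,\St_s\}$, and then Lemma~\ref{lemma:xi-intertwining} (whose proof applies equally well to an arbitrary $g\in\GL_{n/ef}(E)$, since its key input is that $g^{-1}\tU_g g\subseteq K_1$) identifies $I_g(\xi)$ with $\End_{\overline{\CK}[\overline{M}_g]}(r_{\overline{G}}^{\overline{P}_g}\xi)$ up to a rank-one twist. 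Throughout we use that $\overline{M}_s=\overline{M}$ by construction.

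For the support statement, assume $I_g(\tkappa\otimes\xi)\neq 0$. Lemma~\ref{lemma:intertwining}(1) puts $g$ into $K\GL_{n/ef}(E)K$, so we may take $g\in\GL_{n/ef}(E)$. The Cartan decomposition of $\GL_{n/ef}(E)$, combined with the inclusion $\GL_{n/ef}(\OO_E)\subseteq K$, allows us to replace $g$ by a diagonal element whose entries are powers of $\unif_E$ without changing $KgK$. Nonvanishing of $I_g(\xi)$ together with Proposition~\ref{prop:induction restriction} (resp.\ Proposition~\ref{prop:steinberg restriction 2}) then forces $W(\overline{M},\overline{M}_g)\neq\emptyset$: some Weyl conjugate of $\overline{M}$ sits inside $\overline{M}_g$. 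Conjugating $g$ by a permutation matrix in $\GL_{n/ef}(\OO_E)\subseteq K$ (again preserving $KgK$), we may arrange $\overline{M}\subseteq\overline{M}_g$, which says exactly that the diagonal of $g$ is constant on each $\overline{M}$-block, i.e.\ $g\in Z$.

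For the explicit evaluation at $z\in Z$ one has $\overline{M}\subseteq\overline{M}_z$ automatically. Replacing $z$ by $zk$ for a suitable $k\in\GL_{n/ef}(\OO_E)$ (which does not disturb $KzK$) to make $z$ be $\overline{M}_z$-central, Lemma~\ref{lemma:xi-intertwining} produces natural isomorphisms
$$I_z(I_s)\cong\End_{\overline{\CK}[\overline{M}_z]}\bigl(r_{\overline{G}}^{\overline{P}_z}I_s\bigr),\qquad I_z(\St_s)\cong\End_{\overline{\CK}[\overline{M}_z]}\bigl(r_{\overline{G}}^{\overline{P}_z}\St_s\bigr).$$
Propositions~\ref{prop:induction restriction} and~\ref{prop:steinberg restriction 2}, applied with $\overline{M}_s=\overline{M}$, expand these restrictions as the direct sums indexed by $W(\overline{M}_z)\backslash W(\overline{M},\overline{M}_z)$ and $W(\overline{M}_z)\backslash W(\overline{M},\overline{M}_z)/W_{\overline{M}}(s)$ respectively. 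Tensoring with $I_z(\tkappa)$ via the factorisation in Lemma~\ref{lemma:intertwining}(1), and invoking the standard identification $H(G,K,\tkappa\otimes\xi)_{KzK}\cong I_z(\tkappa\otimes\xi)$ via $f\mapsto f(z)$, yields the displayed formulas.

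The main obstacle lies in the support argument, specifically in the Weyl-conjugation bookkeeping: one must verify that any $w\in W(\overline{G})$ achieving $w\overline{M}w^{-1}\subseteq\overline{M}_g$ can be realised by a permutation matrix in $\GL_{n/ef}(\OO_E)\subseteq K$, and that after replacing $g$ by $wgw^{-1}$ the resulting block-wise constancy on $\overline{M}$ forces $g\in Z$ rather than merely $g\in M$. A secondary subtlety is that the centrality adjustment $z\mapsto zk$ in the second part must be checked to be compatible with the tensor decomposition, so that the resulting identification $I_z(\tkappa\otimes\xi)\cong I_z(\tkappa)\otimes I_z(\xi)$ inherits the natural endomorphism-ring description on the second factor without spurious twists.
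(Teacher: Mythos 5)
Your argument follows essentially the same mechanism as the paper's proof: Lemma~\ref{lemma:intertwining} reduces to $g\in\GL_{\frac{n}{ef}}(E)$ and factors $I_g(\tkappa\otimes\xi)$ as $I_g(\tkappa)\otimes I_g(\xi)$, Lemma~\ref{lemma:xi-intertwining} identifies $I_z(\xi)$ with the endomorphism algebra of a parabolic restriction, and Propositions~\ref{prop:steinberg restriction 2} and~\ref{prop:induction restriction} expand that restriction; your explicit unwinding of the support claim, which the paper treats as immediate from the same lemmas, is a reasonable elaboration. One small remark: the centrality adjustment $z\mapsto zk$ is unnecessary, since any $z\in Z$ is already $\overline{M}_z$-central (it acts as a scalar on each $\overline{M}_z$-block), exactly as the paper observes.
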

\begin{proof}
By Lemma~\ref{lemma:intertwining}, it suffices to compute the spaces $I_z(I_s)$ and $I_z(\St_s)$. 
Note that for any $z$ in $Z$, it is clear that $z$ is $\overline{M}_z$-central.
The result is thus immediate from Lemma~\ref{lemma:xi-intertwining}, together with Propositions
\ref{prop:steinberg restriction} and~\ref{prop:induction restriction}.
\end{proof}

As a result, exactly as in section~\ref{sec:zero generic}, we have:
\begin{corollary} \label{cor:dim count}
For $z \in Z$, the dimension of $H(G,K,\tkappa \otimes St_s)_{K z K}$ is equal to
the cardinality of $W(\overline{M}_z) \backslash W(\overline{M},\overline{M}_z)/W_{\overline{M}}(s)$.
\end{corollary}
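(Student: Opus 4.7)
The plan is to unpack the tensor product description given by Proposition~\ref{prop:K-intertwining} and compute the dimensions of the two factors separately. Proposition~\ref{prop:K-intertwining} already provides the isomorphism
$$H(G,K,\tkappa \otimes \St_s)_{K z K} \cong I_z(\tkappa) \otimes \End_{\overline{\CK}[\overline{M}_z]}\Bigl(\bigoplus_{w'} \St_{\overline{M}_z,w' s (w')^{-1}}\Bigr),$$
so the task reduces to showing $\dim I_z(\tkappa) = 1$ and that the endomorphism ring on the right has dimension equal to $|W(\overline{M}_z) \backslash W(\overline{M},\overline{M}_z)/W_{\overline{M}}(s)|$.

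For the endomorphism ring, the representations $\St_{\overline{M}_z, w's(w')^{-1}}$ are, by their very definition, absolutely irreducible generic representations of $\overline{M}_z$, each uniquely determined by its cuspidal support, which in turn is determined by the $\overline{M}_z$-conjugacy class of $w's(w')^{-1}$. The reformulation given in the discussion preceding Proposition~\ref{prop:steinberg restriction 2}, applied here with $\overline{M}_z$ playing the role of the ambient Levi and $\overline{M} = \overline{M}_s$ being the minimal split Levi containing $s$, shows that distinct representatives of $W(\overline{M}_z) \backslash W(\overline{M},\overline{M}_z)/W_{\overline{M}}(s)$ give non-$\overline{M}_z$-conjugate semisimple elements. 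Hence the summands $\St_{\overline{M}_z, w's(w')^{-1}}$ are pairwise non-isomorphic, and Schur's lemma identifies the endomorphism ring with a product of copies of $\overline{\CK}$ indexed by this double coset quotient.

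For the factor $I_z(\tkappa)$, I would invoke the standard intertwining theory of $\beta$-extensions developed in~\cite{BK}, Section 5: since $\tkappa$ is the $W(k)$-lift of the $\beta$-extension $\kappa$ and $z \in Z$ lies in $\GL_{\frac{n}{ef}}(E)$, which is contained in the full intertwining set $K \GL_{\frac{n}{ef}}(E) K$ of $\tkappa$, the space $I_z(\tkappa) = \Hom_{K \cap zKz^{-1}}(\tkappa,\tkappa^z)$ is one-dimensional. Combining the two dimension calculations yields
$$\dim H(G,K,\tkappa \otimes \St_s)_{KzK} = 1 \cdot \bigl|W(\overline{M}_z) \backslash W(\overline{M},\overline{M}_z)/W_{\overline{M}}(s)\bigr|,$$
as required.

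The only subtle point is the assertion that $\dim I_z(\tkappa) = 1$ for general $z \in Z$; because $z$ need not normalize $K$ (it lies in the center of $M$, not necessarily in $E^{\times}$), this is not immediate from the fact that $E^{\times}$ normalizes $\tau$ recorded in the theorem on maximal distinguished cuspidal types. Instead it depends on the full strength of the Bushnell--Kutzko intertwining calculation for $\beta$-extensions, which pins the intertwining at every element of $\GL_{\frac{n}{ef}}(E)$ to dimension one. Once this is granted, the remainder of the proof is a routine bookkeeping exercise combining Schur's lemma with the parameterization established in Proposition~\ref{prop:K-intertwining}.
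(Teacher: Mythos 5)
Your proposal is correct and follows essentially the same path as the paper's proof: decompose via Proposition~\ref{prop:K-intertwining}, show the summands $\St_{\overline{M}_z, w's(w')^{-1}}$ are pairwise non-isomorphic as $w'$ ranges over double coset representatives, apply Schur's lemma, and conclude using the one-dimensionality of $I_z(\tkappa)$. The one genuine improvement you offer is making explicit the claim $\dim I_z(\tkappa) = 1$ for $z \in Z \subset M \cap \GL_{\frac{n}{ef}}(E)$, which the paper's terse proof leaves implicit; your observation that this does not follow merely from $E^\times$ normalizing $\tau$ (since $Z$ is generally larger than $E^\times$ and need not normalize $K$) but instead requires the Bushnell--Kutzko computation of intertwining for $\beta$-extensions is correct and worth articulating.
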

\begin{proof}
The modules $\St_{\overline{M}_z, w s w^{-1}}$ and $\St_{\overline{M}_z, (w') s (w')^{-1}}$
are isomorphic if, and only if, $w$ and $w'$ represent the same class in
$W(\overline{M}_z) \backslash W(\overline{M},\overline{M}_z)/W_{\overline{M}}(s)$.
It follows that
$$\dim_{\overline{\CK}} \End_{\overline{\CK}[\overline{M}_z]}(\oplus_{w} \St_{\overline{M}_z, w s w^{-1}})
= \# W(\overline{M}_z) \backslash W(\overline{M},\overline{M}_z)/W_{\overline{M}}(s)$$
and the result follows by Proposition~\ref{prop:K-intertwining}.
\end{proof}

It remains to compute the dimension of the subspace of $H(G,K,\tkappa \otimes I_s)$ that is
central and supported on $K g K$.  The argument here is more or less identical to
that of section~\ref{sec:zero generic}, via the identification of $H(G,K,\tkappa \otimes I_s)$ with
the tensor product of the $H_j$.  This identification, together with Lemma~\ref{lemma:Iwahori central support},
allows us to immediately deduce:

\begin{prop} \label{prop:central support}
For any $z$ in $Z$, the space of central elements of $H(G,K_P,\tau_P)$
supported on the union of double cosets of the form $K_P w z w' K_P$
with $w,w'$ in $W_{\overline{M}}(s)$ is one-dimensional.  Moreover, the sum of these spaces as $z$
varies is the entire center of $H(G,K_P,\tau_P)$.
\end{prop}

We can thus conclude:
\begin{theorem} \label{thm:generic hecke}
The map: 
$$Z(H(G,K_P,\tau_P)) \rightarrow
H(G,K,\tkappa \otimes \St_s)$$
is an isomorphism.
\end{theorem}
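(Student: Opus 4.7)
Injectivity is given by the preceding proposition, so only surjectivity remains, and I will prove it by matching dimensions support-by-support in the $K$-double coset filtration.

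On the target, the corollary to Proposition~\ref{prop:K-intertwining} already computes
$$\dim_{\overline{\CK}} H(G,K,\tkappa\otimes\St_s)_{KzK} \;=\; |W(\overline{M}_z)\backslash W(\overline{M},\overline{M}_z)/W_{\overline{M}}(s)|.$$
For the source, I would use the identification $Z(H(G,K_{\overline{P}},\tau_{\overline{P}})) = \overline{\CK}[Z]^{W_{\overline{M}}(s)}$ together with Corollary~\ref{cor:central support}: this yields a basis of the center indexed by $W_{\overline{M}}(s)$-orbits in $Z$, with the basis vector for the orbit of $z'$ supported, in $H(G,K_{\overline{P}},\tau_{\overline{P}})$, on $K_{\overline{P}}W_{\overline{M}}(s)z'W_{\overline{M}}(s)K_{\overline{P}}$. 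Transporting through the isomorphism $H(G,K_{\overline{P}},\tau_{\overline{P}}) \cong H(G,K,\tkappa\otimes I_s)$, which sends an element supported on $K_{\overline{P}}gK_{\overline{P}}$ to one supported on $KgK$, and using $W_{\overline{M}}(s) \subset W(\overline{G}) \subset K$, each basis vector becomes supported on the single $K$-double coset $Kz'K$. Hence the map of the theorem respects the $K$-support filtration, and the source dimension on $KzK$ equals the number of $W_{\overline{M}}(s)$-orbits in $\{z' \in Z : Kz'K = KzK\}$.

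Combining Lemma~\ref{lemma:intertwining} (which confines the support of $H(G,K,\tkappa\otimes I_s)$ to $K\GL_{n/ef}(E)K$) with the Cartan decomposition for $\GL_{n/ef}(E)$ relative to its maximal compact subgroup $K\cap\GL_{n/ef}(E) = \GL_{n/ef}(\OO_E)$ identifies $\{z' \in Z : Kz'K = KzK\}$ with the $W_{\overline{M}}$-orbit of $z$ in $Z$: two elements of $Z$ share a $K$-double coset iff they have the same multiset of $\unif_E$-valuations, which is equivalent to being related by the block-permutation action of $W_{\overline{M}}$ on $Z$. Therefore the source dimension on $KzK$ is $|W_{\overline{M}}(s)\backslash W_{\overline{M}}/W_{\overline{M},z}|$, where $W_{\overline{M},z}$ denotes the stabilizer of $z$ in $W_{\overline{M}}$.

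The proof then reduces to the combinatorial identity
$$|W_{\overline{M}}(s)\backslash W_{\overline{M}}/W_{\overline{M},z}| \;=\; |W(\overline{M}_z)\backslash W(\overline{M},\overline{M}_z)/W_{\overline{M}}(s)|,$$
which I would establish from two structural observations: since $z$ is semisimple, its centralizer in $W(\overline{G})$ is $W(\overline{M}_z)$, so $W_{\overline{M},z} = W_{\overline{M}} \cap W(\overline{M}_z)$; and any Levi of $\overline{M}_z$ of the same $\overline{G}$-type as $\overline{M}$ is $W(\overline{M}_z)$-conjugate to $\overline{M}$ inside $\overline{M}_z$, giving $W(\overline{M},\overline{M}_z) = W(\overline{M}_z)\cdot W_{\overline{M}}$. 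Together these yield a natural bijection $W(\overline{M}_z)\backslash W(\overline{M},\overline{M}_z) \cong W_{\overline{M},z}\backslash W_{\overline{M}}$, compatible with the right $W_{\overline{M}}(s)$-actions, and the identity then follows from the symmetry $|A\backslash G/B| = |B\backslash G/A|$. The main obstacle is the careful support-tracking through the Hecke-algebra isomorphisms and the Cartan-style description of $K$-double cosets inside $Z$; once these are in place the combinatorial step is formal and, combined with injectivity, yields the theorem.
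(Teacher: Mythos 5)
Your approach follows the paper's strategy for the target-side dimension and reduces correctly to a support-by-support count, but there is a genuine gap in the source-side computation, specifically in the claim that $\{z' \in Z : Kz'K = KzK\}$ is a single $W_{\overline{M}}$-orbit. Having the same multiset of $\unif_E$-valuations is indeed equivalent to being conjugate by some element of $W(\overline{G})$, but for elements of $Z$ (which are scalar on each block of $\overline{M}$) this is strictly weaker than $W_{\overline{M}}$-conjugacy whenever $\overline{M}$ has blocks of different sizes. Concretely, suppose $\overline{M}$ has one block of size $d>1$ and $r \geq d$ blocks of size $1$, and let $z \in Z$ have valuation $1$ on the size-$d$ block and $0$ elsewhere. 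Then the element $z'$ with valuation $0$ on the size-$d$ block and valuation $1$ on $d$ of the size-$1$ blocks satisfies $Kz'K = KzK$ but is not $W_{\overline{M}}$-conjugate to $z$, since $W_{\overline{M}}$ cannot move the size-$d$ block. Such $\overline{M}$ do arise: the blocks of $\overline{M}=\overline{M}_s$ have distinct sizes exactly when $s$ has irreducible factors of distinct degrees, which does occur for suitable $s$ with $s^{\reg}=s'$ (the lemma at the end of section 5 rules this out only under the extra hypothesis $\ell > n$).

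Your second structural claim, $W(\overline{M},\overline{M}_z)=W(\overline{M}_z)\cdot W_{\overline{M}}$, fails for the same reason: $W(\overline{M}_z)$-conjugacy of Levi subgroups of $\overline{M}_z$ preserves the finer invariant recording how many blocks of each size lie inside each $\overline{M}_z$-block, which is not determined by the $\overline{G}$-type. In the example above, the $w'$ carrying $\overline{M}$ to the Levi whose size-$d$ block sits in the other $\overline{M}_z$-factor lies in $W(\overline{M},\overline{M}_z)\setminus W(\overline{M}_z)W_{\overline{M}}$. Consequently the proposed identity $|W_{\overline{M}}(s)\backslash W_{\overline{M}}/W_{\overline{M},z}|=|W(\overline{M}_z)\backslash W(\overline{M},\overline{M}_z)/W_{\overline{M}}(s)|$ is false: in the example the left side is $1$, while both the right side and the actual source dimension equal $2$. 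The fix is to observe directly that the conjugating element $w'$ with $z'=(w')^{-1}zw'$ lies in $W(\overline{G})$ and satisfies $w'\overline{M}(w')^{-1}\subset\overline{M}_z$ (since $\overline{M}\subset\overline{M}_{z'}=(w')^{-1}\overline{M}_zw'$), hence $w'\in W(\overline{M},\overline{M}_z)$, determined by $(z,z')$ up to left multiplication by $W(\overline{M}_z)$; dividing by the further $W_{\overline{M}}(s)$-identification of the coset-collections then gives the count $|W(\overline{M}_z)\backslash W(\overline{M},\overline{M}_z)/W_{\overline{M}}(s)|$ without invoking any combinatorial identity.
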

\begin{proof}
By the injectivity of this map, and Corollary~\ref{cor:dim count}, it suffices
to show that for each $z$ in $Z$, the subspace of $Z(H(G,K_P, \tau_P))$
supported on $K z K$ has dimension equal to the cardinality of 
$W(\overline{M}_z \backslash W(\overline{M},\overline{M}_z)/W_{\overline{M}}(s)$.
This is immediate from Proposition~\ref{prop:central support} and Proposition~\ref{prop:cosets}.
\end{proof}

As in~\ref{sec:zero generic}, we may deduce from this that the map $A_{M,\pi} \rightarrow H(G,K, \tkappa \otimes \St_s)$
giving the action of $A_{M,\pi}$ on $\cInd_K^G \tkappa \otimes \St_s$ is an isomorphism.

\section{Structure of $\End_G(\CP_{K,\tau})$} \label{sec:endomorphisms}

Our next step will be to understand the endomorphism ring 
$\End_{W(k)[G]}(\CP_{K,\tau}) = H(G,K,\CP_{\kappa \otimes \sigma})$ for an arbitrary $\kappa$.
Mostly, we will follow the arguments of section~\ref{sec:zero endomorphisms} in the depth zero case, with
some necessary modifications.  However, it will be useful to make some additional observations, which
we omitted from section~\ref{sec:zero endomorphisms} because their proof does not appreciably simplify
in the depth zero setting.

To simplify notation, let $E_{\sigma}$ be the ring $\End_{W(k)[K]}(\CP_{\sigma})$,
and let $E_{K,\tau}$ be the ring $\End_{W(k)[G]}(\CP_{K,\tau})$.  By Proposition~\ref{prop:endomorphisms},
$E_{\sigma}$ is a reduced commutative $W(k)$-algebra that is free of finite rank
over $W(k)$.

We have a direct sum decomposition:
$$\CP_{\sigma} \otimes \overline{\CK} \cong \bigoplus_{s: s^{\reg} = s'} \St_s,$$
where $s'$ is the $\ell$-regular semisimple element of $\overline{G}$ corresponding to the representation
$\sigma$.  This yields a decomposition:
$$\CP_{K,\tau} \otimes \overline{\CK} = \bigoplus_{s: s^{\reg} = s'} \cInd_K^G \tkappa \otimes \St_s.$$

\begin{proposition} \label{prop:P intertwining}
Let $g$ be an element of $G$ that intertwines $\CP_{K,\tau}$.  Then $g$ lies in $K \GL_{\frac{n}{ef}}(E) K$,
and we have a direct sum decomposition:
$$I_g(\CP_{K,\tau}) \otimes \overline{\CK} = \bigoplus_{s: s^{\reg} = s'} I_g(\tkappa \otimes \St_s).$$
Moreover, if $g$ lies in $\GL_{\frac{n}{ef}}$ and is $\overline{M}_g$-central, then there is a natural isomorphism:
$$I_g(\CP_{K,\tau}) \cong I_g(\tkappa) \otimes \End_{W(k)[\overline{M}_g]}(r_{\overline{G}}^{\overline{P}_g} \CP_{\sigma}).$$
\end{proposition}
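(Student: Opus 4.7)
The plan is to read the proposition as a direct consequence of the two intertwining lemmas of this section (Lemma~\ref{lemma:intertwining} and Lemma~\ref{lemma:xi-intertwining}), applied to $\xi = \CP_\sigma$, together with the decomposition $\CP_\sigma \otimes_{W(k)} \CK' = \bigoplus_{s:\, s^{\reg} = s'} \St_s$ coming from Proposition~\ref{prop:endomorphisms}. Throughout, ``$g$ intertwines $\CP_{K,\tau}$'' is to be interpreted as ``$g$ intertwines the inducing type $\tkappa \otimes \CP_\sigma$,'' i.e.\ $I_g(\tkappa \otimes \CP_\sigma) \ne 0$; this matches the usage of $I_g(\cdot)$ in the statement.

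First, because $\CP_\sigma$ is inflated from $K/K_1 \cong \overline{G}$ (it is the inflation of a projective envelope of $\sigma$ in $\Rep_{W(k)}(\overline{G})$), Lemma~\ref{lemma:intertwining}(1), with $\xi = \CP_\sigma$, immediately gives that any $g$ intertwining $\tkappa \otimes \CP_\sigma$ satisfies $g \in K \GL_{\frac{n}{ef}}(E) K$, proving the first claim. Moreover, for $g \in \GL_{\frac{n}{ef}}(E)$, the same lemma supplies a canonical isomorphism
$$I_g(\tkappa \otimes \CP_\sigma) \;\cong\; I_g(\tkappa) \otimes I_g(\CP_\sigma),$$
where on the right $\CP_\sigma$ is viewed as a $\GL_{\frac{n}{ef}}(\OO_E)$-module by inflation from $\overline{G}$.

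Next, to obtain the decomposition after $\otimes_{W(k)} \CK'$, I would use that $\tkappa$ and $\CP_\sigma$ are $W(k)$-free of finite rank, hence so is $\tkappa \otimes \CP_\sigma$, and that the compact open subgroup $K \cap gKg^{-1}$ acts through a finite quotient on it. Standard flat base change for $\Hom$ (finitely generated source, over a finite group ring) therefore gives
$$I_g(\tkappa \otimes \CP_\sigma) \otimes_{W(k)} \CK' \;\cong\; I_g\bigl((\tkappa \otimes \CP_\sigma) \otimes_{W(k)} \CK'\bigr).$$
Inserting $\CP_\sigma \otimes_{W(k)} \CK' = \bigoplus_{s:\,s^{\reg}=s'} \St_s$ and distributing $I_g$ over the direct sum yields the asserted decomposition $I_g(\CP_{K,\tau}) \otimes \CK' = \bigoplus_s I_g(\tkappa \otimes \St_s)$.

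Finally, under the additional hypothesis that $g \in \GL_{\frac{n}{ef}}(E)$ is $\overline{M}_g$-central, Lemma~\ref{lemma:xi-intertwining}, with $\xi = \CP_\sigma$, provides a canonical isomorphism
$$I_g(\CP_\sigma) \;\cong\; \End_{W(k)[\overline{M}_g]}\bigl(r_{\overline{G}}^{\overline{P}_g} \CP_\sigma\bigr),$$
and composing with the tensor factorisation from the first step gives the third claim. The real content of the argument sits in the earlier lemmas; the only mildly non-formal point is the base-change step, whose justification is routine given the finite generation of $\tkappa \otimes \CP_\sigma$ over $W(k)[K \cap gKg^{-1}]$, and this should be the most subtle bookkeeping in the write-up.
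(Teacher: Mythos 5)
Your overall plan — read the proposition off Lemma~\ref{lemma:intertwining}, Lemma~\ref{lemma:xi-intertwining}, and the decomposition $\CP_\sigma \otimes \CK' = \bigoplus_s \St_s$ from Proposition~\ref{prop:endomorphisms} — is the same route the paper takes, and your treatment of the first and third claims is fine. But there is a genuine gap in the middle step. The functor $I_g$ is not additive in the naive sense you use: by definition $I_g(\tau) = \Hom_{K \cap gKg^{-1}}(\tau, \tau^g)$, so if $\tau = \bigoplus_s \tau_s$ then
$$I_g(\tau) = \bigoplus_{s,t} \Hom_{K \cap gKg^{-1}}(\tau_s, \tau_t^g),$$
and ``distributing $I_g$ over the direct sum'' silently discards the cross terms with $s \neq t$. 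You need to show these vanish, and that is exactly the content the paper's proof supplies: it passes through $\End_{W(k)[\overline{M}_g]}(r_{\overline{G}}^{\overline{P}_g}\CP_\sigma)$ via Lemma~\ref{lemma:xi-intertwining}, and then invokes Proposition~\ref{prop:steinberg restriction} to see that the irreducible constituents of $r_{\overline{G}}^{\overline{P}_g}\St_s$ and $r_{\overline{G}}^{\overline{P}_g}\St_t$ are pairwise non-isomorphic for $s \neq t$ (they correspond to $\overline{M}_g$-conjugacy classes that are $\overline{G}$-conjugate to $s$ and $t$, respectively, hence disjoint). Only then does the endomorphism ring of the sum split as a product of the endomorphism rings of the summands, giving the asserted decomposition after tensoring with $I_g(\tkappa)$. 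Your base-change observation is correct and unobjectionable, but it does not by itself dispose of the cross terms; you should add the Proposition~\ref{prop:steinberg restriction} argument to close the gap.
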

\begin{proof}
Every statement other than the decomposition is a direct consequence of Lemma~\ref{lemma:xi-intertwining}.
As for the direct sum decomposition, the decomposition:
$$P_{\sigma} \otimes \overline{\CK} = \bigoplus_{s: s^{\reg} = s'} \St_s,$$
yields a decomposition
$$r_{\overline{G}}^{\overline{P}} \CP_{\sigma} \otimes \overline{\CK} = 
\bigoplus_{s: s^{\reg} = s'} r_{\overline{G}}^{\overline{P}} \St_s.$$
From Proposition~\ref{prop:steinberg restriction} we see that no two irreducible summands of the right hand side
are isomorphic to each other, so any endomorphism of the right hand side preserves each of the summands.  We thus have
a decomposition:
$$\End_{W(k)[\overline{M}_g]}(r_{\overline{G}}^{\overline{P}_g} \CP_{\sigma}) = \bigoplus_{s: s^{\reg} = s'}
\End_{W(k)[\overline{M}_g]}(r_{\overline{G}}^{\overline{P}_g} \St_s).$$
Tensoring both sides with $I_g(\tkappa)$ yields the desired result.
\end{proof}

\begin{corollary} \label{cor:s decomposition}
The action of $E_{K,\tau} \otimes \overline{\CK}$ on the direct sum decomposition
$$\CP_{K,\tau} \otimes \overline{\CK} = \bigoplus_{s: s^{\reg} = s'} \cInd_K^G \tkappa \otimes \St_s$$
preserves each summand, and thus yields an isomorphism:
$$E_{K,\tau} \otimes \overline{\CK} \cong \prod_{s: s^{\reg} = s'} H(G,K,\tkappa \otimes \St_s).$$
In particular $E_{K,\tau}$ is reduced, commutative, and $\ell$-torsion free.  Moreover,
$\Spec E_{K,\tau}$ is connected.
\end{corollary}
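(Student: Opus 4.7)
The plan is to obtain the product decomposition from the intertwining calculation of Proposition~\ref{prop:P intertwining}, and then derive the properties in the last sentence from this decomposition together with Theorem~\ref{thm:generic hecke}. First, applying $\cInd_K^G(\tkappa \otimes -)$ to $\CP_\sigma \otimes \CK' = \bigoplus_s \St_s$ yields the decomposition $\CP_{K,\tau} \otimes \CK' = \bigoplus_s \cInd_K^G(\tkappa \otimes \St_s)$ as $\CK'[G]$-modules.

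The heart of the argument is to show that $E_{K,\tau} \otimes \CK'$ acts block-diagonally on this splitting, which amounts to checking that $\Hom_{\CK'[G]}(\cInd_K^G(\tkappa \otimes \St_s), \cInd_K^G(\tkappa \otimes \St_t)) = 0$ for $s \neq t$. By Frobenius reciprocity and Mackey, this Hom space breaks up as a sum over $g \in K \backslash G / K$ of the off-diagonal pieces $\Hom_{K \cap gKg^{-1}}(\tkappa \otimes \St_s, (\tkappa \otimes \St_t)^g)$. All such pieces, taken over every pair $(s, t)$, assemble into the total intertwining $I_g(\CP_{K,\tau}) \otimes \CK'$, and Proposition~\ref{prop:P intertwining} identifies this total with $\bigoplus_s I_g(\tkappa \otimes \St_s)$ --- purely diagonal --- so the off-diagonal contributions must vanish. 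Combining this with $E_{K,\tau} \otimes \CK' = \End_{\CK'[G]}(\CP_{K,\tau} \otimes \CK')$ (valid because $\tkappa \otimes \CP_\sigma$ is $W(k)$-free of finite rank, so $\Hom$ and $K \cap gKg^{-1}$-invariants both commute with the flat base change $-\otimes_{W(k)} \CK'$), and base-changing to $\overline{\CK}$, yields the stated isomorphism $E_{K,\tau} \otimes \overline{\CK} \cong \prod_s H(G,K,\tkappa \otimes \St_s)$.

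For the final claims, $\ell$-torsion freeness of $E_{K,\tau}$ follows from the fact that $\CP_{K,\tau}$ is $W(k)$-free --- since $\tkappa$ is a $W(k)$-lift of $\kappa$ and $\CP_\sigma$ is a direct summand of $W(k)[\overline{G}]$ --- so $E_{K,\tau}$ embeds into $E_{K,\tau} \otimes \overline{\CK}$. By Theorem~\ref{thm:generic hecke} each factor $H(G,K,\tkappa \otimes \St_s)$ is the subring $\overline{\CK}[Z]^{W_{\overline{M}}(s)}$ of a Laurent polynomial ring, hence commutative and reduced; these properties descend to $E_{K,\tau}$. The principal technical obstacle is the vanishing of off-diagonal intertwiners in the main argument, but this is already packaged into Proposition~\ref{prop:P intertwining} via the observation that $r_{\overline{G}}^{\overline{P}} \St_s$ and $r_{\overline{G}}^{\overline{P}} \St_t$ share no common simple summand when $s$ and $t$ are not $\overline{G}$-conjugate.
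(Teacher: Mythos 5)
Your proof is correct and takes essentially the same route as the paper. The paper's proof of this corollary is terse: it says the direct-sum decomposition "follows immediately from Proposition~\ref{prop:P intertwining} by summing over the elements supported on $KgK$ for each $g$," that $\ell$-torsion freeness is "immediate from the fact that $\CP_{K,\tau}$ is projective," and that reducedness and commutativity follow from the corresponding properties of $H(G,K,\tkappa\otimes\St_s)$. Your argument unpacks the first step by phrasing the vanishing of off-diagonal $\Hom$'s via Mackey/Frobenius reciprocity and noting that Proposition~\ref{prop:P intertwining} is precisely the statement that $I_g(\CP_{K,\tau})\otimes\CK'$ has no off-diagonal terms; you correctly trace the ultimate source of this to the absence of common simple summands in the $r_{\overline{G}}^{\overline{P}}\St_s$ for non-conjugate $s$. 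Your route to $\ell$-torsion freeness, via $W(k)$-freeness of $\CP_{K,\tau}$ (compact induction of a free finite-rank $W(k)$-module), is a mild and equally valid variant of the paper's "projective" justification. The technical aside on base change of the Hecke algebra is correct and worth making explicit.
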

\begin{proof}
The direct sum decomposition follows immediately from Proposition~\ref{prop:P intertwining} by summing
over the elements supported on $K g K$ for each $g$.  The embedding follows from the absence
of $\ell$-torsion in $E_{K,\tau}$ (which is immediate from the fact that $\CP_{K,\tau}$ is
projective.) Reducedness and commutativity of $E_{K,\tau}$
then follow from the corresponding results for $H(G,K,\tkappa \otimes \St_s)$.

For the last claim, suppose that we had a nontrivial idempotent in $E_{K,\tau}$. 
Then, since the spectrum of $H(G,K,\tkappa \otimes \St_s)$ is connected for all $s$,
such an idempotent maps to either zero or one in each $H(G,K,\tkappa \otimes \St_s)$,
and hence lies in $E_{\sigma} \otimes \CK$.  But $E_{\sigma}$ is saturated in
$E_{K,\tau}$; that is, if $\ell x$ lies in $E_{\sigma}$, then so does $x$.  Thus we
would obtain a nontrivial idempotent in $E_{\sigma}$, which is impossible since $E_{\sigma}$
is local.
\end{proof}

We now turn to questions that have already been addressed in the depth zero case in section~\ref{sec:zero endomorphisms}.
As in that setting, it will be necessary to understand families of cuspidal representations of $\GL_i(F)$
for various $i$, all of which have supercuspidal support inertially equivalent to some power of
a fixed supercuspidal representation over $k$.  Fix an integer $n_1$, and let $(K_1,\tau_1)$
be a maximal distinguished {\em supercuspidal} $k$-type for $\GL_{n_1}(F)$.  We have $\tau_1 = \kappa_1 \otimes \sigma_1$,
where $\sigma_1$ is the supercuspidal representation of $\GL_{\frac{n_1}{ef}}(\FF_{q^f})$ 
attached to an $\ell$-regular element $s'_1$ of $\GL_{\frac{n_1}{ef}}(\FF_{q^f})$ with
irreducible characteristic polynomial.

Recall that $e_{q^f}$ is the order of $q^f$ modulo $\ell$, and let
$m$ lie in $\{1, e_{q^f}, \ell e_{q^f}, \ell^2 e_{q^f}, \dots \}$.
For precisely such $m$, the generic representation $\sigma_m$ of $\GL_{\frac{n_1m}{ef}}(\FF_{q^f})$
corresponding to a block matrix consisting of $m$ copies of $s'_1$ is cuspidal.

In section~\ref{sec:zero endomorphisms} the choice of such a $\sigma_m$ sufficed to define a depth zero type
in $\GL_{n_1m}(F)$.  Here we must work harder:
for any suitable $m$ we must also define a representation $\kappa_m$ of a suitable compact open subgroup
of $\GL_{n_1m}(F)$.  We do so as follows: the data giving rise to the type $(K_1,\tau_1)$ consist of an
extension $E$ of $F$ of ramification index $e$ and residue class degree $f$, a stratum
$[{\mathfrak A_1}, n, 0, \beta]$, with $E = F[\beta]$, and a character $\theta$
in ${\mathcal C}({\mathfrak A_1}, 0, \beta)$.  Then $\theta$ gives rise to an endo-equivalence
class $(\Theta, 0, \beta)$.  Let ${\mathfrak A}_m$ be the maximal order $M_m({\mathfrak A}_1)$ of 
$M_{n_1m}(F)$; then the endo-class $(\Theta, 0, \beta)$ gives rise to a compact open subgroup
$K_m$ of $\GL_{n_1m}(F)$ and a representation $\kappa_m$ of $K_m$.  (This is slightly misleading:
the representation $\kappa_m$ is in fact only well-defined up to certain twists.  For the moment
we will let $\kappa_m$ be any representation of $K_m$ arising from the choice of ${\mathfrak A}_m$ and
the endo-class $(\Theta, 0, \beta)$.  Later we will make additional assumptions on $\kappa_m$ that
pin it down precisely.

Let $\tau_m = \kappa_m \otimes \sigma_m$;
then $(K_m,\tau_m)$ is a maximal distinguished cuspidal $k$-type in $\GL_{n_1m}(F)$.

For each positive integer $m$, let $G_m$ be the group $\GL_{n_1m}(F)$ and let $\overline{G}_m$ 
be the group $\GL_{\frac{n_1m}{ef}}(\FF_{q^f})$.
For each semisimple conjugacy class $s$ in $\overline{G}_m$ with $\ell$-regular part $(s'_1)^m$,
let $\overline{M}_s$ be the minimal split Levi of $\overline{G}_m$ containing $s$,
let $M_s$ be the corresponding Levi of $G_m$, and let $Z_s$ be the subgroup of the center of
$M_s \cap \GL_{\frac{n_1m}{ef}}(E)$ consisting of diagonal matrices whose entries are powers of $\unif_E$.

As in section~\ref{sec:zero endomorphisms}, we may define 
elements $\theta_{i,s}$ of $W(k)[Z_s]^{W_{\overline{M}_s}(s)}$ as follows:
\begin{definition}
Let $\theta_{i,s}$ be the element of $Z_s$ given by the sum of the elements of $Z_s$ 
whose characteristic polynomial
(as an element of $\GL_{\frac{n_1m}{ef}}(E)$) has the form 
$(t - \unif_E)^{\frac{n_1i}{ef}}(t-1)^{\frac{n_1(m-i)}{ef}}$.
\end{definition}
As in section~\ref{sec:zero endomorphisms}, it is possible that $\theta_{i,s} = 0$. 

Of particular interest to us will be the subgroups corresponding to $s = (s'_1)^m$; we denote these by
$\overline{M}_{0,m}$, $M_{0,m}$, and $Z_{0,m}$.  For $i$ between $1$ and $m$, let $z_{i,m}$
be an element of $Z_{0,m}$ with characteristic polynomial
$(t - \unif_E)^{\frac{n_1i}{ef}}(t-1)^{\frac{n_1(m-i)}{ef}}$.

For $m \in \{1, e_{q^f}, \ell e_{q^f}, \dots \}$, and any conjugacy class $s$
in $\overline{G}_m$ with $\ell$-regular part $(s'_1)^m$, set $E_m = E_{K_m,\tau_m}$.
We have a chain of maps:
$$E_m \rightarrow H(G_m,K_m,\tkappa_m \otimes \St_s) \cong A_{M_s,\pi_s} \cong \overline{\CK}[Z_s]^{W_{\overline{M}_s}(s)},$$
where the first map arises from Corollary~\ref{cor:s decomposition}, the pair $(M_s,\pi_s)$ is in the inertial
equivalence class determined by $\tkappa_m$ and $s$, and the last isomorphism depends on
a particular choice of $(M_s,\pi_s)$.  As in section~\ref{sec:zero endomorphisms} we will construct elements
of $E_m$ whose action we can describe explicitly in terms of the $\theta_{i,s}$; this will require
choosing a ``compatibile family of cuspidals'' analogous to the family constructed in section~\ref{sec:zero endomorphisms},
in order to suitably normalize the maps to $\overline{\CK}[Z_s]$ for all $s$. 

Fix, once and for all, an absolutely irreducible integral supercuspidal representation $\pi_{s'_1}$ of $\GL_{n_1}(F)$ 
containing $(K_1, \tkappa_1 \otimes \St_{s'_1})$.  We will construct,
for each $m \in \{1,e_{q^f},\ell e_{q^f}, \dots \}$, and 
each irreducible conjugacy class $s$ in $\overline{G}_m$ such that $s^{\reg} = (s'_1)^m$, 
an absolutely irreducible cuspidal representation $\pi_s$ of $\GL_{n_1m}(F)$ containing 
$(K_m,\tkappa_m \otimes \St_s)$.  Our construction will mostly parallel that of section~\ref{sec:zero endomorphisms},
but as usual the nontriviality of $\tkappa_m$ introduces complications.

We proceed as follows: note that $(K_m,\tkappa_m \otimes \St_s)$ is a maximal distinguished cuspidal $\overline{\CK}$-type.
In particular $H(G_m,K_m,\tkappa_m \otimes \St_s)$ is isomorphic to $\overline{\CK}[\Theta^{\pm 1}]$,
where $\Theta$ is an element of $H(G_m,K_m,\tkappa_m \otimes \St_s)$ supported on $K_m z_{m,m} K_m$.  We have an isomorphism:
$$H(G_m,K_m,\tkappa_m \otimes \St_s)_{K_m z_{m,m} K_m} \cong I_{z_{m,m}}(\tkappa_m) \otimes \End_{\overline{\CK}[\overline{G}_m]}(\St_s).$$

We also have an isomorphism:
$$H(G_m,K_m,\tkappa_m \otimes \St_{(s'_1)^m})_{K_m z_{m,m} K_m} \cong I_{z_{m,m}}(\tkappa_m) \otimes \End_{\overline{\CK}[\overline{G}_m]}(\St_{(s'_1)^m}).$$
Let $\pi_{(s'_1)^m}$ be the cuspidal representation $\pi_{s'_1}^{\otimes m}$ of $M_{(s'_1)^m}$.
Such a choice gives an isomorphism of $H(G_m,K_m,\tkappa_m \otimes \St_{(s'_1)^m})$
with $\overline{\CK}[Z_{0,m}]^{W_{\overline{M}_{0,m}}((s_1')^m)}$; the element $z_{m,m}$ of $\overline{\CK}[Z_{0,m}]$
maps to an element of $H(G_m,K_m,\tkappa_m \otimes \St_{(s'_1)^m})_{K_m z_{m,m} K_m}$.  This element has the form
$\phi \otimes 1$ for some $\phi \in I_{z_{m,m}}(\tkappa_m)$, where we regard $1$ as the identity endomorphism of $\St_{(s'_1)^m}$.

We may then consider the element $\phi \otimes 1$ of $I_{z_{m,m}}(\tkappa) \otimes \End_{\overline{\CK}[\overline{G}_m]}(\St_s)$.
This gives us an element $\Theta$ of $H(G_m,K_m,\tkappa \otimes \St_s)_{K_m z_{m,m} K_m}$.  There is then a unique irreducble cuspidal
representation $\pi_s$ of $G_m$ over $\overline{\CK}$ that contains $(K_m,\tkappa \otimes \St_s)$, on which $\Theta$
acts via the identity.  Note that $\pi_s$ is integral.

We will refer to the family of cuspidal representations $\{\pi_s\}$ as the {\em compatible family of cuspidals}
attached to our choice of $\pi_{s'_1}$.  The dependence on the choice of $\pi_{(s'_1)}$ is mild; indeed,
replacing $\pi_{(s'_1)}$ by an unramified twist $\pi_{(s'_1)} \otimes (\chi \circ \det)$ simply twists
each $\pi_s$ by $\chi \circ \det$ as well.  Note that (for the moment) there is also a dependence on the
$\kappa_m$ we have chosen; this will be resolved when we pin down the $\kappa_m$ precisely.

For each $m$, and each {\rm reducible} conjugacy class $s$ in $\overline{G}_m$, we define an irreducible cuspidal
representation $\pi_s$ of $M_s$, by taking $\pi_s$ to be the tensor product of the cuspidal representations $\pi_{s_i}$
defined above, where the $s_i$ are the irreducible factors of $s$.  

We can now state a key result of this section, analogous to Theorem~\ref{thm:depth zero Theta}:

\begin{theorem} \label{thm:compatibility}
For each $m$, there exists a subalgebra $C_{K_m,\tau_m}$ of $E_{K_m,\tau_m}$, generated over $W(k)$ by elements
$\Theta_{1,m}, \dots, \Theta_{m,m}$ and $(\Theta_{m,m})^{-1}$,
such that 
for any $s$ with $s^{\reg} = (s'_1)^m$, the composed map:
$$C_{K_m,\tau_m} \rightarrow A_{M_s,\pi_s} \cong \overline{\CK}[Z_s]^{W_{\overline{M}_s}(s)}$$
takes $\Theta_{i,m}$ to $\theta_{i,s}$.  (Here the left-hand 
map is the map giving the action of $E_{K_m,\tau_m}$ on the summand $(\CP_{K_m,\tau_m} \otimes \overline{\CK})_{M_s,\pi_s}$
of $\CP_{K_m,\tau_m} \otimes \overline{\CK}$.)
\end{theorem}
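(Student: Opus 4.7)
The plan is to define $\Theta_{i,m}$ as the unique element of $E_{K_m,\tau_m}\otimes_{W(k)}\overline{\CK}$ whose image in each factor of the product decomposition of Corollary~\ref{cor:s decomposition} is $\theta_{i,s}$, and then verify that this element is integral. Specifically, Corollary~\ref{cor:s decomposition} together with Theorem~\ref{thm:generic hecke} yields identifications
$$E_{K_m,\tau_m} \otimes_{W(k)} \overline{\CK} \;\cong\; \prod_{s:\, s^{\reg}=(s'_1)^m} H(G_m,K_m,\tkappa_m\otimes \St_s) \;\cong\; \prod_{s:\, s^{\reg}=(s'_1)^m} \overline{\CK}[Z_s]^{W_{\overline{M}_s}(s)},$$
where the last identification is pinned down on the $s$-factor by the choice of $\pi_s$ from the compatible family attached to $\pi_{s'_1}$. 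I would then define $\Theta_{i,m}$ to be the element of the product whose $s$-coordinate is $\theta_{i,s}$; the required identity of Theorem~\ref{thm:compatibility} then holds tautologically. Granting integrality, $C_{K_m,\tau_m}$ is defined as the $W(k)$-subalgebra generated by $\Theta_{1,m},\dots,\Theta_{m,m}$ and $\Theta_{m,m}^{-1}$, noting that $\Theta_{m,m}$ is a unit because each $\theta_{m,s}$ is a single group element of $Z_s$ (the scalar matrix $\unif_E\cdot\id$).

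The integrality of $\Theta_{i,m}$ is checked by exploiting the fact that Theorem~\ref{thm:generic hecke} is a support-preserving isomorphism. Since each $\theta_{i,s}$ is supported on elements $z\in Z_s$ with characteristic polynomial $(t-\unif_E)^{n_1i/ef}(t-1)^{n_1(m-i)/ef}$, the candidate $\Theta_{i,m}$ has support on a finite union of such double cosets $K_m z K_m$. For each such $z$, which may be taken to be $\overline{M}_z$-central after adjusting by an element of $K_m$, Proposition~\ref{prop:P intertwining} identifies
$$I_z(\CP_{K_m,\tau_m}) \;=\; I_z(\tkappa_m) \otimes \End_{W(k)[\overline{M}_z]}(r_{\overline{G}_m}^{\overline{P}_z} \CP_{\sigma_m}),$$
while over $\overline{\CK}$ the same proposition gives $I_z(\CP_{K_m,\tau_m})\otimes_{W(k)}\overline{\CK} = \bigoplus_{s} I_z(\tkappa_m) \otimes \End_{\overline{\CK}[\overline{M}_z]}(r_{\overline{G}_m}^{\overline{P}_z}\St_s)$. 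Integrality of $\Theta_{i,m}$ at $K_m z K_m$ thus amounts to showing that its image in this direct sum lies in the integral submodule on the left.

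The main obstacle is this last integrality check, which I would dispatch by tracing through the explicit construction of the isomorphism of Theorem~\ref{thm:generic hecke}. That isomorphism factors through the support-preserving inclusion $H(G_m,K_m,\tkappa_m\otimes \St_s) \hookrightarrow H(G_m,K_m,\tkappa_m \otimes I_s)$ and the description of the latter via $G$-cover techniques as a tensor product of affine Hecke algebras. One verifies that, for $z$ with characteristic polynomial as above, the intertwining element at $K_m z K_m$ corresponding to $\theta_{i,s}$ has the form $\phi_z \otimes \psi_{z,s}$, where $\phi_z \in I_z(\tkappa_m)$ is independent of $s$ and $\psi_{z,s}$ is a distinguished intertwiner of $r_{\overline{G}_m}^{\overline{P}_z}\St_s$ singled out by the compatible choice of $\pi_s$. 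The compatible family $\pi_s$ was constructed from $\pi_{s'_1}$ precisely so that the $\psi_{z,s}$ fit together, as $s$ varies, into an integral endomorphism of $r_{\overline{G}_m}^{\overline{P}_z}\CP_{\sigma_m}$; by Proposition~\ref{prop:projective restriction} this restriction is identified with $\CP_{\overline{M}_z,(s'_1)^m}$, whose endomorphism ring is the (reduced, integral) block center provided by Corollary~\ref{cor:finite endomorphisms}. Tensoring with the integral element $\phi_z$ yields the required integral lift of $\Theta_{i,m}$ at $K_m z K_m$, and summing over $z$ completes the construction.
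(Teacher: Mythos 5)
Your approach is structurally different from the paper's: the paper builds $\Theta_{i,m}$ inductively from $\Theta_{j,m'}$ by passing through a $G$-cover of $(K_M,\tkappa_M\otimes\CP_{\overline{M}})$, establishing Theorem~\ref{thm:projective G cover}, and then descending through the chain $i_{\overline{P}}^{\overline{G}_m}\CP_{\overline{M}} \to \CP'_{\sigma_m} \to \CP_{\sigma_m}$; you instead propose a direct, non-inductive integrality check coset by coset. That direct route would be appealing, but as written it has a genuine gap.

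The crucial unjustified step is your assertion that ``the compatible family $\pi_s$ was constructed from $\pi_{s'_1}$ precisely so that the $\psi_{z,s}$ fit together, as $s$ varies, into an integral endomorphism of $r_{\overline{G}_m}^{\overline{P}_z}\CP_{\sigma_m}$.'' That is not what the construction of the compatible family does. Re-read the definition of $\pi_s$: it is pinned down by requiring that a single fixed intertwiner $\phi\otimes 1$, supported on $K_m z_{m,m} K_m$, acts on $\pi_s$ by the identity. This is a normalization at the ``top'' double coset $K_m z_{m,m} K_m$ only, i.e.\ it controls the behavior of $\Theta_{m,m}$ (where indeed the relevant $\End$-factor is literally the identity endomorphism, hence trivially integral). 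For $i<m$, $\theta_{i,s}$ lives over a proper parabolic restriction $r_{\overline{G}_m}^{\overline{P}_{z_{i,m}}}\CP_{\sigma_m}$, and nothing in the definition of $\pi_s$ directly constrains what the corresponding intertwiners in $\End_{\overline{\CK}[\overline{M}_{z_{i,m}}]}(\bigoplus_{w'}\St_{\overline{M}_{z_{i,m}}, w's(w')^{-1}})$ look like or whether they glue into an element of the integral ring $\End_{W(k)[\overline{M}_{z_{i,m}}]}(r_{\overline{G}_m}^{\overline{P}_{z_{i,m}}}\CP_{\sigma_m})$. Establishing precisely that compatibility is the substance of the theorem; asserting it as a property of the compatible family is circular.

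Concretely, the paper's inductive apparatus --- the $G$-cover $T:E_{K_{m'},\tau_{m'}}^{\otimes j}\to H(G_m,K''_m,\tkappa''_m\otimes\CP_{\overline{M}})$ of Theorem~\ref{thm:projective G cover} (whose proof requires the two invertibility inputs of Propositions~\ref{prop:projective G cover} and~\ref{prop:mod ell G cover}), the formation of $\tTheta_{i,m}$ as symmetrized tensor products of $\Theta_{r,m'}$, the descent to $H(G_m,K_m,\tkappa_m\otimes\CP'_{\sigma_m})$ via Lemma~\ref{lemma:G cover center}, and the lifting back into $E_{K_m,\tau_m}$ via Proposition~\ref{prop:P and P'} --- is exactly what replaces the phrase ``one verifies'' in your argument. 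If you want to make the direct argument work, you would need to prove, without that machinery, that the tuple $(\theta_{i,s})_s$ lands in $I_z(\tkappa_m)\otimes\End_{W(k)[\overline{M}_z]}(r_{\overline{G}_m}^{\overline{P}_z}\CP_{\sigma_m})$ rather than merely in its $\overline{\CK}$-span; at present that is asserted but not shown.
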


Note that this property characterizes the $\Theta_{i,m}$ uniquely.  The basic strategy, as in the depth
zero case, is to construct the $\Theta_{i,m}$ inductively via $G$-covers.
The case $m=1$ is easy: we have an isomorphism
$$H(G_1,K_1,\tkappa_1 \otimes \CP_{\sigma_1})_{K_1 z_{1,1} K_1} \cong I_{z_{1,1}}(\tkappa_1) \otimes \End_{W(k)[\overline{G}_1]}(\CP_{\sigma_1}).$$
We define $\Theta_{1,1}$ to be the unique element of the form $\phi \otimes 1$, for some $\phi$ in $I_{z_{1,1}}(\tkappa_1)$,
that acts on the quotient $\pi_{(s'_1)}$ of $\CP_{K_1,\tau_1} \otimes \overline{\CK}$ via the identity.  It is clear
from our construction of the $\pi_s$ that this $\Theta_{1,1}$ has the desired property.

We now turn to the inductive part of the argument.  This will proceed 
via a $G$-cover argument, along the lines of the construction
in section~\ref{sec:zero endomorphisms}.  However, the argument is considerably more delicate because of the presence of
the $\tkappa_m$.  In particular, the same complications that arose in section~\ref{sec:generic} (as compared
to section~\ref{sec:zero generic}) will arise here.

Let $m'$ be the largest element of $\{1,e_{q^f}, \ell e_{q^f}, \dots\}$ strictly less than
$m$, and set $j = \frac{m}{m'}$.  Let $V_{m'}$
be the $F$-vector space on which $G_{m'}$ acts, and identify $V_m$ with the direct sum of $j$ copies 
$V_{m',1}, \dots, V_{m',j}$ of $V_{m'}$.  Let $M$ be the Levi subgroup of $G_m$ preserving this direct sum
decomposition, and let $P$ be the parabolic preserving the flag 
$$V_{m',1} \subset V_{m',1} + V_{m',2} \subset \dots \subset V_{m',1} + \dots + V_{m',j}.$$
The groups $P = MU$ give rise to subgroups $\overline{P} = \overline{M}\overline{U}$ of
$\overline{G}_m$ in the usual way.

We have maximal orders ${\mathfrak A}_m$ of $G_m$ and ${\mathfrak A}_{m'}$ of $G_{m'}$ attached to the
types $(K_m,\tau_m)$ and $(K_{m'},\tau_{m'})$.  The procedure of~\cite{BK-semisimple}, 7.2 also
yields an order ${\mathfrak A}'_m$ contained in ${\mathfrak A}_m$ attached to the flag defined above.
Set $K'_m = J(\beta,{\mathfrak A}'_m),$ let $K''_m$ be the subgroup
$(J(\beta,{\mathfrak A}'_m) \cap P)H^1(\beta,{\mathfrak A}'_m)$ of $K'_m$, and let
$K_{m,P}$ be the preimage of $\overline{P}$ in $K_m$ under the map from
$K_m$ to $\overline{G}_m$.  
Then, just as in section~\ref{sec:generic} we have representations $\tkappa'_m$ of $K'_m$,
$\tkappa''_m$ of $K''_m$, and $\tkappa_{m,P}$ of $K_{m,P}$,
satisfying:
$$\tkappa'_{m,P} = (\tkappa'_m)|_{K_{m,P}}$$
$$\Ind_{K''_m}^{K'_m} \tkappa''_m \cong \tkappa'_m$$
$$\Ind_{K_{m,P}}^{({\mathfrak A}')^{\times}} \tkappa'_{m,P} \cong \Ind_{K'_m}^{({\mathfrak A}')^{\times}} \tkappa'_m.$$

Moreover, the intersection $K_M$ of $K''_m$ with $M$ is (under the identification of $M$ with a product of $j$ copies
of $G_{m'}$) equal to the product of $j$ copies of $K_{m'}$.  The restriction $\tkappa_M$ of $\tkappa''_m$ to $K_M$
factors as a product of $j$ copies of a representation $\tkappa'_{m'}$ containing a character attached to
${\mathfrak A}_m$ via the endo-class $(\Theta,0,\beta)$.  Thus $\tkappa'_{m'}$ differs from $\tkappa_{m'}$
by a twist.  

{\em We henceforth assume that, for each $m$, we have chosen $\tkappa_m$ so that $\tkappa'_{m'}$
is equal to $\tkappa_{m'}$.}  It is clear this can be done; the choice of $\kappa_1$ is arbitrary, and
for each pair $m',m$ of successive elements of $\{1,e_{q^f}, \ell e_{q^f}, \dots \}$,
changing $\tkappa_m$ by twist changes $\tkappa'_{m'}$ by the ``same'' twist.
With this assumption the representations $\tkappa_{m}$, depend only on our choice
of $\tkappa_1$.

Finally, let $\CP_{\overline{M}}$ denote the inflation of $\CP_{\sigma_{m'}}^{\otimes j}$ from $\overline{M}$ to
a representation of $K_M$, and also (somewhat abusively) the inflation of $\CP_{\sigma_{m'}}^{\otimes j}$
to a representation of $K''_m$ (via the surjection of $K''_m$ onto $\overline{P}$.)  

Exactly as in section~\ref{sec:generic}, we obtain isomorphisms:
$$\cInd_{K''_m}^{G_m} \tkappa''_m \otimes \CP_{\overline{M}} \cong \cInd_{K_{m,P}}^{G_m} \tkappa_{m,P} \otimes \CP_{\overline{M}}
\cong \cInd_{K_m}^{G_m} \tkappa_m \otimes i_{\overline{P}}^{\overline{G}_m} \CP_{\overline{M}}$$
$$H(G,K''_m,\tkappa''_m \otimes \CP_{\overline{M}}) \cong H(G_m,K_P,\tkappa_{m,P} \otimes \CP_{\overline{M}})
\cong H(G_m,K_m,\tkappa_m \otimes i_{\overline{P}}^{\overline{G}_m} \CP_{\overline{M}}).$$
Moreover, the discussion in section~\ref{sec:generic} following Theorem~\ref{thm:type cover} shows that these maps
are compatible with supports.

We are now in a position to prove an analog of Theorem~\ref{thm:depth zero G-cover}:
\begin{theorem} \label{thm:projective G cover}
The pair $(K''_m,\tkappa''_m \otimes \CP_{\overline{M}})$ is a $G$-cover of $(K_M,\tkappa_M \otimes \CP_{\overline{M}})$.
\end{theorem}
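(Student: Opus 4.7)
The plan is to verify the two hypotheses of Proposition~\ref{prop:G-cover}: that $(K''_m, \tkappa''_m \otimes \CP_{\overline{M}})$ is decomposed with respect to $P$, and that for some central strictly positive element $\lambda \in M$, the element $T^+(1_{K_M \lambda K_M})$ is invertible in $H(G_m, K''_m, \tkappa''_m \otimes \CP_{\overline{M}})$. The overall strategy is to reduce the invertibility to characteristic zero, where each summand of $\CP_{\overline{M}} \otimes \overline{\CK}$ produces a $G$-cover in the sense of Theorem~\ref{thm:type cover}, and then to lift the rational inverse to $W(k)$ via an explicit intertwining calculation.

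For the decomposition, $K''_m$ has the Iwahori factorization $K''_m = (K''_m \cap U^\circ)(K''_m \cap M)(K''_m \cap U)$ built into the Bushnell--Kutzko construction of \cite{BK-semisimple}, section 7.2, and $\tkappa''_m$ is trivial on $K''_m \cap U^\circ$ and $K''_m \cap U$ by the earlier lemma verifying (7.2.1). The representation $\CP_{\overline{M}}$ is inflated to $K''_m$ through $K''_m \to \overline{P} \to \overline{M}$: the subgroup $K''_m \cap U$ maps into the unipotent radical $\overline{U}$ of $\overline{P}$, which lies in the kernel of $\overline{P} \to \overline{M}$, and the subgroup $K''_m \cap U^\circ$ is contained in $H^1(\beta,{\mathfrak A}'_m)$ (the containment follows from $K''_m = (J(\beta,{\mathfrak A}'_m) \cap P)H^1(\beta,{\mathfrak A}'_m)$ together with $P \cap U^\circ = 1$), hence maps to the identity in $\overline{G}_m$. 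In both cases $\CP_{\overline{M}}$ acts trivially, so the tensor product $\tkappa''_m \otimes \CP_{\overline{M}}$ satisfies the required triviality.

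For the strictly positive central element, take $\lambda = (\unif_E^{j-1}, \unif_E^{j-2}, \dots, \unif_E, 1) \in Z(M)$ under the identification $M \cong G_{m'}^j$, which strictly contracts $U$ and expands $U^\circ$. To prove $\Theta_\lambda := T^+(1_{K_M \lambda K_M})$ is invertible in $E'' := H(G_m, K''_m, \tkappa''_m \otimes \CP_{\overline{M}})$, first note that $E''$ is $\ell$-torsion free (since $\tkappa''_m \otimes \CP_{\overline{M}}$ is a $W(k)$-module without $\ell$-torsion) and so embeds into $E'' \otimes \overline{\CK}$, which decomposes as a product of Hecke algebras indexed by summands $\St_{s_1} \otimes \dots \otimes \St_{s_j}$ of $\CP_{\overline{M}} \otimes \overline{\CK}$. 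Each factor is the Hecke algebra of a maximal distinguished cuspidal $M$-type, for which \cite{BK-semisimple}~7.2 furnishes a characteristic-zero $G$-cover and hence invertibility of $\Theta_\lambda$. The main obstacle, and the heart of the argument, is showing that the resulting rational inverse — which must be supported on the single double coset $K''_m \lambda^{-1} K''_m$ — actually takes values in the integral intertwining space $I_{\lambda^{-1}}(\tkappa''_m \otimes \CP_{\overline{M}})$. Since $\lambda$ is $\overline{M}_\lambda$-central, an integral analogue of Lemma~\ref{lemma:xi-intertwining} identifies this space with $I_{\lambda^{-1}}(\tkappa''_m) \otimes \End_{W(k)[\overline{M}]}(\CP_{\overline{M}})$; under this identification the action of $\Theta_\lambda$ reduces to convolution by an integrally invertible element of $I_\lambda(\tkappa''_m)$ (the same element controlling the $G$-cover property for each $(K''_m, \tkappa''_m \otimes \St_{\overline{M},s})$) tensored with the identity endomorphism of $\CP_{\overline{M}}$. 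Inverting each tensor factor separately produces the required integral inverse, and Proposition~\ref{prop:G-cover} then delivers the $G$-cover property.
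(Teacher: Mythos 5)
Your reduction (verify the decomposed-pair condition, then show $T^+(1_{K_M\lambda K_M})$ is invertible) matches the paper's framing, and the verification of the decomposition and the reduction to characteristic zero summands are fine. But the key step contains a gap, and it is precisely the step you identify as ``the heart of the argument.''

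You assert that the rational inverse of $\Theta_\lambda$ ``must be supported on the single double coset $K''_m\lambda^{-1}K''_m$.'' This is not true. The element $\lambda = (\unif_E^{j-1},\dots,\unif_E,1)$ has positive length in the extended affine Weyl group: under the support-preserving isomorphism of $H(G_m,K''_m,\tkappa''_m\otimes\St_{\overline{M},s})$ with a tensor product of affine Hecke algebras $H(q^{fd},m)$, the element $T^+(1_{K_M\lambda K_M})$ corresponds to a product of $T_\mu$'s with $\mu$ dominant of positive length, whose inverse involves $T_w^{-1} = q^{-\ell(w)}\bigl(T_w - \cdots\bigr)$ and is therefore spread over many double cosets. (Contrast with the length-zero $\zeta$ of Bushnell--Kutzko, whose Hecke operator genuinely has $T_\zeta^{-1} = T_{\zeta^{-1}}$; your $\lambda$ is not of that form.) Once the single-coset support claim fails, the proposed factorization $I_{\lambda^{-1}}(\tkappa''_m)\otimes\End_{W(k)[\overline{M}]}(\CP_{\overline{M}})$ no longer captures the inverse, and the ``invert each tensor factor separately'' step has no content.

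The paper's actual proof sidesteps support considerations for the inverse entirely. It establishes invertibility of $T^+x$ (for a carefully chosen $x = \otimes_i x_i$, each $x_i$ supported on a central element of $G_{m'}$) in two partial settings: after inverting $\ell$ (Proposition~\ref{prop:projective G cover}, essentially your characteristic-zero step) and modulo the maximal ideal $I$ of the finite local $W(k)$-algebra $E_{\sigma_m}$ (Proposition~\ref{prop:mod ell G cover}). It then invokes an elementary but crucial ring-theoretic lemma: if $E$ is a finite local $W(k)$-algebra with maximal ideal $m_E$ and $R$ is a finitely generated central $E$-algebra, then an element of $R$ invertible in $R[1/\ell]$ and in $R/m_ER$ is a unit. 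Your proposal omits the mod-$\ell$ invertibility step altogether; that step is genuinely nontrivial (it goes through the lattices $L_0$, $L$, $L'$ in $\St_{\overline{M}_0,(s'_1)^m}$ and Vigneras' integral $G$-cover results) and cannot be replaced by the support argument you propose, because that argument is based on a false premise.
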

\begin{proof}
By Lemma~\ref{lemma:G-cover reduction}, it suffices to construct a sequence of central elements $\lambda_1, \dots, \lambda_r$
of $M$, whose product is strictly positive,
and for each $i$ an invertible element $x_i$ of $H(G_m,K''_m,\tkappa'' \otimes \CP_{\overline{M}})$
supported on $K''_m \lambda_i' K''_m$, such that $T^+ x_i$ is invertible. 

Let $w$ be an element of $W(G_m)$ that maps $V_{m',1}$ to $V_{m',j}$, and $V_{m',i}$ to $V_{m',i-1}$ for $1 < i \leq j$,
and let $z$ be the central element of $M$ that acts by multiplication by $\unif_E$ on $V_{m',1}$ and by the identity on
all other $V_{m',i}$.  Let $\Pi = w z$.  Then $\Pi$ normalizes $K''_m$, and intertwines $\tkappa''_m$.  Let $\alpha$ be
any isomorphism $\tkappa''_m \cong (\tkappa''_m)^{\Pi}$, and let $\beta$ be the automorphism of $\CP_{\overline{M}} = \CP_{m'}^{\otimes j}$
that cyclically permutes the tensor factors.  Then $\alpha \otimes \beta$ is an isomorphism of $\tkappa''_m \otimes \CP_{\overline{M}}$
with its $\Pi$-conjugate.  Let $y$ be the unique element of $H(G_m,K''_m,\tkappa''_m \otimes \CP_{\overline{M}})$ supported
on $K''_m \Pi K''_m$ that takes the value $\alpha \otimes \beta$ at $\Pi$.  Since $\Pi$ normalizes $K''_m$, and $\alpha \otimes \beta$
is an isomorphism, left multiplication by $y$ induces an isomorphism:
$$H(G_m,K''_m,\tkappa''_m \otimes \CP_{\overline{M}})_{K''_m x K''_m} \rightarrow H(G_m,K''_m,\tkappa''_m \otimes \CP_{\overline{M}})_{K''_m \Pi x K''_m}$$  
for all $x$, and an analogous statement holds for right multiplication.

Now for $1 \leq i \leq j$, let $\lambda_i = w^{-i} \Pi^i$.  Then $\lambda_i$ is a central and positive (but not strictly positive) element of $M$,
and the product of the $\lambda_i$ is strictly positive.  Let $x_i$ be the element of $H(M,K_M,\tkappa_M \otimes \CP_{\overline{M}})$
giving the action of $\lambda_i$ on $\cInd_{K_M}^M \tkappa_M \otimes \CP_{\overline{M}}$.  It suffices to show $T^+ x_i$ is invertible
for each $i$; equivalently, it suffices to show that $(T^+ x_i) y^{-i}$ is an invertible element $v_i$ of 
$H(G_m,K''_m,\tkappa''_m \otimes \CP_{\overline{M}})$.

Since $v_i$ is supported on $K''_m w^{-i} K''_m$, and the subalgebra of $H(G_m,K''_m,\tkappa''_m \otimes \CP_{\overline{M}})$ supported
on $K_m$ is isomorphic to the endomorphism ring of $i_{\overline{P}_m}^{\overline{G}_m} \CP_{\overline{M}}$, we may interpret
$v_i$ as being induced from an endomorphism $\overline{v}_i$ of $i_{\overline{P}_m}^{\overline{G}_m} \CP_{\overline{M}}$.
Exactly as in section~\ref{sec:zero endomorphisms}, one verifies that $\overline{v}_i$ is the automorphism of this space described in
Theorem 2.4 of~\cite{howlett-lehrer}, completing the argument.
\end{proof}

Our construction of the $\Theta_{i,m}$ now proceeds in a manner similar to that of 
section~\ref{sec:zero endomorphisms}.
In particular we obtain a map:
$$E_{m'}^{\otimes j} \rightarrow H(G_m,K''_m,\tkappa''_m \otimes \CP_{\overline{M}}) \cong 
H(G_m,K_m,\tkappa_m \otimes i_{\overline{P}}^{\overline{G}_m} \CP_{\overline{M}}).$$

Recall that $\CP'_m$ is the image of $i_{\overline{P}}^{\overline{G}_m}$ in
$\CP_m$ under the natural map from the former to the latter.  Exactly as in
section~\ref{sec:zero endomorphisms}, we have:

\begin{proposition} \label{prop:P and P'}
There is a surjection:
$$H(G_m,K_m,\tkappa_m \otimes \CP_m) \rightarrow H(G_m,K_m,\tkappa_m \otimes \CP'_m)$$
compatible with the inclusion of $\cInd_{K_m}^{G_m} \tkappa_m \otimes \CP_m$ in
$\cInd_{K_m}^{G_m} \tkappa_m \otimes \CP'_m$ and the actions of the respective Hecke operators
on these spaces.  Moreover, this surjection is an isomorphism away from double cosets of the
form $K_m z_{m,m}^r K_m$.
\end{proposition}
\begin{proof}
The proof is identical to that of Proposition~\ref{prop:surjection}, except that
elements of the form $z_{m,m}^r$ are no longer central, so the statement must be rephrased slightly.
\end{proof}

Similarly, in a manner identical to the proof of Lemma~\ref{lemma:depth zero center}, we have:

\begin{lemma} \label{lemma:G cover center}
Let $c$ be a central element of $H(G_m,K_m,\tkappa_m \otimes i_{\overline{P}}^{\overline{G}_m} \CP_{\overline{M}})$.
Then $c$ preserves the kernel of the surjection:
$$\cInd_{K_m}^{G_m} \tkappa_m \otimes i_{\overline{P}}^{\overline{G}_m} \CP_{\overline{M}}
\rightarrow \cInd_{K_m}^{G_m} \tkappa_m \otimes \CP'_{\sigma_m},$$
and thus descends to an element of $H(G_m,K_m,\tkappa_m \otimes \CP'_{\sigma_m})$.
\end{lemma}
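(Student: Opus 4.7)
The plan is to exploit the fact that over $\bar{\CK}$ the short exact sequence
\[
0 \to K_1 \to i_{\overline{P}}^{\overline{G}_m} \CP_{\overline{M}} \to \CP'_{\sigma_m} \to 0
\]
of $\overline{G}_m$-modules splits (by Maschke, since $\overline{G}_m$ is finite and $\bar{\CK}$ has characteristic zero), and to show that the resulting projection idempotent lies in the base change of the \emph{integral} Hecke algebra $\End_{G_m}(P'')=H(G_m,K_m,\tkappa_m \otimes i_{\overline{P}}^{\overline{G}_m} \CP_{\overline{M}})$, and not merely in the a priori larger ring $\End_{G_m}(P'' \otimes \bar{\CK})$. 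Central elements of the integral Hecke algebra will then automatically commute with this idempotent after base change, and so preserve its kernel.

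To set up the reduction, both $P'' := \cInd_{K_m}^{G_m}(\tkappa_m \otimes i_{\overline{P}}^{\overline{G}_m} \CP_{\overline{M}})$ and $P' := \cInd_{K_m}^{G_m}(\tkappa_m \otimes \CP'_{\sigma_m})$ are $\ell$-torsion-free ($P''$ is projective; $P'$ is induced from a submodule of the torsion-free $\tkappa_m \otimes \CP_{\sigma_m}$), so it suffices to verify $(c \otimes 1)(N \otimes_{W(k)} \bar{\CK}) \subseteq N \otimes_{W(k)} \bar{\CK}$. Choose a splitting section over $\bar{\CK}$ to obtain an idempotent $e_0 \in \End_{\overline{G}_m}(i_{\overline{P}}^{\overline{G}_m} \CP_{\overline{M}} \otimes \bar{\CK})$ projecting onto $\CP'_{\sigma_m} \otimes \bar{\CK}$. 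Since $i_{\overline{P}}^{\overline{G}_m} \CP_{\overline{M}}$ is finitely generated over $W(k)$, the formation of $\End_{\overline{G}_m}(\cdot)$ commutes with $\otimes_{W(k)} \bar{\CK}$, so $e_0$ lies in $\End_{\overline{G}_m}(i_{\overline{P}}^{\overline{G}_m} \CP_{\overline{M}}) \otimes_{W(k)} \bar{\CK}$. Inflating via $K_m \twoheadrightarrow \overline{G}_m$, tensoring with $\id_{\tkappa_m}$, and including as a function supported on the identity double coset $K_m$ (using the $\otimes \bar{\CK}$-compatibility of each intertwining space $I_g$, which is finitely generated over $W(k)$) produces an idempotent $E$ in $H(G_m,K_m,\tkappa_m \otimes i_{\overline{P}}^{\overline{G}_m} \CP_{\overline{M}}) \otimes_{W(k)} \bar{\CK}$. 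By functoriality of $\cInd_{K_m}^{G_m}$, the action of $E$ on $P'' \otimes \bar{\CK}$ is exactly the projection onto $P' \otimes \bar{\CK}$ along $N \otimes \bar{\CK}$.

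To conclude, observe that $\End_{G_m}(P'')$ is $W(k)$-flat (being $\ell$-torsion-free), whence $Z(\End_{G_m}(P'') \otimes_{W(k)} \bar{\CK}) = Z(\End_{G_m}(P'')) \otimes_{W(k)} \bar{\CK}$. Thus $c \otimes 1$ lies in the center of the base-changed Hecke algebra, commutes with $E$, and hence preserves $\ker(E) = N \otimes \bar{\CK}$; the $\ell$-torsion-free argument then gives $c(N) \subseteq N$. The hard step is the middle paragraph: genuinely realizing $E$ inside $\End_{G_m}(P'') \otimes_{W(k)} \bar{\CK}$ rather than merely inside $\End_{G_m}(P'' \otimes \bar{\CK})$, since centrality of $c$ in the integral Hecke algebra only forces commutation with the former. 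Everything else is routine bookkeeping with base change and functoriality of $\cInd$.
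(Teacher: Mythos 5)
Your argument is correct and follows essentially the same route as the paper's: the sequence splits after inverting $\ell$, the base change of $c$ commutes with the splitting idempotent (since it remains central), so it preserves $N \otimes \overline{\CK}$, and $\ell$-torsion-freeness of the quotient lets you intersect back down to $N$.

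The only place you spend more effort than needed is in singling out the distinction between $\End_{G_m}(P'') \otimes_{W(k)} \overline{\CK}$ and $\End_{G_m}(P'' \otimes \overline{\CK})$: for a compact induction of a finitely generated $W(k)[K_m]$-module, $\End_{G_m}(\cInd_{K_m}^{G_m}\tau)$ is the Hecke algebra $\bigoplus_g I_g(\tau)$, a direct sum of finitely generated free $W(k)$-modules, so base change commutes with $\End$ on the nose. This identification is used tacitly throughout the paper (e.g.\ in the proof of Proposition~\ref{prop:invert}), and once one grants it, your middle paragraph reduces to the single observation in the paper's proof that ``$c$ commutes with projection onto this summand.'' Likewise, the claim that $Z(-)$ commutes with $\otimes\overline{\CK}$ needs slightly more than flatness (one needs the localization-plus-field-extension argument, using $\ell$-torsion-freeness), but the conclusion is correct and routine. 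None of this affects the validity of the proof.
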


With these results in hand we return to the inductive construction.
Fix $m'$ immediately preceding $m$,
and suppose that we have constructed elements $\Theta_{i,m'}$ as in Theorem~\ref{thm:compatibility}.
Suppose further that for $1 \leq i < m'$, the element $\Theta_{i,m'}$ is supported away from double cosets
of the form $K_{m'} z_{m',m'}^r K_{m'}$, 
and that $\Theta_{m',m'}$ is an element of $E_{K_{m'},\tau_{m'}}$
supported on $K_{m'} z_{m,m} K_{m'}$, and is of the form $\phi \otimes 1$, where $\phi$ lies in
$I_{z_{m',m'}}(\tkappa_{m'})$.  (These stipulations hold when $m' = 1$ by construction, and we will show that 
the inductive construction of the $\Theta_{i,m'}$ implies these conditions for each larger $m'$ as well.)
We now turn to constructing the elements $\Theta_{i,m}$.

As in section~\ref{sec:zero endomorphisms}, we first define $\tTheta_{i,m}$ be the element of $E_{m'}^{\otimes j}$
defined by the formula:
$$\tTheta_{i,m} = \sum\limits_{r_1 + \dots + r_j = i} \Theta_{r_1,m'} \otimes \dots \otimes \Theta_{r_j,m'},$$
and let $\overline{\Theta}_{i,m}$ be the image of $\tTheta_{i,m}$ in 
$H(G_m,K_m,\tkappa_m \otimes i_{\overline{P}}^{\overline{G}_m} \CP_{\overline{M}})$.  For $i < m$,
$\overline{\Theta}_{i,m}$ is supported away from cosets of the form $K_m z_{m,m}^r K_m$.
By contrast, $\overline{\Theta}_{m,m}$ is supported on $K_m z_{m,m} K_m$, and has the form $\phi \otimes 1$
for an element $\phi$ of $I_{z_{m,m}}(\tkappa)$.

The arguments of section~\ref{sec:zero endomorphisms} show that for any $s$ with $\ell$-regular part
$(s'_1)^m$, and any $s_1, \dots, s_j$ in $\overline{G}_{m'}$ such that the ``block'' matrix with
blocks $s_1, \dots, s_j$ is conjugate to $s$, the action of $\overline{\Theta}_{i,m}$ on the summand
$\cInd_{K_m}^{G_m} \tkappa_m \otimes i_{\overline{P}}^{\overline{G}_m} \bigotimes_r \St_{s_r}$
of $\cInd_{K_m}^{G_m} \tkappa_m \otimes i_{\overline{P}}^{\overline{G}_m} \CP_{\overline{M}} \otimes \overline{\CK}$
is via the element $\theta_{i,s}$ of $A_{M_s,\pi_s}$.  In particular $\overline{\Theta}_{i,m}$ is central in
$H(G_m,K_m,\tkappa_m \otimes i_{\overline{P}}^{\overline{G}_m} \CP_{\overline{M}})$.

The construction of the elements $\Theta_{i,m}$ as in Theorem~\ref{thm:compatibility} is
now straightforward.  Consider the image of $\overline{\Theta}_{i,m}$ in
$H(G_m,K_m,\tkappa_m \otimes \CP'_m)$.  For $i < m$, this image is supported away from
cosets of the form $K_m z_{m,m}^r K_m$ and hence lifts uniquely to an element of $E_m$ supported
away from $K_m z_{m,m}^r K_m$.  Denote this element by $\Theta_{i,m}$; it is clear from our calculations
that this has the claimed properties.  For $i = m$, the element $\overline{\Theta}_{m,m}$ is supported
on $K_m z_{m,m} K_m$ and has the form $\phi \otimes 1$ for some $\phi$ in $I_{z_{m,m}}(\tkappa)$;
we let $\Theta_{m,m}$ denote the element $\phi \otimes 1$ of $H(G_m,K_m,\tkappa_m \otimes \CP_m)_{K_m z_{m,m} K_m}$.
Again, it is clear that this element maps to $\theta_{m,s}$ for all $s$.

The proof of Theorem~\ref{thm:compatibility} is thus complete.\qed

We now turn to considering the implications of Theorem~\ref{thm:compatibility}.
Our construction of the $\Theta_{i,m}$ gives us control over the mod $\ell$ cuspidal supports
of the pairs $(M_s,\pi_s)$.  

\begin{theorem} \label{thm:cusp support}
\begin{enumerate}
\item Let $V$ be an irreducible cuspidal representation of $G_m$ over $k$ containing $(K_m,\tau_m)$.  Then the supercuspidal
support of $V$ is inertially equivalent to $(L_m,\pi_1^{\otimes m})$.
\item For each semisimple conjugacy class $s$ in $\overline{G}_m$ with $\ell$-regular part $(s'_1)^m$, the supercuspidal
$\overline{\CK}$-representation $\pi_s$ of $M_s$ has mod $\ell$ inertial supercuspidal support equivalent to $(L_m,\pi_1^{\otimes m})$.
\item Let $V$ be an irreducible cuspidal representation of $G_r$ over $k$ whose mod $\ell$ inertial supercuspidal support
is equivalent to $(L_r,\pi_1^{\otimes r})$.  Then $r$ lies in $\{1, e_{q^f}, \ell e_{q^f}, \dots \}$, and $V$ 
contains $(K_r,\tau_r)$.
\item Let $V$ be an irreducible supercuspidal representation of $G_r$ over $\overline{\CK}$ whose mod $\ell$ inertial supercuspidal support
is equivalent to $(L_r,\pi_1^{\otimes r})$.  Then $r$ lies in $\{1, e_{q^f}, \ell e_{q^f}, \dots \}$, and $V$ is an unramified twist
of $\pi_s$ for some irreducible semisimple conjugacy class $s$ in $\overline{G}_r$ with $\ell$-regular part $(s'_1)^r$.
\end{enumerate}
\end{theorem}
\begin{proof}
These claims all follow easily from results of Vigneras in~\cite{vig98}.  However, they also follow directly from what
we have proven above.  For completeness, and with an eye towards potential generalizations to situations not covered by~\cite{vig98},
we give this alternative argument:

For the first claim, note that we have a surjection $\CP_{K_m,\tau_m} \rightarrow V$.  Let $\tV$ be a cuspidal
lift of $V$ to a representation over $\overline{K}$; since $\tV$ admits a nonzero map from
$\CP_{K_m,\tau_m}$, $\tV$ must lie in $\Rep_{\overline{K}}(G)_{M_s,\pi_s}$ as above.  Moreover, since $\tV$ is
cuspidal we must have $M_s = G$ and $\pi_s$ cuspidal; in this case $\tV$ is an unramified twist of $\pi_s$.
We thus have an action of $E_{K_m,\tau_m}$ on $\tV$ via the maps:
$$E_{K_m,\tau_m} \rightarrow A_{M_s,\pi_s} \rightarrow \End_{\overline{K}[G]}(\tV) \cong \overline{K}.$$
This action preserves the image of $\CP_{K_m,\tau_m}$ in $\tV$ and thus reduces modulo $\ell$ to a map
$\alpha: E_{K_m,\tau_m} \rightarrow k$.  If we let $E_{K_m,\tau_m}$ act on $V$ via $\alpha$, then this action
is compatible with the surjection of $\CP_{K_m,\tau_m}$ onto $V$.

Let ${\mathfrak m}$ be the kernel of $\alpha$, and let $V'$ be the quotient
$\CP_{K_m,\tau_m}/{\mathfrak m} \CP_{K_m,\tau_m}$.  Every endomorphism of a quotient of $V'$ lifts to an element
of $E_{K_m,\tau_m}$, and is thus a scalar; this implies that the cosocle of $V'$ is absolutely irreducible.  Since
we have a surjection $V' \rightarrow V$, the cosocle of $V'$ is isomorphic to $V$.

On the other hand, we have a map $E_{K_m,\tau_m} \rightarrow W(k)[Z_{0,m}]^{W_{M_{0,m}}((s'_1)^m)}$
giving the action of $E_{K_m,\tau_m}$ on $\cInd_{K_m}^{G_m} \tkappa_m \otimes \St_{(s'_1)^m}$, and the target of
this map is generated by the images of the $\Theta_{i,m}$.  It is then straightforward to verify that
the map $\alpha$ coincides with the composition:
$$E_{K_m,\tau_m} \rightarrow W(k)[Z_{0,m}]^{W_{M_{0,m}}((s'_1)^m)} \rightarrow k,$$
where the second map takes the images of
$\Theta_{1,m}, \dots, \Theta_{m-1,m}$ to zero and takes the image of $\Theta_{m,m}$ to $\alpha(\Theta_{m,m})$.
(This amounts to verifying that $\Theta_{i,m}$ acts by zero on $V$ for $1 \leq i \leq m-1$, but this is clear since
$\tV$ is cuspidal: the action on $V$ thus factors through the action of $E_{K_m,\tau_m}$ on $\tV$
and for $1 \leq i \leq m-1$, the elements $\Theta_{i,m}$ annihilate the cuspidals.)

Let $W$ be the image of $\CP_{K_m,\tau_m}$ in $\cInd_{K_m}^{G_m} \tkappa_m \otimes \St_{(s'_1)^m}$.  We have
a surjection $V' \rightarrow W/{\mathfrak m} W$, and hence in particular $V$ is isomorphic to a subquotient
of $W$.  On the other hand, since $W \otimes \overline{\CK}$ is isomorphic to $\cInd_{K_m}^{G_m} \tkappa_m \otimes \St_{(s'_1)^m}$,
it follows that $W \otimes \overline{\CK}$ lies in $\Rep_{\overline{\CK}}(G)_{M_{(s'_1)^m},\pi_{(s'_1)^m}}$, and thus that
every subquotient of $W$ has mod $\ell$ inertial supercuspidal support $(L_m,\pi_1^{\otimes m})$.

For the second claim, note that it suffices to prove this when $s$ is irreducible, and in this case $\pi_s$ contains
$(K_m,\tkappa \otimes \St_s)$.  Twisting so that $\pi_s$ is integral, we find that the mod $\ell$ reduction of $\pi_s$
contains $(K_m,\tau_m)$ and so we are done by claim (1).

Now let $V$ be an irreducible cuspidal representation of $G_r$ over $k$ for some $r$.  Then $V$ contains a maximal distinguished
cuspidal $k$-type $(K',\tau')$.  Write $\tau' = \kappa' \otimes \sigma'$.  Then $\sigma'$ is attached to an $\ell$-regular
semisimple conjugacy class $t$ in $\overline{G}_r$; moreover, as $\sigma'$ is cuspidal, $t$ is the $\ell$-regular part of
an irreducible conjugacy class and thus has the form $(t')^{r'}$ for some irreducible $\ell$-regular conjugacy class $t'$ and
some positive integer $r'$ dividing $n_1r$.  Let $n'_1 = \frac{n_1r}{r'}$ and fix a maximal order in $\GL_{n'_1}(F)$.  Then
the endo-class $(\Theta',0,\beta')$ attached to $(K',\tau')$ associates to this maximal order a compact open subgroup
$K_1'$ of $\GL_{n'_1}(F)$ and a representation $\kappa_1$ of $K_1'$ (defined up to twist).  Set $\tau'_1 = \kappa'_1 \otimes \sigma_{t'}$,
where $\sigma_{t'}$ is the supercuspidal representation associated to $t'$; then $(K'_1,\tau'_1)$ is a maximal distinguished
supercuspidal type, and the construction of this section associated to this type a family $(K'_m,\tau'_m)$ for suitable $m$.
Moreover, we may
choose the twist of $\kappa_1$ so that $(K',\tau')$ is equivalent to the type $(K'_r,\tau'_r)$.  Then claim (1) above shows
that $V$ has supercuspidal support inertially equivalent to $(\pi')^{\otimes r'}$, where $\pi'$ is some supercuspidal $k$-representation
of $\GL_{n'_1}(F)$ containing $(K'_1,\tau'_1)$.  But then $r = r'$ and $\pi'$ and $\pi$ are unramfied twists of each other, so
$(K'_1,\tau'_1)$ is equivalent to $(K_1,\tau_1)$.  It now follows that $\sigma_{t'}$ is a unramified twist of $\sigma_1$ by a character,
so that $s'$ and $t'$ differ by an element of the center of $\overline{G}_1$.  Thus $r$ must lie in $\{1,e_{q^f}, \ell e_{q^f}, \dots\}$
so the type $(K_r,\tau_r)$ makes sense and is equivalent to $(K',\tau')$.  Thus $V$ contains $(K',\tau')$ as claimed.

Claim (4) follows from (3) easily by reducing an unramified twist of $V$ modulo $\ell$.
\end{proof}

We conclude this section by giving a complete description of $E_{K_m,\tau_m}$ for
small $m$; that is, for $m < \ell$.  There are two cases to consider; in the first,
$m = 1$, and in the second $m = e_{q^f} > 1$.

When $m=1$, the element $\Theta_{1,1}^r$ of $E_{K_1,\tau_1}$ generates
$(E_{K_1,\tau_1})_{K_1 z_{1,1}^r K_1}$ as an $E_{\sigma_1}$-module for all $r$, and hence we have
$$E_{K_1,\tau_1} = E_{\sigma_1}[\Theta_{1,1}^{\pm 1}].$$
This case was already studied by Dat in~\cite{dat}; in particular Dat shows that
$E_{\sigma_1}$ is a universal deformation ring of $\sigma_1$, and that, after completing
at any maximal ideal of characteristic $\ell$, $E_{K_1,\tau_1}$ becomes the universal
deformation ring of the corresponding supercuspidal representation.

Whem $m > 1$ but $m < \ell$, then any $s \neq (s'_1)^m$ with $s^{\reg} = (s'_1)^m$
is irreducible.  We have an ideal $I^{\cusp}$ of $E_{\sigma_m}$ that is the kernel of
the action of $E_{\sigma_m}$ on $\St_{(s'_1)^m}$.

\begin{proposition} \label{prop:e_q}
When $1 < m < \ell$, we have an isomorphism:
$$E_{K_m,\tau_m} \cong E_{\sigma_m}[\Theta_{1,m}, \dots, \Theta_{m,m}^{\pm 1}]/\<\Theta_{1,m}, \dots, \Theta_{m - 1,m}\>\cdot I^{\cusp}.$$
\end{proposition}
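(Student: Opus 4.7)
The plan is to construct the natural candidate map from the polynomial ring, verify the stated relation, and then show the induced map is an isomorphism by separate injectivity and surjectivity arguments. Throughout I exploit the product decomposition $E_{K_m,\tau_m} \otimes \CK \cong \prod_s H(G_m,K_m,\tkappa_m\otimes\St_s)\otimes \CK$ of Corollary~\ref{cor:s decomposition}, where $s$ ranges over semisimple classes with $s^{\reg}=(s'_1)^m$, together with the normalization of the $\Theta_{i,m}$ provided by Theorem~\ref{thm:compatibility}.

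\emph{Step 1 (the map and the relation).} I will define $\psi\colon R := E_{\sigma_m}[\Theta_{1,m},\dots,\Theta_{m,m}^{\pm 1}] \to E_{K_m,\tau_m}$ by sending each $\Theta_{i,m}$ to itself and extending the natural inclusion $E_{\sigma_m}\hookrightarrow E_{K_m,\tau_m}$; this is well-defined because $E_{K_m,\tau_m}$ is commutative and $\Theta_{m,m}$ is invertible. Letting $J=\<\Theta_{1,m},\dots,\Theta_{m-1,m}\>$ and $\widetilde I$ for the extension of $I\subset E_{\sigma_m}$ to $R$, I must check $\psi(J\widetilde I)=0$. Since $E_{K_m,\tau_m}$ is $\ell$-torsion-free, it suffices to work after $\otimes \CK$ on each factor of the decomposition. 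Under the hypothesis $1<m<\ell$ the only reducible $s$ with $s^{\reg}=(s'_1)^m$ is $s=(s'_1)^m$, so $I$ vanishes on the $s'$-factor and the product is automatically zero there. For each irreducible $s$ one has $\overline M_s=\overline G_m$ and $Z_s=\<\unif_E\cdot\Id\>$ consists of scalar matrices with characteristic polynomial of the form $(t-\unif_E^k)^{n_1m/ef}$; by Theorem~\ref{thm:compatibility} the element $\Theta_{i,m}$ maps to $\theta_{i,s}$, the sum over elements with polynomial $(t-\unif_E)^{n_1i/ef}(t-1)^{n_1(m-i)/ef}$, which is empty for $0<i<m$. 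Hence $\Theta_{i,m}$ itself vanishes on every irreducible factor, killing the product there as well.

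\emph{Step 2 (injectivity).} Let $\overline\psi$ be the induced map from $\overline R := R/J\widetilde I$. I will show $\overline R$ is $\ell$-torsion-free by grading $R$ by $\NN^{m-1}$-degree in the variables $\Theta_{1,m},\dots,\Theta_{m-1,m}$; then $J\widetilde I$ is a homogeneous ideal whose $\alpha$-graded piece is $I\cdot R_\alpha$ for $\alpha\neq 0$ and $0$ for $\alpha=0$, so $\overline R$ decomposes as a $W(k)$-module into a copy of $E_{\sigma_m}[\Theta_{m,m}^{\pm 1}]$ and free modules over $(E_{\sigma_m}/I)[\Theta_{m,m}^{\pm 1}]$, both $\ell$-torsion-free ($E_{\sigma_m}/I$ is a $W(k)$-order in $\End_{\CK}(\St_{s'})=\CK$, hence equals $W(k)$). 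Next, $\overline\psi\otimes\overline{\CK}$ is an isomorphism factor by factor: on the $s=s'$ factor, $\Theta_{i,m}$ maps to the elementary symmetric polynomial $e_i(z_1,\dots,z_m)\in\overline{\CK}[Z_0]^{S_m}$ and the fundamental theorem of symmetric polynomials gives an isomorphism; on each irreducible $s$-factor, the quotient becomes $\overline{\CK}[\Theta_{m,m}^{\pm 1}]$ and $\Theta_{m,m}$ maps to the generator of $\overline{\CK}[Z_s]^{W_s}\cong\overline{\CK}[T^{\pm 1}]$. Since both rings are $\ell$-torsion-free, injectivity of $\overline\psi$ follows.

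\emph{Step 3 (surjectivity, the main obstacle).} I decompose $E_{K_m,\tau_m}$ by support. For $g=z_{m,m}^r$, Proposition~\ref{prop:P intertwining} (using $\overline M_{z_{m,m}^r}=\overline G_m$) identifies $(E_{K_m,\tau_m})_{K_m z_{m,m}^r K_m} = I_{z_{m,m}^r}(\tkappa_m)\otimes E_{\sigma_m}$; invertibility of $\Theta_{m,m}$ forces the component $\phi^{(r)}$ of $\Theta_{m,m}^r$ to generate $I_{z_{m,m}^r}(\tkappa_m)$ as a $W(k)$-module, so this support piece equals $\Theta_{m,m}^r\cdot E_{\sigma_m}$, in the image. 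For $g\notin K_m\<z_{m,m}\>K_m$, Proposition~\ref{prop:P and P'} identifies the component with that of $E_{K_m,\tau_m}/I^{\cusp}$, which embeds in $H(G_m,K_m,\tkappa_m\otimes\CP'_{\sigma_m})$ and hence in $\CK[Z_0]^{S_m}$ after $\otimes\CK$. The crux is to show this embedding lands in $W(k)[Z_0]^{S_m}$: I adapt the lemma at the end of section~\ref{sec:generic}, arguing that an element with non-integral image would specialize to a non-integral scalar on some integral irreducible $\tPi$ in the $(M_{s'},\pi_{s'})$-Bernstein component, contradicting the fact that $E_{K_m,\tau_m}$ preserves the $\ell$-adically separated module $\cInd_{K_m}^{G_m}\tkappa_m\otimes\CP_{\sigma_m}$. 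Once this integrality is established, the images of $\Theta_{1,m},\dots,\Theta_{m-1,m},\Theta_{m,m}^{\pm 1}$ generate $W(k)[Z_0]^{S_m}$ as a $W(k)$-algebra by the fundamental theorem, forcing $E_{K_m,\tau_m}/I^{\cusp}=W(k)[Z_0]^{S_m}$, and combined with the $z_{m,m}^r$ case this yields surjectivity. The main obstacle is precisely this integrality argument, which cannot be quoted verbatim from section~\ref{sec:generic} because $\CP'_{\sigma_m}$ is not irreducible over $\overline{\CK}$ but only over $\CK$; the needed variant must carefully exploit the block structure provided by the preceding sections.
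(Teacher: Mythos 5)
Your proposal is correct and follows essentially the same route as the paper's (very terse) argument: decompose by support on the double cosets $K_m z K_m$, handle the cosets $K_m z_{m,m}^r K_m$ directly using $\Theta_{m,m}^r\cdot E_{\sigma_m}$, and handle the remaining cosets through the identification of Proposition~\ref{prop:P and P'} together with the integrality lemma for $H(G_m,K_m,\tkappa_m\otimes L)$.

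One correction to your self-diagnosis, though: the ``main obstacle'' you flag in Step~3 is not actually an obstacle, and the integrality lemma near the end of Section~\ref{sec:endomorphisms} (not Section~\ref{sec:generic}) applies verbatim rather than needing a variant. The point is that when $1<m<\ell$ the module $\CP'_{\sigma_m}$ \emph{is} the lattice $L$ of that lemma. Indeed, the projection of $\CP'_{\sigma_m}\otimes\overline{\CK}$ to $\St_s$ vanishes for every irreducible $s$ (since $i_{\overline{P}}^{\overline{G}_m}\CP_{\overline{M}}$ has no cuspidal quotient), while the fact that $\CP_{\sigma_m}/\CP'_{\sigma_m}$ is cuspidal forces the projection onto $\St_{(s'_1)^m}$ to be surjective; under the hypothesis $1<m<\ell$ the only reducible class is $(s'_1)^m$, so $\CP'_{\sigma_m}\otimes\overline{\CK}\cong\St_{(s'_1)^m}$. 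In particular $\CP'_{\sigma_m}$ is a $W(k)$-lattice in an \emph{absolutely} irreducible representation, which also contradicts your remark that ``$\CP'_{\sigma_m}$ is not irreducible over $\overline{\CK}$ but only over $\CK$''; that remark is false, and once it is dropped your surjectivity argument closes without any new input. (It also justifies, rather than merely presupposing, your use of $E_{\sigma_m}/I\cong W(k)$ in Step~2.)
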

\begin{proof}
As $E_{\sigma_m}$ and the $\Theta_{i,m}$ are contained in $E_{K_m,\tau_m}$, we have a map:
$$E_{\sigma_m}[\Theta_{1,m}, \dots, \Theta_{m,m}^{\pm 1}] \rightarrow E_{K_m,\tau_m}.$$
It is easy to see that $\<\Theta_{1,m}, \dots, \Theta_{m-1,m}\>$ map to zero in $H(G_m,K_m,\tkappa_m \otimes \St_s)$
for $s \neq (s'_1)^m$, so that $\<\Theta_{1,m} \dots, \Theta_{m-1,m}\>\cdot I^{\cusp}$ is in the kernel
of the map to $E_{K_m,\tau_m}$.  Denote this ideal by $J$.

To see that this ideal is precisely the kernel, first note that
the restriction of this map to $E_{\sigma_m}[\Theta_{m,m}^{\pm 1}]$ identfies
$E_{\sigma_m}[\Theta_{m,m}^{\pm 1}]$ with the subring of $E_{K_m,\tau_m}$ supported on elements
of the form $K_m z_{m,m}^r K_m$.  Moreover, the ideal of $E_{\sigma_m}[\Theta_{1,m}, \dots, \Theta_{m,m}^{\pm 1}]/J$
generated by the $\Theta_{i,m}$ for $1 \leq i \leq m-1$ is a complementary $W(k)$-submodule
of the subring $E_{\sigma_m}[\Theta_{m,m}^{\pm 1}]$.  When restricted to the ideal generated by
$\Theta_{1,m}, \dots, \Theta_{m-1,m}$, the composed map:
$$E_{\sigma_m}[\Theta_{1,m}, \dots, \Theta_{m,m}^{\pm 1}]/J \rightarrow E_{K_m,\tau_m} \rightarrow H(G_m,K_m,\tkappa_m \otimes \St_{(s'_1)^m})$$
is injective, and its image is precisely the subring of $H(G,K,\tkappa_m \otimes \St_{(s'_1)^m})$ supported away from
elements of the form $K_m z_{m,m}^r K_m)$.  From this we see that the map
$$E_{\sigma_m}[\Theta_{1,m}, \dots, \Theta_{m,m}^{\pm 1}]/J \rightarrow E_{K_m,\tau_m}$$
is injective.

To see surjectivity, note that the map $E_{K_m,\tau_m} \rightarrow H(G_m,K_m,\tkappa_m \otimes \St_{(s'_1)^m})$ is an isomorphism
away from double cosets of the form $K_m z_{m,m}^r K_m$, so any element of $E_{K_m,\tau_m}$ supported away from the latter
is in the image of the map from $E_{\sigma_m}[\Theta_{1,m}, \dots, \Theta_{m,m}^{\pm 1}]$.  On the other hand we have already
seen that every element supported on a double coset $K_m z_{m,m}^r K_m$ is in the image, so the result is proven.
\end{proof}

\begin{remark} \rm When $\ell > n$, and $\sigma$ is not supercuspidal, Paige gives an explicit
description of $E_{\sigma}$ as a $W(k)$-algebra in~\cite{paige}, Theorem 4.11.
The above proposition thus gives a complete description of $E_{K,\tau}$ in this case.
\end{remark}

\section{Finiteness results} \label{sec:fg}

Our next goal is to establish fundamental finiteness results for $\CP_{K,\tau}$.  In order to do
so it will be necessary to work integrally with lattices inside a generic pseudo-type
$(K,\tkappa \otimes \St_s)$.  Choose
a finite extension $\CK'$ of $\CK$ such that $\St_s$ is defined over $\CK'$, and let $\OO'$ be
the ring of integers in $\CK'$.  We can then consider $\OO'$-lattices $L_s$ inside $\St_s$,
and consider the ``integral generic pseudo-type'' $(K,\tkappa \otimes L_s)$, and try to determine the structure
of $\cInd_K^G \tkappa \otimes L$ as a module over $H(G,K,\tkappa \otimes L_s)$.

There will be two lattices in $\St_s$ of particular interest to us.  We construct the first of these
as follows: denote by $\overline{M}$ the Levi subgroup $\overline{M}_s$ of $\overline{G}$.
The representation $\St_{\overline{M},s}$ is irreducible, cuspidal and defined over $\CK'$,
and remains irreducible when reduced mod $\ell$.  There is thus a $\overline{M}$-stable $\OO'$-lattice 
$L_{\overline{M}}$ in $\St_{\overline{M},s}$, and such an $L_{\overline{M}}$ is unique up to homothety.
Then $i_{\overline{P}}^{\overline{G}} L_{\overline{M},s}$ is an $\OO'$-lattice in $I_s$.  Let $L_s$ be the image
of this lattice in $\St_s$.

The second lattice we will make use of will be denoted $L'_s$, and is defined as follows: the representation
$\St_s$ is a direct summand of $\CP_{\sigma} \otimes_{W(k)} \CK'$.  Let $L'_s$ be the image of
$\CP_{\sigma} \otimes_{W(k)} \OO'$ under the projection to $\St_s$; this defines $L'_s$ up to homothety.
The lattice $L'_s$ is the one that is of interest to us in applications, but is more complicated;
we will study it via its relationship with $L_s$.  Note that there exist $a,b$ such that
$\ell^a L_s \subset L'_s \subset \ell^b L_s$.  Let $\tau_{L_s}$ and $\tau_{L'_s}$ denote the
representations $\tkappa \otimes L_s$, and $\tkappa \otimes L'_s$.

The pair $(K,\tau_{L_s})$ is not difficult to understand; indeed, the arguments of
section~\ref{sec:generic} apply.  In particular, consider the pair $(K_M,\tkappa_M \otimes L_{\overline{M}})$, where $M$, $K_M$,
and $\tkappa_M$ are as in section~\ref{sec:generic}.  It follows from Lemma~\ref{lemma:lattice G-cover} that
$(K'',\tkappa'' \otimes L_{\overline{M}})$
is a $G$-cover of $(K_M,\tkappa_M \otimes L_{\overline{M}})$, and that the center of
$H(G,K'',\tkappa'' \otimes L_{\overline{M}})$ is isomorphic to $\OO'[Z_s]^{W_{\overline{M}}(s)}$.  
(Indeed, a choice of a compatible family of cuspidals for the tower of types that contains $(K,\tau)$
gives rise to explicit isomorphisms: 
$$H(M,K_M,\tkappa_M \otimes L_{\overline{M}}) \cong \OO'[Z_s]$$
$$Z(H(G,K'',\tkappa'' \otimes L_{\overline{M}})) \cong \OO'[Z_s]^{W_{\overline{M}}(s)}.$$
Henceforth we fix such a choice.)
The intertwining calculations
of section~\ref{sec:generic} give rise to a support preserving isomorphism
$H(G,K'',\tkappa'' \otimes L_{\overline{M}})$ with $H(G,K_{\overline{P}},\tkappa_{\overline{P}} \otimes L_{\overline{M}})$, and an isomorphism
of the latter with $H(G,K,\tkappa \otimes i_{\overline{P}}^{\overline{G}} L_{\overline{M}})$.  

We now observe:
\begin{lemma} \label{lem:center}
Let $x$ be a central element of $H(G,K,\tkappa \otimes i_{\overline{P}}^{\overline{G}} L_{\overline{M}})$.  Then $x$
descends to an endomorphism of $\cInd_K^G \tkappa \otimes L_s$ via the surjection of $i_{\overline{P}}^{\overline{G}} L_{\overline{M}}$
onto $L_s$.
\end{lemma}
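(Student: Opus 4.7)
The plan is to imitate almost verbatim the argument of Lemma~\ref{lemma:G cover center}. The key observation is that by definition $L_s$ is the image of $i_{\overline{P}}^{\overline{G}} L_{\overline{M}}$ in $\St_s$, where $L_{\overline{M}}$ is an $\overline{M}$-stable $\OO'$-lattice in $\St_{\overline{M},s}$ and the map to $\St_s$ comes from the natural map $I_s = i_{\overline{P}}^{\overline{G}} \St_{\overline{M},s} \to \St_s$. Applying the exact functor $\cInd_K^G(\tkappa\otimes -)$ we thus obtain a surjection
$$\cInd_K^G \tkappa \otimes i_{\overline{P}}^{\overline{G}} L_{\overline{M}} \;\twoheadrightarrow\; \cInd_K^G \tkappa \otimes L_s,$$
and the claim is that any central element $x$ of the left-hand Hecke algebra preserves the kernel.

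First I would verify this after inverting $\ell$. After tensoring with $\CK'$, the lattice $L_s$ becomes $\St_s$ and $i_{\overline{P}}^{\overline{G}} L_{\overline{M}}$ becomes $I_s = i_{\overline{P}}^{\overline{G}} \St_{\overline{M},s}$; by definition $\St_s$ is the generic direct summand of $I_s$ (see Corollary~\ref{cor:steinberg Bernstein component} and the discussion preceding it). Consequently $\cInd_K^G \tkappa \otimes \St_s$ is a direct summand of $\cInd_K^G \tkappa \otimes I_s$, so the endomorphism $x$, being central in the Hecke algebra of the latter, commutes with the idempotent projection onto this summand and therefore preserves its kernel. This shows $x$ preserves the kernel of our surjection after inverting $\ell$.

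Now I would pass from $\CK'$ back to $\OO'$ using torsion-freeness, exactly as in Lemma~\ref{lemma:G cover center}. The module $\cInd_K^G \tkappa \otimes L_s$ is $\ell$-torsion free, so the kernel of
$$\cInd_K^G \tkappa \otimes i_{\overline{P}}^{\overline{G}} L_{\overline{M}} \;\longrightarrow\; \cInd_K^G \tkappa \otimes L_s$$
is exactly the intersection of the kernel of the analogous map after inverting $\ell$ with the integral module. Both of these subsets are preserved by $x$ (the first by the previous paragraph, the second because $x$ lives in the integral Hecke algebra), so their intersection is as well. This gives the desired descent of $x$ to an endomorphism of $\cInd_K^G \tkappa \otimes L_s$. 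The only subtle point is the use of centrality to split off $\St_s$ from $I_s$ after $\otimes \CK'$; everything else is formal, so I do not expect a genuine obstacle.
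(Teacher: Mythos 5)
Your proof is correct and follows essentially the same approach as the paper's. The paper's own argument is more terse — it simply cites "the results of section~\ref{sec:generic}" for the fact that the claim holds after inverting $\ell$, then invokes $\ell$-torsion-freeness exactly as you do — whereas you have spelled out the rational step (centrality gives commutation with the idempotent projecting onto the summand $\cInd_K^G \tkappa \otimes \St_s$ of $\cInd_K^G \tkappa \otimes I_s$); this is precisely the mechanism the paper is alluding to, mirroring Lemma~\ref{lemma:G cover center} as you say.
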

\begin{proof}
The results of section~\ref{sec:generic}, particularly the discussion after the proof
of Proposition~\ref{prop:support}, show that this holds after inverting $\ell$.  We have a surjection:
$$\cInd_K^G \tkappa \otimes (i_{\overline{P}}^{\overline{G}} L_{\overline{M}}) \rightarrow \cInd_K^G \tkappa \otimes L_s,$$
and it suffices to show that $x$ preserves the kernel of this surjection.  But as this holds after inverting $\ell$,
and both the left-hand and right-hand sides are $\ell$-torsion free, the result follows.
\end{proof}

We next turn to questions of admissibility:
\begin{lemma}
The module $\cInd_{K_M}^M \tkappa \otimes L_M$ is an admissible $H(M,K_M,\tkappa_M \otimes L_M)$-module.
\end{lemma}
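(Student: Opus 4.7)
The plan is to reduce to a single factor of $M$ and then exploit induction in stages through $E^\times K_i$. Since $M=\prod_i M_i$, $K_M=\prod_i K_i$, and $\tkappa_M\otimes L_M$ factors as $\bigotimes_i(\tkappa_i\otimes L_{s_i})$, both the compact induction and the Hecke algebra decompose as tensor products over the factors. It therefore suffices to prove that $\cInd_{K_i}^{M_i}(\tkappa_i\otimes L_{s_i})$ is admissible as a module over $H_i:=H(M_i,K_i,\tkappa_i\otimes L_{s_i})$ for each $i$.

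For a single factor, I first identify $H_i\cong \OO'[T,T^{-1}]$. By Lemma~\ref{lemma:intertwining}(1), the intertwining of $\tkappa_i\otimes L_{s_i}$ is supported on $K_i E^\times K_i$. For $g=\unif_E^n$ the element $g$ is $\overline{M}_i$-central in the sense of Lemma~\ref{lemma:xi-intertwining} (conjugation by $\unif_E$ acts trivially on $K_i/K_i^1\cong\overline{M}_i$), so that lemma yields $I_g(L_{s_i})\cong\End_{\OO'[\overline{M}_i]}(L_{s_i})$. Because $L_{s_i}$ is absolutely irreducible both after inverting $\ell$ (it is the Deligne--Lusztig cuspidal $\St_{\overline{M}_i,s_i}$) and after reduction mod $\ell$ (the resulting type is a maximal distinguished cuspidal $k$-type), this endomorphism ring equals $\OO'$; combined with $I_g(\tkappa_i)$ being free of rank one over $\OO'$, this gives $H_i\cong\OO'[T,T^{-1}]$ with $T$ supported on $K_i\unif_E K_i$.

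Next, using induction in stages, $\cInd_{K_i}^{M_i}(\tkappa_i\otimes L_{s_i})\cong \cInd_{E^\times K_i}^{M_i}\cInd_{K_i}^{E^\times K_i}(\tkappa_i\otimes L_{s_i})$. Since $E^\times$ normalizes $\tkappa_i\otimes L_{s_i}$, choose an extension $\widetilde{\tau_i}$ to $E^\times K_i$; then
$$\cInd_{K_i}^{E^\times K_i}(\tkappa_i\otimes L_{s_i})\;\cong\;\widetilde{\tau_i}\otimes_{\OO'}\OO'[T,T^{-1}],$$
where $K_i$ acts through $\widetilde{\tau_i}$ and $\unif_E$ acts as $\widetilde{\tau_i}(\unif_E)\otimes T$. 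Under this identification the action of $H_i$ coming from compact induction corresponds to multiplication on the $\OO'[T,T^{-1}]$-factor. Inducing to $M_i$ yields an isomorphism of $M_i\times H_i$-modules
$$\cInd_{K_i}^{M_i}(\tkappa_i\otimes L_{s_i})\;\cong\;\bigl(\cInd_{E^\times K_i}^{M_i}\widetilde{\tau_i}\bigr)\otimes_{\OO'}H_i.$$
The factor $\cInd_{E^\times K_i}^{M_i}\widetilde{\tau_i}$ is an $\OO'$-integral form of an absolutely irreducible cuspidal $\CK'$-representation of $M_i$, so its $K'$-invariants are finitely generated over $\OO'$ for every compact open $K'\subset M_i$; tensoring yields the required finite generation of $\bigl(\cInd_{K_i}^{M_i}(\tkappa_i\otimes L_{s_i})\bigr)^{K'}$ over $H_i$, which is the sense in which the module is admissible.

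The main technical obstacle is the full $M_i\times H_i$-equivariance of the displayed isomorphisms---in particular verifying that the generator $T$ of $H_i$, defined via an intertwiner supported on $K_i\unif_E K_i$, really acts by multiplication by $T$ on the $\OO'[T,T^{-1}]$-factor. This reduces to a direct unraveling of Frobenius reciprocity together with the explicit description of intertwiners in Lemmas~\ref{lemma:intertwining} and~\ref{lemma:xi-intertwining}, and is parallel to (but simpler than) the calculations already carried out in Section~\ref{sec:generic}.
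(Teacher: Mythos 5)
Your overall strategy is the same as the paper's: relate the compact induction to a fixed cuspidal $\OO'$-lattice tensored with a Laurent polynomial ring and read off admissibility. The reduction to a single factor is harmless, and the identification $H_i\cong\OO'[T,T^{-1}]$ via the intertwining lemmas is fine. However, the displayed isomorphism
$$\cInd_{K_i}^{M_i}(\tkappa_i\otimes L_{s_i})\;\cong\;\bigl(\cInd_{E^\times K_i}^{M_i}\widetilde{\tau_i}\bigr)\otimes_{\OO'}H_i$$
is not an isomorphism of $M_i\times H_i$-modules, and this is where the argument breaks. Induction in stages and the tensor identity give
$\cInd_{K_i}^{M_i}(\tkappa_i\otimes L_{s_i})\cong\cInd_{E^\times K_i}^{M_i}\bigl(\widetilde{\tau_i}\otimes\OO'[E^\times K_i/K_i]\bigr),$
where $E^\times K_i$ acts on $\OO'[E^\times K_i/K_i]$ by translation. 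To pull the factor $\OO'[E^\times K_i/K_i]$ out of the induction one would need it to be the restriction to $E^\times K_i$ of an $M_i$-representation. The translation character $E^\times K_i\to E^\times K_i/K_i\cong\ZZ$ sends $\unif_E\mapsto 1$; any homomorphism from $M_i$ to a torsion-free abelian group factors through $M_i/(M_i)_0\cong\ZZ$, where $\unif_E$ maps to $n_i/e$ (since $\det\unif_E=\unif_F^{n_i/e}$). So the translation character extends to $M_i$ only when $n_i/e=1$, i.e.\ in the totally ramified case. One also sees directly that your isomorphism cannot hold: specializing along a character $\chi$ of $H_i$ over $\overline{\CK}$, the left-hand side produces the unramified twist of $\cInd_{E^\times K_i}^{M_i}\widetilde{\tau_i}$ indexed by $\chi$, while the right-hand side produces the same representation $\cInd_{E^\times K_i}^{M_i}\widetilde{\tau_i}$ for every $\chi$.

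The repair is exactly what the paper's proof does: base change along the finite free inclusion $H_i\cong\OO'[Z_i]\hookrightarrow\OO'[M_i/(M_i)_0]$. After that base change, the translation action \emph{is} restricted from an $M_i$-action (the regular action on $\OO'[M_i/(M_i)_0]$), and the tensor identity gives
$$\bigl(\cInd_{K_i}^{M_i}(\tkappa_i\otimes L_{s_i})\bigr)\otimes_{H_i}\OO'[M_i/(M_i)_0]\;\cong\;\bigl(\cInd_{E^\times K_i}^{M_i}\widetilde{\tau_i}\bigr)\otimes_{\OO'}\OO'[M_i/(M_i)_0]$$
with the diagonal $M_i$-action on the right. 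This is admissible over $\OO'[M_i/(M_i)_0]$ because $\cInd_{E^\times K_i}^{M_i}\widetilde{\tau_i}$ is admissible over $\OO'$, and one then descends along the finite extension $H_i\hookrightarrow\OO'[M_i/(M_i)_0]$ to conclude admissibility over $H_i$. Your closing paragraph flags the $M_i\times H_i$-equivariance of the isomorphism as a routine verification; in fact it is the step that fails, and the needed base change is the content, not a formality.
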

\begin{proof}
Let $\pi$ be an $\OO'[M]$-module such that $\pi \otimes_{\OO'} \CK'$ is absolutely irreducible and
such that the restriction of $\pi$ to $K_M$ admits a nonzero map from $\tkappa \otimes L_M$.
Then $\Hom_{\OO'[K_M]}(\tkappa \otimes L_M, \pi)$ is a free $\OO'$-module of rank one.
Consider the $\OO'[M]$-module $\pi \otimes_{\OO'} \OO'[M/M_0]$, on which $M$ acts on $\OO'[M/M_0]$
via the natural character $M \rightarrow \OO'[M/M_0]$.  We have
$$\Hom_{\OO'[K_M]}(\tkappa \otimes L_M,\pi) \cong \OO'[M/M_0],$$
and $H(M,K_M,\tau) = \OO'[Z]$ acts on the right hand side via the inclusion of $\OO'[Z]$ in $\OO'[M/M_0]$. 
This yields an isomorphism:
$$(\cInd_{K_M}^M \tkappa \otimes L_M) \otimes_{\OO'[Z_s]} \OO'[M/M_0] \cong \pi \otimes_{\OO'} \OO'[M/M_0].$$
In particular the left hand side is admissible over $\OO'[M/M_0]$, and so
$\cInd_{K_M}^M \tkappa \otimes L_M$ is admissible over $\OO'[Z_s]$.
\end{proof}

\begin{lemma} \label{lem:induction restriction admissibility}
Let $R$ be commutative $W(k)$-algebra, let $P = MU$ be a parabolic
subgroup of $G$, and let $\pi$ be an admissible $R[M]$-module such that for any parabolic
subgroup $P' = M'U'$ of $M$, $r_M^{P'} \pi$ is admissible as an $R[M']$-module.  Then
$i_P^G \pi$ is an admissible $R[G]$-module, and, for any parabolic subgroup $P'' = M''U''$
of $G$, $r_G^{P''} i_P^G \pi$ is an admissible $R[M'']$-module.
\end{lemma}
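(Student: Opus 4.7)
The plan is to handle the two assertions separately: first admissibility of $i_P^G \pi$ via the Iwasawa decomposition plus a Mackey-style double coset analysis, then admissibility of each $r_G^{P''} i_P^G \pi$ via a Bernstein--Zelevinsky geometric lemma that reduces it to parabolic inductions from $M''$ of Jacquet modules of $\pi$, which are handled by the first step.

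First I would fix a maximal compact $K_{\max}$ of $G$ and use the Iwasawa decomposition $G = P K_{\max}$ to identify the restriction of $i_P^G \pi$ to $K_{\max}$ with the compact induction from $P \cap K_{\max}$. Given any compact open subgroup $K_1 \subset K_{\max}$, the set $P \backslash G / K_1$ is finite (since $P \backslash G$ is compact), and Mackey's formula yields a canonical $R$-module isomorphism
$$(i_P^G \pi)^{K_1} \;\cong\; \bigoplus_i \pi^{M_i},$$
where $g_1, \dots, g_r$ represent $P \backslash G/K_1$ and $M_i$ is the (compact open) image in $M$ of $P \cap g_i K_1 g_i^{-1}$; the unipotent part acts trivially and the $\delta_P^{1/2}$-twist only changes the $M$-module structure by a character. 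By admissibility of $\pi$ as an $R[M]$-module, each summand is finitely generated, hence so is $(i_P^G \pi)^{K_1}$. A general compact open $K$ of $G$ is conjugate into some $K_{\max}$, and this proves admissibility.

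Next I would address $r_G^{P''} i_P^G \pi$ using the geometric lemma of Bernstein--Zelevinsky, which in our setting asserts that $r_G^{P''} i_P^G \pi$ admits a finite filtration by $R[M'']$-submodules whose successive quotients have the form
$$i_{M'' \cap wPw^{-1}}^{M''}\bigl( w \cdot r_{M \cap w^{-1} P'' w}^{M} \pi \bigr),$$
indexed by $w$ running over representatives for $W^{M''} \backslash W / W^M$. Each inner Jacquet module is admissible over some Levi of $M$ by hypothesis, and applying the first part of the lemma inside $M''$ shows each successive quotient is admissible as an $R[M'']$-module. Admissibility passes through finite extensions of $R[M'']$-modules, so $r_G^{P''} i_P^G \pi$ is admissible.

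The main obstacle is verifying the geometric lemma over the base ring $R$ rather than over a field. However, the decomposition is inherently geometric: it comes from the stratification of $P \backslash G / P''$ by $B$-orbits and the associated filtration on $i_P^G \pi$ by supports, which is functorial in the coefficient ring. The normalized induction and restriction functors $i_P^G$ and $r_G^{P}$ are each exact, and the square root of the modulus character lives in $W(k) \subset R$ by our standing assumption that $\ell$ is odd, so the standard arguments adapt without essential change.
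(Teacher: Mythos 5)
Your proposal is correct and follows essentially the same route as the paper: invoke the Bernstein--Zelevinsky filtration of $r_G^{P''} \circ i_P^G$ and use that parabolic induction preserves admissibility. The paper's proof is a one-line citation of exactly these two ingredients; the only difference is that you spell out the Iwasawa/Mackey argument for why parabolic induction preserves admissibility, which the paper treats as a known fact.
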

\begin{proof}
This is an immediate consequence of Bernstein-Zelevinski's filtration of
of the composite functor $r_G^{P''} i_P^G$ (\cite{BZ}, 2.12) together with the
fact that parabolic induction takes admissible representations to admissible representations.
\end{proof}

Let $R$ be the center of
$H(G,K,\tkappa \otimes i_{\overline{P}}^{\overline{G}} L_{\overline{M}})$.  (We may
also regard $R$ as the center 
of $H(G,K'',\tkappa'' \otimes L_{\overline{M}})$.)
By Lemma~\ref{lem:center}, the ring $R$
acts on $\cInd_K^G \tau_{L_s}$.

\begin{proposition}
The module $\cInd_K^G \tau_{L_s}$ is admissible over $R$.
More generally, for any parabolic $P' = M'U'$ in $G$, the module
$r_G^{P'} \cInd_K^G \tau_{L_s}$ is admissible over $R$.
\end{proposition}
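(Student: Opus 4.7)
The strategy is to reduce to the admissibility of $\cInd_{K_M}^M \tau_{L_M}$ (just established in the preceding lemma) by exploiting the $G$-cover relationship between $(K'', \tkappa'' \otimes L_{\overline{M}})$ and $(K_M, \tau_{L_M})$. First I would establish isomorphisms of smooth $G$-modules
$$\cInd_K^G \bigl(\tkappa \otimes i_{\overline{P}}^{\overline{G}} L_{\overline{M}}\bigr) \;\cong\; \cInd_{K''}^G \bigl(\tkappa'' \otimes L_{\overline{M}}\bigr) \;\cong\; i_P^G \cInd_{K_M}^M \tau_{L_M}.$$
The first is the analogue of an isomorphism used repeatedly in Section~\ref{sec:generic}. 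The second is a consequence of the integral $G$-cover theorem: applying Theorem~\ref{thm:G-cover}(2) with $\Pi = \cInd_{K_M}^M \tau_{L_M}$ shows that both sides have image $H(G, K'', \tkappa'' \otimes L_{\overline{M}})$ (as a left module over itself) under the functor $\Hom_{K''}(\tkappa'' \otimes L_{\overline{M}}, -)$, so the $G$-cover equivalence of categories identifies them.

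With this identification in hand, I would verify the hypotheses of Lemma~\ref{lem:induction restriction admissibility} for $\cInd_{K_M}^M \tau_{L_M}$: the previous lemma establishes its admissibility over $H(M, K_M, \tau_{L_M}) \cong \OO'[Z_s]$, and its Jacquet restrictions $r_M^{P'}$ to proper parabolics of $M$ vanish after inverting $\ell$ (since over $\CK'$ the module is a direct sum of absolutely irreducible cuspidals); using the faithful flatness of $\OO'[M/M_0]$ over $\OO'[Z_s]$ together with the argument of the preceding lemma, the integral Jacquet restrictions are admissible as well. Lemma~\ref{lem:induction restriction admissibility} then yields admissibility of $i_P^G \cInd_{K_M}^M \tau_{L_M}$---and of each $r_G^{P''}$ thereof---as an $H(M, K_M, \tau_{L_M})[G]$-module.

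Finally, I would pass to the quotient $Y := \cInd_K^G \tau_{L_s}$ of $X := \cInd_K^G (\tkappa \otimes i_{\overline{P}}^{\overline{G}} L_{\overline{M}})$ via the surjection $i_{\overline{P}}^{\overline{G}} L_{\overline{M}} \twoheadrightarrow L_s$, while simultaneously changing base ring to $H(G, K, \tau_{L_s})$. Since $\OO'[Z_s]$ is module-finite over its Weyl-invariant subring $\OO'[Z_s]^{W_{\overline{M}}(s)}$, which is central in $H(G, K, \tkappa \otimes i_{\overline{P}}^{\overline{G}} L_{\overline{M}})$ and (by the lemma preceding the proposition) descends to $Y$ via a map into $H(G, K, \tau_{L_s})$, the module $X^{K'}$ is finitely generated over $\OO'[Z_s]^{W_{\overline{M}}(s)}$ for every compact open $K'$. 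For a pro-$p$ compact open $K'$, the functor $(-)^{K'}$ is exact on smooth $\OO'$-modules (since $p \in (\OO')^{\times}$), so $Y^{K'}$ is a quotient of $X^{K'}$ and is therefore finitely generated over $H(G, K, \tau_{L_s})$. For a general compact open $K'$, choose a pro-$p$ normal subgroup $K'' \trianglelefteq K'$ of finite index; then $Y^{K'} \subseteq Y^{K''}$ is a submodule of a finitely generated $H(G, K, \tau_{L_s})$-module, so Noetherianness of $H(G, K, \tau_{L_s})$ forces finite generation. The parabolic restriction statement follows by the same argument applied after the right-exact functor $r_G^{P'}$, using that $r_G^{P'}$ commutes with the surjection $X \twoheadrightarrow Y$.

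The main obstacle lies in this final step, where admissibility must survive both a quotient map and a change of Hecke algebra from $H(M, K_M, \tau_{L_M})$ to the strictly smaller $H(G, K, \tau_{L_s})$. Quotients of admissible modules are not automatically admissible, and finite generation is not automatically preserved when the ring shrinks. The resolution relies on combining the exactness of pro-$p$ invariants (available because $p$ is invertible in $\OO'$) with Noetherianness of $H(G, K, \tau_{L_s})$---the latter itself a nontrivial fact that must be established by exhibiting the integral Hecke algebra as a finitely generated commutative $\OO'$-algebra along the lines of the analysis of $E_{K, \tau}$ in Section~\ref{sec:endomorphisms}.
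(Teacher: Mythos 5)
Your proposal follows essentially the same route as the paper: pass through the $G$-cover $(K'',\tkappa''\otimes L_{\overline{M}})$, deduce admissibility of $\cInd_{K''}^G \tkappa''\otimes L_{\overline{M}}$ and its Jacquet restrictions from Lemma~\ref{lem:induction restriction admissibility} applied to the cuspidal module $\cInd_{K_M}^M\tkappa_M\otimes L_{\overline{M}}$ over $H(M,K_M,\tkappa_M\otimes L_{\overline{M}})\cong\OO'[Z_s]$, and then descend to the quotient $\cInd_K^G\tau_{L_s}$ over $H(G,K,\tau_{L_s})\cong\OO'[Z_s]^{W_{\overline{M}}(s)}$ using module-finiteness of $\OO'[Z_s]$ over that invariant ring. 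Two minor notes: the paper's comparison of $\cInd_{K''}^G\tkappa''\otimes L_{\overline{M}}$ with $i_P^G\cInd_{K_M}^M\tkappa_M\otimes L_{\overline{M}}$ carries an extra $\otimes_{\OO'[Z]^{W_{\overline{M}}(s)}}\OO'[Z]$ twist that your direct identification avoids (and in fact a comparison of $\tau''$-isotypic components favors your version, since both sides yield $H(G,K'',\tkappa''\otimes L_{\overline{M}})$ as a right module over itself, while the paper's twisted version would inflate it by a further factor of $\OO'[Z]$), though either formulation suffices for the argument; and your detour through ``vanishing after inverting $\ell$'' for the Jacquet modules can be shortcut, since the Jacquet functor is exact and the cuspidal module is $\ell$-torsion free, so the integral Jacquet restrictions already vanish outright rather than merely being admissible.
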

\begin{proof}

We have shown that we may identify the
center of $H(G,K'',\tkappa'' \otimes L_{\overline{M}})$
with the ring $\OO'[Z_s]^{W_{\overline{M}}(s)}$.  This ring acts on
$\cInd_K^G \tau_{L_s}$ via its identification with $R$.

Consider the module $\tkappa_M \otimes L_{\overline{M}}$; this is a lattice in a maximal distinguished
cuspidal $M$-type.  Thus 
the $W(k)[M]$-module $\cInd_{K_M}^{M} \tkappa_M \otimes L_{\overline{M}}$ is
admissible and cuspidal over the Hecke algebra $H(M,K_M,\tkappa_M \otimes L_{\overline{M}})$, and the latter is
isomorphic to $\OO'[Z_s]$.  It follows that $i_P^G \cInd_{K_M}^{M} \tkappa_M \otimes L_{\overline{M}}$
is an admissible $\OO'[Z_s]$-module, as is $r_G^{P'} i_P^G \cInd_{K_M}^M \tkappa_M \otimes L_{\overline{M}}$
for any $P'$.

The subalgebra $\OO'[Z_s]^{W_{\overline{M}}(s)}$ of $\OO'[Z_s]$
acts on both $i_P^G \cInd_{K_M}^M \tkappa_M \otimes L_{\overline{M}}$
and on $\cInd_{K''}^G \tkappa'' \otimes L_{\overline{M}}$.  In fact, 
any map between these two modules is $\OO'[Z_s]^{W_{\overline{M}}(s)}$-equivariant.  To see this, note that
we can identify $\OO'[Z_s]^{W_{\overline{M}}(s)} \otimes_{\OO'} \overline{\CK}$ with
the center $A_{M_s,\pi_s}$ of $\Rep_{\overline{\CK}}(G)_{M_s,\pi_s}$, and thus the action of $\OO'[Z_s]^{W_{\overline{M}}(s)}$
is compatible with every map between the two modules after inverting $\ell$.  Since both modules are $\ell$-torsion free the claim follows.

As $\OO'[Z_s]$ is a finitely generated $\OO'[Z_s]^{W_{\overline{M}}(s)}$-module, it now suffices to produce an
embedding of $\cInd_{K''}^G \tkappa'' \otimes L_{\overline{M}}$ in $i_P^G \cInd_{K_M}^M \tkappa_M \otimes L_{\overline{M}}$.

We first consider this question over $\overline{\CK}$.  In this situation the two modules
are actually isomorphic.  To see this, note that after tensoring with
$\overline{\CK}$, $\tkappa'' \otimes L_{\overline{M}}$ becomes $\tkappa'' \otimes \St_{\overline{M},s}$, which
is, in the language of~\cite{BK-semisimple}, a semisimple type.  In particular,
$\Hom_{K''}(\tkappa'' \otimes \St_{\overline{M},s}, -)$ yields an equivalence between the block of
$\Rep_{\overline{\CK}}(G)$ corresponding to the type $\tkappa'' \otimes \St_{\overline{M},s}$
and the category of $H(G,K'',\tkappa'' \otimes \St_{\overline{M},s})$-modules.  On the other hand, it is an
easy consequence of Theorem~\ref{thm:G-cover} that the space
$$\Hom_{K''}(\tkappa'' \otimes \St_{\overline{M},s}, i_P^G \cInd_{K_M}^M \tkappa_M \otimes \St_{\overline{M},s})$$
is a free $H(G,K'',\tkappa'' \otimes \St_{\overline{M},s})$-module of rank one.
We may thus identify $\cInd_{K''}^G \tkappa'' \otimes \St_{\overline{M},s}$ with 
$i_P^G \cInd_{K_M}^M \tkappa_M \otimes L_{\overline{M}}$.

Descending from $\overline{\CK}$ to $\CK'$, we find an isomorphism of
$\cInd_{K''}^G \tkappa'' \otimes L_{\overline{M}} \otimes \CK'$ into 
$i_P^G \cInd_{K_M}^M \tkappa_M \otimes L_{\overline{M}} \otimes \CK'$.  It suffices to show
that, after multiplying by a sufficiently large power of $\ell$, such a map
takes $\cInd_{K''}^G \tkappa'' \otimes L_{\overline{M}}$ into $i_P^G \cInd_{K_M}^M \tkappa_M \otimes L_{\overline{M}}.$
But this is clear since $G$-maps from the former to the latter are, by Frobenius reciprocity, the same as $K''$-maps
from $\tkappa'' \otimes L_{\overline{M}}$ into the latter, and $\tkappa'' \otimes L_{\overline{M}}$ is finitely
generated as an $\OO'$-module.
\end{proof}

We now compare $\cInd_K^G \tau_{L_s}$ and $\cInd_K^G \tau_{L'_s}$.  The inclusions
$\ell^a L_s \subset L'_s \subset \ell^b L$
give rise to inclusions:
$\ell^a \cInd_K^G \tau_{L_s} \subset \cInd_K^G \tau_{L'_s} \subset \ell^b \cInd_K^G \tau_{L_s}$.  

The endomorphism ring $E_{K,\tau}$ of $\CP_{K,\tau}$ preserves the factor
$\cInd_K^G \tkappa \otimes \St_s$ of $\CP_{K,\tau} \otimes \overline{\CK}$,
and hence preserves the image of $\CP_{K,\tau} \otimes \OO'$ in $\cInd_K^G \tkappa \otimes \St_s$.
This image is equal to $\cInd_K^G \tau_{L'_s}$.
In particular we obtain a map of $E_{K,\tau} \otimes \OO'$ into $H(G,K,\tau_{L'_s})$.

For some $m$, we have $(K,\tau) = (K_m,\tau_m)$, where $K_m$ and $\tau_m$
are as in section~\ref{sec:endomorphisms}.
Our choice of compatible family of cuspidals in section~\ref{sec:endomorphisms}
gives an isomorphism:
$$H(G,K,\tkappa \otimes \St_s) \cong
\overline{\CK}[Z_s]^{W_{\overline{M}_s}(s)}.$$  
The isomorphism of $R \otimes \overline{\CK}$ with $H(G,K,\tkappa \otimes \St_s)$ identifies the ring $R$
with the subalgebra $\OO'[Z_s]^{W_{\overline{M}_s}(s)}$.  We also identify
$H(G,K,\tau_{L'_s})$ with another, yet-to-be-determined $\OO'$-subalgebra that contains
$E_{K,\tau} \otimes \OO'$.  

We observe that as a subalgebra of $\overline{\CK}[Z_s]^{W_{\overline{M}_s}(s)}$, 
$H(G,K,\tau_{L'_s})$ contains the images of the elements $\Theta_{i,m}$ of $C_{K,\tau}$, 
as well as the image of $\Theta_{m,m}^{-1}$.  We may identify these with the elements
$\theta_{i,s}$ of $\overline{\CK}[Z_s]^{W_{\overline{M}}(s)}$, and hence with elements of $R$.
We then observe:

\begin{proposition} \label{prop:scalar fg}
The algebra $W(k)[Z_s]^{W_{\overline{M}}(s)}$ is a finitely generated module over
$W(k)[\theta_{1,s}, \dots, \theta_{m,s}, \theta^{-1}_{m,s}]$ In particular, $R$ is a finitely generated module
over this subalgebra.
\end{proposition}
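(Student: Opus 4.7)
The strategy is to show that $R := W(k)[Z_s]^{W_{\overline{M}}(s)}$ is integral over $A := W(k)[\theta_{1,s},\dots,\theta_{m,s},\theta_{m,s}^{-1}]$; since $R$ is visibly of finite type as a $W(k)$-algebra (and so as an $A$-algebra), integrality will then upgrade to finite generation as an $A$-module by the standard fact that a finite-type algebra extension is module-finite iff it is integral.

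First I would make the algebraic structure explicit. Writing the irreducible factorization of the characteristic polynomial of $s$ as $f_1^{m_1}\cdots f_r^{m_r}$ with $\deg f_j = d_j = c_j \cdot n_1/(ef)$, the group $Z_s$ is free abelian on generators $x_{j,k}$ (one for each ``block'', $1\le k\le m_j$), and $W_{\overline{M}}(s)=\prod_j S_{m_j}$ acts by permuting the $k$-index for each fixed $j$. An element of $Z_s$ with characteristic polynomial $(t-\unif_E)^{n_1i/ef}(t-1)^{n_1(m-i)/ef}$ corresponds to a choice of subset $S$ of $\{(j,k)\}$ with $\sum_{(j,k)\in S}c_j = i$, so
\[
\theta_{i,s} \;=\; \sum_{\substack{S\subseteq\{(j,k)\} \\ \sum_{(j,k)\in S} c_j = i}} \prod_{(j,k)\in S} x_{j,k},
\]
and in particular $\theta_{m,s}=\prod_{j,k}x_{j,k}$ is a unit in $W(k)[Z_s]$, so indeed $A\subseteq R$.

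The heart of the argument is the generating-function identity
\[
\sum_{i=0}^m \theta_{i,s}\, t^i \;=\; \prod_{j,k}\bigl(1+t^{c_j}x_{j,k}\bigr),
\]
which is immediate by expanding the right-hand side and collecting monomials by total $c_j$-weight. Taking the reciprocal polynomial $\tilde G(t):=t^m\sum_i\theta_{i,s}t^{-i}$ one finds
\[
\tilde G(t) \;=\; t^m + \theta_{1,s}t^{m-1}+\cdots+\theta_{m-1,s}t + \theta_{m,s} \;=\; \prod_{j,k}\bigl(t^{c_j}+x_{j,k}\bigr),
\]
a \emph{monic} polynomial of degree $m$ in $A[t]$. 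Its roots in an algebraic closure of $\operatorname{Frac}(W(k)[Z_s])$ are the $c_j$-th roots of the elements $-x_{j,k}$; since $\tilde G$ is monic with coefficients in $A$, each such root is integral over $A$, and therefore so is each $x_{j,k}=-\alpha^{c_j}$.

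To conclude, every $e_i^{(j)}\in R$ is a polynomial in the $x_{j,k}$, hence integral over $A$; since $R$ is generated over $W(k)$ by the $e_i^{(j)}$ together with $\theta_{m,s}^{-1}\in A$, it follows that $R$ is integral over $A$. Combined with the observation that $R$ is finitely generated as an $A$-algebra, this gives the desired finite generation as an $A$-module. I do not foresee a serious obstacle: the main point to verify carefully is the generating-function identity and the passage from integrality of the $c_j$-th roots $\alpha$ to integrality of $x_{j,k}$ via the monic relation $x_{j,k}=-\alpha^{c_j}$, both of which are elementary.
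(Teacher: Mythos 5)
Your argument is correct and is essentially the paper's proof: you form the monic polynomial $\prod_{j,k}\bigl(t^{c_j}+x_{j,k}\bigr)$ over $A$ whose nonleading coefficients are the $\theta_{i,s}$, deduce that the generators $x_{j,k}$ of $W(k)[Z_s]$ are integral over $A$, and conclude that the finite-type $A$-algebra $R$ is module-finite. The only cosmetic remarks are that your degree bookkeeping is in fact a bit more careful than the paper's (which reads ``coefficient of $t^{r-i}$'' where it should read $t^{m-i}$), and that the elementary symmetric polynomials $e_i^{(j)}$ in $x_{j,1},\dots,x_{j,m_j}$ should be introduced before they are invoked in the final paragraph.
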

\begin{proof}
Let $s_1, \dots, s_r$ be the irreducible constituents of $s$, and let
$z_1, \dots, z_r$ be the elements of $Z_s$ such that, when considered as an element
of $\GL_{\frac{n}{ef}}(E)$, $z_i$ is scalar with entries $\unif_E$ on the block
of $Z_s$ corresponding to $s_i$, and the identity on all other blocks.  Let $d_i$
be the degree of $s_i$ over $\FF_{q^f}$, and let $d$ be the degree of $s'$
over $\FF_{q^f}$, where $s^{\reg} = (s')^m$.  Then, by definition, we have:
$$\theta_{i,s} = \sum_{S} \prod_{j \in S} z_j, $$ 
where $S$ runs over those subsets of ${1, \dots, r}$ such that
$$\sum_{j \in S} d_j = di.$$

Now consider the polynomial
$$P(t) = \prod_{i=1}^r (t^{\frac{d_i}{d}} + z_i).$$
For $1 \leq i \leq r$, the coefficient of $t^{r-i}$ in $P(t)$
is $\theta_{i,s}$.  It follows that the elements $(-z_i)^{\frac{d}{d_i}}$
are integral over $W(k)[\theta_{1,s}, \dots, \theta_{m,s}]$, and so the
elements $z_i$ themselves are.  As $W(k)[Z_s]$ is generated by the $z_i$,
together with $\theta^{-1}_{m,s}$, it follows that
$W(k)[Z_s]$ is integral, and hence finitely generated as a module, over
$W(k)[\theta_{1,s}, \dots, \theta_{m,s}, \theta^{-1}_{m,s}]$,
and the result is immediate.
\end{proof}

We now show:

\begin{proposition} \label{prop:steinberg admissibility}
The module $\cInd_K^G \tau_{L'_s}$ is an admissible $E_{K,\tau}[G]$-module
Moreover, for any $P' = M'U'$
in $G$, $r_G^{P'} \cInd_K^G \tau_{L'}$ is admissible as a $E_{K,\tau}[M']$-module.
\end{proposition}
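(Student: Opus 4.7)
The plan is to deduce admissibility over $E_{K,\tau}$ from the admissibility of $\cInd_K^G \tau_{L_s}$ (and its parabolic restrictions) over $H(G,K,\tau_{L_s}) \cong \OO'[Z_s]^{W_{\overline{M}_s}(s)}$ established in the preceding proposition, using Proposition~\ref{prop:scalar fg} and the commensurability of $L_s$ and $L'_s$.

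First I would observe, using Theorem~\ref{thm:compatibility}, that the image of the map $E_{K,\tau} \otimes \OO' \to H(G,K,\tau_{L'_s}) \subset \overline{\CK}[Z_s]^{W_{\overline{M}_s}(s)}$ contains the subalgebra
\[C := \OO'[\theta_{1,s},\dots,\theta_{m,s},\theta_{m,s}^{-1}],\]
since the elements $\Theta_{i,m}$ and $\Theta_{m,m}^{-1}$ of $E_{K,\tau}$ map to the $\theta_{i,s}$ and $\theta_{m,s}^{-1}$. Proposition~\ref{prop:scalar fg}, extended from $W(k)$ to $\OO'$ by flat base change, then says that $\OO'[Z_s]^{W_{\overline{M}_s}(s)}$ is a finitely generated $C$-module. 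Combined with the already-established admissibility of $\cInd_K^G \tau_{L_s}$ over $H(G,K,\tau_{L_s})$ (and of $r_G^{P'} \cInd_K^G \tau_{L_s}$ over $H(G,K,\tau_{L_s})[M']$), this shows that for any compact open subgroup $K''$ of $G$ (respectively of $M'$), the space $(\cInd_K^G \tau_{L_s})^{K''}$ (respectively $(r_G^{P'} \cInd_K^G \tau_{L_s})^{K''}$) is finitely generated over $C$. Note that $C$ is Noetherian, being a finitely generated $\OO'$-algebra.

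Next I would exploit the commensurability of the lattices. There exists $N \geq 0$ with $\ell^N L_s \subset L'_s \subset \ell^{-N} L_s$ inside $\St_s$; inducing yields the corresponding commensurability of $\cInd_K^G \tau_{L_s}$ and $\cInd_K^G \tau_{L'_s}$ inside $\cInd_K^G \tkappa \otimes \St_s$, and exactness of $r_G^{P'}$ preserves it. Taking $K''$-invariants places $(\cInd_K^G \tau_{L'_s})^{K''}$ inside $\ell^{-N}(\cInd_K^G \tau_{L_s})^{K''}$ as a $C$-submodule; the latter is $C$-module isomorphic to $(\cInd_K^G \tau_{L_s})^{K''}$ (via multiplication by $\ell^N$) and is therefore finitely generated over $C$. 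By Noetherianity of $C$, the submodule $(\cInd_K^G \tau_{L'_s})^{K''}$ is itself finitely generated over $C$, hence over $E_{K,\tau}$. The identical argument applied with $r_G^{P'}$ in place handles the parabolic restriction statement.

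The main technical input is Proposition~\ref{prop:scalar fg}: it is what allows the subalgebra coming from $E_{K,\tau}$ (via the ``elementary symmetric'' elements $\theta_{i,s}$) to control admissibility, rather than the full invariant ring $H(G,K,\tau_{L_s})$. Past that, the passage from $L_s$ to $L'_s$ is a routine commensurability-plus-Noetherianity argument, so the proof is essentially immediate once the finiteness result is in hand.
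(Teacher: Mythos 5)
Your proof is correct and follows essentially the same route as the paper: reduce from $H(G,K,\tau_{L_s}) \cong \OO'[Z_s]^{W_{\overline{M}}(s)}$ down to the subalgebra generated by the $\theta_{i,s}$ via Proposition~\ref{prop:scalar fg}, then transfer to $L'_s$ using a $\theta_{i,s}$-equivariant embedding of $\cInd_K^G \tau_{L'_s}$ into $\cInd_K^G \tau_{L_s}$ (which you obtain from the commensurability $\ell^a L_s \subset L'_s \subset \ell^b L_s$) together with Noetherianity. The only difference is that you spell out the commensurability-and-Noetherianity step in slightly more detail than the paper does.
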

\begin{proof}
The module $r_G^{P'} \cInd_K^G \tau_{L_s}$ is admissible over $R$, 
which we have identified with $\OO'[Z_s]^{W_{\overline{M}}(s)}$.  It is thus also
admissible
over $\OO'[\theta_{1,s}, \dots, \theta_{m,s},\theta_{m,s}^{-1}]$, 
by Proposition~\ref{prop:scalar fg}.  The $\theta_{i,s}$ preserve both
$\cInd_K^G \tau_{L_s}$ and $\cInd_K^G \tau_{L'_s}$, and any embedding of
the latter in the former is equivariant for the $\theta_{i,s}$ (as the action of these elements is via
the Bernstein center.)  Fix such an embedding; this yields an embedding of
$r_G^{P'} \cInd_K^G \tau_{L'_s}$ 
in $r_G^{P'} \cInd_K^G \tau_{L_s}$ that is compatible with the action of the
elements $\theta_{i,s}$.  As the former is admissible over
$\OO'[\theta_{1,s}, \dots, \theta_{m,s},\theta^{-1}_{m,s}]$,
the latter must be as well, and the result follows.
\end{proof}

We now return to the study of $\CP_{K,\tau}$.  We choose $\CK'$ (and by extension
$\OO'$ sufficiently large that every map from $E_{\sigma}$ to $\overline{\CK}$
has image contained in $\OO'$.  Then $\St_s$ is defined over $\CK'$
for every $s$.  Moreover, the embeddings:
$$\CP_{K,\tau} \otimes \overline{\CK} \hookrightarrow \bigoplus_s \cInd_K^G \tkappa \otimes \St_s$$
$$E_{K,\tau} \otimes \overline{\CK} \hookrightarrow \prod_s H(G,K,\tkappa \otimes \St_s)$$
factor through embeddings:
$$\CP_{K,\tau} \otimes \OO' \hookrightarrow \bigoplus_s \cInd_K^G \tau_{L'_s}$$
$$E_{K,\tau} \otimes \OO' \hookrightarrow \prod_s H(G,K,\tau_{L'_s}).$$

We thus have:
\begin{theorem} \label{thm:P admissibility}
For any parabolic subgroup $P' = M'U'$ of $G$, $r_G^{P'} \CP_{K,\tau}$ is an admissible $C_{K,\tau}[M']$-module
(and hence also an admissible $E_{K,\tau}[M']$-module.)
\end{theorem}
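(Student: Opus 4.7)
The plan is to prove, for every compact open subgroup $K'$ of $M'$, that $(r_G^{P'}\CP_{K,\tau})^{K'}$ is finitely generated as a $C_{K,\tau}$-module. First I would reduce to a statement over $\OO'$ by faithful flatness and finiteness of $\OO'/W(k)$: since $C_{K,\tau}$ is a finitely generated $W(k)$-algebra (it is generated by $\Theta_{1,m},\dots,\Theta_{m,m},\Theta_{m,m}^{-1}$), it is Noetherian, and so is $C_{K,\tau}\otimes_{W(k)}\OO'$; moreover, $N \hookrightarrow N\otimes_{W(k)}\OO'$ is a $C_{K,\tau}$-linear injection into a module that, as a $C_{K,\tau}$-module, is the $W(k)$-rank of $\OO'$ copies of $N$. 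Thus it suffices to prove that $(r_G^{P'}\CP_{K,\tau})^{K'} \otimes \OO'$ is finitely generated as a $C_{K,\tau}\otimes\OO'$-module.

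Next I invoke the embedding $\CP_{K,\tau}\otimes \OO' \hookrightarrow \bigoplus_s \cInd_K^G\tau_{L'_s}$ (with $s$ ranging over the finite set of semisimple conjugacy classes in $\overline{G}$ with $s^{\reg}=s'$) established just before the theorem statement. Applying the exact functor $r_G^{P'}$ and then the left-exact functor of $K'$-invariants yields an injection of $C_{K,\tau}\otimes\OO'$-modules
$$(r_G^{P'}\CP_{K,\tau})^{K'}\otimes\OO' \hookrightarrow \bigoplus_s (r_G^{P'}\cInd_K^G\tau_{L'_s})^{K'}.$$
For this to make sense the embedding must be $C_{K,\tau}$-equivariant: this follows because $C_{K,\tau}\subset E_{K,\tau}$ acts on $\CP_{K,\tau}\otimes\OO'$ preserving each summand (Corollary~\ref{cor:s decomposition}), and under Theorem~\ref{thm:compatibility} the action of $\Theta_{i,m}$ on the summand corresponding to $s$ coincides with that of $\theta_{i,s}\in \OO'[Z_s]^{W_{\overline{M}_s}(s)} = H(G,K,\tau_{L_s})$, which preserves the sublattice $\cInd_K^G\tau_{L'_s}\subset\cInd_K^G\tkappa\otimes\St_s$.

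By Proposition~\ref{prop:steinberg admissibility}, each $r_G^{P'}\cInd_K^G\tau_{L'_s}$ is admissible as an $E_{K,\tau}[M']$-module; in fact its proof exhibits finite generation of $K'$-invariants over the subalgebra $\OO'[\theta_{1,s},\dots,\theta_{m,s},\theta_{m,s}^{-1}]$ (applying Proposition~\ref{prop:scalar fg}), and this subalgebra is precisely the image of $C_{K,\tau}\otimes\OO'$ in $H(G,K,\tau_{L'_s})$ under the normalization of Theorem~\ref{thm:compatibility}. Hence the finite direct sum on the right-hand side is finitely generated over the Noetherian ring $C_{K,\tau}\otimes\OO'$; its submodule $(r_G^{P'}\CP_{K,\tau})^{K'}\otimes\OO'$ is then also finitely generated, which is what we needed.

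The main point to verify carefully is thus the equivariance of the embedding with respect to the $C_{K,\tau}$-action; everything else is bookkeeping combining Noetherianness of $C_{K,\tau}$, exactness of parabolic restriction, and the admissibility already established in Proposition~\ref{prop:steinberg admissibility} for the building blocks $\cInd_K^G\tau_{L'_s}$. No new geometric or representation-theoretic input beyond what is already in the excerpt is required.
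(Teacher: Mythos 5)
Your argument is correct and follows the same route the paper takes: reduce along the embedding $\CP_{K,\tau}\otimes\OO'\hookrightarrow\bigoplus_s\cInd_K^G\tau_{L'_s}$ and invoke the admissibility of each $r_G^{P'}\cInd_K^G\tau_{L'_s}$ over the image of $C_{K,\tau}\otimes\OO'$ established in Proposition~\ref{prop:steinberg admissibility} (itself resting on Proposition~\ref{prop:scalar fg}). The paper's proof is terser, leaving the $C_{K,\tau}$-equivariance of the embedding and the Noetherian/submodule bookkeeping implicit, but you have supplied precisely the verifications that justify the paper's one-line conclusion.
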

\begin{proof}
For each $s$, $r_G^{P'} \cInd_K^G \tau_{L'_s}$ is admissible over $C_{K,\tau} \otimes \OO'$,
and hence also over $C_{K,\tau}$.  The result is thus immediate from the above decomposition.
\end{proof}

\begin{corollary} The $W(k)$-algebra $E_{K,\tau}$ is a finitely generated $C_{K,\tau}$-module.
\end{corollary}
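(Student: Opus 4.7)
The plan is to deduce the finite generation of $E_{K,\tau}$ over $C_{K,\tau}$ from Theorem~\ref{thm:P admissibility} applied to the trivial parabolic $P' = G$, which makes $\CP_{K,\tau}$ itself an admissible $C_{K,\tau}[G]$-module. Before doing that, I would observe that $C_{K,\tau}$ is Noetherian: by Theorem~\ref{thm:compatibility} it is generated as a $W(k)$-algebra by the finitely many elements $\Theta_{1,m},\dots,\Theta_{m,m},\Theta_{m,m}^{-1}$, so it is a quotient of a Laurent polynomial ring in finitely many variables over $W(k)$. It will therefore suffice to embed $E_{K,\tau}$ as a $C_{K,\tau}$-submodule of some finitely generated $C_{K,\tau}$-module.

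Next I would use Frobenius reciprocity to identify
$$E_{K,\tau} \;=\; \End_{W(k)[G]}(\CP_{K,\tau}) \;\cong\; \Hom_{W(k)[K]}(\tkappa \otimes \CP_{\sigma},\, \CP_{K,\tau}).$$
The source $\tkappa \otimes \CP_{\sigma}$ is a finite rank free $W(k)$-module: $\tkappa$ is a finite rank lift of $\kappa$, and by Proposition~\ref{prop:endomorphisms} the projective envelope $\CP_{\sigma}$ is a summand of the finite rank free module $\Ind_{\overline{U}}^{\overline{G}} \Psi$. The smooth $K$-action on this finite $W(k)$-module factors through a finite quotient, so there is an open compact subgroup $K_0 \subseteq K$ acting trivially on $\tkappa \otimes \CP_{\sigma}$. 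Any $K$-equivariant homomorphism $\phi: \tkappa \otimes \CP_{\sigma} \to \CP_{K,\tau}$ then has image inside $\CP_{K,\tau}^{K_0}$, so evaluating at a $W(k)$-basis $e_1,\dots,e_r$ of $\tkappa \otimes \CP_{\sigma}$ yields an injection
$$E_{K,\tau} \;\hookrightarrow\; (\CP_{K,\tau}^{K_0})^r, \qquad \phi \mapsto (\phi(e_1),\dots,\phi(e_r)).$$
This map is $C_{K,\tau}$-linear, since elements of $C_{K,\tau}\subseteq E_{K,\tau}$ act on $E_{K,\tau}$ by post-composition with $G$-equivariant endomorphisms of $\CP_{K,\tau}$, and such endomorphisms preserve the $K_0$-invariants.

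To finish, I would apply Theorem~\ref{thm:P admissibility} with $P' = G$, which says precisely that $\CP_{K,\tau}^{K_0}$ is a finitely generated $C_{K,\tau}$-module; hence so is the finite direct sum $(\CP_{K,\tau}^{K_0})^r$. Since $C_{K,\tau}$ is Noetherian, every submodule of a finitely generated $C_{K,\tau}$-module is finitely generated, so $E_{K,\tau}$ is finitely generated over $C_{K,\tau}$.

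I do not expect a substantive obstacle here: all the genuine content has been packaged into the admissibility statement Theorem~\ref{thm:P admissibility} (which in turn rests on the admissibility over $H(G,K,\tau_{L_s})$ combined with the finiteness in Proposition~\ref{prop:scalar fg}). What remains is the essentially formal combination of Frobenius reciprocity, smoothness of $\tkappa \otimes \CP_{\sigma}$, and Noetherian descent to submodules.
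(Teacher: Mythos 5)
Your proposal is correct and is, in essence, the argument the paper intends: the Corollary follows from Theorem~\ref{thm:P admissibility} precisely because Frobenius reciprocity realizes $E_{K,\tau}$ as $\Hom_{W(k)[K]}(\tkappa \otimes \CP_{\sigma}, \CP_{K,\tau})$, which sits as a $C_{K,\tau}$-submodule of a finite direct sum of $\CP_{K,\tau}^{K_0}$; admissibility over $C_{K,\tau}$ (the case $P'=G$) together with the Noetherianity of $C_{K,\tau}$ then forces finite generation. The paper leaves this chain of reductions implicit, and your write-up supplies exactly the missing steps without diverging from the intended route.
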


We now turn to the study of the $W(k)[G]$-module
$\CP_{(M,\pi)}$ for an inertial equivalence class $(M,\pi)$ of irreducible
cuspidal representations of $M$ over $k$.  Write $\pi$ as a tensor product
of irreducible cuspidal representations $\pi_i$ of general linear groups $\GL_{n_i}(F)$,
and for each $i$, let $(K_i,\tau_i)$
be a maximal distinguished cuspidal $k$-type contained in $\pi_i$.

\begin{theorem} \label{thm:admissibility}
Let $R$ be the tensor product, over $W(k)$, of the rings $E_{K_i,\tau_i}$
for all $i$.  Then $\CP_{(M,\pi)}$ is an admissible $R[G]$-module,
and for any parabolic subgroup $P' = M'U'$ of $G$, $r_G^{P'} \CP_{(M,\pi)}$ is
an admissible $R[M']$-module.
\end{theorem}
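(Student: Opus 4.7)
The plan is to reduce to Theorem~\ref{thm:P admissibility} on each factor and then push the admissibility through parabolic induction by means of Lemma~\ref{lem:induction restriction admissibility}. First, for each $i$ and each parabolic subgroup $P_i' = M_i'U_i'$ of $\GL_{n_i}(F)$, Theorem~\ref{thm:P admissibility} tells us that $r_{\GL_{n_i}(F)}^{P_i'} \CP_{K_i,\tau_i}$ is admissible as an $E_{K_i,\tau_i}[M_i']$-module; taking $P_i' = \GL_{n_i}(F)$ itself gives admissibility of $\CP_{K_i,\tau_i}$ over $E_{K_i,\tau_i}[\GL_{n_i}(F)]$.

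Next I would verify that the external tensor product
$$\CP := \CP_{K_1,\tau_1} \otimes_{W(k)} \cdots \otimes_{W(k)} \CP_{K_r,\tau_r}$$
is an admissible $R[M]$-module, and similarly that $r_M^{P_M} \CP$ is admissible over $R[M_M]$ for every parabolic subgroup $P_M = M_MU_M$ of $M$. Since $M = \prod_i \GL_{n_i}(F)$, every parabolic subgroup $P_M$ of $M$ decomposes as a product $\prod_i P_i'$ with Levi $M_M = \prod_i M_i'$, and parabolic restriction commutes with external tensor products, so $r_M^{P_M} \CP$ is the external tensor product of the $r_{\GL_{n_i}(F)}^{P_i'} \CP_{K_i,\tau_i}$. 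Given a compact open subgroup $U$ of $M_M$, after shrinking to a product subgroup $U = \prod_i U_i$ (which one may do since the $U_i$-fixed subspaces only grow when $U_i$ shrinks, preserving finite generation) one has
$$(r_M^{P_M} \CP)^{U} \;=\; \bigotimes_{i,W(k)} (r_{\GL_{n_i}(F)}^{P_i'} \CP_{K_i,\tau_i})^{U_i},$$
and each tensor factor is a finitely generated $E_{K_i,\tau_i}$-module by step one, so the tensor product is finitely generated over $R = \bigotimes_i E_{K_i,\tau_i}$.

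Finally, by construction $\CP_{(M,\pi)} = i_P^G \CP$ for an appropriate parabolic subgroup $P$ of $G$ with Levi $M$. Lemma~\ref{lem:induction restriction admissibility}, applied with the commutative $W(k)$-algebra $R$ and the $R[M]$-module $\CP$ whose parabolic restrictions to every Levi of $M$ are admissible (as verified above), yields at once that $\CP_{(M,\pi)}$ is admissible over $R[G]$ and that $r_G^{P'} \CP_{(M,\pi)}$ is admissible over $R[M']$ for every parabolic subgroup $P' = M'U'$ of $G$, which is the conclusion.

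The only real work is the middle step, namely the compatibility of admissibility with external tensor products over $W(k)$; this is a routine but careful verification, and the reduction to product-form compact open subgroups of Levi subgroups of $M$ is the one place where one has to be a little careful. Everything else is a direct invocation of results already established in sections~\ref{sec:fg} and~\ref{sec:faithful}. The hypothesis $\ell > n$ plays no role in this admissibility argument itself; it enters, if at all, only through the explicit structural statements about $E_{K_i,\tau_i}$ that one may wish to combine with this result.
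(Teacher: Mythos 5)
Your proof is correct and follows exactly the route the paper intends: the paper's own proof is the one-line assertion that the result ``is an immediate consequence of Theorem~\ref{thm:P admissibility} and Lemma~\ref{lem:induction restriction admissibility},'' and you have simply filled in the tensor-product step that the paper treats as routine. Your middle step is sound, though for full rigor one should note that passing from a product compact open $\prod_i U_i$ up to an arbitrary $U$ containing it uses the Noetherianity of $R$ (which holds since each $E_{K_i,\tau_i}$ is finite over the polynomial ring $C_{K_i,\tau_i}$), and that the identification $(\bigotimes_i \Pi_i)^{\prod U_i} = \bigotimes_i \Pi_i^{U_i}$ relies on the $W(k)$-flatness (equivalently, $\ell$-torsion-freeness) of the factors, which holds here; your observation that the $\ell > n$ hypothesis is not actually used in the argument is also accurate.
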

\begin{proof}
This is an immediate consequence of Theorem~\ref{thm:P admissibility} and
Lemma~\ref{lem:induction restriction admissibility}.
\end{proof}




\section{The Bernstein decomposition and Bernstein's second adjointness}
\label{sec:bernstein}

Our next goal is to apply our results on the structure of $\CP_{K,\tau}$ to establish
a Bernstein decomposition for the category of smooth $W(k)[G]$-modules.
With the results of the previous section in hand, this is an easy consequence
of Bernstein's second adjointness for the category of smooth $W(k)[G]$-modules,
which is due, in this generality, to Dat in~\cite{dat-adjointness}.  
However, at this point it is not too much work to give
an alternative proof of Bernstein's second adjointness which is quite different in spirit
from Dat's approach.  We thus detour for a moment to show how Bernstein's second
adjointness follows from the results so far.

For technical reasons (namely, the fact that parabolic induction is naturally a
right adjoint, and therefore takes injectives to injectives), it will be useful for us
to work with injective objects rather than the projectives $\CP_{K,\tau}$.  To obtain
a suitable supply of injectives, we define:

\begin{definition} Let $\Pi$ be a smooth $W(k)[G]$-module.  We denote by
$\Pi^{\vee}$ the $W(k)[G]$-submodule of smooth vectors in $\Hom_{W(k)}(\Pi,\CK/W(k))$.
\end{definition}

As $\CK/W(k)$ is an injective $W(k)$-module, and the functor that takes a $W(k)[G]$-module
to the submodule consisting of its smooth vectors is exact, the functor $\Pi \mapsto \Pi^{\vee}$
is exact as well.  Moreover, we have:

\begin{lemma} Let $\Pi$ and $\Pi'$ be smooth $W(k)[G]$-modules.  Then there is a natural
isomorphism: 
$$\Hom_{W(k)[G]}(\Pi, (\Pi')^{\vee}) \rightarrow \Hom_{W(k)[G]}(\Pi',\Pi^{\vee}).$$
\end{lemma}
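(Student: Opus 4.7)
The plan is to interpret both Hom-sets as spaces of $G$-invariant $W(k)$-bilinear pairings $\Pi \times \Pi' \to \CK/W(k)$ satisfying a suitable smoothness condition, and then show that the natural map between them (swapping the roles of the two arguments) is well-defined by checking that the smoothness conditions on the two sides are in fact equivalent.

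First I would set up the natural transformation. Given $f \in \Hom_{W(k)[G]}(\Pi,(\Pi')^\vee)$, define a $W(k)$-bilinear pairing $\langle -, -\rangle_f : \Pi \times \Pi' \to \CK/W(k)$ by $\langle \pi, \pi'\rangle_f = f(\pi)(\pi')$. Since $f$ is $G$-equivariant and $G$ acts on $\Hom_{W(k)}(\Pi',\CK/W(k))$ by $(g\lambda)(\pi') = \lambda(g^{-1}\pi')$, the pairing satisfies $\langle g\pi, g\pi'\rangle_f = \langle \pi, \pi'\rangle_f$. Conversely, I would send this pairing to the map $\tilde f : \Pi' \to \Hom_{W(k)}(\Pi,\CK/W(k))$ defined by $\tilde f(\pi')(\pi) = \langle \pi, \pi'\rangle_f$, which is automatically $G$-equivariant by a symmetric calculation.

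The main step — and really the only nontrivial point — is to verify that $\tilde f(\pi')$ lands in $\Pi^\vee$, i.e.\ that $\pi \mapsto \langle \pi, \pi'\rangle_f$ is smooth as a functional on $\Pi$. For this, fix $\pi' \in \Pi'$ and choose a compact open subgroup $K_0 \subset G$ fixing $\pi'$ (which exists by smoothness of $\Pi'$). Then for any $k \in K_0$ and any $\pi \in \Pi$, $G$-invariance of the pairing gives
$$\langle k^{-1}\pi, \pi'\rangle_f = \langle k^{-1}\pi, k^{-1}\pi'\rangle_f = \langle \pi, \pi'\rangle_f,$$
which is exactly the statement that $\tilde f(\pi')$ is fixed by $K_0$ under the contragredient action on $\Hom_{W(k)}(\Pi,\CK/W(k))$, hence smooth. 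So no use of the smoothness of $f(\pi)$ as a functional on $\Pi'$ is required here — the smoothness needed for $\tilde f(\pi') \in \Pi^\vee$ comes for free from $G$-invariance together with smoothness of $\Pi'$.

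Finally, the construction is visibly functorial in both $\Pi$ and $\Pi'$, and applying the same recipe in the other direction (starting from $g \in \Hom_{W(k)[G]}(\Pi',\Pi^\vee)$, form the pairing $\langle \pi,\pi'\rangle_g = g(\pi')(\pi)$, then recover a map $\Pi \to (\Pi')^\vee$) produces a two-sided inverse. The only subtle point — verifying that the target of the recovered map really lies in the smooth dual rather than the algebraic dual — is handled by exactly the same $K_0$-invariance argument with the roles of $\Pi$ and $\Pi'$ reversed, using smoothness of $\Pi$. This establishes the natural isomorphism.
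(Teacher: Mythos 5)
Your proof is correct and follows the same route as the paper: interpret both Hom-sets as spaces of $G$-invariant pairings $\Pi \times \Pi' \to \CK/W(k)$. The only difference is that you spell out the key verification (that $\tilde f(\pi')$ is automatically smooth because $G$-invariance of the pairing together with smoothness of $\Pi'$ forces $K_0$-invariance), which the paper leaves implicit in the phrase ``smooth with respect to the action of $G$.''
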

\begin{proof}
Both $\Hom_{W(k)[G]}(\Pi, (\Pi')^{\vee})$ and $\Hom_{W(k)[G]}(\Pi',\Pi^{\vee})$ are
in bijection with the set of $G$-equivariant pairings $\Pi \times \Pi' \rightarrow \CK/W(k)$
that are smooth with respect to the action of $G$.
\end{proof}

\begin{corollary} If $\Pi$ is a projective $W(k)[G]$-module, then $\Pi^{\vee}$ is injective.
\end{corollary}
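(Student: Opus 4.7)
The plan is to deduce this corollary directly from the preceding lemma, together with the exactness of the functor $\Pi' \mapsto (\Pi')^{\vee}$ (which was noted in the excerpt, since $\CK/W(k)$ is $W(k)$-injective and passing to smooth vectors is exact). The only thing to show is that $\Hom_{W(k)[G]}(-,\Pi^{\vee})$ carries injections to surjections.

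So, first I would take a short exact sequence $0 \to A \to B \to C \to 0$ of smooth $W(k)[G]$-modules and apply the exact dualizing functor to obtain a short exact sequence $0 \to C^{\vee} \to B^{\vee} \to A^{\vee} \to 0$. Next, since $\Pi$ is projective, the functor $\Hom_{W(k)[G]}(\Pi,-)$ is exact on smooth $W(k)[G]$-modules, so applying it to this dualized sequence yields a short exact sequence, in particular a surjection
$$\Hom_{W(k)[G]}(\Pi,B^{\vee}) \twoheadrightarrow \Hom_{W(k)[G]}(\Pi,A^{\vee}).$$
Finally, I would invoke the natural isomorphism of the preceding lemma twice to rewrite this surjection as
$$\Hom_{W(k)[G]}(B,\Pi^{\vee}) \twoheadrightarrow \Hom_{W(k)[G]}(A,\Pi^{\vee}),$$
and check that the isomorphism of the lemma is natural enough that this surjection is the map induced by the inclusion $A \hookrightarrow B$. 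This gives the extension property characterizing injectivity.

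There is essentially no obstacle here: the content of the corollary is exactly the combination of the adjunction with the exactness of $(-)^{\vee}$, and the only point that needs a moment's thought is the naturality of the isomorphism in the lemma with respect to morphisms in $\Pi'$. This follows immediately from the description of both sides as $G$-equivariant smooth pairings into $\CK/W(k)$, since pre-composition on the $\Pi'$ factor corresponds on both sides to the obvious functoriality.
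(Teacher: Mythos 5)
Your argument is correct and is exactly the paper's argument, merely spelled out in more detail: the paper observes that $\Hom_{W(k)[G]}(-,\Pi^{\vee})$ is naturally equivalent to the exact functor $\Hom_{W(k)[G]}(\Pi,(-)^{\vee})$, which is precisely your combination of the preceding lemma with exactness of $(-)^{\vee}$ and projectivity of $\Pi$. The naturality point you flag is indeed the only thing worth a moment's thought, and your pairing description resolves it correctly.
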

\begin{proof}
The functors $\Hom_{W(k)[G]}(-,\Pi^{\vee})$ and 
$\Hom_{W(k)[G]}(\Pi,(-)^{\vee})$ are naturally equivalent, and the latter is exact.
\end{proof}

This duality is well-behaved with respect to normalized parabolic induction:
\begin{lemma} Let $P=MU$ be a Levi subgroup of $G$, and let $\pi$ be 
a smooth $W(k)[M]$-module.  Then there is a natural isomorphism:
$$(i_P^G \pi)^{\vee} \rightarrow i_P^G \pi^{\vee}.$$
In particular, if $\Pi$ is a simple $W(k)[G]$-module with cuspidal (resp. supercuspidal)
support $(M,\pi),$, then $\Pi^{\vee}$ has cuspidal (resp. supercuspidal)
support $(M,\pi^{\vee})$.
\end{lemma}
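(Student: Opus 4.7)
The plan is to establish the isomorphism via the classical duality pairing for smooth induction from a closed subgroup, and then to deduce the (super)cuspidal support statement by combining this isomorphism with exactness of $(-)^\vee$ and, in the cuspidal case, Bernstein's second adjointness.

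For the isomorphism, I would invoke the general fact that for any closed subgroup $H$ of a unimodular locally profinite group $G$ and any smooth $H$-module $V$, integration against a right-invariant measure on $H \backslash G$ produces a natural, perfect, $G$-equivariant pairing
$$\cInd_H^G V \otimes \Ind_H^G(V^\vee \otimes \delta_H) \rightarrow \CK/W(k),$$
and hence a natural isomorphism $(\cInd_H^G V)^\vee \cong \Ind_H^G(V^\vee \otimes \delta_H)$. Specializing to $H = P$ (so that $\Ind_P^G = \cInd_P^G$, since $P\backslash G$ is compact and $G$ is unimodular) and $V = \pi \otimes \delta_P^{1/2}$, the normalizations combine via $(\pi \otimes \delta_P^{1/2})^\vee \cong \pi^\vee \otimes \delta_P^{-1/2}$ to give
$$(i_P^G \pi)^\vee \cong \Ind_P^G(\pi^\vee \otimes \delta_P^{-1/2} \otimes \delta_P) = i_P^G \pi^\vee,$$
naturally in $\pi$. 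Applying the same argument with $M$ in place of $G$ and a proper parabolic $P'$ of $M$ shows that $\pi^\vee$ is (super)cuspidal whenever $\pi$ is: otherwise $\pi^\vee$ would be a subquotient of $i_{P'}^M \sigma$ for some $\sigma$, and dualizing would exhibit $\pi$ as a subquotient of $i_{P'}^M \sigma^\vee$.

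For the supercuspidal support claim, exactness of $(-)^\vee$ combined with the isomorphism directly gives $\Pi^\vee$ as a subquotient of $i_P^G \pi^\vee$, and since $\pi^\vee$ is supercuspidal this exhibits $(M,\pi^\vee)$ as the supercuspidal support of $\Pi^\vee$.

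The cuspidal case is the main obstacle: dualizing the surjection $i_P^G \pi \twoheadrightarrow \Pi$ produces only an embedding $\Pi^\vee \hookrightarrow i_P^G \pi^\vee$, whereas cuspidal support demands a quotient structure. To bridge this, I would invoke Bernstein's second adjointness, which is available in this integral setting from Dat's work and is reproved in the subsequent sections of this paper. Concretely, the original surjection is equivalent by second adjointness to an embedding $\pi \hookrightarrow r_G^{P^\circ} \Pi$; dualizing this and applying the Jacquet-module duality $(r_G^{P^\circ}\Pi)^\vee \cong r_G^P \Pi^\vee$ (valid for admissible $\Pi$, which covers the simple $\Pi$ here) produces a surjection $r_G^P \Pi^\vee \twoheadrightarrow \pi^\vee$. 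Since $\pi^\vee$ is cuspidal, a standard socle-cosocle argument --- itself an application of second adjointness together with the geometric lemma --- upgrades this to an embedding $\pi^\vee \hookrightarrow r_G^P \Pi^\vee$, and a final application of second adjointness in the opposite direction identifies this with a surjection $i_{P^\circ}^G \pi^\vee \twoheadrightarrow \Pi^\vee$, yielding the desired cuspidal support.
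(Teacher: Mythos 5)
Your proof of the isomorphism $(i_P^G \pi)^\vee \cong i_P^G \pi^\vee$ is essentially the paper's argument: the paper also uses the pairing coming from integration over $G/P$, checking it is perfect by passing to invariants under small compact open subgroups. The extra bookkeeping you supply about the modulus characters cancelling is correct and matches the paper's convention $i_P^G\pi = \Ind_P^G(\pi\otimes\delta_P^{1/2})$.

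Where you add genuine content is in the ``in particular'' clause, which the paper states as an immediate corollary and does not prove. You are right to flag the cuspidal case as nontrivial: dualizing the surjection $i_P^G\pi\twoheadrightarrow\Pi$ yields only a subobject $\Pi^\vee\hookrightarrow i_P^G\pi^\vee$, whereas the paper's definition of cuspidal support requires $\Pi^\vee$ to be a \emph{quotient} of a parabolic induction from a cuspidal. The supercuspidal case, by contrast, is genuinely immediate since that notion is defined via subquotients, and your one-line treatment of it is correct (together with the observation that $(-)^\vee$ preserves (super)cuspidality, which you also handle correctly).

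Your resolution of the cuspidal case works, but the intermediate ``socle--cosocle'' step --- passing from a surjection $r_G^P\Pi^\vee\twoheadrightarrow\pi^\vee$ to an embedding $\pi^\vee\hookrightarrow r_G^P\Pi^\vee$ --- is the most fragile link and is not really necessary. A cleaner route: $\Pi^\vee$ is an irreducible subquotient of $i_P^G\pi^\vee$ with $\pi^\vee$ cuspidal, and the standard fact (Vign\'eras over $k$, or derived from the geometric lemma together with second adjointness) that every irreducible subquotient of $i_P^G\pi^\vee$ has cuspidal support equal to $(M,\pi^\vee)$ up to conjugacy finishes the argument directly; one then unwinds the definition of cuspidal support to produce the required surjection. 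Either way, second adjointness (or a result proved using it) enters, so you are correct that this step is not elementary. This raises a point worth being aware of: the paper uses exactly this cuspidal case of the ``in particular'' to establish the Bernstein decomposition, and then uses the Bernstein decomposition to give an ``alternative proof'' of second adjointness. Since that development already leans on Dat's second adjointness (as the paper acknowledges), there is no logical gap, but the ``alternative proof'' should be read as a reformulation rather than an independent derivation --- a nuance your proof makes visible.
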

\begin{proof}
Integration over $G/P$ defines a $G$-equivariant bilinear map:
$$[i_P^G \pi] \times [i_P^G \pi^{\vee}] \rightarrow \CK/W(k),$$
and hence a $G$-equivariant map:
$$i_P^G \pi^{\vee} \rightarrow [i_p^G \pi]^{\vee}.$$
This map is easily seen to be an isomorphism by passing to $U$-invariants for a cofinal family
of sufficiently small compact open subgroups $U$ of $G$.
\end{proof}

Note that if $\Pi$ is an {\em admissible} $W(k)[G]$-module, (or, alternatively, an admissible
$\CK[G]$-module), then $(\Pi^{\vee})^{\vee}$
is naturally isomorphic to $\Pi$.  In particular this is true if $\Pi$ is simple.

If $(K,\tau)$ is a maximal distinguished cuspidal $k$-type, we define $I_{K,\tau}$
to be the injective $W(k)[G]$-module $\CP_{K,\tau^{\vee}}^{\vee}$.  Similarly, if $(M,\pi)$
is a pair consisting of a Levi subgroup of $G$ and an irreducible cuspidal representation $\pi$
of $M$ over $k$, we set $I_{(M,\pi)}$ to
be the $W(k)[G]$-module $\CP_{(M,\pi^{\vee})}^{\vee}$.

\begin{proposition} The $W(k)[G]$-module $I_{(M,\pi)}$ is injective.  Moreover,
every simple $W(k)[G]$-module $\Pi$ with mod $\ell$ inertial cuspidal support $(M,\pi)$
embeds in $I_{(M,\pi)}$.
\end{proposition}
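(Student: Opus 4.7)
The plan is as follows. First I would show that $\CP_{(M,\pi^{\vee})}$ is projective; by the corollary just proved (that the dual of a projective is injective), this immediately gives injectivity of $I_{(M,\pi)} = \CP_{(M,\pi^{\vee})}^{\vee}$. Second, for a simple $\Pi$ with mod $\ell$ inertial cuspidal support $(M,\pi)$, I would verify that $\Pi^{\vee}$ has mod $\ell$ inertial cuspidal support $(M,\pi^{\vee})$, apply Proposition~\ref{prop:cuspidal support} to obtain a surjection $\CP_{(M,\pi^{\vee})} \twoheadrightarrow \Pi^{\vee}$, and then dualize via the adjunction
\[
\Hom_{W(k)[G]}(\CP_{(M,\pi^{\vee})},\Pi^{\vee}) \cong \Hom_{W(k)[G]}(\Pi, I_{(M,\pi)})
\]
to produce a nonzero map $\Pi \to I_{(M,\pi)}$, which must be injective by simplicity of $\Pi$.

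The projectivity step proceeds as follows. Writing $\CP_{(M,\pi^{\vee})} = i_P^G[\CP_{K_1,\tau_1^{\vee}} \otimes \cdots \otimes \CP_{K_r,\tau_r^{\vee}}]$, each factor $\CP_{K_i,\tau_i^{\vee}}$ is projective over $W(k)[\GL_{n_i}(F)]$ by construction, being a compact induction of a projective $W(k)[K_i]$-module. Their external tensor product over the block-diagonal Levi $M$ is projective as a $W(k)[M]$-module (this reduces to a K\"unneth-type assertion for compact inductions of projective modules over products of compact open subgroups, using that $\cInd_{\prod K_i}^{\prod G_i}$ commutes with external tensor products). To pass across the parabolic induction I would invoke Bernstein's second adjointness, proved in this generality by Dat~\cite{dat-adjointness}: $i_P^G$ is left adjoint to the exact functor $r_G^{P^{\circ}}$, and therefore sends projectives to projectives.

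To check that $\Pi^{\vee}$ has mod $\ell$ inertial cuspidal support $(M,\pi^{\vee})$, I would treat the two cases from the definition separately. When $\ell$ annihilates $\Pi$, the lemma $(i_P^G \pi_0)^{\vee} \cong i_P^G \pi_0^{\vee}$ combined with simplicity shows that the cuspidal support of $\Pi^{\vee}$ is $(M,\pi_0^{\vee})$ for the same $\pi_0 \in [\pi]$ that supports $\Pi$; hence it is inertially equivalent to $\pi^{\vee}$. When $\ell$ is invertible on $\Pi$, I would pass to a finite extension $\CK'/\CK$ splitting $\Pi \otimes_{\CK} \CK'$ into absolutely simple summands $\Pi_0$ with cuspidal support $(M,\tpi_0)$ for some integral $\tpi_0$ lifting some $\pi_0 \in [\pi]$; dualizing, $\tpi_0^{\vee}$ is again integral (the $\OO'$-dual of an $M$-stable lattice is an $M$-stable lattice in the dual representation) and lifts $\pi_0^{\vee}$, so $\Pi_0^{\vee}$ has cuspidal support $(M,\tpi_0^{\vee})$ and $\Pi^{\vee}$ has the claimed mod $\ell$ inertial cuspidal support.

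The main obstacle is the projectivity of $\CP_{(M,\pi^{\vee})}$, which genuinely requires Bernstein's second adjointness in this generality: without Dat's theorem one knows only that $i_P^G$ is a right adjoint to $r_G^P$, which preserves injectives but not projectives. Everything else in the argument is formal manipulation of duality, cuspidal support, and adjunctions, and the tensor product / parabolic induction bookkeeping is standard once the second adjointness is in hand.
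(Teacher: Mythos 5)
Your argument is logically sound if one is willing to cite Dat's second adjointness, but it misses the point of the section and uses the wrong tool. The paper is in the middle of giving an \emph{independent} proof of Bernstein's second adjointness (the text just before this material says so explicitly), so invoking Dat's theorem here would make the entire section circular. Indeed, the projectivity of $\CP_{(M,\pi)}$ is only established in the paper \emph{after} second adjointness has been proved (see the corollary ``The representations $\CP_{[L,\pi]}$ are projective and small''). That is precisely why the paper switches to injectives in this section.

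You actually have all the ingredients needed for the paper's argument, you just apply them in the wrong order. The lemma immediately preceding this proposition shows $(i_P^G \pi_0)^{\vee} \cong i_P^G \pi_0^{\vee}$. Using it, one moves the dual \emph{inside} the parabolic induction:
$$
I_{(M,\pi)} = \bigl[i_P^G(\CP_{K_1,\tau_1^{\vee}} \otimes \cdots \otimes \CP_{K_r,\tau_r^{\vee}})\bigr]^{\vee} \cong i_P^G\bigl(I_{K_1,\tau_1} \otimes \cdots \otimes I_{K_r,\tau_r}\bigr).
$$
The inner tensor product is injective as a $W(k)[M]$-module (each factor is the dual of a projective), and $i_P^G$ is the \emph{right} adjoint of the exact functor $r_G^P$ by ordinary Frobenius reciprocity, hence preserves injectives. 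No second adjointness is needed, and no projectivity of $\CP_{(M,\pi^{\vee})}$ is asserted. You observe correctly that first adjointness only gives preservation of injectives, then conclude ``which genuinely requires Bernstein's second adjointness'' --- but it doesn't: the injective-preservation statement is exactly what is used, once the dual is pushed past $i_P^G$.

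Your treatment of the second half (showing $\Pi^{\vee}$ has mod $\ell$ inertial cuspidal support $(M,\pi^{\vee})$, case by case, then dualizing the surjection from Proposition~\ref{prop:cuspidal support}) matches the paper's argument, and you supply details the paper leaves implicit; that part is fine.
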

\begin{proof}
For a suitable parabolic subgroup $P$, and suitable maximal distinguished cuspidal types
$(K_i,\tau_i)$, we have:
$$\CP_{(M,\pi^{\vee})}^{\vee} = 
[i_P^G \CP_{K_1,\tau_1^{\vee}} \otimes \dots \otimes \CP_{K_r,\tau_r^{\vee}}]^{\vee}$$
$$= i_P^G I_{K_1,\tau_1} \otimes \dots \otimes I_{K_r,\tau_r}.$$
As $i_P^G$ is a right adjoint of an exact functor (by Frobenius reciprocity), the
latter module is clearly injective.

Now given $\Pi$, $\Pi^{\vee}$ has mod $\ell$ inertial cuspidal support
$(M,\pi^{\vee})$; by Proposition~\ref{prop:cuspidal support} we have
a surjection $\CP_{(M,\pi^{\vee})} \rightarrow \Pi^{\vee}$.  Dualizing, and
using the fact that $(\Pi^{\vee})^{\vee} = \Pi$, we obtain our desired result.
\end{proof}

\begin{proposition}
Let $(M,\pi)$ and $(M',\pi')$ be two pairs consisting of a Levi subgroup of $G$ and
an irreducible cuspidal representation of that Levi subgroup over $k$.
Let $I = I_{(M,\pi)}$; $I' = I_{(M',\pi')}$.
If $\Hom_{W(k)[G]}(I,I')$ is nonzero, then the mod $\ell$ inertial supercuspidal supports
of $(M,\pi)$ and $(M',\pi')$ coincide.
\end{proposition}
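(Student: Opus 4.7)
The plan is to apply the bilinear form adjunction established in the first lemma of Section~\ref{sec:bernstein} to rewrite
\[
\Hom_{W(k)[G]}(I_{(M,\pi)}, I_{(M',\pi')}) \;\cong\; \Hom_{W(k)[G]}(\CP_{(M',(\pi')^\vee)}, \CP_{(M,\pi^\vee)}^{\vee\vee}),
\]
and to extract a simple module $\Sigma$ that is visible from both sides of this isomorphism. The module $\CP_{(M',(\pi')^\vee)}$ is finitely generated as a smooth $W(k)[G]$-module, being an iterated parabolic and compact induction of finitely generated modules from open compact subgroups. Hence the image of any nonzero map $g$ in the right-hand Hom group is a finitely generated submodule of $\CP_{(M,\pi^\vee)}^{\vee\vee}$, and a Zorn's lemma argument produces a simple quotient $\Sigma$ of $\mathrm{Im}(g)$. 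Then $\Sigma$ is simultaneously a simple quotient of $\CP_{(M',(\pi')^\vee)}$ and a simple subquotient of $\CP_{(M,\pi^\vee)}^{\vee\vee}$.

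Next I read off the mod $\ell$ inertial supercuspidal support of $\Sigma$ from each description. As a simple quotient of $\CP_{(M',(\pi')^\vee)}$, the module $\Sigma$ lies (over $\overline{\CK}$ if $\ell$-invertible, and in the sense of Vigneras' mod $\ell$ Bernstein decomposition if $\ell$-torsion) in the Bernstein block determined by $(M',(\pi')^\vee)$; combining Proposition~\ref{prop:Bernstein idempotents} with its $k$-analog shows that $\Sigma$ has mod $\ell$ inertial supercuspidal support equal to that of $(M',(\pi')^\vee)$, i.e.\ the dual of the supercuspidal support of $(M',\pi')$. As a simple subquotient of $\CP_{(M,\pi^\vee)}^{\vee\vee}$, the analogous argument shows that $\Sigma$ has mod $\ell$ inertial supercuspidal support equal to that of $(M,\pi^\vee)$, the dual of the supercuspidal support of $(M,\pi)$. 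Equating the two determinations and applying the duality involution gives the required coincidence of mod $\ell$ inertial supercuspidal supports of $(M,\pi)$ and $(M',\pi')$.

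The main obstacle is the second of these two support computations: transferring the block-theoretic input of Proposition~\ref{prop:Bernstein idempotents} from $\CP_{(M,\pi^\vee)}$ to its bidual. When $\CP_{(M,\pi^\vee)}$ is admissible, which holds for $\ell>n$ by Theorem~\ref{thm:admissibility}, one has $\CP_{(M,\pi^\vee)}^{\vee\vee}\cong\CP_{(M,\pi^\vee)}$ and the computation is immediate. For general $\ell$ one argues that the action of the Bernstein center is compatible with dualization via the identity $(z\cdot\Phi)(x)=\Phi(z\cdot x)$, so that any center element annihilating $\CP_{(M,\pi^\vee)}$ also annihilates its dual and, iterating, its bidual; one then appeals to the admissibility of $\CP_{K,\tau}$ over the subalgebra $C_{K,\tau}$ from Theorem~\ref{thm:P admissibility} to control the bidual sufficiently to conclude.
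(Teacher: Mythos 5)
Your overall strategy is close to the paper's -- both proofs begin with the duality adjunction $\Hom(\Pi,(\Pi')^{\vee}) \cong \Hom(\Pi',\Pi^{\vee})$ to convert the statement about the injectives $I_{(M,\pi)}$, $I_{(M',\pi')}$ into a statement about the projectives $\CP_{(M,\pi^\vee)}$, $\CP_{(M',(\pi')^\vee)}$ and their biduals.  But the two proofs diverge exactly at the point you flag as "the main obstacle," and I don't think the workaround you sketch can be made to go through.

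The concrete problem is that you extract a single simple subquotient $\Sigma$ of $\CP_{(M,\pi^\vee)}^{\vee\vee}$ over $W(k)$ and then try to read off its supercuspidal support.  When $\ell$ kills $\Sigma$ there is no established mechanism for doing this.  The Bernstein center you invoke is a phantom at this stage of the paper: the center of $\Rep_{W(k)}(G)$ is precisely what the paper is in the process of constructing, so it cannot be appealed to; the center of $\Rep_{\overline{\CK}}(G)$ \emph{is} available, but its elements act by zero on $\ell$-torsion and so cannot separate $k$-simple subquotients of $\CP^{\vee\vee}$ by block; and the $C_{K,\tau}$-admissibility from Theorem~\ref{thm:P admissibility} tells you that $\CP_{K,\tau}^K$ is a finitely generated module over a polynomial ring, which is not enough for $\CP^{\vee\vee}\cong\CP$ (that requires finite generation over $W(k)$, i.e. admissibility in the strict sense, which fails here).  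So the sentence "control the bidual sufficiently to conclude" is the precise location of the missing idea rather than a deferred technicality.  A similar, milder, worry already attaches to your step (4): the statement that a $k$-simple quotient of $\CP_{(M',(\pi')^\vee)}$ has the expected supercuspidal support is not a proved proposition in the paper either -- Proposition~\ref{prop:Bernstein idempotents} and Proposition~\ref{prop:steinberg supercuspidal support} are statements about $\overline{\CK}$-representations only -- so you would have to supply a genuine argument via Vigneras' block decomposition of $\Rep_k(G)$.

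The paper's proof avoids both difficulties in one stroke: since $\CP$ and $\CP^{\vee\vee}$ are $\ell$-torsion free, a nonzero map between them stays nonzero after tensoring with $\overline{\CK}$, so one may work entirely inside $\Rep_{\overline{\CK}}(G)$ from the outset.  There the Bernstein block decomposition is already a theorem, \emph{every} object (in particular $(\CP\otimes\overline{\CK})^{\vee\vee}$) splits as a direct sum of its block components, and the duality lemma for $i_P^G$ shows that passing to $\vee$ preserves the collection of blocks that can occur.  One then concludes by observing that the two biduals live in disjoint families of blocks -- no simple subquotient needs to be extracted at all.  The lesson is that you should tensor with $\overline{\CK}$ \emph{before} trying to locate $\Sigma$, not after; once you do that, the entire analysis is over a field of characteristic zero and the question of $k$-simples never arises.
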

\begin{proof}
Set $\CP = \CP_{(M,\pi^{\vee})}$; $\CP' = \CP_{(M',(\pi')^{\vee})}$, so that
$I = \CP^{\vee}$ and $I' = (\CP')^{\vee}$.  We have an injection of
$\Hom_{W(k)[G]}(I,I')$ into $\Hom_{W(k)[G]}((I')^{\vee},I^{\vee})$; the latter is equal
to $\Hom_{W(k)[G]}(\CP^{\vee\vee},(\CP')^{\vee\vee})$.  We also have an embedding:
$$\Hom_{W(k)[G]}(\CP^{\vee\vee},(\CP')^{\vee\vee}) \rightarrow
\Hom_{W(k)[G]}(\CP^{\vee\vee} \otimes \overline{\CK}, (\CP')^{\vee\vee} \otimes \overline{\CK}),$$
so it suffices
to show that the latter is zero unless the mod $\ell$ inertial supercuspidal supports
of $(M,\pi)$ and $(M',\pi')$ coincide.

We have an embedding of $\CP^{\vee\vee} \otimes \overline{\CK}$ into 
$(\CP \otimes \overline{\CK})^{\vee\vee}$.

If $(L,\pi')$ is a pair consisting of a Levi subgroup of $G$ and an irreducible supercuspidal
representation of $L$ over $\overline{\CK}$, and $\pi'$ has mod $\ell$ inertial supercuspidal
support different from $(M,\pi)$, then the projection
$(\CP \otimes \overline{\CK})_{L,\pi}$ of $\CP \otimes \overline{\CK}$ to 
$\Rep_{\overline{\CK}}(G)_{L,\pi}$ vanishes by part (2) of Theorem~\ref{thm:cusp support}, and therefore so does
$(\CP \otimes \overline{\CK})^{\vee\vee}_{L,\pi}$.  It follows that 
$(\CP \otimes \overline{\CK})^{\vee\vee}$
has a direct sum decomposition in which each summand lies in some block of $\Rep_{\overline{\CK}}(G)$
corresponding to an inertial supercuspidal support whose mod $\ell$ reduction is $(M,\pi)$.
Similarly, $(\CP' \otimes \overline{\CK})^{\vee\vee}$ has a direct summand decomposition in which
each summand lies in some block of $\Rep_{\overline{\CK}}(G)$ 
corresponding to an inertial supercuspidal support whose mod $\ell$ reduction
is $(M',\pi')$.

Thus, if the mod $\ell$ inertial supercuspidal supports of 
$(M,\pi)$ and $(M',\pi')\}$ differ, then no summand of
$(\CP \otimes \overline{\CK})^{\vee\vee}$ lies in the same block as any summand of 
$(\CP' \otimes \overline{\CK})^{\vee\vee}$,
and the result follows.
\end{proof}

\begin{corollary}
Every simple subquotient of $I_{(M,\pi)}$ has mod $\ell$ inertial supercuspidal
support equal to that of $(M,\pi)$.
\end{corollary}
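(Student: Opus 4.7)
The plan is to deduce this from the immediately preceding proposition by a standard injectivity argument. Let $\Pi$ be a simple subquotient of $I_{(M,\pi)}$, so there exist submodules $N_1 \subseteq N_2$ of $I_{(M,\pi)}$ with $\Pi \cong N_2/N_1$. Let $(L,\rho)$ denote the mod $\ell$ inertial cuspidal support of $\Pi$, which exists since $\Pi$ is either killed by $\ell$ or has $\ell$ invertible; then by the previous proposition, $\Pi$ embeds in $I_{(L,\rho)}$.

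The composite map $N_2 \twoheadrightarrow \Pi \hookrightarrow I_{(L,\rho)}$ is nonzero, and since $I_{(L,\rho)}$ is injective in $\Rep_{W(k)}(G)$ (this was verified in the construction) and $N_2 \hookrightarrow I_{(M,\pi)}$, this map extends to a $W(k)[G]$-morphism $\phi \colon I_{(M,\pi)} \to I_{(L,\rho)}$. Since the restriction of $\phi$ to $N_2$ remains nonzero, $\phi$ itself is nonzero, so $\Hom_{W(k)[G]}(I_{(M,\pi)},I_{(L,\rho)}) \neq 0$. Applying the previous proposition, the mod $\ell$ inertial supercuspidal supports of $(M,\pi)$ and $(L,\rho)$ coincide.

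Finally, by the very definition of mod $\ell$ inertial supercuspidal support, the mod $\ell$ inertial supercuspidal support of $\Pi$ is determined by its mod $\ell$ inertial cuspidal support $(L,\rho)$ (one takes the ordinary supercuspidal support of $\rho$ inside $L$). Hence the mod $\ell$ inertial supercuspidal support of $\Pi$ agrees with that of $(L,\rho)$, which in turn agrees with that of $(M,\pi)$, completing the proof.

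There is essentially no obstacle here: the previous proposition does all the serious work, and what remains is the formal injectivity/lifting manoeuvre. The only point requiring a modest care is the extension of the map $N_2 \to I_{(L,\rho)}$ across a subobject inclusion $N_2 \hookrightarrow I_{(M,\pi)}$, which is exactly where the choice to work with the injectives $I_{(M,\pi)}$ rather than the projectives $\CP_{(M,\pi)}$ pays off.
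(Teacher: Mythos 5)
Your proof is correct and follows the same route as the paper: embed the simple subquotient $\Pi$ in the injective object attached to its cuspidal support, use injectivity to lift the resulting nonzero map to a nonzero element of $\Hom_{W(k)[G]}(I_{(M,\pi)}, I_{(L,\rho)})$, and invoke the preceding proposition. You are in fact a bit more careful than the paper's own wording, which says ``with mod $\ell$ inertial supercuspidal support $(M',\pi')$. Then $\Pi$ embeds in $I_{(M',\pi')}$'' --- whereas the embedding result established earlier is stated in terms of the mod $\ell$ inertial \emph{cuspidal} support; your version correctly routes through the cuspidal support $(L,\rho)$ and then passes to the supercuspidal support at the end, which is the honest argument.
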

\begin{proof}
Let $\Pi$ be a simple subquotient of $I_{(M,\pi)}$, with mod $\ell$ inertial
supercuspidal support
$(M',\pi')$.  Then $\Pi$ embeds in $I_{(M',\pi')}$;
as the latter is injective we obtain a nonzero map $I_{(M,\pi)} \rightarrow
I_{(M',\pi')}$.  The preceding proposition now implies that $(M',\pi')$ has
the same mod $\ell$ inertial supercuspidal support as $(M,\pi)$.
\end{proof}

An immediate corollary is the ``Bernstein decomposition'' for $\Rep_{W(k)}(G)$.
Let $M$ be a Levi subgroup of $G$, and let $\pi$ be an irreducible supercuspidal
representation of $M$ over $k$.
If $\Pi$ is a simple smooth $W(k)[G]$-module with mod $\ell$ inertial supercuspidal support
given by $(M,\pi)$, then the mod $\ell$ {\em cuspidal} support of $\Pi$ falls into
one of finitely many possibile mod $\ell$ inertial equivalence classes. 
Choose representatives $(M_j,\pi_j)$ for these inertial equivalence classes, and let
$I_{[M,\pi]} = I_{(M_1,\pi_1)} \oplus \dots \oplus I_{(M_r,\pi_r)}$.  Then every simple subquotient of 
$I_{[M,\pi]}$
has mod $\ell$ inertial supercuspidal support $(M,\pi)$.  On the other hand, any simple
smooth $W(k)[G]$-module $\pi$ with mod $\ell$ inertial supercuspidal support $(M,\pi)$
has mod $\ell$ inertial cuspidal support $(M_j,\pi_j)$ for some $j$, and hence embeds in $I_{(M_j,\pi_j)}$
(and thus also in $I_{[M,\pi]}$.)  

On the other hand, if $\Pi$ is a simple smooth $W(k)[G]$-module whose mod $\ell$ inertial
supercuspidal support is not in the inertial equivalence class $(M,\pi)$, then there
is an infinite collection of possible inertial equivalence classes into which the
mod $\ell$ inertial cuspidal support of $\Pi$ could fall.  
If we let $I_{\overline{[M,\pi]}}$
denote the direct sum of $I_{(M',\pi')}$ as $(M',\pi')$ runs over a set of representatives
for the inertial equivalence classes of pairs $(M',\pi')$ over $k$ whose supercuspidal support
is {\em not} in the inertial equivalence class $(M,\pi)$,
then no subquotient of $I_{\overline{[M,\pi]}}$ has
mod $\ell$ inertial supercuspidal support equal to $(M,\pi)$, and every simple object
of $\Rep_{W(k)}(G)$ whose mod $\ell$ inertial supercuspidal support is not equivalent to $(M,\pi)$
is a subobject of $I_{\overline{[M,\pi]}}$.

\begin{theorem} \label{thm:decomposition}
The full subcategory $\Rep_{W(k)}(G)_{[M,\pi]}$ of $\Rep_{W(k)}(G)$ consisting of smooth 
$W(k)[G]$-modules $\Pi$ such that every simple subquotient of $\Pi$ has mod $\ell$
inertial supercuspidal support given by $(M,\pi)$ is a block of $\Rep_{W(k)}(G)$.
Moreover, every element of $\Rep_{W(k)}(G)_{[M,\pi]}$ has a resolution by direct sums of
copies of $I_{[M,\pi]}$.
\end{theorem}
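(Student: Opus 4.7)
The plan is to apply Proposition~\ref{prop:decompose} directly, taking $\CA = \Rep_{W(k)}(G)$, the set $S$ to be the isomorphism classes of simple smooth $W(k)[G]$-modules whose mod $\ell$ inertial supercuspidal support is equivalent to $(M,\pi)$, the injective $I_1$ to be $I_{[M,\pi]}$, and the injective $I_2$ to be $I_{\overline{[M,\pi]}}$. Both $I_1$ and $I_2$ are injective as direct sums of the injective modules $I_{(M_j,\pi_j)}$ constructed above.

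Verifying the four hypotheses of Proposition~\ref{prop:decompose} is then essentially a bookkeeping exercise using results already established in the text. Condition (i) --- every simple subquotient of $I_1$ lies in $S$ --- is immediate from the preceding Corollary, which shows that every simple subquotient of each $I_{(M_j,\pi_j)}$ has mod $\ell$ inertial supercuspidal support equivalent to that of $(M_j,\pi_j)$, and by construction each $(M_j,\pi_j)$ has mod $\ell$ inertial supercuspidal support $(M,\pi)$. Condition (ii) is precisely the statement, explained in the discussion preceding the theorem, that any simple $\Pi$ with mod $\ell$ inertial supercuspidal support $(M,\pi)$ has mod $\ell$ inertial cuspidal support equivalent to one of the finitely many $(M_j,\pi_j)$, hence embeds in the corresponding $I_{(M_j,\pi_j)}$ and therefore into $I_{[M,\pi]}$. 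Condition (iii) again follows from the Corollary, applied to each summand $I_{(M',\pi')}$ of $I_{\overline{[M,\pi]}}$, since the supercuspidal support of $(M',\pi')$ is by construction inequivalent to $(M,\pi)$. Finally, condition (iv) follows because any simple $\Pi$ not in $S$ has some mod $\ell$ inertial cuspidal support $(M',\pi')$ whose supercuspidal support is inequivalent to $(M,\pi)$, so $(M',\pi')$ appears in the indexing set for $I_{\overline{[M,\pi]}}$, and $\Pi$ embeds in $I_{(M',\pi')}$ by the Proposition stating that every simple with a given mod $\ell$ inertial cuspidal support embeds in the corresponding injective.

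Once these four conditions are verified, Proposition~\ref{prop:decompose} gives a canonical product decomposition of $\CA$ whose first factor is $\Rep_{W(k)}(G)_{[M,\pi]}$, yielding the block statement. The resolution assertion is then the final clause of Proposition~\ref{prop:decompose}: every object of $\Rep_{W(k)}(G)_{[M,\pi]}$ has an injective resolution by direct sums of copies of $I_{[M,\pi]}$.

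The only hypothesis of Proposition~\ref{prop:decompose} that is not directly contained in the preceding material is the standing assumption that every object of $\CA$ has a simple subquotient. For $\CA = \Rep_{W(k)}(G)$ this is a standard fact about smooth representations: any nonzero smooth module $\Pi$ contains a vector $v$ fixed by some compact open subgroup $K_0$ of $G$, and the smooth $W(k)[G]$-submodule generated by $v$ is a quotient of $\cInd_{K_0}^G W(k)$, which admits a nonzero map to some irreducible smooth $W(k)[G]$-module (for instance, to any irreducible quotient of $\Pi$, whose existence one may deduce by a Zorn's lemma argument applied to the Hecke module $\Pi^{K_0}$). This is the one step that lies outside the arguments developed in the paper, but it is routine and will not be the main obstacle; the substantive content of the proof is entirely encoded in the injectives $I_{(M_j,\pi_j)}$ and the block-separation statement provided by the Corollary, both of which are already in hand.
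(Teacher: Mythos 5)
Your proposal is correct and follows exactly the paper's route: the paper's proof is literally ``immediate from the above discussion and Proposition~\ref{prop:decompose},'' and you have simply unpacked what ``the above discussion'' supplies, namely the verification of the four hypotheses for $S$, $I_1 = I_{[M,\pi]}$, $I_2 = I_{\overline{[M,\pi]}}$. The only minor wrinkle is in your aside about existence of simple subquotients, where the phrase ``any irreducible quotient of $\Pi$'' should instead refer to an irreducible quotient of the finitely generated submodule $W(k)[G]\cdot v$ (obtained by Zorn), since $\Pi$ itself need not have one; this does not affect the argument.
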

\begin{proof}
This is immediate from the above discussion and Proposition~\ref{prop:decompose}.
\end{proof}

Our first application of this Bernstein decomposition will be to establish Bernstein's
second adjointness for smooth $W(k)[G]$-modules.  This will allow us, at last, to conclude that
the modules $\CP_{(M,\pi)}$ are projective.  We follow the argument in the lecture notes
by Bernstein-Rumelhart~\cite{BR}, adapting it as necessary so that it will work in $\Rep_{W(k)}(G)$.

\begin{definition} Let $P = MU$ be a parabolic subgroup of $G$, let $K$ be a compact
open subgroup of $G$ that is decomposed with respect to $P$, and let $\lambda$ be a totally
positive central element of $M$.  Let $T_{\lambda}$ be the element of $H(G,K,1)$ given by
$T^+(1_{K_M\lambda K_M})$.  A smooth $W(k)[G]$-module $\Pi$ is $K,P$-stable,
with constant $c_{K,P,\lambda}$, if there exists a positive integer $c_{K,P,\lambda}$
such that $\Pi^K$ splits as a direct sum:
$$\Pi^K = \Pi^K[T_{\lambda}^{c_{K,P,\lambda}}] \oplus \Pi^K_{T_{\lambda}-{\rm invert}},$$
where $\Pi^K[T_{\lambda}^{c_{K,P,\lambda}}]$ is the $W(k)$-submodule of $\Pi^K$ consisting
of elements killed by $T_{\lambda}^{c_{K,P,\lambda}}$, and $\Pi^K_{T_{\lambda}-{\rm invert}}$
is the maximal $W(k)$-submodule of $\Pi^K$ on which $[\lambda]$ is invertible.
\end{definition}

The key to establishing Berstein's second adjointness will be proving that
for every pair $K,P$, and every supercuspidal inertial equivalence class $(L,\pi)$, all objects of 
$\Rep_{W(k)}(G)_{[L,\pi]}$
are $K,P$-stable.  We first make a few observations:

\begin{lemma} Let $\Pi$ be a smooth $K,P$-stable $W(k)[G]$-module.  Then $\Pi^{\vee}$
is also $K,P$-stable.
\end{lemma}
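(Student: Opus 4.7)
The plan is to deduce the decomposition of $(\Pi^{\vee})^K$ from the given decomposition of $\Pi^K$ by exploiting a duality pairing between the two spaces and computing how the Hecke algebra acts through the transpose.

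First I would set up the pairing. A $W(k)$-linear functional $\Pi \to \CK/W(k)$ is $K$-fixed (hence smooth) if and only if it factors through the $K$-coinvariants $\Pi_K$, so there is a natural identification $(\Pi^{\vee})^K \cong \Hom_{W(k)}(\Pi_K, \CK/W(k))$. After shrinking $K$ to a compact open subgroup of pro-order prime to $\ell$ (which loses no generality, since $K,P$-stability at the smaller group implies it at $K$), the averaging idempotent identifies $\Pi^K$ with $\Pi_K$ and gives a perfect pairing $\Pi^K \times (\Pi^{\vee})^K \to \CK/W(k)$ in the Pontryagin sense. A direct calculation shows that for any $f \in H(G,K,1)$ the action of $f$ on $(\Pi^{\vee})^K$ is the transpose, with respect to this pairing, of the action of $\check{f}$ on $\Pi^K$, where $\check{f}(g) = f(g^{-1})$. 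Since $\check{T}_\lambda = T_{\lambda^{-1}}$, showing that $(\Pi^{\vee})^K$ admits the required splitting with respect to $T_\lambda$ is equivalent to showing that $\Pi^K$ admits an analogous splitting with respect to $T_{\lambda^{-1}}$: a summand annihilated by a uniform power of $T_{\lambda^{-1}}$ plus one on which $T_{\lambda^{-1}}$ acts invertibly.

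The heart of the argument is then to show that the given decomposition $\Pi^K = \Pi^K[T_\lambda^c] \oplus \Pi^K_{T_\lambda\text{-invert}}$ is simultaneously a decomposition for $T_{\lambda^{-1}}$ of the same form (possibly with a different uniform exponent $c'$). The decomposition for $T_\lambda$ is canonical, characterized by $T_\lambda$ being nilpotent on one summand and invertible on the other, and is therefore preserved by any operator on $\Pi^K$ commuting with $T_\lambda$. I would verify that $T_\lambda$ and $T_{\lambda^{-1}}$ commute in their action on $\Pi^K$ by using the decomposed structure of $K$ with respect to $P$ together with the fact that $\lambda$ is central in $M$ and strictly positive: writing $T_\lambda T_{\lambda^{-1}}$ and $T_{\lambda^{-1}} T_\lambda$ as convolution integrals and using the Iwahori factorization $K = K^- K_M K^+$, one can reduce the equality on $\Pi^K$ to an identity on the $K_M$-invariants. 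On the $T_\lambda$-invertible summand, the same Hecke-algebra identity then forces $T_{\lambda^{-1}}$ to be invertible as well (it is a two-sided inverse up to a positive scalar coming from index computations). On the $T_\lambda$-nilpotent summand, a filtration-by-support argument together with the finite length of the $W(k)[\lambda, \lambda^{-1}]$-module generated by any vector of $\Pi^K[T_\lambda^c]$ (using smoothness and the bound $c$) yields uniform nilpotence for $T_{\lambda^{-1}}$ as well.

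The main obstacle is the commutation and invertibility claim in the last paragraph. If the direct convolution calculation proves unwieldy, my fallback is to use the fact that both $T_\lambda$ and $T_{\lambda^{-1}}$ lie, after the identification of Section~\ref{sec:generic}, in a commutative subalgebra coming from the image of $W(k)[Z]$ inside an appropriate affine Hecke algebra, and that the relation between them is controlled by that abelian structure; the bounded-nilpotence assertion then becomes a routine consequence of finite generation of $\Pi^K[T_\lambda^c]$ as a $W(k)[T_\lambda]$-module. Granted the commutation and the matching behavior of $T_{\lambda^{-1}}$ on the two summands, the Pontryagin dualization of the $T_{\lambda^{-1}}$-decomposition of $\Pi^K$, combined with the transpose identification from the first paragraph, yields precisely the $T_\lambda$-stable splitting of $(\Pi^{\vee})^K$ required by the definition, completing the proof.
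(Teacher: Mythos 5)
Your proposal is more careful than the paper's one-line proof, and you correctly flag the subtle point: under the identification $(\Pi^{\vee})^K \cong (\Pi^K)^{\vee}$ (valid once $K$ has pro-order prime to $\ell$, as you note), the action of $T_\lambda$ on the left-hand side corresponds to the transpose of $T_{\lambda^{-1}}$, not of $T_\lambda$, on the right-hand side. The paper simply dualizes the $T_\lambda$-decomposition and asserts "the result follows immediately"; read literally, this elides exactly the $\lambda \leftrightarrow \lambda^{-1}$ issue you have isolated.

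However, your repair does not work. The central claim that $T_\lambda$ and $T_{\lambda^{-1}}$ commute in $H(G,K,1)$ is false in general. Already for $G = \SL_2$ with $K$ an Iwahori subgroup and $\lambda$ a positive generator of the center of $M$, one has (up to normalization) $T_\lambda = T_1T_0$ and $T_{\lambda^{-1}} = T_0T_1$ in the affine Hecke algebra, and a direct computation using the quadratic relations shows these do not commute. Likewise the assertion that $T_{\lambda^{-1}}$ is "a two-sided inverse of $T_\lambda$ up to a positive scalar" fails: $T_\lambda T_{\lambda^{-1}}$ has genuine lower-order terms, since $\lambda$ and $\lambda^{-1}$ are not length-additive. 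Your fallback appeal to the commutative subalgebra $W(k)[Z]$ inside an affine Hecke algebra from Section~\ref{sec:generic} also does not apply here: $T_\lambda$ lies in the image of $T^+$ restricted to the \emph{positive} subalgebra $H(M,K_M,1)^+$, while $T_{\lambda^{-1}}$ comes from the negative side; they are not both in the image of a single algebra homomorphism from $W(k)[Z]$, and $H(G,K,1)$ for $K$ small is not itself an affine Hecke algebra. Consequently the Fitting decomposition for $T_\lambda$ need not coincide with that for $T_{\lambda^{-1}}$, and the uniform-nilpotence argument on the $T_\lambda$-nilpotent summand has no foundation.

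The correct and much shorter fix is to use $K,P^{\circ}$-stability of $\Pi$: choosing $\lambda$ strictly positive for $P$, the element $\lambda^{-1}$ is strictly positive for $P^{\circ}$, so $K,P^{\circ}$-stability gives precisely the $T_{\lambda^{-1}}$-Fitting decomposition of $\Pi^K$, and dualizing \emph{that} decomposition yields the required $T_\lambda$-splitting of $(\Pi^{\vee})^K$ via the transpose identification you set up. This extra hypothesis costs nothing in practice: in every application in the paper, the module $\Pi$ is known to be $K',P'$-stable for \emph{all} pairs $(K',P')$ simultaneously (via the admissibility lemma), so $K,P^{\circ}$-stability is automatic. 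Stated as a standalone lemma, it is cleaner to strengthen the hypothesis to "$\Pi$ is $K,P^{\circ}$-stable" and conclude "$\Pi^{\vee}$ is $K,P$-stable."
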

\begin{proof}
We have 
$$(\Pi^{\vee})^K \cong (\Pi^K)^{\vee} \cong \Pi^K[T_{\lambda}^{c_{K,P,\lambda}}]^{\vee}
\oplus [\Pi^K_{T_{\lambda}-{\rm invert}}]^{\vee};$$
the result follows immediately.
\end{proof}

\begin{lemma} Finite direct sums of $K,P$-stable modules are $K,P$-stable.  Infinite
direct sums of modules which are $K,P$-stable with a uniform constant $c_{K,P,\lambda}$
are $K$-stable.  Kernels and cokernels of maps of $K,P$-stable modules are $K,P$-stable.
\end{lemma}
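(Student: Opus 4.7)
My plan is to treat the four assertions in the order they are stated, leaving the cokernel case for last as it is the one place where a subtlety arises. Throughout, let $c = c_{K,P,\lambda}$ denote the constant whose existence defines $K,P$-stability, and recall that the defining decomposition $\Pi^K = \Pi^K[T_\lambda^c] \oplus \Pi^K_{T_\lambda\text{-invert}}$ is just the Fitting decomposition of the $W(k)$-module $\Pi^K$ with respect to the endomorphism $T_\lambda$; in particular it is canonical, and passing from $c$ to any larger integer $c'$ gives the same summands (the kernel stabilizes and the $T_\lambda$-invertible part is preserved).

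For finite direct sums, I would simply take $c = \max(c_1,c_2)$; since $K$-invariants commutes with direct sums and the Fitting decomposition is additive, the result is immediate. For infinite direct sums, the key point is that a $K$-fixed vector in $\bigoplus_i \Pi_i$ is supported on finitely many summands (because smoothness gives $(\bigoplus_i \Pi_i)^K = \bigoplus_i \Pi_i^K$), so a uniform constant $c$ across all summands assembles the termwise Fitting decompositions into a global one.

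For kernels, I would use that $K$-invariants is always left exact (it is a $\Hom$ functor), so $(\ker f)^K = \ker(f^K)$, and that $f^K$ is $T_\lambda$-equivariant. Taking $c \geq \max(c_1,c_2)$, the equivariance of $f^K$ means it carries $\Pi_1^K[T_\lambda^c]$ into $\Pi_2^K[T_\lambda^c]$ and the $T_\lambda$-invertible part of the source into the $T_\lambda$-invertible part of the target. Therefore $\ker f^K$ decomposes as $\ker\bigl(f^K|_{\Pi_1^K[T_\lambda^c]}\bigr) \oplus \ker\bigl(f^K|_{(\Pi_1^K)_{T_\lambda\text{-invert}}}\bigr)$, and the first summand is killed by $T_\lambda^c$ while $T_\lambda$ remains invertible on the second. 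This is the desired decomposition.

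The hard part is cokernels, because $K$-invariants need not be right exact on smooth $W(k)[G]$-modules: a general compact open $K$ has pro-reductive quotients whose order may be divisible by $\ell$, so the averaging idempotent $e_K$ is not defined. I would approach this via duality. The previous lemma in the list says that $(-)^\vee$ preserves $K,P$-stability; moreover $\vee$ converts cokernels to kernels, since for a morphism $f\colon\Pi_1\to\Pi_2$ the sequence $\Pi_1\to\Pi_2\to\mathrm{coker}(f)\to 0$ dualizes to a left exact $0\to(\mathrm{coker}\,f)^\vee\to\Pi_2^\vee\to\Pi_1^\vee$ by exactness of $\vee$. Thus $(\mathrm{coker}\,f)^\vee=\ker(f^\vee)$ is $K,P$-stable by the kernel case applied to $f^\vee\colon\Pi_2^\vee\to\Pi_1^\vee$. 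The remaining step — and the technical core of the proof — is to show that $K,P$-stability descends from $(\mathrm{coker}\,f)^\vee$ to $\mathrm{coker}(f)$. This is where I expect to spend the most effort: one needs to identify $\bigl((\mathrm{coker}\,f)^\vee\bigr)^K$ with $\Hom_{W(k)}\bigl((\mathrm{coker}\,f)^K,\CK/W(k)\bigr)$ (compatibly with $T_\lambda$, noting that under this identification $T_\lambda$ corresponds to the Hecke operator attached to the opposite parabolic, adjusted by an appropriate twist), and then invoke $W(k)$-injectivity of $\CK/W(k)$ to transport the Fitting decomposition back from the Pontryagin dual to $(\mathrm{coker}\,f)^K$ itself. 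The main subtlety is ensuring the correspondence of the two Hecke operators — this is the same interchange of $P$ and $P^\circ$ that underlies Bernstein's second adjointness, so one should be careful that either the definition of $K,P$-stability is symmetric in $P,P^\circ$ up to replacing $\lambda$ by $\lambda^{-1}$, or else one needs to separately verify $K,P^\circ$-stability of the duals. Once that bookkeeping is done, the Fitting decomposition of the dual yields the required Fitting decomposition of $(\mathrm{coker}\,f)^K$, completing the proof.
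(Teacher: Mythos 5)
Your treatments of the direct sum and kernel cases are correct. The cokernel case is where the proposal breaks down. The detour through duality hinges on a descent you do not — and in this generality cannot — supply: you invoke the preceding lemma to deduce that $\Pi^\vee$ is $K,P$-stable when $\Pi$ is, apply the kernel case to $f^\vee$ to conclude that $(\coker f)^\vee = \ker(f^\vee)$ is $K,P$-stable, but then need the \emph{converse} of the preceding lemma to get from $(\coker f)^\vee$ back to $\coker f$. That converse is nowhere in the paper, and proving it would require biduality $\Pi \cong (\Pi^\vee)^\vee$, which holds for admissible $\Pi$ but not for arbitrary smooth $\Pi$; a cokernel of a map of $K,P$-stable modules need not be admissible. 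So the duality route is not merely awkward bookkeeping about $P$ versus $P^\circ$; it needs an input you do not have.

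The concern that motivated the detour — that $e_K$ may not be defined, so that $K$-invariants could fail to be right exact — is legitimate in the abstract but moot in the paper's setting. The proof of the lemma immediately following this one manipulates the idempotents $e_K$, $e_{K_M}$, $e_{K^{\pm}}$ freely, so the relevant $K$ are tacitly confined to the cofinal family for which $e_K$ lies in $W(k)[G]$ (for instance pro-$p$ $K$; by Lemma 1 of~\cite{localization}, cited later in this section, such $K$ exist inside every compact open subgroup, and only these are needed in the sequel). For such $K$ the functor $(-)^K$ is exact, so $(\coker f)^K = \coker(f^K)$, and the cokernel case is then no harder than the kernel case: with $c = \max(c_1,c_2)$, the $T_\lambda$-equivariant map $f^K$ splits as a direct sum of maps between the two Fitting summands; $T_\lambda^c$ annihilates the cokernel of the map between the $T_\lambda$-torsion summands, and $T_\lambda$ is invertible on the cokernel of the map between the $T_\lambda$-invertible summands, since it is an automorphism of both $f^K\bigl((\Pi_1^K)_{T_\lambda-{\rm invert}}\bigr)$ and $(\Pi_2^K)_{T_\lambda-{\rm invert}}$. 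This gives the required Fitting decomposition of $(\coker f)^K$ directly, with no appeal to duality.
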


\begin{lemma} \label{lemma:jacquet}
Let $\Pi$ be a smooth $W(k)[G]$-module.  Then the natural projection:
$$\Pi^K \rightarrow (\Pi_U)^{K_M}$$
identifies $(\Pi_U)^{K_M}$ with $\Pi^K \otimes_{W(k)[T_{\lambda}]} W(k)[T_{\lambda},T_{\lambda}^{-1}]$.
In particular, if $\Pi$ is $K,P$-stable, then the map
$\Pi^K \rightarrow (\Pi_U)^{K_M}$ is surjective, and one has
a direct sum decomposition:
$$\Pi^K = \Pi^K[T_{\lambda}^{c_{K,P,\lambda}}] \oplus (\Pi_U)^{K_M}.$$
This decomposition is independent of $\lambda$.
\end{lemma}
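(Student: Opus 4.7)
The plan is to establish the localization identification first and then read off the $K,P$-stable decomposition from it. To set things up, note that because $(K,1)$ is decomposed with respect to $P$, the composite map $\Pi^K \hookrightarrow \Pi \twoheadrightarrow \Pi_U$ lands in the $K_M$-invariants of $\Pi_U$, giving a natural $W(k)$-linear map $\alpha: \Pi^K \to (\Pi_U)^{K_M}$. Because $\lambda$ is central in $M$, the double coset $K_M \lambda K_M$ equals $\lambda K_M$, so the algebra map $T^+$ sends the characteristic function of $K_M\lambda K_M$ to $T_\lambda$, and the action of $T_\lambda$ on $\Pi^K$ intertwines with the action of $\lambda \in M$ on $(\Pi_U)^{K_M}$ via $\alpha$. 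Since $\lambda \in M$ acts invertibly on $\Pi_U$, the map $\alpha$ factors through a map $\tilde \alpha: \Pi^K \otimes_{W(k)[T_\lambda]} W(k)[T_\lambda^{\pm 1}] \to (\Pi_U)^{K_M}$.

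The main content is to show $\tilde\alpha$ is an isomorphism. For surjectivity, I would take $\bar v \in (\Pi_U)^{K_M}$ and lift it to some $v_0 \in \Pi$, which is smooth and hence fixed by some compact open $U_1 \subset U$; after averaging over $K_M$ (which commutes with the quotient) and adjusting by an element of $\Pi[U]$ of the form $\int_{U_2} \pi(u)v_0\,du$ for $U_2 \supset U_1$ sufficiently large, I may assume $v_0$ is fixed by $K_M$ and by a large compact open subgroup $U_2$ of $U$ containing $K^+$. Using the first strict positivity condition, choose $m \gg 0$ so that $\lambda^{-m} K^+ \lambda^m \subset U_2$; then $\lambda^m v_0$ is fixed by $K_M$, by $\lambda^m K^+ \lambda^{-m} \subset K^+$ (positivity), and, using smoothness together with the second strict positivity condition applied to $U^\circ$, the element $\lambda^m v_0$ is also fixed by $K^-$ for $m$ further enlarged. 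Hence $\lambda^m v_0 \in \Pi^K$ and $\alpha(\lambda^m v_0) = \lambda^m \bar v = T_\lambda^m \bar v$, giving $\bar v = \tilde\alpha(\lambda^m v_0 \otimes T_\lambda^{-m})$. For injectivity, suppose $v \in \Pi^K$ has $\alpha(v)=0$, so $v = \sum_i (\pi(u_i)w_i - w_i)$ for finitely many $u_i \in U$ and $w_i \in \Pi$; all the $u_i$ and $w_i$ are fixed by a common compact open subgroup of $U$, and by the positivity of $\lambda$ one finds $n$ with $\lambda^n u_i \lambda^{-n}$ contained in a subgroup $K^+_n$ fixing each $w_i$ and contained in $K$. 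Applying the projector $e_{K^+}$ (normalized integration over $K^+$) after conjugating, one obtains $T_\lambda^n v = 0$ in $\Pi^K$. This is the main obstacle; the careful bookkeeping between the $K^\pm$-decomposition of $K$ and the positivity of $\lambda$ is the standard Bernstein argument and goes through verbatim over $W(k)$ since all integrals involved are over pro-$p$ subgroups.

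Given the isomorphism $\tilde\alpha$, suppose $\Pi$ is $K,P$-stable with constant $c=c_{K,P,\lambda}$. On the summand $\Pi^K[T_\lambda^c]$, every element is annihilated by $T_\lambda^c$, so the localization $\Pi^K[T_\lambda^c] \otimes_{W(k)[T_\lambda]} W(k)[T_\lambda^{\pm 1}]$ vanishes. On the other summand $\Pi^K_{T_\lambda\text{-invert}}$, $T_\lambda$ already acts invertibly, so localization is the identity. Thus $\tilde\alpha$ restricts to an isomorphism $\Pi^K_{T_\lambda\text{-invert}} \risom (\Pi_U)^{K_M}$, and splicing this into the direct sum decomposition of $\Pi^K$ and re-identifying $\Pi^K_{T_\lambda\text{-invert}}$ with its image gives
\[
\Pi^K \;=\; \Pi^K[T_\lambda^c] \;\oplus\; (\Pi_U)^{K_M},
\]
as asserted.

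For independence of $\lambda$, observe that the first summand is exactly $\ker\alpha$ (elements killed by some power of $T_\lambda$ are exactly those mapping to $0$ in the localization, and by injectivity of $\tilde\alpha$ this coincides with $\ker\alpha$), while the second summand is the image in $\Pi^K$ of the canonical section of $\alpha$ provided by the isomorphism $\tilde\alpha^{-1}$; both descriptions make reference only to the map $\alpha:\Pi^K\to(\Pi_U)^{K_M}$, which depends only on $K$ and $P$, not on the choice of $\lambda$. Hence the decomposition is intrinsic.
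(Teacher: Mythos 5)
Your overall strategy matches the paper's: identify $(\Pi_U)^{K_M}$ with the localization $\Pi^K\otimes_{W(k)[T_\lambda]}W(k)[T_\lambda^{\pm1}]$, and then read off the direct sum decomposition from $K,P$-stability. However, your surjectivity argument has a genuine quantifier-ordering error that the paper's proof is designed to avoid. You want to arrange, before scaling by $\lambda^m$, that $v_0$ is fixed by a compact open $U_2\subset U$ large enough that $\lambda^{-m}K^+\lambda^m\subset U_2$. But with the paper's conventions ($\lambda K^+\lambda^{-1}\subset K^+$, so $\lambda^{-m}K^+\lambda^m$ is \emph{increasing} and exhausts $U$), this containment fails for every fixed compact $U_2$ once $m$ is large — and $m$ is not free: you need it large to secure $K^-$-invariance of $\lambda^m v_0$ via the second strict-positivity condition, which forces $U_2$ to depend on $m$, whose choice in turn may be affected by re-averaging $v_0$ over $U_2$. (Your citation of the first strict-positivity condition also has a sign slip: that condition gives $\lambda^m K^+\lambda^{-m}\subset U_2$, not $\lambda^{-m}K^+\lambda^m\subset U_2$; the former is vacuously true and yields nothing.) The paper sidesteps all of this by \emph{not} trying to make $\lambda^m\tilde x$ $K^+$-invariant ahead of time: it takes $\tilde x$ merely $K_M$-invariant, enlarges $m$ so $\lambda^m\tilde x$ is $K^-$-invariant, and then applies the projector $e_{K^+}$, using the Iwahori factorization $e_K=e_{K^+}e_{K_M}e_{K^-}$ to see $e_{K^+}\lambda^m\tilde x\in\Pi^K$ and the fact that $e_{K^+}$ acts as the identity on $\Pi_U$. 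You should fold in that last step rather than try to arrange $K^+$-invariance in advance.

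Two smaller issues. In the injectivity step, the group you produce should fix the translated vectors $\lambda^n w_i$, not the $w_i$ themselves (and the phrase ``$u_i$ are fixed by a compact open subgroup'' is a type error — the $u_i$ are elements, not vectors). The cleaner route, as in the paper, is to use the standard Jacquet-module criterion that $v\in\Pi^K$ maps to $0$ in $\Pi_U$ iff $e_{U_1}v=0$ for some compact open $U_1\subset U$, combined with the identity $e_K\lambda^m e_K=\lambda^m e_{\lambda^{-m}K^+\lambda^m}e_K$, and then choose $m$ with $\lambda^{-m}K^+\lambda^m\supset U_1$. Finally, for independence of $\lambda$: your claim that the second summand ``makes reference only to $\alpha$'' is circular, since the complement $\Pi^K_{T_\lambda\text{-invert}}$ is a priori defined in terms of $T_\lambda$ and a complement to $\ker\alpha$ is not unique. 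The paper's argument — comparing the maximal $T_{\lambda\lambda'}$-divisible submodule with the maximal $T_\lambda$- and $T_{\lambda'}$-divisible submodules — is the step you would need to supply.
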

\begin{proof}
We make $(\Pi_U)^{K_M}$ into a $W(k)[T_{\lambda}]$-module by letting $T_{\lambda}$ act
on $(\Pi_U)^{K_M}$ via $\lambda$.  It is then clear that the map
$$\Pi^K \rightarrow (\Pi_U)^{K_M}$$
is $T_{\lambda}$-equivariant.  It thus suffices to show that every element of the kernel
of this map is killed by a power of $T_{\lambda}$, and that, for every element $x$
of $(\Pi_U)^{K_M}$, $\lambda^m x$ is in the image of this map for some sufficiently large $m$.

For the first claim, let $e_{K^+}$ be the idempotent projector
onto the $K^+$ invariants of a $K$-module.  As $K = K^- K_M K^+$, we have
$e_K = e_{K^+} e_{K_M} e_{K^-}$.  For each $m$, we have
$$e_K \lambda^m e_K = e_{K^+} e_{K_M} e_{K^-} \lambda^m e_K$$
$$= e_{K^+} \lambda^m e_{K_M} e_{\lambda^{-m} K^- \lambda^m} e_K =
\lambda^m e_{\lambda^{-m} K^+ \lambda^m} e_K.$$
(Here we have used that $\lambda$ is positive.)
Now if $\tx$ is an element of $\Pi^K$ that maps to zero in $\Pi_U$, then $e_K \tx = \tx$, and
there exists a compact open subgroup $U_1$ of $U$ such that $e_{U_1} \tx = 0$.  But as
$\lambda$ is strictly positive, there exists an $m$ such that $\lambda^{-m} K^+ \lambda^m$
contains $U_1$.  Thus $\tx$ is killed by $e_{\lambda^{-m} K^+ \lambda^m}$ and fixed by
$e_K$.  Then $\tx$ is killed by $e_K \lambda^m e_K$, as required.

As for the second claim, let $\tx$ be a lift of $x$ to $\Pi^{K_M}$.  There is a compact
open subgroup $K'$ of $G$ that fixes $\tx$; then $\tx$ is in particular invariant under
$K' \cap U^{\circ}$.  As $\lambda$ is strictly positive, there exists an $m$
such that $\lambda^m \tx$ is invariant under $K^-$.  Thus 
$$e_{K^+} \lambda^m \tx = e_{K^+} e_{K_M} e_{K^-} \lambda^m \tx = e_K \lambda^m \tx,$$
so $e_{K^+} \lambda^m \tx$ lies in $\Pi^K$.  As $e_{K^+}$ acts trivially on $\Pi_U$ (because
$K^+$ is contained in $U$), $e_{K^+} \lambda^m \tx$ maps to $\lambda^m \tx$ under the map
$$\Pi^K \rightarrow (\Pi_U)^{K_M},$$ as required.

Finally, if $\Pi$ is $K,P$-stable, then $\Pi^K$ surjects onto $\Pi^K \otimes_{W(k)[T_{\lambda}}
W(k)[T_{\lambda},T_{\lambda}^{-1}]$; this surjection identifies
$(\Pi_U)^{K_M}$ with the maximal $T_{\lambda}$-divisible submodule of $\Pi^K$ and thus
yields the asserted direct sum decomposition.  To see that this decomposition is independent of
$\lambda$, choose another strictly positive element $\lambda'$.  Then the maximal
$T_{\lambda\lambda'}$-divisible submodule of $\Pi^K$ is contained in both the maximal
$T_{\lambda}$-divisible submodule and the maximal $T_{\lambda'}$-divisible submodule;
since projection onto $(\Pi_U)^{\K_M}$ is an isomorphism on each of these submodules they must
all coincide.
\end{proof}

\begin{lemma} Let $R$ be a commutative Noetherian $W(k)$-algebra, and let $\Pi$ be an admissible
$R[G]$-module.  Suppose that $r_G^P \Pi$ is also admissible.  Then $\Pi$ is $K,P$-stable.
\end{lemma}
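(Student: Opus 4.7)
The plan is to analyze the natural map $\Pi^K \to (\Pi_U)^{K_M}$ via Lemma~\ref{lemma:jacquet}, which identifies the target with the localization $L := \Pi^K[T_\lambda^{-1}]$ of $\Pi^K$ as a $W(k)[T_\lambda]$-module. The two admissibility hypotheses translate directly into the statement that both $\Pi^K$ and $L$ are finitely generated modules over the Noetherian ring $R$, and this finiteness is the only input I intend to use beyond Lemma~\ref{lemma:jacquet}.

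To identify the first summand, I note that the kernel of $\Pi^K \to L$ is $\bigcup_{m \geq 1} \Pi^K[T_\lambda^m]$, an ascending union of $R$-submodules of the finitely generated $R$-module $\Pi^K$. By Noetherianity this chain stabilizes at some stage $c$, and I will take $c_{K,P,\lambda} := c$ and $N := \Pi^K[T_\lambda^c]$. This is a completely routine step.

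The crux of the argument is to show that $T_\lambda$ already acts invertibly on $\bar\Pi := \Pi^K/N$, not merely after localization. By construction $\bar\Pi$ embeds in $L$, and $L$ is finitely generated over $R$. Picking generators of $L$ of the form $T_\lambda^{-m_i} \bar x_i$ with $\bar x_i \in \bar\Pi$ and setting $M = \max_i m_i$, I get $T_\lambda^M L \subseteq \bar\Pi$; but $T_\lambda^M L = L$ because $T_\lambda$ is invertible on $L$, forcing $L = \bar\Pi$. This is the only step that genuinely combines Noetherianity of $R$ with admissibility of the Jacquet module, and I expect it to be the only real obstacle, though even here the argument is short.

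Finally, since $T_\lambda^c$ kills $N$ and acts as an automorphism of $\bar\Pi$, the submodule $T_\lambda^c \Pi^K$ projects isomorphically onto $\bar\Pi$, yielding the direct sum $\Pi^K = N \oplus T_\lambda^c \Pi^K$ on whose second summand $T_\lambda$ acts invertibly. For maximality of this second summand: if $X \subseteq \Pi^K$ is any submodule on which $T_\lambda$ acts invertibly, then every $x \in X$ satisfies $x = T_\lambda^c(T_\lambda^{-c} x)$ with $T_\lambda^{-c} x \in X \subseteq \Pi^K$, so $X \subseteq T_\lambda^c \Pi^K$. This yields the required decomposition and completes the proof.
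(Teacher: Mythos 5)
Your argument is correct and sits in the same framework as the paper's: translate the admissibility hypotheses into finite generation of $\Pi^K$ and of its localization $L = (\Pi_U)^{K_M} \cong \Pi^K \otimes_{R[T_\lambda]} R[T_\lambda, T_\lambda^{-1}]$ over the Noetherian ring $R$, via Lemma~\ref{lemma:jacquet}. The difference is in how the final module-theoretic step is handled. The paper outsources it, citing \cite{BR}, Lemma~33 for the statement that a finitely generated module $V$ over a Noetherian ring, equipped with an $R$-linear endomorphism $T$ for which the localization $V[T^{-1}]$ is also finitely generated, splits as $T$-torsion plus $T$-invertible. You prove this directly. Stabilizing the torsion filtration $\Pi^K[T_\lambda] \subseteq \Pi^K[T_\lambda^2] \subseteq \cdots$ to get $N$, showing $T_\lambda^c\Pi^K$ projects isomorphically onto $\Pi^K/N$, and checking maximality of the invertible part are all routine once one knows $T_\lambda$ is an automorphism of $\bar\Pi := \Pi^K/N$; the substantive step is your denominator-clearing argument showing $\bar\Pi = L$, i.e.\ that the image of $\Pi^K$ in the localization is already all of $L$, forced by $T_\lambda^M L = L$ for the common denominator exponent $M$ of a finite generating set. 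This is correct. Your version has the mild advantage of being self-contained, which is worth something since \cite{BR} is an unpublished set of lecture notes; otherwise the two proofs are the same argument with a different division of labor between the body of the proof and an external citation.
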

\begin{proof}
As $r_G^P \Pi$ is a twist of $\Pi_U$, the hypotheses imply that $\Pi^K$ and
$(\Pi_U)^{K_M}$ are finitely generated $R$-modules.  On the other hand,
we have an isomorphism 
$$(\Pi_U)^{K_M} \cong \Pi^K \otimes_{R[T_{\lambda}]} R[T_{\lambda},T^{-1}_{\lambda}].$$
The result is now an immediate consequence of~\cite{BR}, Lemma 33.
\end{proof}

\begin{corollary} For any Levi subgroup $L$ of $G$, and any irreducible cuspidal representation
$\pi$ of $L$ over $k$, 
the modules $\CP_{(M,\pi)}$ and $I_{(M,\pi)}$
are $K,P$-stable.  
\end{corollary}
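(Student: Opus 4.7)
The plan is to derive this corollary directly from the previous two lemmas, namely that admissibility (over a Noetherian $W(k)$-algebra, along with admissibility of parabolic restrictions) implies $K,P$-stability, and that $K,P$-stability is preserved under $\Pi \mapsto \Pi^\vee$. There are essentially no genuinely new computations to perform; the work has already been done in Theorem~\ref{thm:P admissibility} and Lemma~\ref{lem:induction restriction admissibility}.

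First I would treat $\CP_{(M,\pi)}$. Choose a sequence of maximal distinguished cuspidal $k$-types $(K_i,\tau_i)$ for which $\CP_{(M,\pi)} = i_{P'}^G[\CP_{K_1,\tau_1} \otimes \cdots \otimes \CP_{K_r,\tau_r}]$, and set $R = \bigotimes_i C_{K_i,\tau_i}$ (tensor product over $W(k)$). By Theorem~\ref{thm:P admissibility}, each factor $\CP_{K_i,\tau_i}$ and all of its parabolic restrictions are admissible over $C_{K_i,\tau_i}$; taking the outer tensor product of the $\GL_{n_i}(F)$-modules, I obtain a representation of the block-diagonal Levi whose invariants under any product of compact open subgroups $K_i'$ are the tensor product of the individual invariants, and hence are finitely generated over $R$. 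Parabolic restrictions of this tensor product to Levi subgroups of the block-diagonal Levi likewise factor as tensor products and are admissible over $R$. Applying Lemma~\ref{lem:induction restriction admissibility} with this ring $R$ (which is a finite type, hence Noetherian, $W(k)$-algebra) shows that $\CP_{(M,\pi)} = i_{P'}^G[\,\cdots\,]$ is an admissible $R[G]$-module whose parabolic restrictions $r_G^{P} \CP_{(M,\pi)}$ are admissible as $R[L_P]$-modules for every parabolic $P = L_P U_P$ of $G$. The lemma preceding this corollary then yields $K,P$-stability of $\CP_{(M,\pi)}$ directly.

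Next I would pass to $I_{(M,\pi)}$. By definition $I_{(M,\pi)} = \CP_{(M,\pi^\vee)}^\vee$, so the first step applies to $\CP_{(M,\pi^\vee)}$ and shows this module is $K,P$-stable. The earlier lemma stating that $\Pi^\vee$ is $K,P$-stable whenever $\Pi$ is (immediate from the compatibility $(\Pi^\vee)^K \cong (\Pi^K)^\vee$ and the compatibility of the direct sum decomposition with $W(k)$-dualization of $\Pi^K$ as a module over the commutative ring $W(k)[T_\lambda]$) then completes the argument for $I_{(M,\pi)}$.

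There is no real obstacle here; the only point that requires care is the verification that the tensor product of admissible modules for the $\GL_{n_i}(F)$ is admissible over the tensor product ring $R$, together with the corresponding statement for parabolic restrictions inside the block-diagonal Levi. Both facts are immediate from the identification of invariants with tensor products of invariants and the fact that the tensor product of finitely generated modules over $C_{K_i,\tau_i}$ is finitely generated over $\bigotimes_i C_{K_i,\tau_i}$; the possible subtlety is simply to make sure the compact open subgroup $K$ of $G$ is replaced, for the purposes of this calculation, by a cofinal family of product-type subgroups of $P'$ (decomposed with respect to $P'$) before taking invariants.
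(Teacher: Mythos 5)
Your argument is correct and follows the same route as the paper: the paper simply cites Theorem~\ref{thm:admissibility} (whose proof is exactly the combination of Theorem~\ref{thm:P admissibility} and Lemma~\ref{lem:induction restriction admissibility} that you reproduce), then applies the lemma that admissibility over a Noetherian ring implies $K,P$-stability, and finally uses the duality lemma for $I_{(M,\pi)}$. Your substitution of $R = \bigotimes_i C_{K_i,\tau_i}$ for the paper's $R = \bigotimes_i E_{K_i,\tau_i}$ is harmless since Theorem~\ref{thm:P admissibility} gives admissibility over $C_{K,\tau}$ directly, and both rings are Noetherian.
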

\begin{proof}
For $\CP_{(M,\pi)}$, the result follows from the previous lemma, together with 
Theorem~\ref{thm:admissibility}.  We also have the result for $\CP_{[M,\pi]}$, as this
is a finite direct sum of modules $\CP_{[M',\pi']}$.
As $I_{(M,\pi)} = \CP_{(M,\pi^{\vee})}^{\vee}$, we also have the result
for $I_{(M,\pi)}$, and hence also for $I_{[M,\pi]}$.
\end{proof}

\begin{proposition} For any Levi subgroup $L$ of $G$, and any irreducible supercuspidal
$k$-representation $\pi$ of $L$, 
every object of $\Rep_{W(k)}(G)_{[L,\pi]}$ is $K,P$-stable.
\end{proposition}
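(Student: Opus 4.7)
The plan is to reduce the statement to the $K,P$-stability of $I_{[L,\pi]}$, which has just been established, by means of the injective resolution by copies of $I_{[L,\pi]}$ guaranteed by the block decomposition theorem together with the elementary closure properties of $K,P$-stability recorded above.

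First, let $\Pi$ be an arbitrary object of $\Rep_{W(k)}(G)_{[L,\pi]}$. By the Bernstein decomposition result just proved, $\Pi$ admits an injective resolution
\[
0 \to \Pi \to J_0 \to J_1 \to \cdots
\]
in which each $J_i$ is a (possibly infinite) direct sum of copies of $I_{[L,\pi]}$. In particular $\Pi$ is the kernel of the map $J_0 \to J_1$. It therefore suffices to verify two things: that each $J_i$ is $K,P$-stable, and that the kernel of a map between $K,P$-stable modules is $K,P$-stable.

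For the second point I would invoke the lemma stating that kernels and cokernels of maps between $K,P$-stable modules are $K,P$-stable. For the first point, the key observation is that because every summand of $J_i$ is isomorphic to the single fixed module $I_{[L,\pi]}$, a single constant $c_{K,P,\lambda}$ witnesses the $K,P$-stability of every summand simultaneously. Hence the lemma allowing infinite direct sums of $K,P$-stable modules with a uniform constant applies, and each $J_i$ is $K,P$-stable. Combining these two observations yields that $\Pi$ is $K,P$-stable.

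I do not anticipate any serious obstacle here: the nontrivial input — the $K,P$-stability of $I_{[L,\pi]}$ itself — has already been established using the admissibility results for $\CP_{(M,\pi)}$ from Section~\ref{sec:induced}, and the remaining closure properties are precisely the content of the two preceding lemmas. The only mild point to be careful about is the insistence on a \emph{uniform} constant for direct sums, but this is automatic in the situation at hand since all the summands are identical. No further analysis of $K,P$-stability under arbitrary colimits is needed, because the resolution already exhibits $\Pi$ as a kernel rather than, say, a filtered colimit of more elaborate objects.
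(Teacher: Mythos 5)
Your proposal is correct and matches the paper's argument exactly: the paper resolves an arbitrary object of $\Rep_{W(k)}(G)_{[L,\pi]}$ by direct sums of copies of $I_{[L,\pi]}$, notes these are $K,P$-stable, and concludes by taking kernels. The only difference is that you make explicit the uniform-constant point for infinite direct sums, which the paper leaves implicit; this is a minor but welcome clarification.
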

\begin{proof}
Any object of $\Rep_{W(k)}(G)_{[L,\pi]}$ has a resolution by direct sums of $I_{[L,\pi]}$.
These are $K,P$-stable, so the result follows from the fact that kernels of maps of
$K,P$-stable modules are $K,P$-stable.
\end{proof}

It follows that for any object $\Pi$ of $\Rep_{W(k)}(G)_{[L,\pi]}$, the map
$\Pi^K \rightarrow (\Pi_U)^{K_M}$ is surjective.

\begin{proposition}
Let $\Pi$ be an object of $\Rep_{W(k)}(G)_{[L,\pi]}$.  There is a canonical isomorphism:
$$r_G^P \Pi^{\vee} \cong (r_G^{P^{\circ}} \Pi)^{\vee}.$$
\end{proposition}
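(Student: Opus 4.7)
The plan is to construct a natural map
$$\phi \colon (r_G^{P^\circ} \Pi)^\vee \longrightarrow r_G^P \Pi^\vee$$
and verify that it is an isomorphism via the standard strategy for Bernstein's second adjointness, as in~\cite{BR}, Theorem 34, adapted to the $W(k)$-coefficient setting using the $K,P$-stability established above. For the construction, the elementary isomorphism $(\Pi_{U^\circ})^\vee \cong (\Pi^\vee)^{U^\circ}$, combined with the reciprocity $\delta_{P^\circ} = \delta_P^{-1}$ of the normalizing characters, identifies
$$(r_G^{P^\circ} \Pi)^\vee \cong (\Pi^\vee)^{U^\circ} \otimes \delta_P^{-1/2}, \qquad r_G^P \Pi^\vee = (\Pi^\vee)_U \otimes \delta_P^{-1/2}.$$
I take $\phi$ to be (this twist of) the composition $(\Pi^\vee)^{U^\circ} \hookrightarrow \Pi^\vee \twoheadrightarrow (\Pi^\vee)_U$ of inclusion with projection to $U$-coinvariants.

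To check that $\phi$ is an isomorphism of smooth $M$-modules, it suffices to verify this on $K_M$-invariants for every sufficiently small compact open $K_M \subseteq M$. Fix such a $K_M$, a compact open $K = K^- K_M K^+ \subseteq G$ decomposed with respect to $P$, and a strictly positive central $\lambda \in Z(M)$. Since $\Pi \in \Rep_{W(k)}(G)_{[L,\pi]}$ forces $\Pi^\vee \in \Rep_{W(k)}(G)_{[L,\pi^\vee]}$, the module $\Pi^\vee$ is $K,P$-stable by the preceding proposition. Lemma~\ref{lemma:jacquet} therefore yields the canonical direct sum decomposition
$$(\Pi^\vee)^K = (\Pi^\vee)^K[T_\lambda^c] \oplus V,$$
with the summand $V$ canonically identified with $((\Pi^\vee)_U)^{K_M}$ via the coinvariants projection. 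The crucial claim is the identification $V = ((\Pi^\vee)^{U^\circ})^{K_M}$ inside $(\Pi^\vee)^K$; granting this, $\phi$ restricts to the identity on $K_M$-invariants, and letting $K_M$ shrink gives the theorem.

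The inclusion $V \subseteq ((\Pi^\vee)^{U^\circ})^{K_M}$ is immediate: for $y \in V$, the element $\lambda^{-m} y$ lies in $V \subseteq (\Pi^\vee)^K$ for every $m \geq 0$, so $y$ is fixed by $\lambda^m K^- \lambda^{-m}$ for all $m$; by strict positivity $\bigcup_m \lambda^m K^- \lambda^{-m} = U^\circ$, whence $y \in (\Pi^\vee)^{U^\circ}$. The main obstacle is the reverse inclusion $((\Pi^\vee)^{U^\circ})^{K_M} \subseteq V$, which requires showing that any $K_M$-invariant $U^\circ$-fixed element of $\Pi^\vee$ lies in the $T_\lambda$-invertible summand of some $(\Pi^\vee)^{K'}$. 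The approach is to apply $K,P^\circ$-stability to $\Pi$ (noting that $\lambda^{-1}$ is strictly positive with respect to $P^\circ$), producing the decomposition $\Pi^K = \Pi^K[T_{\lambda^{-1}}^c] \oplus (\Pi_{U^\circ})^{K_M}$, and then dualize using $(\Pi^K)^\vee = (\Pi^\vee)^K$, under which $T_\lambda$ on the dual is transpose to $T_{\lambda^{-1}}$ on $\Pi^K$. The resulting alternative decomposition of $(\Pi^\vee)^K$ characterizes $V$ as the annihilator of $\Pi^K[T_{\lambda^{-1}}^c]$, and a $K_M$-invariant $x \in (\Pi^\vee)^K$ annihilates $\Pi^K[T_{\lambda^{-1}}^c]$ precisely when it is $U^\circ$-invariant (by an argument dual to the one used for $V \subseteq (\Pi^\vee)^{U^\circ}$). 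The delicate point is tracking how the two decompositions — one via $K,P$-stability of $\Pi^\vee$, the other via duality from $K,P^\circ$-stability of $\Pi$ — interact under the pairing, and verifying that they in fact coincide.
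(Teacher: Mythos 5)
Your overall strategy is genuinely different from the paper's: you build the map $\phi\colon (\Pi^\vee)^{U^\circ}\hookrightarrow\Pi^\vee\twoheadrightarrow(\Pi^\vee)_U$ explicitly and try to show it's an isomorphism by identifying $((\Pi^\vee)^{U^\circ})^{K_M}$ with the $T_\lambda$-invertible summand $V$ of $(\Pi^\vee)^K$. The paper never does this: it observes that the perfect pairing $\Pi^K\times(\Pi^\vee)^K\to\CK/W(k)$ has $T_\lambda$ adjoint to $T_{\lambda^{-1}}$, uses $(K,P^\circ)$-stability of $\Pi$ to decompose $\Pi^K = \Pi^K[T_{\lambda^{-1}}^c]\oplus(\Pi_{U^\circ})^{K_M}$ and $(K,P)$-stability of $\Pi^\vee$ to decompose $(\Pi^\vee)^K=(\Pi^\vee)^K[T_\lambda^c]\oplus((\Pi^\vee)_U)^{K_M}$, and then the adjointness forces $\Pi^K[T_{\lambda^{-1}}^c]\perp((\Pi^\vee)_U)^{K_M}$ and $(\Pi_{U^\circ})^{K_M}\perp(\Pi^\vee)^K[T_\lambda^c]$, so the perfect pairing descends directly to $(\Pi_{U^\circ})^{K_M}\times((\Pi^\vee)_U)^{K_M}$ with no need to realize $V$ inside $(\Pi^\vee)^{U^\circ}$ at all.

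The real problem in your proposal is the sentence asserting that $V\subseteq((\Pi^\vee)^{U^\circ})^{K_M}$ is ``immediate.'' The step ``$\lambda^{-m}y$ lies in $V\subseteq(\Pi^\vee)^K$'' is false if $\lambda^{-m}$ denotes the group element: $\lambda^{-m}y$ is fixed by $\lambda^{-m}K\lambda^m$, and since $\lambda^{-m}K^-\lambda^m\subsetneq K^-$, this group does not contain $K$, so $\lambda^{-m}y$ is generically not $K$-invariant (let alone in $V$). If instead $T_\lambda^{-m}y$ is meant, that element does lie in $V$, but $T_\lambda^m$ acting on $(\Pi^\vee)^K$ is $e_K\lambda^m e_K$, not translation by $\lambda^m$; applying $e_{K^+}$ after $\lambda^m$ can destroy $\lambda^m K^-\lambda^{-m}$-invariance, so the conclusion ``$y$ is fixed by $\lambda^m K^-\lambda^{-m}$'' does not follow. (Indeed, $((\Pi^\vee)^{U^\circ})^{K_M}$ is not even a subspace of $(\Pi^\vee)^K$ in general, since $U^\circ$- and $K_M$-invariance need not imply $K^+$-invariance, so the claimed identification ``inside $(\Pi^\vee)^K$'' is not well-posed as stated.) Recognizing, as you do at the end, that the reverse inclusion ``requires showing...'' and is ``delicate'' does not help, because your proposed dual argument is built on the same broken step. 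This is exactly the thicket the paper's proof is designed to avoid: it shows the desired pairing directly, sidestepping any assertion about $U^\circ$-invariance of $V$.
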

\begin{proof}
We follow the proof of~\cite{localization}, Theorem 2.  First note that
$r_G^P \Pi^{\vee}$ is a twist of $(\Pi^{\vee})_U$ by our (fixed) square
root of the modulus character of $P$,
and $r_G^{P^{\circ}} \Pi$ is a twist of $\Pi_{U^{\circ}}$ by a square root of the modulus character of
$P^{\circ}$.  As these two modulus characters are inverses of each other, it suffices
to construct an isomorphism $(\Pi_{U^{\circ}})^{\vee} \cong (\Pi^{\vee})_U$ of $W(k)[M]$-modules.

For each $K$ that is decomposed with respect to $P$, we have an isomorphism:
$$(\Pi^K)^{\vee} \rightarrow (\Pi^{\vee})^K$$
coming from a perfect pairing $\Pi^K \times (\Pi^{\vee})^K \rightarrow \CK/W(k)$.
Under this pairing, the adjoint of $T_{\lambda}$ is $T_{\lambda^{-1}}$.  If
we take $\lambda$ to be strictly positive with respect to $P$, then
$\lambda^{-1}$ is strictly positive with respect to $P^{\circ}$.   Moreover,
$\Pi$ is both $(K,P)$-stable and $(K,P^{\circ})$-stable.  In particular,
$(\Pi_{U^{\circ}})^{K_M}$ is isomorphic to the maximal $T_{\lambda^{-1}}$-divisible
submodule of $\Pi^K$, and $((\Pi^{\vee})_U)^{K_M}$ is isomorphic to the maximal
$T_{\lambda}$-divisible submodule of $(\Pi^{\vee})^K$.  Moreover, under these identifications
$(\Pi_{U^{\circ}})^{K_M}$ and $((\Pi^{\vee})_U)^{K_M}$ are direct summands
of $\Pi^K$ and $(\Pi^{\vee})^K$, respectively.  The pairing on the latter thus descends
to a perfect pairing:
$$(\Pi_{U^{\circ}})^{K_M} \times ((\Pi^{\vee})_U)^{K_M} \rightarrow \CK/W(k).$$
By~\cite{localization}, Lemma 1, we can find a $K$ decomposed with respect to $P$ inside
any compact open subgroup of $G$.  Taking the limit over a cofinal system of such $K$
gives the desired perfect pairing
$$\Pi_{U^{\circ}} \times (\Pi^{\vee})_U \rightarrow \CK/W(k),$$
and hence the desired identification of $(\Pi^{\vee})_U$ with $(\Pi_{U^{\circ}})^{\vee}$.
\end{proof}

\begin{theorem}[Bernstein's second adjointness]
Let $\Pi_1$ and $\Pi_2$ be objects of $\Rep_{W(k)}(M)$ and
$\Rep_{W(k)}(G)_{[L,\pi]}$, respectively.  Then there is a canonical isomorphism:
$$\Hom_{W(k)[G]}(i_P^G \Pi_1, \Pi_2) \cong \Hom_{W(k)[M]}(\Pi_1, r_G^{P^{\circ}} \Pi_2).$$
\end{theorem}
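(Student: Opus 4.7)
The plan is to reduce to the case that $\Pi_2$ is admissible, where smooth double duality applies, and then to deduce the isomorphism from first adjointness combined with the Jacquet-module duality $r_G^P \Pi^\vee \cong (r_G^{P^\circ} \Pi)^\vee$ established above.

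I would first use the Bernstein decomposition to reduce to $\Pi_2 \in \Rep_{W(k)}(G)_{[L,\pi]}$, and then take an injective resolution $0 \to \Pi_2 \to I_0 \to I_1$ by direct sums of copies of $I_{[L,\pi]}$. Both functors $\Hom_G(i_P^G \Pi_1, -)$ and $\Hom_M(\Pi_1, r_G^{P^\circ}(-))$ are left exact in $\Pi_2$ (the second because $r_G^{P^\circ}$ is exact), so once a natural transformation between them is constructed, a kernel-lemma argument reduces the isomorphism claim to the case where $\Pi_2$ is a direct sum of copies of $I_{[L,\pi]}$. Since $I_{[L,\pi]}$ is itself a \emph{finite} direct sum of summands $I_{(M_j,\pi_j)}$, this further reduces (up to the direct-sum subtlety noted below) to the admissible case.

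For $\Pi_2 = I_{(M_j,\pi_j)} = \CP_{(M_j,\pi_j^\vee)}^\vee$, Theorem~\ref{thm:admissibility} ensures that $\CP_{(M_j,\pi_j^\vee)}$ and its parabolic restriction $r_G^{P^\circ} \CP_{(M_j,\pi_j^\vee)}$ are admissible; since $(-)^\vee$ preserves admissibility, so are $\Pi_2$ and $r_G^{P^\circ} \Pi_2$. For admissible $B$ and arbitrary smooth $A$, the natural map $\Hom(A,B) \to \Hom(B^\vee, A^\vee)$ sending $f \mapsto f^\vee$ is an isomorphism, with inverse $g \mapsto \iota_B^{-1} \circ g^\vee \circ \iota_A$ (where $\iota_B : B \xrightarrow{\sim} B^{\vee\vee}$ by admissibility). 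Applying this twice, together with $(i_P^G \Pi_1)^\vee \cong i_P^G \Pi_1^\vee$, first adjointness, and the Jacquet-module duality, gives the chain
\begin{align*}
\Hom_G(i_P^G \Pi_1, \Pi_2)
&\cong \Hom_G(\Pi_2^\vee, i_P^G \Pi_1^\vee) \\
&\cong \Hom_M(r_G^P \Pi_2^\vee, \Pi_1^\vee) \\
&\cong \Hom_M((r_G^{P^\circ}\Pi_2)^\vee, \Pi_1^\vee) \\
&\cong \Hom_M(\Pi_1, r_G^{P^\circ}\Pi_2).
\end{align*}

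The main obstacle will be the infinite-direct-sum step in the reduction, since arbitrary direct sums of admissible modules are rarely admissible and so the double-duality argument above does not apply directly to $I_0$ or $I_1$. I would handle this using the $K,P$-stability of $\Pi_2$ established in Section~\ref{sec:bernstein}: by Lemma~\ref{lemma:jacquet}, the $K$-invariants of $\Pi_2$ decompose canonically as $\Pi_2^K \cong \Pi_2^K[T_\lambda^c] \oplus (\Pi_{2,U^\circ})^{K_M}$ for strictly positive $\lambda$ (with respect to $P^\circ$), and both sides of the adjunction can be computed $K$-level by $K$-level and assembled compatibly as $K$ shrinks, bypassing the need for global admissibility.
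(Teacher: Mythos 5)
The core gap is your claim that $\Pi_2 = I_{(M_j,\pi_j)}$ and $r_G^{P^{\circ}}\Pi_2$ are admissible, which you need in order to invert the duality in the first and last steps of your chain of isomorphisms. Theorem~\ref{thm:admissibility} gives admissibility of $\CP_{(M_j,\pi_j^\vee)}$ only as an $R[G]$-module, where $R = \bigotimes_i E_{K_i,\tau_i}$ is a finite-type $W(k)$-algebra that is not finite over $W(k)$. In particular $\CP_{(M_j,\pi_j^\vee)}^{K'}$ is finitely generated over $R$ but infinite over $W(k)$, so $\CP_{(M_j,\pi_j^\vee)}$ is \emph{not} $W(k)$-admissible (compact induction from a compact open subgroup essentially never is). Consequently the biduality map $\iota_{\Pi_2}: \Pi_2 \to \Pi_2^{\vee\vee}$ for $(-)^\vee = \Hom_{W(k)}(-,\CK/W(k))^{\mathrm{sm}}$ is not an isomorphism, and your step $\Hom_M\bigl((r_G^{P^{\circ}}\Pi_2)^\vee, \Pi_1^\vee\bigr) \cong \Hom_M(\Pi_1, r_G^{P^{\circ}}\Pi_2)$ has no justification, and similarly the first step $\Hom_G(i_P^G\Pi_1, \Pi_2) \cong \Hom_G(\Pi_2^\vee, i_P^G\Pi_1^\vee)$.

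The paper avoids this entirely by never attempting to undo a dual: it works with $\Pi_2 = (\Pi_2')^\vee$ for an arbitrary $\Pi_2' \in \Rep_{W(k)}(G)_{[L,\pi^\vee]}$ and keeps $\Pi_2'$ undualized throughout the chain, using only the symmetric and unconditional isomorphism $\Hom(\Pi,(\Pi')^\vee) \cong \Hom(\Pi',\Pi^\vee)$ (which requires no admissibility). This handles $\Pi_2 = I_{[L,\pi]}$ directly, and then the extension to general $\Pi_2$ does not appeal to $K,P$-stability of the whole adjunction: instead it notes that when $\Pi_1$ is finitely generated both sides commute with arbitrary direct sums of $\Pi_2$, writes general $\Pi_1$ as a filtered colimit of finitely generated submodules, and finally resolves $\Pi_2$ by direct sums of $I_{[L,\pi]}$. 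Your proposal to assemble both Hom-sets ``$K$-level by $K$-level'' is vague as stated; $K,P$-stability enters the paper's argument only in the construction of the Jacquet-module duality $r_G^P\Pi^\vee \cong (r_G^{P^\circ}\Pi)^\vee$, not directly in the adjunction itself. You should restructure the chain so that you only go through $(-)^\vee$ in one direction; that is exactly what makes the argument work without admissibility over $W(k)$.
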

\begin{proof}
We follow the argument of the "claim" after Theorem 20 of~\cite{BR}.  We first establish
the case in which $\Pi_2 = (\Pi_2')^{\vee}$ for some $\Pi_2'$ in $\Rep_{W(k)}(G)_{[L,\pi^{\vee}]}.$
We then have a sequence of functorial isomorphisms:
\begin{eqnarray*}
\Hom_{W(k)[M]}(\Pi_1, r_G^{P^{\circ}} \Pi_2) & \cong & \Hom_{W(k)[M]}(\Pi_1, (r_G^P \Pi_2')^{\vee}) \\
& \cong & \Hom_{W(k)[M]}(r_G^P \Pi_2', \Pi_1^{\vee})\\
& \cong & \Hom_{W(k)[G]}(\Pi_2', i_P^G \Pi_1^{\vee})\\
& \cong & \Hom_{W(k)[G]}(\Pi_2', (i_P^G \Pi_1)^{\vee})\\
& \cong & \Hom_{W(k)[G]}(i_P^G \Pi_1, \Pi_2)
\end{eqnarray*}
In particular the result holds for $\Pi_2 = I_{[L,\pi]}$.  If $\Pi_1$ is finitely generated,
then the functors $\Hom_{W(k)[G]}(i_P^G \Pi_1, -)$ and $\Hom_{W(k)[M]}(\Pi_1, r_G^{P^{\circ}} -)$
commute with arbitrary direct sums, so the result holds for $\Pi_1$ finitely generated
and $\Pi_2$ an arbitrary direct sum of copies of $I_{[L,\pi]}$.  As any $\Pi_1$ is the limit of
its finitely generated submodules, the result holds for an arbitrary $\Pi_1$, when $\Pi_2$ is
an arbitrary direct sum of copies of $I_{[L,\pi]}$.  Finally, we can resolve an arbitrary $\Pi_2$
by direct sums of copies of $I_{[L,\pi]}$, and the result then follows for all $\Pi_1,\Pi_2$.
\end{proof}

\begin{corollary}
The representations $\CP_{[L,\pi]}$ are projective and small.
\end{corollary}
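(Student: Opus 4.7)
The plan is to read the corollary as the statement that each $\CP_{(M,\pi)}$ (and hence any finite direct sum $\CP_{[L,\pi]}$) is projective and small. By construction $\CP_{(M,\pi)} = i_P^G Q$, where $Q = \CP_{K_1,\tau_1} \otimes \cdots \otimes \CP_{K_r,\tau_r}$ is a smooth $W(k)[M]$-module and $M = \GL_{n_1}(F) \times \cdots \times \GL_{n_r}(F)$. The strategy is to verify the two required properties for $Q$ and then transfer them to $\CP_{(M,\pi)}$ via Bernstein's second adjointness.

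First I would record two properties of $Q$. Each $\CP_{K_i,\tau_i} = \cInd_{K_i}^{G_i}(\tkappa_i \otimes \CP_{\sigma_i})$ is projective because $\cInd_{K_i}^{G_i}$ is the left adjoint of an exact functor (restriction to the compact open $K_i$); it is small because $\Hom_{G_i}(\cInd_{K_i}^{G_i} V_i, -) = \Hom_{K_i}(V_i, -)$ and $V_i = \tkappa_i \otimes \CP_{\sigma_i}$ is finitely generated over $W(k)$, so Hom out of it commutes with arbitrary direct sums. The exterior tensor product $Q$ is then compactly induced from $K_1 \times \cdots \times K_r \subset M$ applied to the (finitely generated) tensor product of the $V_i$, so the same argument shows $Q$ is projective and small as a $W(k)[M]$-module.

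Next I would transfer these properties through $i_P^G$. For any $\Pi \in \Rep_{W(k)}(G)_{[L,\pi]}$, where $(L,\pi)$ is the inertial supercuspidal support of $(M,\pi)$, Bernstein's second adjointness (just established) gives
$$\Hom_{W(k)[G]}(\CP_{(M,\pi)}, \Pi) = \Hom_{W(k)[G]}(i_P^G Q, \Pi) \cong \Hom_{W(k)[M]}(Q, r_G^{P^{\circ}} \Pi).$$
The functor $r_G^{P^{\circ}}$ is exact and preserves arbitrary direct sums (it is, up to a modulus twist, just passage to $U^{\circ}$-coinvariants, which commutes with colimits). Combined with projectivity and smallness of $Q$, this proves that $\Hom_{W(k)[G]}(\CP_{(M,\pi)}, -)$ is exact and commutes with direct sums on $\Rep_{W(k)}(G)_{[L,\pi]}$.

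To finish, I would handle arbitrary smooth $W(k)[G]$-modules via the Bernstein decomposition established in this section. Every $\Pi \in \Rep_{W(k)}(G)$ splits canonically as $\Pi_{[L,\pi]} \oplus \Pi'$ where no simple subquotient of $\Pi'$ has mod $\ell$ inertial supercuspidal support equal to $(L,\pi)$. The main point I expect to have to verify is that $\Hom_{W(k)[G]}(\CP_{(M,\pi)}, \Pi') = 0$: this is the one place the argument is not automatic. For this I would apply second adjointness in the other direction — each simple subquotient of $\Pi'$ lies in some block $[L',\pi'']$ different from $[L,\pi]$, so $\Hom_M(Q, r_G^{P^{\circ}} \Pi')$ vanishes because the simple subquotients of $Q$ are cuspidal with mod $\ell$ inertial supercuspidal support $(M,\pi) \subset (L,\pi)$, while $r_G^{P^{\circ}} \Pi'$ has all subquotients supported on Levis whose induced representations lie outside $[L,\pi]$; more concretely, compute $\Hom_G(\CP_{(M,\pi)}, \pi)$ for simple $\pi$ in a different block and use Proposition~\ref{prop:cuspidal support} together with the fact that $\CP_{(M,\pi)}$ is generated by its image under the compact induction, and extend to general $\Pi'$ using that both functors commute with filtered colimits. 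Once $\Hom_{W(k)[G]}(\CP_{(M,\pi)}, -)$ is shown to vanish on the complementary block, exactness and the smallness property pass from $\Rep_{W(k)}(G)_{[L,\pi]}$ to all of $\Rep_{W(k)}(G)$, and the corollary follows.
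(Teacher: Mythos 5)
Your reduction and the core use of second adjointness match the paper, but the last step — the vanishing of $\Hom_{W(k)[G]}(\CP_{(M,\pi)}, \Pi')$ on the complementary factor — is where the argument goes soft, and unnecessarily so. You describe it as "the one place the argument is not automatic" and then sketch a direct verification, but in fact this vanishing \emph{is} automatic once one records (as the paper does, and as is established earlier from Proposition~\ref{prop:Bernstein idempotents} and $\ell$-torsion-freeness) that $\CP_{(M,\pi')}$ itself lies in $\Rep_{W(k)}(G)_{[L,\pi]}$. Since the Bernstein decomposition expresses $\Rep_{W(k)}(G)$ as a product of full subcategories, there are no nonzero morphisms from an object of one factor to an object of a disjoint factor; hence $\Hom(\CP_{(M,\pi')}, \Pi') = \Hom(\CP_{(M,\pi')}, \Pi'_{[L,\pi]})$ for any smooth $\Pi'$, with no further work. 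The paper's proof invokes exactly this and then applies second adjointness only to $\Pi'_{[L,\pi]}$, which lies in the block where the adjunction was proved.

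The direct route you sketch has two concrete problems. First, the version of second adjointness established in the paper is stated only for $\Pi_2 \in \Rep_{W(k)}(G)_{[L,\pi]}$, so applying it "in the other direction" to the complementary summand $\Pi'$ is outside the proved regime. Second, your claim that "the simple subquotients of $Q$ are cuspidal" is false: the paper notes explicitly that not every simple quotient of $\CP_{K,\tau}$ is cuspidal, so a mod-$\ell$ cuspidality argument on $Q$ cannot be used as stated. Neither issue is fatal to the corollary — both evaporate once you use the membership of $\CP_{(M,\pi')}$ in the block — but as written the proposal replaces a one-line observation with a fragile and incomplete calculation.
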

\begin{proof}
It suffices to show that each $\CP_{(M,\pi')}$, with $\pi'$ irreducible and cuspidal over
$K$ is projective and small, as $\CP_{[L,\pi]}$ is a finite direct sum of representations of this
form.  For a suitable sequence of types $(K_i,\tau_i)$, we have
$$\CP_{(M,\pi')} = i_P^G \CP_{K_1,\tau_1} \otimes \dots \otimes \CP_{K_r,\tau_r}.$$
For each $i$ the representation
$\CP_{K_i,\tau_i}$ is projective and small (as $\CP_{K_i,\tau_i}$ is the compact induction
of a finite-length $W(k)$-module).  Thus the tensor product $\Pi$ of the $\CP_{K_i,\tau_i}$
is projective and small when considered as a $W(k)[M]$-module.
We have shown that
$\CP_{(M,\pi')}$ lies in $\Rep_{W(k)}(G)_{[L,\pi]}$.

Fix a smooth representation $\Pi'$ of $G$, and let $\Pi'_{[L,\pi]}$ be the direct summand
of $\Pi'$ that lies in $\Rep_{W(k)}(G)_{[L,\pi]}$.  Then
\begin{eqnarray*}
\Hom_{W(k)[G]}(\CP_{(M,\pi')},\Pi') & \cong & \Hom_{W(k)[G]}(\CP_{(M,\pi')},\Pi'_{[L,\pi]})\\
& \cong & \Hom_{W(k)[M]}(\Pi, r_G^{P^{\circ}} \Pi'_{[L,\pi]})
\end{eqnarray*}
As $r_G^{P^{\circ}}$ commutes with direct sums it is easy to see this implies $\CP_{(M,\pi')}$
is small; projectivity of $\CP_{(M,\pi')}$ follows from exactness of $r_G^{P^{\circ}}$.
\end{proof}
 
\begin{corollary} \label{cor:center}
The Bernstein center $A_{[L,\pi]}$ of $\Rep_{W(k)}(G)_{[L,\pi]}$ is isomorphic
to the center of $\End_{W(k)[G]}(\CP_{[L,\pi]}).$
\end{corollary}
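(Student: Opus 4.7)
The plan is to apply the abstract framework of Section~\ref{sec:faithful}, specifically the proposition of Roche quoted there, to the category $\Rep_{W(k)}(G)_{[L,\pi]}$ with the distinguished object $\CP_{[L,\pi]}$. To do so, I need to verify that $\CP_{[L,\pi]}$ is faithfully projective in this block.

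First, projectivity and smallness of $\CP_{[L,\pi]}$ are established by the preceding corollary. Second, I need to check the faithfulness condition, namely that $\Hom_{W(k)[G]}(\CP_{[L,\pi]},\Pi) \neq 0$ for every nonzero $\Pi$ in the block. Since every object of $\Rep_{W(k)}(G)_{[L,\pi]}$ has a simple subquotient (and since by projectivity any nonzero $\Hom$ from $\CP_{[L,\pi]}$ to such a subquotient lifts to a nonzero $\Hom$ to $\Pi$), it suffices to check this on simple $\Pi$. Recall that $\CP_{[L,\pi]}$ was defined as $\bigoplus_{j} \CP_{(M_j,\pi_j)}$, where $(M_j,\pi_j)$ runs over representatives of the finitely many mod $\ell$ inertial cuspidal support classes whose supercuspidal support is in the class of $(L,\pi)$. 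Any simple $\Pi$ in $\Rep_{W(k)}(G)_{[L,\pi]}$ has mod $\ell$ inertial supercuspidal support $(L,\pi)$, and hence mod $\ell$ inertial cuspidal support $(M_j,\pi_j)$ for exactly one such $j$. By Proposition~\ref{prop:cuspidal support}, $\Pi$ is a quotient of $\CP_{(M_j,\pi_j)}$, which yields a nonzero map from $\CP_{[L,\pi]}$ to $\Pi$.

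Having verified the three conditions, the proposition of Roche recalled in Section~\ref{sec:faithful} applies to give an equivalence between $\Rep_{W(k)}(G)_{[L,\pi]}$ and the category of right $\End_{W(k)[G]}(\CP_{[L,\pi]})$-modules, and in particular an isomorphism between the center $A_{[L,\pi]}$ of $\Rep_{W(k)}(G)_{[L,\pi]}$ and the center of $\End_{W(k)[G]}(\CP_{[L,\pi]})$.

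No step here is really the main obstacle: all of the substantive work has been completed in the preceding sections, with the construction of the modules $\CP_{(M,\pi)}$ (Section~\ref{sec:projectives}), the proof that the $\CP_{(M,\pi)}$ surject onto every simple module with the appropriate cuspidal support (Proposition~\ref{prop:cuspidal support}), the projectivity and smallness coming from Bernstein's second adjointness (the previous corollary), and the Bernstein block decomposition for $\Rep_{W(k)}(G)$ earlier in this section. The present corollary is essentially the assembly of these ingredients into the framework of Section~\ref{sec:faithful}.
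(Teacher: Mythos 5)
Your proposal matches the paper's proof: both establish that $\CP_{[L,\pi]}$ is faithfully projective in $\Rep_{W(k)}(G)_{[L,\pi]}$ by combining the projectivity/smallness from the preceding corollary with the fact (from Proposition~\ref{prop:cuspidal support}) that every simple object of the block is a quotient of some $\CP_{(M_j,\pi_j)}$, hence of $\CP_{[L,\pi]}$, and then invoke the Roche equivalence from Section~\ref{sec:faithful}. This is exactly the paper's argument.
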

\begin{proof}
Every simple object of $\Rep_{W(k)}(G)_{[L,\pi]}$ is a quotient of $\CP_{(M,\pi')}$
for some pair $(M,\pi')$ with inertial supercuspidal support $(L,\pi)$,
and hence such an object is a quotient of $\CP_{[L,\pi]}$.  It follows that $\CP_{[L,\pi]}$ is faithfully
projective in $\Rep_{W(k)}(G)_{[L,\pi]}$, and the result follows immediately.
\end{proof}

\section{Structure of the Bernstein Center} \label{sec:hecke}

In this section we prove basic facts about
the center $A_{[L,\pi]}$ of $\End_{W(k)[G]}(\CP_{[L,\pi]})$.

\begin{proposition} \label{prop:invert}
There is a natural isomorphism:
$$A_{[L,\pi]} \otimes \overline{\CK} \cong \prod_{(\tM,\tpi)} A_{\tM,\tpi},$$
where $(\tM,\tpi)$ runs over inertial equivalence classes of pairs in which
$\tM$ is a Levi subgroup of $G$ and $\tpi$ is a cuspidal representation of $M$
over $\overline{\CK}$ whose mod $\ell$ inertial supercuspidal support equals $(L,\pi)$.
This isomorphism is uniquely characterised by the property that for any $\Pi$
in $\Rep_{\overline{\CK}}(G)$, and any $x$ in $A_{[L,\pi]}$, the action of $x$
on $\Pi$ coincides with that of its image in $\prod_{(\tM,\tpi)} A_{\tM,\tpi}$.
\end{proposition}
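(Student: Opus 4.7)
The plan is to use Corollary~\ref{cor:center} to identify $A_{[L,\pi]}$ with the center of $\End_{W(k)[G]}(\CP_{[L,\pi]})$, and then analyze how this ring decomposes after base change to $\overline{\CK}$. First, I would establish a block decomposition of $\CP_{[L,\pi]} \otimes \overline{\CK}$. By construction, $\CP_{[L,\pi]}$ is the finite direct sum of modules $\CP_{(M_i,\pi_i)}$ as $(M_i,\pi_i)$ runs over representatives for the mod $\ell$ inertial cuspidal support classes whose supercuspidal support is $(L,\pi)$. By Proposition~\ref{prop:Bernstein idempotents}, each $\CP_{(M_i,\pi_i)} \otimes \overline{\CK}$ is annihilated by every Bernstein idempotent $e_{M'',\Pi,\overline{\CK}}$ attached to a pair whose mod $\ell$ inertial supercuspidal support is not $(L,\pi)$. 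Consequently, $\CP_{[L,\pi]} \otimes \overline{\CK}$ splits canonically as $\bigoplus_{(M,\tpi)} Q_{(M,\tpi)}$, where the sum runs over the inertial equivalence classes appearing in the statement, and $Q_{(M,\tpi)}$ is the projection onto $\Rep_{\overline{\CK}}(G)_{M,\tpi}$.

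Next, I would show that each $Q_{(M,\tpi)}$ is faithfully projective in $\Rep_{\overline{\CK}}(G)_{M,\tpi}$ in the sense of section~\ref{sec:faithful}. Projectivity is immediate since $Q_{(M,\tpi)}$ is a direct summand of the projective $W(k)[G]$-module $\CP_{[L,\pi]}$, base-changed to $\overline{\CK}$. Smallness descends from smallness of $\CP_{[L,\pi]}$, established in the previous section. Faithfulness: any simple object of $\Rep_{\overline{\CK}}(G)_{M,\tpi}$ has mod $\ell$ inertial cuspidal support given by some $(M_i,\pi_i)$, and by Proposition~\ref{prop:cuspidal support} admits a surjection from $\CP_{(M_i,\pi_i)}$, hence from $Q_{(M,\tpi)}$. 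Since the different $Q_{(M,\tpi)}$ lie in mutually orthogonal blocks, we obtain
$$\End_{\overline{\CK}[G]}(\CP_{[L,\pi]} \otimes \overline{\CK}) \cong \prod_{(M,\tpi)} \End_{\overline{\CK}[G]}(Q_{(M,\tpi)}),$$
and taking centers together with the proposition in section~\ref{sec:faithful} gives $Z(\End_{\overline{\CK}[G]}(\CP_{[L,\pi]} \otimes \overline{\CK})) \cong \prod_{(M,\tpi)} A_{M,\tpi}$.

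Finally, I would verify the identification $A_{[L,\pi]} \otimes \overline{\CK} \cong Z(\End_{\overline{\CK}[G]}(\CP_{[L,\pi]} \otimes \overline{\CK}))$. This uses two commutations with base change: first, the natural map $\End_{W(k)[G]}(\CP_{[L,\pi]}) \otimes_{W(k)} \overline{\CK} \to \End_{\overline{\CK}[G]}(\CP_{[L,\pi]} \otimes \overline{\CK})$ is an isomorphism because $\CP_{[L,\pi]}$ is built as a parabolic induction of tensor products of compact inductions of finitely generated $W(k)[K]$-modules, so Frobenius reciprocity reduces the claim to a finite-generation statement over $W(k)$; second, formation of the center commutes with $-\otimes_{W(k)} \overline{\CK}$ by faithful flatness of $\overline{\CK}/W(k)$ together with the fact that $\End_{W(k)[G]}(\CP_{[L,\pi]})$ is $W(k)$-flat (since $\CP_{[L,\pi]}$ is projective, hence flat, over $W(k)[G]$). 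The uniqueness characterization is then automatic: an element $x \in A_{[L,\pi]}$ acts on any object $\Pi$ of $\Rep_{\overline{\CK}}(G)$ via the action of $\CP_{[L,\pi]} \otimes \overline{\CK}$, and the decomposition above shows that this action factors through the $A_{M,\tpi}$-action on each component.

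The main obstacle will be the base-change step in the last paragraph: verifying that both $\End$ and $Z(-)$ commute with $-\otimes_{W(k)} \overline{\CK}$ at the level of generality needed. Once one reduces $\End$-commutation to the case of $\CP_{K,\tau}$ via the parabolic induction formalism, it amounts to a Frobenius reciprocity computation with a finitely generated $W(k)$-module; the center commutation is then formal from $W(k)$-flatness.
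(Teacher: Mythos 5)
Your argument is correct in outline, but it takes a different (and slightly more laborious) route than the paper. The paper does not decompose $\CP_{[L,\pi]} \otimes \overline{\CK}$ at all; instead it works with the single "universal" faithfully projective object $\CP = \cInd_{\{e\}}^G W(k)$. For that $\CP$ the base-change isomorphism
$\End_{W(k)[G]}(\CP) \otimes \overline{\CK} \cong \End_{\overline{\CK}[G]}(\CP \otimes \overline{\CK})$
is essentially tautological (both sides are the global smooth Hecke algebra, a filtered colimit of finite-rank free $W(k)$-modules), and then the desired statement is obtained by multiplying the isomorphism of centers by the central idempotent $e_{[L,\pi]}$ and invoking Bernstein--Deligne on the $\overline{\CK}$-side. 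This buys a one-paragraph proof that sidesteps your entire block-by-block analysis of $\CP_{[L,\pi]} \otimes \overline{\CK}$ and its summands $Q_{(M,\tpi)}$.

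Two points in your write-up need repair. First, $\overline{\CK}$ is \emph{not} faithfully flat over $W(k)$ (e.g.\ $k \otimes_{W(k)} \overline{\CK} = 0$); flatness alone, together with $\ell$-torsion-freeness of $\End_{W(k)[G]}(\CP_{[L,\pi]})$ (which follows from $\ell$-torsion-freeness of $\CP_{[L,\pi]}$, not from "projectivity over $W(k)[G]$"), is what gives the injection $Z(\End) \otimes \overline{\CK} \hookrightarrow Z(\End \otimes \overline{\CK})$; the reverse inclusion still needs the explicit structure of the ring, not a general flatness argument. Second, your claim that the $\End$-base-change reduces "by Frobenius reciprocity" to a finite-generation statement glosses over the parabolic-induction layer: computing $\End_{W(k)[G]}(i_P^G \Pi)$ requires Bernstein's second adjointness (established only in section~\ref{sec:bernstein} of this paper, block by block), not ordinary Frobenius reciprocity, together with the fact that $\Hom_{W(k)[M]}(\Pi, -)$ commutes with $-\otimes_{W(k)}\overline{\CK}$ because $\Pi$ is a compact induction of a finite-rank free $W(k)$-module. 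These steps can be carried out, but they are not free, which is precisely why the paper reaches for $\cInd_{\{e\}}^G W(k)$ instead.
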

\begin{proof}
The module $\CP$ defined by $\CP = \cInd_{\{e\}}^G W(k)$, where $\{e\}$ is the trivial subgroup of $G$,
is a faithfully projective module in $\Rep_{W(k)}(G)$.  We have 
$\CP \otimes \overline{\CK} = \cInd_{\{e\}}^G \overline{\CK}$; in particular
$\CP \otimes \overline{\CK}$ is faithfully projective in $\Rep_{\overline{\CK}}(G)$.
As $\CP$ is $\ell$-torsion free, we have an injection:
$$\End_{W(k)[G]}(\CP) \rightarrow \End_{W(k)[G]}(\CP) \otimes \overline{\CK} \cong 
\End_{\overline{\CK}[G]}(\CP \otimes \overline{\CK}).$$
In particular we have an isomorphism:
$$Z(\End_{W(k)[G]}(\CP)) \otimes \overline{\CK} \cong Z(\End_{\overline{\CK}[G]}(\CP \otimes \overline{\CK})).$$
Multiplying both sides by the central idempotent $e_{[L,\pi]}$ gives us the desired isomorphism.
This isomorphism is equivariant for the actions of both sides on $\CP \otimes \overline{\CK}$, and hence
for all objects of $\Rep_{\overline{\CK}}(G)$.
\end{proof}

As $\CP_{[L,\pi]}$ is, by definition, a faithful $A_{[L,\pi]}$-module, we have injections:
$$A_{[L,\pi]} \otimes \overline{\CK} \hookrightarrow \End_{\overline{\CK}[G]}(\CP_{[L,\pi]} \otimes \overline{\CK})$$
$$\End_{W(k)[G]}(\CP_{[L,\pi]}) \hookrightarrow \End_{\overline{\CK}[G]}(\CP_{[L,\pi]} \otimes \overline{\CK}),$$
and it is clear that $A_{[L,\pi]}$ is the intersection, inside
$\End_{\overline{\CK}[G]}(\CP_{[L,\pi]} \otimes \overline{\CK})$, of these two subalgebras.  Thus we can identify $A_{[L,\pi]}$
with the set of elements $x$ in $\prod_{(\tM,\tpi)} A_{\tM,\tpi}$ such that the action of $x$
on $\CP_{[L,\pi]} \otimes \overline{\CK}$ preserves $\CP_{[L,\pi]}$.

We now study the interaction between $A_{[L,\pi]}$ and parabolic induction.  We first show:

\begin{lemma} \label{lemma:saturation}
Let $P$ be a parabolic subgroup of $G$, with Levi subgroup $M$,
and let $\Pi$ be a smooth $W(k)[M]$-module that is $\ell$-torsion free.  Then the
natural map:
$$\End_{W(k)[M]}(\Pi) \rightarrow \End_{W(k)[M]}(i_P^G \Pi)$$
is injective.  Moreover, if $f$
is an element of $\End_{W(k)[G]}(i_P^G \Pi)$ such that $\ell^a f$
is in the image of this map for some $a$, then $f$ is also in the image of this map.
\end{lemma}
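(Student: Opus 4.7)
The plan is to reformulate the statement via Frobenius reciprocity and then reduce it to a simple surjectivity property of the counit. By first adjointness, $r_G^P$ is left adjoint to $i_P^G$, so we have a canonical identification
$$\End_{W(k)[G]}(i_P^G \Pi) \;\cong\; \Hom_{W(k)[M]}(r_G^P i_P^G \Pi, \Pi),$$
under which, by naturality of the counit $\epsilon_\Pi \colon r_G^P i_P^G \Pi \to \Pi$, the natural map $\phi \mapsto i_P^G \phi$ is transported to $\phi \mapsto \phi \circ \epsilon_\Pi$.

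The only nontrivial input I need is that $\epsilon_\Pi$ is surjective. This is standard: one may either invoke the Bernstein--Zelevinski geometric lemma (which exhibits $\Pi$ as a subquotient of $r_G^P i_P^G \Pi$ corresponding to the identity double coset in $P\backslash G/P$, with $\epsilon_\Pi$ the canonical projection to that subquotient), or argue directly by exhibiting, for each $v \in \Pi$, an element of $i_P^G \Pi$ concentrated near $1 \in G$ whose image under $\epsilon_\Pi$ is $v$. Either argument depends only on the Bruhat decomposition of $G$ and the explicit construction of parabolic induction, and so is valid for smooth representations with $W(k)$-coefficients.

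With $\epsilon_\Pi$ surjective, both claims of the lemma follow formally. Injectivity of $\phi \mapsto \phi \circ \epsilon_\Pi$ is immediate. For the saturation claim, let $f \in \End_{W(k)[G]}(i_P^G \Pi)$ correspond under the adjunction to $g \colon r_G^P i_P^G \Pi \to \Pi$, and suppose $\ell^a f = i_P^G \phi$ for some $\phi \in \End_{W(k)[M]}(\Pi)$, so that $\ell^a g = \phi \circ \epsilon_\Pi$. Restricted to $\ker \epsilon_\Pi$, the map $g$ takes values in the $\ell^a$-torsion of $\Pi$, which vanishes by hypothesis; hence $g$ factors as $g = \phi' \circ \epsilon_\Pi$ for a unique $\phi' \in \End_{W(k)[M]}(\Pi)$. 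The relation $\ell^a \phi' \circ \epsilon_\Pi = \phi \circ \epsilon_\Pi$, combined with surjectivity of $\epsilon_\Pi$, forces $\ell^a \phi' = \phi$, and tracing back through the adjunction yields $f = i_P^G \phi'$ as required.

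The main (but still very minor) potential obstacle is confirming surjectivity of $\epsilon_\Pi$ in the $W(k)$-setting; however, this is a formal consequence of the construction of parabolic induction and requires no new technical input beyond facts about smooth representations that are already used freely earlier in the paper.
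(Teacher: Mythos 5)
Your proof is correct, and it rests on exactly the same key fact the paper uses: the surjectivity of the natural map (counit) $r_G^P i_P^G \Pi \to \Pi$. The paper works with the commutative square
$$
\begin{array}{ccc}
r_G^P i_P^G \Pi & \rightarrow & \Pi\\
\downarrow & & \downarrow\\
r_G^P i_P^G \Pi & \rightarrow & \Pi
\end{array}
$$
directly, concluding for the saturation claim that the image of $\phi$ lies in $\ell^a\Pi$ and hence (by $\ell$-torsion-freeness of $\Pi$) that $\phi$ is divisible by $\ell^a$; you instead transport the whole problem across Frobenius reciprocity to $\Hom_{W(k)[M]}(r_G^P i_P^G\Pi,\Pi)$ and show the adjoint of $f$ kills $\ker\epsilon_\Pi$ and therefore factors through $\epsilon_\Pi$. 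The two arguments are interchangeable -- both reduce to the same surjectivity statement plus $\ell$-torsion-freeness -- but your version is a little cleaner in one respect: it never needs to invoke $\ell$-torsion-freeness of $i_P^G\Pi$ (which the paper tacitly uses to cancel $\ell^a$ in the identity $\ell^a f = \ell^a i_P^G g'$, though this follows immediately from exactness of $i_P^G$), while the paper's version has the minor virtue of not requiring the reader to unwind the adjunction. Either way, the substance is the same.
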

\begin{proof}
Let $g$ be an element of $\End_{W(k)[M]}(\Pi)$.  Then $g$ fits in a commutative
diagram:
$$
\begin{array}{ccc}
r_G^P i_P^G \Pi & \rightarrow & \Pi\\
\downarrow & & \downarrow\\
r_G^P i_P^G \Pi & \rightarrow & \Pi
\end{array}
$$
in which the horizontal arrows are the natural maps (and are therefore surjective),
and the vertical arrows are $r_G^P i_P^G g$ and $g$, respectively.  The surjectivity of
the horizontal maps shows we can recover $g$ from $r_G^P i_P^G g$, proving the first claim.
As for the second, suppose that $\ell^a f = i_P^G g$ for some $g$.  Then the 
the image of $r_G^P i_P^G g$ is contained in $\ell^a r_G^P i_P^G \Pi$ and the above diagram
shows that the image of $g$ is contained in $\ell^a \Pi$.  As $\Pi$ is $\ell$-torsion free
it follows that $g$ is (uniquely) divisible by $\ell^a$ in $\End_{W(k)[M]}(\Pi)$;
that is, $g = \ell^a g'$.  Then we have $f = i_P^G g'$.
\end{proof}

Now let $L'$ be a Levi subgroup of $G$ containing $L$, such that $L'$ is a product of
factors $L'_i$ isomorphic to $\GL_{n_i}(F)$.  Then if $L_i$ is the Levi subgroup $L \cap L'_i$ of $L'_i$,
we have $L = \prod_i L_i$ and 
$\pi$ factors as a tensor product of supercuspidal representations $\pi_i$ of $L_i$.
In this situation we have:

\begin{theorem} \label{thm:integral Bernstein parabolic induction}
Suppose $V$ lies in $\Rep_{W(k)}(M)_{[L,\pi]}$, and let $P$ be a parabolic subgroup of
$G$ with Levi component $M$.  Then $i_P^G V$ lies in $\Rep_{W(k)}(G)_{[L,\pi]}$.  Moreover,
there is a unique map: $f: A_{[L,\pi]} \rightarrow \bigotimes_i A_{[L_i,\pi_i]}$ such that,
for all $V$ in $\Rep_{W(k)}(M)_{[L,\pi]}$, the diagram:
$$
\begin{array}{ccc}
A_{[L,\pi]} & \rightarrow & \bigotimes_i A_{[L_i,\pi_i]}\\
\downarrow & & \downarrow\\
\End_{W(k)[G]}(i_p^G V) & \leftarrow & \End_{W(k)[M]}(V)
\end{array}
$$
commutes.  Moreover, $f$ is independent of $P$.
\end{theorem}
\begin{proof}
Let $V$ bet the faithfully projective $W(k)[M]$-module $\bigotimes_i \CP_{[L_i,\pi_i]}$.
Then $V$is a direct sum of representations of the form $\bigotimes_i \CP_{M'_i,\pi'_i}$,
where for each $i$, $M'_i$ is a Levi subgroup of $M_i$ and $\pi'_i$ is an irreducible
cuspidal representation of $M'_i$ over $k$ with inertial supercuspidal support $(L_i,\pi_i)$.

Fix a collection $(M'_i,\pi'_i)$, let $M'$ be the product of the $M'_i$ and let $\pi'$
be the tensor product of the $\pi'_i$.  Then $i_P^G \bigotimes_i \CP_{M'_i,\pi'_i}$
is isomorphic to projective representation $\CP_{M',\pi'}$, and $(M',\pi')$ has inertial
supercuspidal support $(L,\pi)$.  In particular $i_P^G \bigotimes_i \CP_{M'_i,\pi'_i}$
lies in $\Rep_{W(k)}(G)_{[L,\pi]}$.  Since $i_P^G V$ is a direct sum of these projectives,
it too lies in $\Rep_{W(k)}(G)_{[L,\pi]}$.

On the other hand, any object of $\Rep_{W(k)}(M)_{[L,\pi]}$ has a resolution by
direct sums of copies of $V$, so its parabolic induction has a resolution by
direct sums of copies of $i_P^G V$ and hence lies in $\Rep_{W(k)}(G)_{[L,\pi]}$.

We now turn to the second claim.  We first construct the map $f$ over $\overline{\CK}$.
In this case we have product decompositions:
$$A_{[L,\pi]} \otimes \overline{\CK} \cong \prod_{(\tM,\tpi)} A_{M,\tpi}$$
$$\bigotimes_i A_{[L_i,\pi_i]} \otimes \overline{\CK} \cong \bigotimes_i \prod_{(\tM_i,\tpi_i)} A_{\tM_i,\tpi_i},$$
where $\tM,\tpi$ runs over pairs with mod $\ell$ inertial supercuspidal support $[L,\pi]$
and $\tM_i,\tpi_i$ runs over pairs with mod $\ell$ inertial supercuspidal support $[L_i,\pi_i]$.

Define $f \otimes \overline{\CK}$ on $A_{\tM,\tpi} \rightarrow \bigotimes_i A_{\tM_i,\tpi_i}$ to be
the map $\Phi$ described immediately before Proposition~\ref{prop:Bernstein induction}
if $(\prod_i \tM_i, \otimes_i \tpi_i)$ is inertially equivalent to $(M,\tpi)$, and the zero
map otherwise.  It is clear from Proposition~\ref{prop:Bernstein induction} that the resulting
$f \otimes \overline{\CK}$ has the desired property with respect to parabolic induction.

Since $W(k)$ contains a square root of $q$, parabolic induction is compatible with twisting
by elements of $\Gal(\overline{\CK}/K)$, and from this we see that $f \otimes \overline{\CK}$
must be $\Gal(\overline{\CK}/K)$-equivariant.  In particular $f \otimes \overline{\CK}$
descends to a map $f \otimes \CK$ from $A_{[L,\pi]} \otimes \CK$ to 
$\bigotimes_i A_{[L_i,\pi_i]} \otimes \CK$.  This latter map also has the desired property with
respect to parabolic induction.

Now let $x$ be an element of $A_{[L,\pi]}$, and let $y$ be the image
of $x$ under $f \otimes \CK$.  Then $y$ lies in $\bigotimes_i A_{[L_i,\pi_i]} \otimes \CK$,
and so there exists a power $\ell^a$ of $\ell$ such that $\ell^a y$ lies in
$\bigotimes_i A_{[L_i,\pi_i]}$.  Now $x$ acts on $i_P^G V$, and the action of
$\ell^a x$ on $i_P^G V$ coincides with that of $\ell^a y$.  It follows by Lemma~\ref{lemma:saturation}
that there is an element $y'$ of $\End_{W(k)[M]}(V)$ such that the action of $x$
on $i_P^G$ coincides with that of $y'$.  Since $\ell^a y$ and $\ell^a y'$ agree on $i_P^G V$,
and $V$ is $\ell$-torsion free, we must have $y' = y$.  Thus $y$ lies in the intersection
of $\bigotimes_i A_{[L_i,\pi_i]} \otimes \CK$ with $\End_{W(k)[M]}(V)$, and we have seen that
this intersection is precisely $\bigotimes_i A_{[L_i,\pi_i]}$.

We have thus constructed the desired $f$, and shown that the given diagram commutes
for our particular choice of $V$.  But every object of $\Rep_{W(k)}(M)_{[L,\pi]}$
has a resolution by direct sums of copies of $V$, so the diagram commutes for an
arbitrary object of this category.

Finally, since $f \otimes \overline{\CK}$ does not depend on $P$, neither does $f$.
\end{proof}

In one particularly important case the map arising from parabolic restriction is
an isomorphism.  Call a pair $(L,\pi)$ {\em simple} if $L$ can be written
as a product of copies of $\GL_m(F)$ for some fixed $m$, and $\pi$
is inertially equivalent to a tensor product $(\pi')^{\frac{n}{m}}$ for some
fixed supercuspidal representation $\pi'$ of $\GL_m(F)$.  Say two simple types
$(L_1,\pi_1)$ and $(L_2,\pi_2)$ {\em overlap} if there exist positive integers $m,j_1,j_2$, and
a supercuspdial representation $\pi'$ of $\GL_m(F)$, such that 
$L_1$ and $L_2$ are isomorphic to $\GL_m(F)^{j_1}$ and $\GL_m(F)^{j_2}$, respectively,
$\pi_1$ and $\pi_2$ are inertially equivalent to $(\pi')^{\otimes j_1}$ and $(\pi')^{\otimes j_2}$,
respectively.  If $(L_1,\pi_2)$ and $(L_2,\pi_2)$ do not overlap, we say they are {\em disjoint}.

Given an arbtrary pair $(L,\pi)$, there is, up to inertial equivalence,
a unique collection $\{(L_i,\pi_i)\}$ of disjoint simple pairs such that $L$ is the product of the $L_i$, and
$\pi$ is the tensor product of the $\pi_i$.
We call this collection the {\em canonical factorization
of $(L,\pi)$ into simple types.} 

Given such a factorization, there is a unique Levi subgroup $M$ of $G$ containing $L$
such that $M$ is a product of general linear groups $M_i$ with $M_i \cap L = L_i$ for all $i$.
We denote by $\Rep_{W(k)}(M)_{\{[L_i,\pi_i]\}}$ the block of $\Rep_{W(k)}(M)$ whose simple
objects $V$ have the form $\otimes_i V_i$ with $V_i$ a simple object of $\Rep_{W(k)}(M_i)_{[L_i,\pi_i]}$
for all $i$.  The center $A_{\{[L_i,\pi_i]\}}$ of $\Rep_{W(k)}(M)_{\{[L_i,\pi_i]\}}$ is isomorphic
to $\bigotimes_i A_{[L_i,\pi_i]}$.
Let $P$ be a parabolic subgroup of $G$ with Levi $M$.  Then parabolic induction $i_P^G$ 
takes $\Rep_{W(k)}(M)_{\{[L_i,\pi_i]\}}$ to $\Rep_{W(k)}(G)_{[L,\pi]}$.

\begin{thm} \label{thm:tensor factorization}
Let $\{(L_i,\pi_i)\}$ be the factorization of $(L,\pi)$ into disjoint simple pairs.
Then the functor 
$$i_P^G: \Rep_{W(k)}(M)_{\{[L_i,\pi_i]\}} \rightarrow \Rep_{W(k)}(G)_{[L,\pi]}$$
is an equivalence of categories.  In particular, the map
map: $f: A_{[L,\pi]} \rightarrow \bigotimes_i A_{[L_i,\pi_i]}$
arising from Theorem~\ref{thm:integral Bernstein parabolic induction} is an isomorphism.
\end{thm}
\begin{proof}

Let $(M',\pi')$ be a pair, with $M'$ a Levi containing $L$ and $\pi'$ a cuspidal representation of $M'$
with inertial supercuspidal support $(L,\pi)$.  Then there is a unique collection $\{(M'_i,\pi'_i)\}$
such that $M'$ is the product of the $M'_i$, $\pi'$ is the tensor product of the $\pi'_i$, and
each $\pi'_i$ has inertial supercuspidal support $(L_i,\pi_i)$.  From this it follows that
$\CP_{M',\pi'}$ is isomorphic to $i_P^G \bigotimes_i \CP_{M'_i,\pi'_i}$.  Taking direct sums,
we find that $\CP_{[L,\pi]}$ is isomorphic to $i_P^G \bigotimes_i \CP_{[L_i,\pi_i]}$.

Let $e_{\{[L_i,\pi_i]\}}$ be the idempotent corresponding to the block
$\Rep_{W(k)}(M)_{\{[L_i,\pi_i]\}}$ of $\Rep_{W(k)}(M)$.  From Bernstein-Zelevinsky's
filtration of the composite functor $r_G^P i_P^G$, we see that the natural map
$$r_G^P i_P^G \bigotimes_i \CP_{[L_i,\pi_i]} \rightarrow \bigotimes_i \CP_{[L_i,\pi_i]}$$
is a surjection, whose kernel is annihilated by $e_{\{[L_i,\pi_i]\}}$.  We thus have
natural isomorphisms:
$$\bigotimes_i \CP_{[L_i,\pi_i]} \cong e_{\{[L_i,\pi_i]\}} r_G^P i_P^G \bigotimes_i \CP_{[L_i,\pi_i]}$$
$$\CP_{[L,\pi]} \cong i_P^G e_{\{[L_i,\pi_i]\}} r_G^P \CP_{[L,\pi]}.$$

From this one deduces that the pair of functors $i_P^G$ and $e_{\{[L_i,\pi_i]\}} r_G^P$ are inverses.
For instance, any object $V$ of $\Rep_{W(k)}(M)_{\{[L_i,\pi_i]\}}$ admits a resolution by direct sums of
copies of $\bigotimes_i \CP_{[L_i,\pi_i]}$.  Applying the functor $e_{\{[L_i,\pi_i]\}} r_G^P i_P^G$
to this resolution yields a complex isomorphic to the original one via the natural isomorphism 
described above.  The remainder of the argument is similar.

The functor $i_P^G$ thus identifies $A_{[L,\pi]}$ with $\bigotimes_i A_{[L_i,\pi_i]}$;
since this isomorphism is induced by parabolic induction it agrees with the map $f$
of Theorem~\ref{thm:integral Bernstein parabolic induction}.
\end{proof}

We now turn to the question of constructing natural elements of $A_{[L,\pi]}.$
We will do this first in the case where the pair $(L,\pi)$ is simple.  In this
case, up to inertial equivalence, $L$ is isomorphic to $\GL_{n_1}(F)^r$ for some
$n_1$ and $r$, and $\pi$ is a tensor power $\pi_1^{\otimes r}$ of
an irreducible, supercuspidal $k$-representation $\pi_1$ of $\GL_{n_1}(F)$.  Then,
as in section~\ref{sec:endomorphisms}, we may choose a maximal distinguished supercuspidal
$k$-type $(K_1,\tau_1)$ contained in $\pi_1$.  Attached to this type is an integer $f$
and, for each $m$ in $\{1, e_{q^f}, \ell e_{q^f}, \ell^2 e_{q^f}, \dots\}$, a maximal distinguished
cuspidal type $(K_m,\tau_m)$; the representation $\tau_m$ factors as usual as $\kappa_m \otimes \sigma_m$.

In this case, by definition, the projective $\CP_{[L,\pi]}$ is given by:
$$\CP_{[L,\pi]} \cong \bigoplus_{\nu} i_{P_{\nu}}^G \bigotimes_j \CP_{K_{\nu_j},\tau_{\nu_j}}$$
where $\nu$ runs over all increasing partitions $\nu_1 \leq \nu_2 \leq \nu_3 \dots$ of $r$ 
such that $\nu_j$ lies in $\{1, e_{q^f}, \ell e_{q^f}, \dots\}$ for all $j$, and $P_{\nu}$
is the standard (upper triangular) parabolic with block sizes corresponding to $\nu$.

Fix a $\nu$ as above, and let $t$ be its length.
For $1 \leq i \leq r$, we can define an endomorphism 
$\Theta_{i,\nu}$ of $i_{P_{\nu}}^G \otimes_j \CP_{L_{\nu_j},\tau_{\nu_j}}$
by setting
$$\Theta_{i,\nu} = \Sigma_{\mu} \bigotimes_j \Theta_{\mu_j,\nu_j},$$
where $\mu$ runs over tuples $(\mu_1, \dots, \mu_s)$ of nonnegative integers such that
$\mu_j \leq \nu_j$ for all $j$, and the sum of the $\mu_j$ is $i$.  (Here
$\Theta_{\mu_j,\nu_j}$ is the endomorphism of $\CP_{K_{\nu_j},\tau_{\nu_j}}$ constructed
in Theorem~\ref{thm:compatibility}; the tensor product of these endomorphisms acts
by functoriality of parabolic induction on 
$i_{P_{\nu}}^G \bigotimes_j \CP_{K_{\nu_j},\tau_{\nu_j}}$.)

Let $\Theta_i$ be the endomorphism of $\CP_{[L,\pi]}$ that acts via $\Theta_{i,\nu}$
on the summand
$i_{P_{\nu}}^G \bigotimes_j \CP_{K_{\nu_j},\tau_{\nu_j}}$ of $\CP_{[L,\pi]}$.

To understand the action of $\Theta_i$ on $\CP_{[L,\pi]}$ we invoke results of
sections~\ref{sec:endomorphisms}.  In particular, we fix a
lift $\tpi_1$ of $\pi_1$ containing the type $\tkappa_1 \otimes \St_{s_1}$,
where $s_1$ is the $\ell$-regular conjugacy class of $\GL_{\frac{n_1}{ef}}(\FF_{q^f})$
corresponding to $\sigma_1$.  This choice gives rise to a compatible system of cuspidals
$(M_s,\pi_s)$ as in section~\ref{sec:endomorphisms}; here $s$ varies over conjugacy
classes in $\GL_{\frac{n_1i}{ef}}(\FF_{q^f})$, with $\ell$-regular part $(s_1)^i$,
for $i \in \{1, e_{q^f}, \ell e_{q^f}, \dots \}$.
As in the discussion before Theorem~\ref{thm:compatibility}, we have an isomorphism:
$$A_{M_s,\pi_s} \cong \overline{\CK}[Z_s]^{W_{M_s}(s)}.$$

For any $m$, the endomorphism ring $E_{K_m,\tau_m} \otimes \overline{\CK}$
of $\CP_{K_m,\tau_m} \otimes \overline{\CK}$
factors as the product:
$$\prod_s \overline{\CK}[Z_s]^{W_{M_s}(s)}.$$
By Theorem~\ref{thm:compatibility}, the endomorphism $\Theta_{i,m}$ of $\CP_{K_m,\tau_m}$
maps to the element $\prod_s \theta_{i,s}$ under this identification, where $\theta_{i,s}$
is the element of $\overline{\CK}[Z_s]$ constructed in section~\ref{sec:endomorphisms}.

Fix a $\nu$ of length $t$.  We then have an isomorphism:
$$\bigotimes_j E_{K_{\nu_j},\tau_{\nu_j}} \otimes \overline{\CK} \cong \prod_{\vec{s}}
\bigotimes_j \overline{\CK}[Z_{s_j}]^{W_{M_{s_j}}(s_j)},$$
where $\vec{s} = s_1, \dots, s_{t}$ is a sequence such that $s_j$ is a
conjugacy class in $\GL_{\frac{n_1\nu_j}{ef}}(\FF_{q^f})$ with $\ell$-regular part $(s_1)^{\nu_j}$.
If $s$ is the semisimple conjugacy class in $\GL_{\frac{n_1r}{ef}}(\FF_{q^f})$ corresponding to
``block diagonal'' matrix with blocks $s_1, \dots, s_t$, then $\overline{\CK}[Z_s]$ is the
tensor product of the $\overline{\CK}[S_j]$.  Moreover, under the composition
$$\bigotimes_j E_{K_{\nu_j},\tau_{\nu_j}} \rightarrow \bigotimes_j \overline{\CK}[Z_{s_j}]^{W_{M_{s_j}}(s_j)}
\hookrightarrow \overline{\CK}[Z_s],$$
the element $\Theta_{i,\nu}$ of $\bigotimes_j E_{K_{\nu_j},\tau_{\nu_j}}$ maps to the element $\theta_{i,s}$.

In particular, action of $\Theta_{i,\nu}$ on the part of
$i_{P_{\nu}}^G \bigotimes_j \CP_{K_{\nu_j},\tau_{nu_j}} \otimes \overline{\CK}$ lying in the block $(M_s,\pi_s)$
is via the element of $A_{M_s,\pi_s}$ corresponding to $\theta_{i,s}$ under the isomorphism:
$$A_{M_s,\pi_s} \cong \overline{\CK}[Z_s]^{W_{M_s}(s)}.$$

We immediately deduce:

\begin{theorem} \label{thm:theta}
The elements $\Theta_1, \dots, \Theta_r$ lie in $A_{[L,\pi]}$.
\end{theorem}
\begin{proof}
The action of $\Theta_i$ on $\CP_{[L,\pi]}$ coincides with the element
$\prod_{M_s,\pi_s} \theta_{i,s}$ of the product $\prod_{M_s,\pi_s} A_{M_s,\pi_s}$; in particular it lies
in the center of the endomorphism ring of $\CP_{[L,\pi]}$.
\end{proof}

We let $C_{[L,\pi]}$ be the $W(k)$-subalgebra of $A_{[L,\pi]}$ generated by
the elements $\Theta_i$, $i \in 1, \dots, r$, together with $\Theta_r^{-1}$.
More generally, if $[L,\pi]$ is not simple but factors into the simple types $[L_i,\pi_i]$,
we let $C_{[L,\pi]}$ be the tensor product of the algebras $C_{[L_i,\pi_i]}$, regarded as a subalgebra
of $A_{[L,\pi]}$ via the isomorphism $A_{[L,\pi]} \cong \bigotimes_i A_{[L_i,\pi_i]}$ constructed above.

When $(L,\pi)$ is simple, it is not hard to show that the elements $\Theta_1, \dots, \Theta_r$
are algebraically independent.  Indeed, let $s$ be the semisimple conjugacy class in $\GL_{\frac{n}{ef}}(\FF_{q^f})$
given by $(s_1)^r$.  Then $Z_s$ is free abelian group on $r$ generators, and
$W_{M_s}(s)$ is the symmetric group that permutes them.  The elements $\theta_{1,s}, \dots, \theta_{r,s}$
of $W(k)[Z_s]$ are the elementary symmetric functions in these generators.  In particular they are algebraically
independent (and generate $W(k)[Z_s]^{W_{M_s}(s)}$ over $W(k)$.)

Since $\Theta_i$ maps to $\theta_{i,s}$ under the composition:
$$C_{[L,\pi]} \hookrightarrow A_{[L,\pi]} \rightarrow A_{M_s,\pi_s} \cong \overline{\CK}[Z_s]^{W_{M_s}(s)},$$ 
we have:
\begin{proposition}
Let $s$ be the semisimple conjugacy class in $\GL_{\frac{n_1r}{ef}}(\FF_{q^f})$ given by $(s_1)^{r}$.
Then the map $C_{[L,\pi]} \rightarrow W(k)[Z_s]^{W_{M_s}(s)}$ is an isomorphism.
\end{proposition}

\begin{proposition}
The module $\CP_{[L,\pi]}$ is an admissible $C_{[L,\pi]}[G]$-module (and hence is also admissible
as an $A_{[L,\pi]}[G]$-module.)
\end{proposition}
\begin{proof}
It suffices to show this in the case where $[L,\pi]$ is simple, and in this case it
suffices to show that each $i_{P_{\nu}}^G \bigotimes_j \CP_{K_{\nu_j},\tau_{\nu_j}}$
is admissible over $C_{[L,\pi]}$.  The module $\bigotimes_j \CP_{K_{\nu_j},\tau_{\nu_j}}$
is admissible over $\bigotimes_j C_{K_{\nu_j},\tau_{\nu_j}}$.

It thus suffices to show that, for each $\nu$, the map $C_{[L,\pi]} \rightarrow \bigotimes_j C_{K_{\nu_j},\tau_{\nu_j}}$
makes the latter into a finitely generated $C_{[L,\pi]}$-module.  We have a commutative diagram:
$$
\begin{array}{ccc}
C_{[L,\pi]} & \rightarrow & \bigotimes_j C_{K_{\nu_j},\tau_{\nu_j}}\\
\downarrow & & \downarrow\\
W(k)[Z_s]^{W_{M_s}(s)} & \rightarrow & \bigotimes_j W(k)[Z_{s_j}]^{W_{M_{s_j}}(s_j)}
\end{array}
$$
where $s$ is the semisimple conjugacy class in $\GL_{\frac{n_1r}{ef}}(\FF_{q^f})$ given by $(s_1)^{r}$
and $s_j$ is the semisimple conjugacy class in $\GL_{\frac{n_1\nu_j}{ef}}(\FF_{q^f})$ given by $(s_1)^{\nu_j}$.
The vertical maps are isomorphisms.  In particular we have inclusions
$$C_{[L,\pi]} \subseteq \bigotimes_j C_{K_{\mu_j},\tau_{\nu_j}} \subseteq W(k)[Z_s].$$
But $W(k)[Z_s]$ is finitely generated as a $C_{[L,\pi]}$-module, so the result follows.
\end{proof}

We thus have:

\begin{theorem} \label{thm:fg}
The ring $A_{[L,\pi]}$ is a finitely generated, reduced,
$\ell$-torsion free $W(k)$-algebra.
\end{theorem}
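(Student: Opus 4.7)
The plan is to deduce all three properties from the structural description of $A_{[L,\pi]}$ furnished by Corollary~\ref{cor:main} together with the explicit description of $C_{[L,\pi]}$. By Corollary~\ref{cor:main}, the action on $\CP_{M_{\max},\pi_{\max}}$ identifies $A_{[L,\pi]}$ with $B_{M_{\max},\pi_{\max}}$, which by construction sits as a $W(k)$-subalgebra of $E_{M_{\max},\pi_{\max}} = \bigotimes_i E_{K_i,\tau_i}$. So it suffices to prove the three properties for $E_{M_{\max},\pi_{\max}}$ and then pass to the subalgebra.

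For $\ell$-torsion freeness, each $E_{K_i,\tau_i}$ is $\ell$-torsion free by Corollary~\ref{cor:s decomposition}; since $W(k)$ is a discrete valuation ring, $\ell$-torsion freeness means flatness, which is preserved by tensor product, so $E_{M_{\max},\pi_{\max}}$, and hence $A_{[L,\pi]}$, is $\ell$-torsion free. For reducedness, I would use $\ell$-torsion freeness to embed $E_{M_{\max},\pi_{\max}}$ into $E_{M_{\max},\pi_{\max}} \otimes_{W(k)} \overline{\CK}$, and then check that the latter is reduced. By Corollary~\ref{cor:s decomposition}, each $E_{K_i,\tau_i} \otimes_{W(k)} \overline{\CK}$ is a finite product of rings of the form $H(G,K,\tkappa \otimes \St_s) \cong \overline{\CK}[Z_s]^{W_{\overline{M}_s}(s)}$ (Theorem~\ref{thm:generic hecke}); each such ring of invariants is a subring of a Laurent polynomial ring, hence a domain. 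Thus each factor is a finite product of integral $\overline{\CK}$-algebras, and their tensor product over $\overline{\CK}$, an algebraically closed field of characteristic zero, is again reduced. Reducedness of $A_{[L,\pi]}$ follows.

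For finite generation over $W(k)$, the key input is the existence of the subalgebra $C_{[L,\pi]} \subset A_{[L,\pi]}$ introduced after Corollary~\ref{cor:main}, and the fact, established there, that $A_{[L,\pi]}$ is finitely generated as a $C_{[L,\pi]}$-module. It then suffices to show that $C_{[L,\pi]}$ is itself a finitely generated $W(k)$-algebra. By the tensor factorization $C_{[L,\pi]} = \bigotimes_{\Pi} C_{M_{\max}(\Pi),\pi_{\max}(\Pi)}$ and Proposition~\ref{prop:C generators}, each tensor factor is generated over $W(k)$ by the finitely many elements $\Theta_1,\dots,\Theta_m$ together with $\Theta_m^{-1}$; equivalently, by Proposition~\ref{prop:minimal isomorphism}, $C_{[L,\pi]} \cong W(k)[Z]^{W_L(\pi)}$ for a finitely generated free abelian group $Z$ and a finite group $W_L(\pi)$, which is a finite type $W(k)$-algebra by Noether's theorem on invariants of a finitely generated algebra under a finite group acting through automorphisms.

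There is no real obstacle here: the theorem is a formal corollary, essentially a summary, of the work already carried out, the most delicate ingredient being the reduction to the subalgebra $C_{[L,\pi]}$ in Section~\ref{sec:hecke} and the reducedness of the local Hecke algebras $H(G,K,\tkappa \otimes \St_s)$ established in Section~\ref{sec:generic}. The only step that requires a moment's thought is the tensor product argument for reducedness, but this is standard once one knows that each factor $E_{K_i,\tau_i} \otimes \overline{\CK}$ decomposes as a product of domains.
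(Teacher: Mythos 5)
Your proof is essentially correct and follows the same overall plan as the paper's: finite generation comes from $A_{[L,\pi]}$ being finite as a module over the finite-type $W(k)$-algebra $C_{[L,\pi]}$, and $\ell$-torsion-freeness and reducedness come from exhibiting $A_{[L,\pi]}$ inside an $\ell$-torsion-free, reduced overring. The paper gets these last two a little more directly — $\ell$-torsion-freeness from $A_{[L,\pi]}\subset\End_{W(k)[G]}(\CP_{[L,\pi]})$ with $\CP_{[L,\pi]}$ $\ell$-torsion free, and reducedness from the product decomposition $A_{[L,\pi]}\otimes\overline{\CK}\cong\prod A_{M,\tpi}$ of Proposition~\ref{prop:invert} — whereas you route both through the realization $A_{[L,\pi]}\cong B_{M_{\max},\pi_{\max}}\subset E_{M_{\max},\pi_{\max}}=\bigotimes_i E_{K_i,\tau_i}$, unpacking the structure of each $E_{K_i,\tau_i}\otimes\overline{\CK}$ as a product of domains. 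Both work; yours is more explicit, the paper's more economical.

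One sentence in your framing is, however, misleading: you write that ``it suffices to prove the three properties for $E_{M_{\max},\pi_{\max}}$ and then pass to the subalgebra.'' This is false for finite generation, since a $W(k)$-subalgebra of a finite-type $W(k)$-algebra need not be finite type. You do not actually rely on this in your finite-generation paragraph (there you correctly invoke finiteness of $A_{[L,\pi]}$ as a $C_{[L,\pi]}$-module), so the argument goes through; but the framing sentence should be restricted to the $\ell$-torsion-free and reducedness claims, which genuinely do pass to subalgebras.
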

\begin{proof}
As $\CP_{[L,\pi]}$ is admissible over $C_{[L,\pi]}$ its endomorphism ring is
finitely generated as a $C_{[L,\pi]}$-module.  Thus
$A_{[L,\pi]}$ is a finitely generated $C_{[L,\pi]}$-module.  Since $C_{[L,\pi]}$ is
a polynomial ring over $W(k)$, it is immediate that $A_{[L,\pi]}$ is a finitely
generated $W(k)$-algebra.  The fact that $A_{[L,\pi]}$ is $\ell$-torsion free
follows from $A_{[L,\pi]} \subset \End_{W(k)[G]}(\CP_{[L,\pi]})$ and the fact
that $\CP_{[L,\pi]}$ is $\ell$-torsion free.  Reducedness follows from the
fact that $A_{[L,\pi]}$ embeds in $A_{[L,\pi]} \otimes_{W(k)} \overline{\CK}$, and the latter is
reduced by Proposition~\ref{prop:invert}.
\end{proof}

\begin{theorem} \label{thm:fg adm}
Let $\Pi$ be an object of $\Rep_{W(k)}(G)_{[L,\pi]}$ that is
finitely generated as a $W(k)[G]$-module.  Then $\Pi$ is an admissible
$A_{L,\pi}[G]$-module.
\end{theorem}
\begin{proof}
As $\CP_{[L,\pi]}$ is faithfully projective, there is a surjection of a direct sum
of (possibly infinitely many) copies of $\CP_{[L,\pi]}$ onto $\Pi$.  Any element
$x$ of $\Pi$ is in the image of this surjection, and any element $\tx$ that maps to
$x$ is nonzero only in finitely many copies of $\CP_{[L,\pi]}$.  Thus if $\Pi$ is
generated by finitely many elements, then there is a finite direct sum of copies
of $\CP_{[L,\pi]}$ whose image in $\Pi$ contains all the generators, and this direct
sum surjects onto $\Pi$.
\end{proof}

We next turn to the question of understanding the $k$-points of $\Spec A_{[L,\pi]}$.
If $\Pi$ is an irreducible object in $\Rep_{k}(G)_{[L,\pi]}$, then it admits an action
of $A_{[L,\pi]}$; by Schur's lemma this action is via a map $f_{\Pi}: A_{[L,\pi]} \rightarrow k$.
Conversely, if $f: A_{[L,\pi]} \rightarrow k$ is any map, then $\CP_{[L,\pi]} \otimes_{A_{[L,\pi]},f} k$
is an admissible smooth $k[G]$-module on which $A_{[L,\pi]}$ acts via $f$, so there exists an
irreducible $\Pi$ such that $f = f_{\Pi}$.

There is thus a surjection $\Pi \mapsto f_{\Pi}$
from isomorphism classes of irreducible objects in $\Rep_k(G)_{[L,\pi]}$
to $k$-points of $\Spec A_{[L,\pi]}$.  

\begin{lemma}
Suppose $\Pi$ and $\Pi'$ are irreducible $k$-representations in $\Rep_k(G)_{[L,\pi]}$
with the same supercuspidal support.  Then $f_{\Pi} = f_{\Pi'}$.
\end{lemma}
\begin{proof}
Suppose that $\Pi$ and $\Pi'$ both have supercuspidal support $(L,\pi')$ for some $\pi'$
inertially equivalent to $\pi$.  Let $\tpi'$ be a lift of $\pi'$ to $\overline{\CK}$.  Then
both $\Pi$ and $\Pi'$ are subquotients of $i_P^G \tpi$.  On the other hand the action
of $A_{[L,\pi]}$ on $i_P^G \tpi$ factors through $A_{[L,\pi]} \otimes \overline{\CK}$, and
the latter acts on $i_P^G \tpi$ by scalars.  It follows that any element of $A_{[L,\pi]}$ acts 
on $\Pi$ and $\Pi'$ by the same scalar; that is, $f_{\Pi} = f_{\Pi'}$.
\end{proof}

Conversely, we have:
\begin{prop} \label{prop:c action}
Let $\Pi$ and $\Pi'$ be irreducible $k$-representations in $\Rep_k(G)_{[L,\pi]}$ and
suppose that $f_{\Pi}$ and $f_{\Pi'}$ agree on $C_{[L,\pi]}$.  Then
$\Pi$ and $\Pi'$ have the same supercuspidal support.
\end{prop}
\begin{proof}
It suffices to prove this in the case where $(L,\pi)$ is simple. 
The supercuspidal supports of $\Pi$ and $\Pi'$ have the form $(L,\pi \otimes \chi)$
and $(L,\pi \otimes \chi')$, respectively, where $\chi$ and $\chi'$ are unramified characters of
$L$.  

Choose a lift $\tchi$ of $\chi$ to a character $L/L_0 \rightarrow W(k)^{\times}$,
and consider the pair $(L,\tpi \otimes \tchi)$, where $\tpi$
is the representation $\tpi_1^{\otimes r}$ of $L$.

If $P$ is a parabolic with Levi $L$, then $A_{[L,\pi]}$ acts on $i_P^G \tpi \otimes \tchi$
via the map 
$$A_{[L,\pi]} \rightarrow A_{M_s,\pi_s} \cong (\overline{\CK}[(L/L_0)]^{H_s})^{W_{M_s}(s)}
\cong \overline{\CK}[Z_s]^{W_{M_s}(s)},$$
where $s$ is the semisimple conjugacy class $(s_1)^r$ in $\GL_{\frac{n_1r}{ef}}(\FF_{q^f})$.
The action of an element of $A_{[L,\pi]}$ on $i_P^G \tpi \otimes \tchi$ may be described as follows:
the character $\tchi$ can be viewed as a $W(k)$-point of $\Spec \overline{\CK}[L/L_0]$;
then an element $x$ of $(\overline{\CK}[(L/L_0)]^{H_s})^{W_{M_s}(s)}$ acts on $i_P^G \tpi \otimes \tchi$
by the scalar $x(\tchi)$, where we regard $x$ as a function on $\Spec \overline{\CK}[L/L_0]$.

Reducing mod $\ell$, we find that an element of $A_{[L,\pi]}$ that maps to a given element
$x$ of $(W(k)[(L/L_0)]^{H_s})^{W_{M_s}(s)}$
acts on $i_P^G \pi \otimes \chi$ (and hence on $\Pi$) by the scalar $x(\chi)$.  Similarly, such an element
acts by $x(\chi')$ on $\Pi'$.

Since $f_{\Pi}$ and $f_{\Pi'}$ agree on $C_{[L,\pi]}$, we have $\theta_{i,s}(\chi) = \theta_{i,s}(\chi')$
for all $i$.  Thus, when considered as $k$-points of $\Spec W(k)[L/L_0]$, the characters $\chi$
and $\chi'$ map to the same $k$-point of $(\Spec W(k)[(L/L_0)]^{H_s})^{W_{M_s}(s)}$.  In particular
the pairs $(L,\pi \otimes \chi)$ and $(L,\pi \otimes \chi')$ are conjugate, so $\Pi$ and $\Pi'$ have
the same supercuspidal support as claimed.
\end{proof}

Combining these, we have:

\begin{corollary} \label{cor:points}
If $\Pi$ and $\Pi'$ are irreducible representations of $G$ over $k$ that
lie in $\Rep_{W(k)}(G)_{[L,\pi]}$, 
and $f_{\Pi}, f_{\Pi'}$ are the maps $A_{[L,\pi]} \rightarrow k$ giving the action
of $A_{L,\pi}$ on $\Pi$ and $\Pi'$ respectively, then $f_{\Pi} = f_{\Pi'}$ if, and only if,
$\Pi$ and $\Pi'$ have the same supercuspidal supports.  Moreover, the map
$\Spec A_{[L,\pi]} \otimes k \rightarrow \Spec C_{[L,\pi]} \otimes k$ is a bijection on $k$-points.
\end{corollary}

\begin{corollary} \label{cor:mod ell reduced}
The composition:
$$C_{[L,\pi]} \otimes k \rightarrow A_{[L,\pi]} \otimes k \rightarrow (A_{[L,\pi]} \otimes k)^{\red}$$
identifies $C_{[L,\pi]} \otimes k$ with the reduced quotient of $A_{[L,\pi]} \otimes k$.
\end{corollary}
\begin{proof}
The map $A_{[L,\pi]} \rightarrow \overline{\CK}[Z_s]^{W_{M_s}(s)}$ factors through $W(k)[Z_s]^{W_{M_s}(s)}$,
where $s$ is the semisimple conjugacy class in $\GL_{\frac{n_1r}{ef}}(\FF_{q^f})$ given by $(s_1)^{r}$,
and the composition:
$$C_{[L,\pi]} \otimes k \rightarrow A_{[L,\pi]} \otimes k \rightarrow k[Z_s]^{W_{M_s}(s)}$$
is an isomorphism.  We thus obtain a map $(A_{[L,\pi]} \otimes k)^{\red} \rightarrow C_{[L,\pi]} \otimes k$
that is left inverse to the map $C_{[L,\pi]} \otimes k \rightarrow (A_{[L,\pi]} \otimes k)^{\red}$.  We must
show that it is also a right inverse.  For this it suffices to show that the map
$(A_{[L,\pi]} \otimes k)^{\red} \rightarrow C_{[L,\pi]} \otimes k$ is injective.
But both maps are bijections on $k$-points.  Since
$\Spec C_{[L,\pi]}$ is integral, so is $\Spec (A_{[L,\pi]} \otimes k)^{\red}$;
the map $\Spec C_{[L,\pi]} \otimes k \rightarrow \Spec (A_{[L,\pi]} \otimes k)^{\red}$ is dominant
and so the induced map on rings of functions must be injective.
\end{proof}

\section{Further results} \label{sec:explicit}
In this section we establish more precise results on the structure of $A_{[L,\pi]}$;
we focus on the case in which the pair $(L,\pi)$ is simple.  
Fix an irreducible supercuspidal representation $\pi_1$ of $\GL_n(F)$, and for each positive
integer $m$, let $G_m$ be the group $\GL_{nm}(F)$, let $L_m$ be a Levi of $G_m$ isomorphic to a product
of $m$ copies of $\GL_n(F)$, and let $\pi_m$ be the representation $(\pi_1)^{\otimes m}$ of $L_m$,
so that $(L_m,\pi_m)$ is a simple pair. 

All of our results in this section
rely on the following result, which is proved in~\cite{bernstein3}:

\begin{thm}[\cite{bernstein3}, Proposition 4.9] \label{thm:generic1}
Let $(K,\tau)$ be a maximal distinguished cuspidal $k$-type of $G_m$, and let ${\mathfrak p}$ be
a prime ideal of $E_{K,\tau}$ with residue field $\kappa({\mathfrak p})$.  Then the cosocle of the
tensor product $\CP_{K,\tau} \otimes_{E_{K,\tau}} \kappa({\mathfrak p})$ is absolutely irreducible
and generic.
\end{thm}

Suppose the inertial supercuspidal support of $(K,\tau)$ is $(L_m,\pi_m)$, so that there
is an action of $A_{L_m,\pi_m}$ on $\CP_{K,\tau}$ and a corresponding injection
$A_{L_m,\pi_m} \rightarrow E_{K,\tau}$.  Let ${\mathfrak m}$ be a maximal ideal of
$A_{L_m,\pi_m}$, with residue field $k$.  We then have:

\begin{lemma} \label{lem:filtration}
Every irreducble quotient of $\CP_{K,\tau}/{\mathfrak m} \CP_{K,\tau}$ is generic.
\end{lemma}
\begin{proof}
The quotient $E_{K,\tau}/{\mathfrak m} E_{K,\tau}$ is a finite dimensional $k$-algebra,
and is thus a product of finite dimensional local $k$-algebras $E_i$, with maximal ideals
${\mathfrak m_i}$.  We then have an isomorphism:
$$\CP_{K,\tau}/{\mathfrak m} \CP_{K,\tau} \cong \bigoplus_i \CP_{K,\tau} \otimes_{E_{K,\tau}} E_i$$
and it suffices to shat any irreducible quotient of the right-hand-side is generic.

Fix $i$.  Then $E_i$ admits a filtration by ideals
$$0 = I_0 \subseteq I_1 \subseteq \dots \subseteq I_r = E_i$$
such that the quotients $I_{j+1}/I_j$ are one-dimensional $k$-vector spaces.  We can thus choose,
for each $j$, an element $x_j$ of $I_j$ such that $x_j$ generates $I_j$ modulo $I_{j-1}$.
Then multiplication by $x_j$ induces an surjection of $\CP_{K,\tau} \otimes_{E_{K,\tau}} \kappa({\mathfrak m_i})$
onto $I_j \CP_{K,\tau}/ I_{j-1} \CP_{K,\tau}$.  It follows that $\CP_{K,\tau} \otimes_{E_{K,\tau}} E_i$
admits a filtration such that the cosocle of each successive quotient is irreducible and generic.
Thus every irreducible quotient of $\CP_{K,\tau} \otimes_{E_{K,\tau}} E_i$ is generic as claimed.
\end{proof}

We will
say a simple object $\Pi$ of $\Rep_{W(k)}(G_m)_{[L_m,\pi_m]}$ is generic if either:
\begin{itemize}
\item $\ell \Pi = 0$, and $\Pi$ is an irreducible generic $k$-representation of $G_m$, or
\item $\ell \Pi = \Pi$, and $\Pi \otimes_{\CK} \overline{\CK}$ is a direct sum of irreducible generic
$\overline{\CK}$-representations of $G_m$.
\end{itemize}

We can then rephrase Theorem~\ref{thm:generic1} as follows:

\begin{thm} \label{thm:generic2}
Let $(K,\tau)$ be a maximal disinguished cuspidal $k$-type of $G_m$, with supercuspidal support $(L_m,\pi_m)$.  
Then every simple quotient of $\CP_{K,\tau}$ is generic.  Moreover, every simple generic object of
$\Rep_{W(k)}(G)_{[L_m,\pi_m]}$ is a quotient of $\CP_{K,\tau}$.
\end{thm}
\begin{proof}
Let $\Pi$ be a simple quotient of $\CP_{K,\tau}$.  Then $\End_{W(k)[G]}(\Pi)$ is either isomorphic to $k$,
if $\ell \Pi = 0$, or to an extension $\CK'$ of $\CK$ (note that every endomorphism of $\Pi$ lifts to
an endomorphism of $\CP_{K,\tau}$, so that $\Pi$ has a commutative endomorphism ring.)  

Suppose first that $\End_{W(k)[G]}(\Pi)$ is an extension $\CK'$ of $\CK$.  Then,
since $A_{L_m,\pi_m}$ acts on $\Pi$, and the natural map $A_{L_m,\pi_m} \rightarrow E_{K,\tau}$
is an isomorphism after inverting $\ell$, we have a map $E_{K_m,\tau_m} \rightarrow \End_{W(k)[G]}(\Pi)$.
This map is surjective because every endomorphism of $\Pi$ lifts to an element of $E_{K_m,\tau_m}$, and
hence its kernel is a maximal ideal ${\mathfrak m}$.  Then 
$\Pi$ then a quotient of 
$\CP_{K,\tau} \otimes_{E_{K,\tau}} \kappa({\mathfrak m})$.  Since $\Pi$ is simple,
the cosocle of $\CP_{K,\tau} \otimes_{E_{K,\tau}} \kappa({\mathfrak m)}$ surjects onto
$\Pi$.  It follows that, when regarded as a representation over $\kappa({\mathfrak m})$,
$\Pi$ is absolutely irreducible and generic, so that $\Pi$ is a generic $W(k)[G]$-module as claimed.

On the other hand, if $\ell \Pi = 0$, then the annihilator of $\Pi$ in $A_{L_m,\pi_m}$ is a maximal ideal
${\mathfrak m}$ with residue field $k$, and $\Pi$ is an irreducible quotient of $\CP_{K,\tau}/{\mathfrak m} \CP_{K,\tau}$,
and is therefore generic by Lemma~\ref{lem:filtration}.

Conversely, let $\Pi$ be a simple generic object in $\Rep_{W(k)}(G)_{[L_m,\pi_m]}$, and set $\kappa = k$
if $\ell \Pi = 0$, $\kappa = \overline{\CK}$ otherwise.  Then $\Pi \otimes \kappa$ is 
a direct sum of irreducible representations; let $\Pi'$ be an irreducible summand.  The action of $A_{[L_m,\pi_m]}$
on $\Pi'$ is via a map $A_{[L_m,\pi_m]} \rightarrow \kappa$.  This map depends on the choice of $\Pi'$,
but its kernel ${\mathfrak m}$ does not.

Let ${\tilde {\mathfrak m}}$ be a maximal ideal of 
$E_{K,\tau}$ lying over ${\mathfrak m}$.  Since $\kappa$ is algebraically closed, and $E_{K,\tau}$
is finite over $A_{[L_m,\pi_m]}$, the map $A_{[L_m,\pi_m]} \rightarrow \kappa$ giving the action
of $A_{[L_m,\pi_m]}$ on $\Pi'$ extends to a map $f: E_{K,\tau}/{\tilde {\mathfrak m}} \rightarrow \kappa$.

The cosocle of $\Pi''$ of $\CP_{K,\tau} \otimes_{E_{K,\tau},f} \kappa$ is an irreducible generic representation
on which $A_{[L_m,\pi_m]}$ acts via $f$; in particular the supercuspidal supports of $\Pi''$ and $\Pi'$
coincide.  Since there is a unique generic representation with given supercuspidal support, $\Pi''$ is isomorphic
to $\Pi'$; we thus obtain a surjection $\CP_{K,\tau} \otimes_{E_{K,\tau},f} \kappa \rightarrow \Pi'$.
In particular $\Hom_{W(k)[G]}(\CP_{K,\tau} \otimes_{E_{K,\tau},f} \kappa, \Pi \otimes \kappa)$ is nonzero
and so $\Hom_{W(k)[G]}(\CP_{K,\tau},\Pi)$ is nonzero as well.
\end{proof}

We will apply this result in the setting of section~\ref{sec:endomorphisms}.  
Fix a maximal distinguished supercuspidal
$k$-type $(K_1,\tau_1)$ containing $\pi_1$;
then the construction of section~\ref{sec:endomorphisms}
gives rise to a family of maximal distinguished cuspidal $k$-types $(K_m,\tau_m)$ for $m$
in $\{1,e_{q^f},\ell e_{q^f}, \dots \}$.  Fix an element $m > 1$ in this set, and let $m'$
be the largest element of this set strictly smaller than $m$.  Set $j = \frac{m}{m'}$.
We then have projectives $\CP_m = \CP_{K_m,\tau_m}$ and $\CP_{m'} = \CP_{K_{m'},\tau_{m'}}$; we denote their
endomorphism rings by $E_m$ and $E_{m'}$, respectively.  

Let $M$ be a Levi of $G_m$ containing $L_m$ and isomorphic to a product of $j$ copies of $\GL_{nm'}(F)$,
and let $P$ be a parabolic with Levi $M$.  Theorem~\ref{thm:generic2} gives us the following relationship
between the $W(k)[M]$-module $\CP_{m'}^{\otimes j}$ and the parabolic restriction $r_G^P \CP_m$:

\begin{prop}
The map: 
$$r_G^P \CP_m \otimes \Hom_{W(k)[M]}(r_G^P \CP_m, \CP_{m'}^{\otimes j}) \rightarrow \CP_{m'}^{\otimes j}$$
is surjective.
\end{prop}
\begin{proof}
Note that $r_P^G \CP_m$ is a projective $W(k)[M]$-module.  It thus suffices to show that every simple
quotient of $\CP_{m'}^{\otimes j}$ admits a map from $r_P^G \CP_m$.  By Theorem~\ref{thm:generic2},
any simple quotient $\Pi$ $\CP_{m'}^{\otimes j}$ is generic and lies in $\Rep_{W(k)}(M)_{[L_m,\pi_m]}$.
Thus $i_P^G \Pi$ lies in $\Rep_{W(k)}(G)_{[L_m,\pi_m]}$ and has a simple generic subquotient.  In particular,
$\Hom_{W(k)[G]}(\CP_m, i_P^G \Pi)$ is nonzero, and so $\Hom_{W(k)[M]}(r_P^G \CP_m, \Pi)$ is nonzero as well.
\end{proof}

As a consequence, we deduce:
\begin{theorem}
There exists a unique map: $f_m: E_m \rightarrow E_{m'}^{\otimes j}$ making the diagram:
$$
\begin{array}{ccc}
A_{[L_m,\pi_m]} & \rightarrow & A_{[L_{m'},\pi_{m'}]}^{\otimes j}\\
\downarrow & & \downarrow\\
E_m & \rightarrow & E_{m'}^{\otimes j}
\end{array}
$$
commute.
\end{theorem}
\begin{proof}
Our descriptions of $A_{[L_m,\pi_m]} \otimes \overline{\CK}$ and $E_m \otimes \overline{\CK}$ show that
the natural map $A_{[L_m,\pi_m]} \rightarrow E_m$ giving the action of $A_{[L_m,\pi_m]}$ on $\CP_m$
becomes an isomorphism after tensoring with $\overline{\CK}$, and hence after inverting $\ell$.
Uniqueness of the map is clear thus if such a map exists.  

To construct the map, note that $E_m$
acts on $\CP_m$ and hence on $r_G^P \CP_m$.  Moreover, if $x$ is an element of $E_m$
in the image of $A_{[L_m,\pi_m]}$, then $x$ acts on $r_G^P \CP_m$ via the natural map
$A_{[L_m,\pi_m]} \rightarrow A_{[L_{m'},\pi_{m'}]}^{\otimes j}$, by the adjointness of parabolic induction and
restriction.  It thus suffices to show that the action
of $E_m$ preserves the kernel of the surjection:
$$r_G^P \CP_m \otimes \Hom_{W(k)[M]}(r_G^P \CP_m, \CP_{m'}^{\otimes j}) \rightarrow \CP_{m'}^{\otimes j}.$$

On the other hand, if $x$ lies in $E_m$, then for some $a$, $\ell^a x$ lies in $A_{[L_m,\pi_m]}$,
and hence acts via an element of $A_{[L_{m'},\pi_{m'}]}^{\otimes j}$ on $r_G^P \CP_m$.  In particular
$\ell^a x$ preserves the kernel of the map:
$$\left[r_G^P \CP_m \otimes \Hom_{W(k)[M]}(r_G^P \CP_m, \CP_{m'}^{\otimes j})\right]
\rightarrow \CP_{m'}^{\otimes j}$$
and so $x$ preserves this kernel as well (as the kernel contains all $\ell$-torsion).  
Thus $x$ gives an endomorphism of $\CP_{m'}^{\otimes j}$ and we are done.
\end{proof}

We have the following control over the image of $E_m$ in $E_{m'}^{\otimes j}$:
\begin{theorem}
Let $y$ be an element of $E_{m'}^{\otimes j}$ such that, for some $a$, $\ell^a y$ lies in the image of $E_m$.
Then there exist elements $x_1,x_2$ of $E_m$, and a positive integer $b$,
such that:
\begin{enumerate}
\item $x_2$ is supported on double cosets of the form
$K_m z_{m,m}^i K_m$, and
\item $f_m(x_2) = \ell^b(y - f_m(x_1))$.
\end{enumerate}
\end{theorem}
\begin{proof}
We invoke results from section~\ref{sec:endomorphisms}.  In particular, by Proposition~\ref{prop:P and P'}
we have a support-preserving map
$$E_m/I^{\cusp} \hookrightarrow H(G_m,K_m,\tkappa_m \otimes \CP'_{\sigma_m})$$
that is an isomorphism after inverting $\ell$ and whose cokernel is supported on double cosets
of the form $K_m z_{m,m}^i K_m$.

The $G$-cover construction of section~\ref{sec:endomorphisms}
yields a map:
$$T: E_{m'}^{\otimes j} \rightarrow H(G_m,K_m,\tkappa_m \otimes i_{\overline{P}}^{\overline{G}_m} \CP_{\overline{M}})
$$
and Lemma~\ref{lemma:G cover center} shows that central elements of $H(G_m,K_m,\tkappa_m \otimes
i_{\overline{P}}^{\overline{G}_m} \CP_{\overline{M}})$ descend to elements of $H(G_m,K_m,\tkappa_m \otimes \CP'_{\sigma_m})$.

If $\ell^a y$ is in the image of $E_m$, then for some $c \geq a$, $\ell^c y$ lies in the
image of $A_{[L_m,\pi_m]}$.  In particular $T(\ell^c y)$ coincides with an element of $A_{[L_m,\pi_m]}$
as an endomorphism of $\cInd_{K_m}^{G_m} \tkappa_m \otimes i_{\overline{P}}^{\overline{G}_m} \CP_{\overline{M}}$
and is thus central.  In particular $T y$ is central as well.

We thus obtain an element $y'$ of $H(G_m,K_m,\tkappa_m \otimes \CP'_{\sigma_m})$ corresponding to $T y$.
Let $y'_2$ be the part of $y'$ supported on double cosets of the form $K_m z_{m,m}^i K_m$, and set $y'_1 = y - y'_2$.
Then $y'_1$ is the image of an element $x_1$ of $E_m$.  Moreover, for some $b$, $\ell^b y'_2$ is the image
of an element $x_2$ of $E_m$ supported on double cosets of the form $K_m z_{m,m}^i K_m$.

The action of $T f_m(x_2)$ on $\cInd_{K_m}^{G_m} \tkappa_m \otimes \CP'_{\sigma_m}$ then coincides with
that of $\ell^b y'_2$, and that of $T f_m(x_1)$ coincides with $y'_1$.  We thus have
$T f_m(x_2) = \ell^b(T y - T f_m(x_1))$, and since $T$ is injective the claim follows.
\end{proof}

The existence of maps $f_m$ as above allows us to considerably simplify our description of $A_{[L_m,\pi_m]}$.  From
this point on we let $m$ be arbitrary.  Call a partition $\nu$ of $m$ {\em admissible} if each $\nu_j$ lies
in the set $\{1,e_{q^f},\ell e_{q^f}, \dots \}$.  We order the admissible partitions by refinement, via the convention
$\nu \preceq \nu'$ if $\nu$ refines $\nu'$.  Since each element of the set $\{1,e_{q^f},\ell e_{q^f}, \dots \}$ divides
the next, there is a unique {\em maximal} (i.e. coarsest) partition $\nu$ of $m$ for every $m$.

For any admissible partition $\nu$ of $m$ we have a map 
$$A_{[L_m,\pi_m]} \rightarrow \bigotimes_j A_{L_{\nu_j},\pi_{\nu_j}} \rightarrow \bigotimes_j E_{\nu_j}.$$

\begin{theorem}
Let $\nu$ be the maximal admissible partition of $m$.  Then the map $A_{[L_m,\pi_m]} \rightarrow \bigotimes_j E_{\nu_j}$ is
injective, and has saturated image (i.e., if $\ell^a x$ lies in the image for some $a$, then so does $x$.)
In particular, if $m$ lies in $\{1, e_{q^f}, \ell e_{q^f}, \dots \}$ then the map $A_{[L_m,\pi_m]} \rightarrow E_m$ is
an isomorphism.
\end{theorem}
\begin{proof}
We have seen that
$A_{[L_m,\pi_m]}$ is the intersection, in $\End_{W(k)[G_m]}(\CP_{[L_m,\pi_m]} \otimes \CK)$,
of $A_{[L_m,\pi_m]} \otimes \CK$ and $\End_{W(k)[G_m]}(\CP_{[L_m,\pi_m]})$.

Let $x$ be an element of $\bigotimes_j E_{\nu_j}$ such that $\ell^a x$ is the image of $y \in A_{[L_m,\pi_m]}$.
For any admissible partition $\nu'$ of $m$, $\nu'$ refines $\nu$.  We thus have a map:
$$\bigotimes_j E_{\nu_j} \rightarrow \bigotimes_i E_{\nu'_i}$$
obtained by iteratively applying maps of the form $f_{\nu_j}$ to the tensor factors.  For each $\nu'$,
let $x_{\nu'}$ be the image of $x$ in $\bigotimes_i E_{\nu'_i}$.  Then $x_{\nu'}$ gives an
endomorphism of $i_{P_{\nu'}}^{G_m} \bigotimes_i \CP_{\nu'_i}$, and the action of $\ell^a x_{\nu'}$ on this
space coincides with that of $y$.

Now $\CP_{[L_m,\pi_m]}$ is isomorphic to the direct sum of the spaces $i_{P_{\nu'}}^{G_m} \bigotimes_i \CP_{\nu'_i}$
as $\nu'$ varies over all admissible partitions.  The endomorphism of $\CP_{[L_m,\pi_m]}$ that acts by
$x_{\nu'}$ on the summand corresponding to $\nu'$ for all $\nu'$ is given by $\frac{y}{\ell^a}$ and thus
lies in $A_{[L_m,\pi_m]}$.
\end{proof}

We conclude by giving a characterization of the image of $A_{[L_m,\pi_m]}$ in $\bigotimes_j E_{\nu_j}$,
where $\nu$ remains the maximal admissible partition of $m$.
We have $\tau_1 = \kappa_1 \otimes \sigma_1$, where $\sigma_1$ is a supercuspidal representation
of $\overline{G}_1$ corresponding to an $\ell$-regular semisimple conjugacy class $s'$ in $\overline{G}_1$.
We have an isomorphism:
$$E_{\nu_j} \otimes \overline{\CK} \rightarrow \prod_s \overline{\CK}[Z_s]^{W_{M_s}(s)},$$
where $s$ runs over semsimple conjugacy classes in $G_{\nu_j}$ with $\ell$-regular part $(s')^{\nu_j}$.
For such an $s$, let $\phi_s: E_{\nu_j} \rightarrow \overline{\CK}[Z_s]^{W_{M_s}(s)}$ be the projection
onto the corresponding factor.

Let $\vec{s}$ be a sequence of conjugacy classes, where $\vec{s}_j$ is a semisimple conjugacy class in $G_{\nu_j}$
with $\ell$-regular part $(s')^{\nu_j}$.  The tensor product of the $\phi_{\vec{s}_j}$ gives a map:
$$\phi_{\vec{s}}: \bigotimes_j E_{\nu_j} \rightarrow \bigotimes_j \overline{\CK}[Z_{\vec{s}_j}]^{W_{M_{\vec{s}_j}}(\vec{s}_j)}.$$
We may also regard $\vec{s}$ as a single conjugacy class in $M_{\nu}$, where $M_{\nu}$ is a Levi subgroup
of $G_m$ with ``block decomposition'' given by $\nu$.

Now suppose that $\vec{s}$ and $\vec{s}'$ are two conjugacy classes in $M_{\nu}$ as above, and let
$w \in W(G_m)$ be an element of $W(G_m)$ that conjugates $\vec{s}$ to $\vec{s}'$.  Then
conjugation by $w$ induces an isomorphism:
$$\prod_j Z_{\vec{s}_j} \cong \prod_j Z_{\vec{s}'_j}$$
and a compatible isomorphism:
$$\prod_j W_{M_{\vec{s}_j}}(\vec{s}_j) \cong \prod_j W_{M_{\vec{s}'_j}}(\vec{s}'_j)$$
where we regard all the products as subgroups of $M_{\nu}$, and hence of $G_m$.  We thus obtain isomorphisms:
$$\psi_w: \bigotimes_j \overline{\CK}[Z_{\vec{s}_j}]^{W_{M_{\vec{s}_j}}(\vec{s}_j)} \cong
\bigotimes_j \overline{\CK}[Z_{\vec{s}'_j}]^{W_{M_{\vec{s}'_j}}(\vec{s}'_j)}.$$

\begin{theorem} \label{thm:explicit}
An element $x$ of $\bigotimes_j E_{\nu_j}$ lies in the image of $A_{[L_m,\pi_m]}$ if, and only if,
for all $\vec{s},\vec{s}'$ as above, and all $w \in W(G_m)$ that conjugate $\vec{s}$ to $\vec{s}'$, we have
$\phi_{\vec{s}}(x) = \psi_w \phi_{\vec{s}'}(x)$.
\end{theorem}
\begin{proof}
It suffices to show that the action of $x$ on $i_{P_{\nu}}^{G_m} \bigotimes_j \CP_{\nu_j} \otimes \overline{\CK}$ coincides
with an element of $A_{[L,\pi]} \otimes \overline{\CK}$.  Elements of the latter have the form
$\prod_{M,\tpi} A_{M,\tpi}$, where $(M,\tpi)$ runs over a set of representatives for the inertial equivalence
classes of pairs over $\overline{\CK}$ with mod $\ell$ inertial supercuspidal support $(L_m,\pi_m)$.

For $\vec{s}$, let $M_{\vec{s}}$ be the product of the $M_{\vec{s}_j}$ and let $\pi_{\vec{s}}$ be the tensor
product of the $\pi_{s_j}$.  Then $(M_{\vec{s}},\pi_{\vec{s}})$ is inertially equivalent to
a unique pair $(M,\tpi)$ as above.  This inertial equivalence induces a map:
$$A_{M,\tpi} \rightarrow \bigotimes_j \overline{\CK}[Z_{\vec{s}'_j}],$$
and the image consists of those elements that are invariant under all $w \in W(G_m)$ that
conjugate $\vec{s}'$ to itself.  

Moreover, if $\vec{s}$ and $\vec{s}'$ are such that $(M_{\vec{s}},\pi_{\vec{s}})$ and
$(M_{\vec{s}'},\pi_{\vec{s}'})$ are both inertially equivalent to $(M,\tpi)$, then there exists a $w \in W(G_m)$
that conjugates $\vec{s}$ to $\vec{s}'$, and then the diagram:
$$
\begin{array}{ccc}
A_{M,\tpi} & \rightarrow & \bigotimes_j \overline{\CK}[Z_{\vec{s}_j}]\\
\downarrow & & \downarrow\\
A_{M,\tpi} & \rightarrow & \bigotimes_j \overline{\CK}[Z_{\vec{s'}_j}]
\end{array}
$$
commutes, where the left vertical arrow is the identity and the right is $\psi_w$.

Thus, if $x$ satisfies the hypothesis of the theorem, it gives rise to a well-defined element
of $A_{[L_m,\pi_m]} \otimes \overline{\CK}$, and the action of this element on 
$i_{P_{\nu}}^G \bigotimes_j \CP_{\nu_j} \otimes \overline{\CK}$
coincides with that of $x$.  Thus $x$ lies in the image of $A_{[L_m,\pi_m]}$ as claimed.
\end{proof}

\begin{example} \label{ex:banal} \rm
Suppose that $e_{q^f} > m$ and so the partition
$\nu$ consists entirely of $1$'s.  (This includes, in particular, any banal situation.)
The ring $E_1$ is the ring $W(k)[\Theta_{1,1}^{\pm 1}]$.  An element $x$
of $E_1^{\otimes m}$ satsifes the conditions of Theorem~\ref{thm:explicit} if, and only if $x$ is invariant
under $S_m$.  Thus in this case $A_{[L,\pi]}$ is simply an algebra of symmetric polynomials; in fact,
one has $C_{[L,\pi]} = A_{[L_m,\pi_m]}$ in this case.
\end{example}

\begin{example} \label{ex:quasi-banal} \rm
Suppose that $\ell > m$, and that $q^f \cong 1$ modulo $\ell$.  Then the partition $\nu$ consists entirely of $1$'s,
and $E_1$ is isomorphic to $W(k)[\Theta_{1,1}^{\pm 1},\zeta]/\zeta^{\ell^a} - 1$, where $\ell^a$ is the largest power of
$\ell$ dividing $q - 1$.  Then, as in the last example,
an element of $x$ in $E_1^{\otimes m}$ satisfies the conditions of Theorem~\ref{thm:explicit}
if, and only if $x$ is $S_m$ invariant.  We thus obtain an explicit description of $A_{[L_m,\pi_m]}$ in
this setting as well.  (Note that this case includes- but is slightly larger than- what is often
called the ``quasi-banal'' setting in the literature.)
\end{example}

\begin{example} \label{ex:more complicated} \rm
Suppose that $e_{q^f} \leq m < 2e_{q^f}$, so that the partition $\nu$ is given by $1, \dots, 1, e_{q^f}$.
The ring $E_1$ is given by $W(k)[\Theta_{1,1}^{\pm 1}]$, and, by Proposition~\ref{prop:e_q}, $E_{e_{q^f}}$ is given by:
$$E_{e_{q^f}} = 
E_{\sigma_{e_{q^f}}}[\Theta_{1,e_{q^f}}, \dots, \Theta_{e_{q^f},e_{q^f}},\Theta_{e_{q^f},e_{q^f}}^{-1}]/I^{\cusp} \cdot
\<\Theta_{1,e_{q^f}}, \dots, \Theta_{e_{q^f}-1,e_{q^f}}\>.$$
(Recall that $I^{\cusp}$ is the kernel of the map $E_{\sigma_{e_{q^f}}} \rightarrow W(k)$ giving the action
of $E_{\sigma_{e_{q^f}}}$ on the unique irreducible, noncuspidal summand of $\CP_{\sigma_{e_{q^f}}} \otimes \overline{\CK}$.)

We regard $E_{\sigma_{e_{q^f}}}$ as a subalgebra of $E_1^{\otimes m-e_{q^f}} \otimes E_{e_{q^f}}$ 
by the inclusion $E_{\sigma_{e_{q^f}}} \rightarrow E_{e_{q^f}}$.  It is then not hard to verify, using
Theorem~\ref{thm:explicit}, that the image of $A_{[L,\pi]}$ in $E_1^{\otimes m-e_{q^f}} \otimes E_{e_{q^f}}$
is generated by $E_{\sigma_{e_{q^f}}}$, together with the images of the elements
$\Theta_1, \dots, \Theta_m,\Theta_m^{-1}$ of $A_{[L,\pi]}$.  (First one proves these elements generate after tensoring
with $\overline{\CK}$, then one verifies that if $\ell x$ is in the subalgebra generated by
these elements, then so is $x$.)

This yields a presentation:
$$A_{[L_m,\pi_m]} \cong E_{\sigma_{e_{q^f}}}[\Theta_1, \dots, \Theta_m,\Theta_m^{-1}]/I^{\cusp} \cdot J, $$
where $J$ is the ideal generated by the elements: $\Theta_j$ for $m - e_{q^f} + 1 \leq j \leq e_{q^f} - 1$
and $\Theta_j - \Theta_{e_{q^f}} \Theta_{j - e_{q^f}}$ for $e_{q^f} < j \leq m$.  As remarked earlier,
Paige's description of the structure of $E_{\sigma_{e_{q^f}}}$~\cite{paige} means that this
presentation is completely explicit.
\end{example}

In general, when $\nu$ is a more complicated partition, the criteria of Theorem~\ref{thm:explicit}
do not realize $A_{[L_m,\pi_m]}$ as a ring of invariants in $\bigotimes_j E_{\nu_j}$.  Obtaining
a clean description of $A_{[L_m,\pi_m]}$ in this setting is substantially more complicated- in particular
it is unlikely that one can find a clean presentation like the ones above.  We refer the reader to~\cite{curtis},
where we will address this question from a rather different perspective.

\end{document}